\pgfplotsset{compat=newest} 
\pgfplotsset{plot coordinates/math parser=false}
\def\varep{\varepsilon}
\newcommand{\bqq}{\begin{equation}}
	\newcommand{\eqq}{\end{equation}}
\newcommand{\bqs}{\begin{equation*}}
	\newcommand{\eqs}{\end{equation*}}
\newcommand{\R}{\mathbb{R}} 
\newcommand{\Z}{\mathbb{Z}} 
\newcommand{\N}{\mathbb{N}}
\newcommand{\md}{\mathrm{d}}
\newtheorem{thm}{Theorem}[section]
\newtheorem{lem}{Lemma}[section]
\newtheorem{prop}{Proposition}[section]
\newtheorem{rmk}[thm]{Remark}
\numberwithin{equation}{section}
\title{\bf Sharp asymptotics for  the KPP equation with some front-like initial data}
\author{Mingmin Zhang\thanks{
This work has been supported by the Occitanie region, the European Regional Development Fund
(ERDF), and the French government, through the France 2030 project managed by the National
Research Agency (ANR)  ``ANR-22-EXES-0015'', and by the French ANR project ReaCh  ``ANR-23-CE40-0023-01''. The author sincerely thanks  Matthieu Alfaro for bringing their work \cite{AGX2024} to her attention, and warmly acknowledges enlightening discussions with Thomas Giletti. 
She 
 is also deeply indebted to Jean-Michel Roquejoffre, whose invaluable comments and advice  have led to great improvements in this manuscript.\\ Email address: mingmin.zhang.math@gmail.com.}}
\affil{ CNRS, UMR 5219, IMT, Universit\'e de Toulouse, France }
\date{}
\begin{document}
\maketitle

\begin{abstract}

 We provide the first PDE proof of the celebrated Bramson's $o(1)$ results in 1983 concerning the large time asymptotics for  the KPP equation  under  front-like initial data of types $x^{k+1}e^{-\lambda_*x}$ and $x^{\boldsymbol{\nu}} e^{-\lambda x}$ as $x$ tends to infinity, where $0<\lambda<\lambda_*=\sqrt{f'(0)}$ and $k, \boldsymbol{\nu}\in\mathbb{R}$. Specifically, our results are the following:
  For the former type initial data, we prove that the position of 
 the level sets is asymptotically  $c_*t+\frac{k}{2\lambda_*}\ln t+\mathcal{O}(1)$ if $k>-3$,  is $c_*t-\frac{3}{2\lambda_*}\ln t+\frac{1}{\lambda_*}\ln\ln t+\mathcal{O}(1)$ if $k=-3$, where $c_*=2\lambda_*$.  In sharp contrast, if $k<-3$ and if  $u_0$ belongs to $\mathcal{O}(x^{k+1}e^{-\lambda_* x})$ for $x$ large,  then the position of the level sets behaves asymptotically like $c_*t-\frac{3}{2\lambda_*}\ln t+\sigma_\infty+o(1)$, with $\sigma_\infty\in\R$ depending on the initial condition $u_0$.
 Regarding the latter type  initial data, we show that the level sets behave asymptotically like $ct+\frac{\boldsymbol{\nu}}{\lambda}\ln t$ up to $\mathcal{O}(1)$ error in general setting, with $c=\lambda+f'(0)/\lambda$.  Under  the $\mathcal{O}(1)$ results,  the ``convergence along level sets'' results are also demonstrated. 
 Moreover, we further refine the above $\mathcal{O}(1)$ results  to  the  ``convergence to a traveling wave'' results provided that 
  initial data decay precisely as a multiple of the above decaying rates.

\vskip 0.1cm
\noindent{\small{\it Mathematics Subject Classification}: 35K57; 35C07; 35B40.}
\vskip 0.1cm
\noindent{\small{\it Key words}: Fisher-KPP equations; sharp asymptotics; logarithmic correction; front-like initial data; algebraic power;  traveling wave.}
\vskip 0.1cm

\end{abstract}

\section{Introduction and main results}

This paper is devoted to  {\it sharp asymptotics}  for solutions to the KPP equation
\begin{equation}\label{kpp}
	u_t=u_{xx}+f(u),~~~~t>0,~x\in\R,
\end{equation}
associated with some front-like  initial data $u_0$.
The function $f\in C^2([0,1])$ is  of KPP type
\begin{equation*}
	f(0)=f(1)=0,  \quad 0<f(s)\leq f'(0)s ~~\text{ for  } s\in(0,1),
\end{equation*}
which is extended linearly in $\R\backslash[0,1]$ for simplicity. This type of reaction-diffusion equation is used to model phenomena in a great variety of applications from biology to social sciences, and has been extensively studied since the pioneering works of Fisher \cite{Fisher} and Kolmogorov-Petrovsky-Piskunov \cite{KPP37}. 

The front-like initial data $u_0$  in this paper are  continuous and nontrivial in $\R$ satisfying  $0\le u_0(x)\le 1$ for $x\in\R$.  Moreover,   there exist 
$0<a_1\le  a_2$ such that
	\begin{align} \label{initial}\tag{{\bf H1}}
\text{either~~~~}	
		a_1x^{k+1}e^{-\lambda_*x}\le &u_0(x)\le a_2 x^{k+1}e^{-\lambda_*x},~~~x\gg 1,\\
	\text{or~~~~~~~~~~}
		a_1x^{\boldsymbol{\nu}}e^{-\lambda x}\le& u_0(x)\le a_2 x^{\boldsymbol{\nu}}e^{-\lambda x},~~~~~~~x\gg 1. \label{initial-flat}\tag{{\bf H2}}
	\end{align}
with $0<\lambda<\lambda_*=\sqrt{f'(0)}$ and with  $k, \boldsymbol{\nu}\in\R$.

In this paper, {\it sharp asymptotics} for solutions to the KPP equation \eqref{kpp} under \eqref{initial} and  \eqref{initial-flat} types of initial functions refers to the fundamental question  whether convergence to a traveling wave - namely,
\begin{equation*}\label{sharp asymptotics}
	u(t,x+X(t))\to U_c(x),~~~~\text{as}~t\to+\infty,~\text{uniformly in}~x\in\R_+,
\end{equation*}
for an  appropriate choice of $X(t)$ and for a traveling wave  $U_c(x)$ (will be stated below) - holds true, and what is the asymptotics of the centering term $X(t)$ up to $o(1)$ precision? Bramson \cite{Bram2} conducted an elaborate study based on the Feynman-Kac integral and  Brownian motion methods, and he gave,  for each type of initial data above, criteria
for convergence to traveling waves as well as the formulas in $o(1)$ errors for the asymptotics of the centering term $X(t)$
(will be reformulated as our main theorems below). The analogue of Bramson's results was obtained in the probabilistic  work Berestycki-Brunet-Harris-Roberts \cite{BBHR2017} for the linear equation  with a killing free boundary  under  \eqref{initial} and \eqref{initial-flat} types of initial conditions.  In addition,  the position of the level sets for solutions to the KPP equation  under initial data of type \eqref{initial} was also partially investigated\footnote{We adopt our notation for convenience.}: Ebert-van Saarloos \cite{EvS00} provided a formal analysis for $f(u)=u-u^2$ and for  $k>-3$, showing that the speed of the level sets behaves like $2+\frac{k}{2t}+\cdots$;  more recently,  Alfaro-Giletti-Xiao \cite{AGX2024} addressed the case $k\ge -3$ using PDE methods and achieved $\mathcal{O}(1)$ precision. To the best of our knowledge, apart from the aforementioned literature, no other relevant studies have been presented on this topic. 
  The goal of this paper is to provide a PDE proof of the celebrated Bramson's results \cite{Bram2} on the {\it sharp asymptotics} for solutions to  the KPP equation \eqref{kpp}   under  \eqref{initial} and \eqref{initial-flat} types of initial conditions.

Let us now introduce the notion of traveling fronts which will   be made use of   in the course of our analysis.
{\it A traveling front} is a solution to \eqref{kpp} of the form 
 $u(t,x)=U_c(x-ct)$, where the profile $ U_c$ satisfies
 \begin{equation*}
 	U_c''+c U_c'+f( U_c)=0,~~~~0<U_c<1,~~~~U_c(-\infty)=1,~~~~U_c(+\infty)=0,
 \end{equation*}
 decreasing in $\R$ and unique up to translation. It is  well-known that traveling fronts exist if and only if wave speeds $c\ge c_*=2\sqrt{f'(0)}$. Moreover,  the traveling front profile $U_c(z)$ as $z\to+\infty$ satisfies $U_{c}(z)\approx e^{-\lambda z}$ if $c>c_*$, and $U_{c_*}(z)\approx ze^{-\lambda_* z}$, up to normalization. 
 The decay rates can be obtained from the linearized problem $w_t=w_{xx}+f'(0)w$, and are given by
\begin{equation*}
	\lambda:=\lambda(c)=\frac{c-\sqrt{c^2-4f'(0)}}{2}~~~\text{if}~c>c_*,~~~~~~~~\lambda_*:=\lambda(c_*)=\frac{c_*}{2}=\sqrt{f'(0)}.
\end{equation*}
In other words,  $c\in[c_*,+\infty)$ and, accordingly $\lambda\in(0,\lambda_*]$, are the unique pair such that $\lambda^2-c\lambda+f'(0)=0$.

Throughout this paper, we decide to formulate our results by writing the decay rate of $u_0$ explicitly as $x^{k+1}e^{-\lambda_*x}$ and $x^{\boldsymbol{\nu}} e^{-\lambda x}$.
The intuitive reason of taking $x^{k+1}e^{-\lambda_*x}$ is the following: suppose that \eqref{kpp} emanates from the function $U_{c_*}(x)$, then the solution $u$ is obviously the minimal traveling wave $U_{c_*}(x-c_*t)$. This indeed corresponds to the particular case of $k=0$ in \eqref{initial}, and it turns out that the front propagation actually has an exact linear speed $c_*$, namely the asymptotic front location is precisely characterized by $c_*t$. With this observation, our results can be easily understood in a rough sense that  when $k<0$ - meaning that the initial data lie below $U_{c_*}(x)$ - reveals that the front propagation will lag behind the minimal traveling wave $U_{c_*}(x-c_*t)$; on the contrary, when $k>0$ - meaning that the initial data stay above $U_{c_*}(x)$ - implies that the front position will be ahead of the minimal traveling wave $U_{c_*}(x-c_*t)$.  With the same reasoning, the form $x^{\boldsymbol{\nu}} e^{-\lambda x}$ is chosen.

\vskip 3mm

\noindent
{\bf Known results for  localized initial data}
\vskip 2mm

Before presenting our main results, let us review the literature associated with  {\it localized} initial data\footnote{By {\it localized}, we mean the initial data $u_0$ are nontrivial and nonnegative such that $u_0(x)=0$ for all $x>A$ with some $A>0$, such as Heaviside type or compactly supported functions.}. The long-time asymptotics of solutions 
 to \eqref{kpp} with such data has attracted considerable attention from both the PDE and probability communities. It has been revealed that the solutions exhibit a form of universality in their long-time dynamics. On the one hand, the spreading property holds true thanks to the well-known work of Aronson-Weinberger \cite{AW78}, namely, the solution $u$  admits an asymptotic spreading speed $c_*=2\sqrt{f'(0)}$ such that as $t\to+\infty$:
$\inf_{|x|\le ct} u(t,x)\to 1$ if $c\in[0,c_*)$, and $\sup_{x\ge ct} u(t,x)\to 0$ if $c>c_*$.   
On the other hand, they follow the same {\it sharp asymptotics}, for which our introduction below would not aim to cover all existing studies, but rather focus on a selection of significant works that contribute to this topic. 

For the Heaviside type initial condition $u_0(x)=\mathbbm{1}_{\{x<0\}}$, Kolmogorov-Petrovsky-Piskunov \cite{KPP37} showed that there exists a function
\begin{equation*}
	\label{KPPresult}
	X(t)=2t+o_{t\to+\infty}(t)
\end{equation*} 
 such that
 \begin{equation}
 	\label{KPPresult-2}
 	u(t,x+X(t))\to U_{c_*}(x)~~\text{as}~t\to+\infty,~\text{uniformly in}~x\in\R.
 \end{equation} 
Moreover, Bramson \cite{Bram1} studied \eqref{kpp} with $f(u)=u-u^2$ from a probabilistic perspective, interpreting
 the solution $u$  as the probability that the rightmost particle at time $t$ in a
branching Brownian motion lies to the right of $x$. Based on this viewpoint,  he established a refined estimate for the centering term $X(t)$ in  \eqref{KPPresult-2}:
\begin{equation}\label{1.3}
	X(t)=c_*t-\frac{3}{2\lambda_*} \ln t + \mathcal{O}_{t\to+\infty}(1).
\end{equation}
In a subsequent work, Bramson \cite{Bram2} further sharpened the $\mathcal{O}(1)$ result, proving that \eqref{KPPresult-2} holds with
\begin{equation}\label{1.4}
	X(t)=c_*t-\frac{3}{2\lambda_*} \ln t + \sigma_\infty +o_{t\to+\infty}(1),
\end{equation}
where $\sigma_\infty\in\R$  depends on the initial datum  $u_0$. 
We also mention the early PDE works of Uchiyama \cite{Uchiyama} and Lau \cite{Lau}  for the KPP equation with Heaviside type initial data.
The first PDE proof of Bramson’s 
$\mathcal{O}(1)$ correction \eqref{1.3} was contributed by Hamel-Nolen-Roquejoffre-Ryzhik \cite{HNRR13} under general KPP nonlinearities and  general localized initial data. A key insight identified by the authors \cite{HNRR13} is that   the KPP equation can be approximated by the linearized problem with an absorbing moving boundary, which  has since played a fundamental role in shaping the trajectory of research in this field. Later,  Bramson’s $o(1)$ result \eqref{1.4}  was retrieved by Nolen-Roquejoffre-Ryzhik \cite{NRR17} via again pure PDE techniques. Since then,  extensive developments of Bramson's result  have been triggered in probability and PDE communities, including discrete setting, spatially periodic media, nonlocal diffusion and interactions and higher dimensional space  \cite{A2013, BFRZ2023, BHR20, Ducrot, HNRR16, Graham, NRR19, MRR2022,  RRR2019, Roquejoffre2023}.
More broadly, the research of sharp asymptotics has also been extended for instance to, by formal analysis,  Ginzburg-Landau type equations and fourth order parabolic equations 
\cite{EvS00, EvSP02},  monostable equations \cite{Giletti}, the Burgers-FKPP equation \cite{AHRJEMS}, and certain transport problem with nonlocal nonlinear boundary condition \cite{FRZ2024}. 

\vskip 3mm

\noindent
{\bf Main results of this paper}
\vskip 2mm

 We summarize our results in the form of $\mathcal{O}(1)$ and $o(1)$ precision, depending on the types of initial data $u_0$ and on tail behaviors of  $u_0$.
 Hereafter, we denote by  $u$  the solutions to the Cauchy problem \eqref{kpp}.

\vskip 3mm

\noindent
{\bf Sharp asymptotics in $\mathcal{O}(1)$ term}
\vskip 2mm

We begin with the general situation. Given any $m\in(0,1)$, we denote the level set of the solution $u$ as 
\begin{equation*}
	X_m(t):=\sup \left\{ x \in\R ~|~ u(t,x) \geq m\right\},~~~t>0.
\end{equation*}
Our main results are the following.

\begin{thm}\label{thm1-O(1) k>=-3} 
	 Assume that $u_0$ satisfies \eqref{initial} with $k\ge -3$, then
	\begin{equation*}
		\label{O(1)}
		\begin{aligned}
		X_m(t)=\begin{cases}
			c_*t+\frac{k}{2\lambda_*}\ln t+\mathcal{O}_{t\to+\infty}(1),~~&\text{if}~~k>-3,\vspace{4pt}\\
			c_*t-\frac{3}{2\lambda_*}\ln t+\frac{1}{\lambda_*}\ln\ln t+\mathcal{O}_{t\to+\infty}(1), ~~&\text{if}~~k=-3.
		\end{cases}
	\end{aligned}
	\end{equation*}
\end{thm}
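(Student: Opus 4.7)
The plan is to adapt the PDE framework of \cite{HNRR13,NRR17} to the present front-like setting. The starting point is the change of variables
\begin{equation*}
w(t,\xi) := e^{\lambda_* \xi}\, u(t, c_* t + \xi),
\end{equation*}
which, using $c_* = 2\lambda_*$ and $\lambda_*^2 = f'(0)$, converts the linear part of \eqref{kpp} into the pure heat operator. Explicitly,
\begin{equation*}
w_t - w_{\xi\xi} = e^{\lambda_* \xi}\bigl[f(e^{-\lambda_* \xi} w) - f'(0)\, e^{-\lambda_* \xi} w\bigr] \le 0
\end{equation*}
by the KPP hypothesis. The initial datum $w_0(\xi) = e^{\lambda_* \xi} u_0(\xi)$ is comparable to $\xi^{k+1}$ as $\xi\to +\infty$ (by \eqref{initial}) and decays exponentially as $\xi\to -\infty$; the level set $X_m(t)$ is characterised by $w(t, X_m(t) - c_* t) = m\, e^{\lambda_*(X_m(t) - c_* t)}$.

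For the upper bound I would use $u \le 1$ to deduce $w(t, \xi)\le e^{\lambda_* \xi}$ everywhere, so that on any half-line $\{\xi > -M(t)\}$ the function $w$ is bounded above by the solution $\tilde W$ of the heat equation on that domain with Dirichlet value $e^{-\lambda_* M(t)}$ at $\xi = -M(t)$ and initial datum $w_0$. Writing $\tilde W = e^{-\lambda_* M(t)} + V$ and applying the reflected heat kernel together with the rescaling $y = \sqrt t\, z$ gives, for $\xi + M(t)$ bounded and $t\to\infty$,
\begin{equation*}
V(t, \xi) \;\sim\; C_k\bigl(\xi + M(t)\bigr)\, t^{k/2}\ \text{ for } k>-3,\qquad V(t, \xi) \;\sim\; C\bigl(\xi + M(t)\bigr)\, t^{-3/2}\,\ln t\ \text{ for } k=-3,
\end{equation*}
the extra $\ln t$ in the critical case originating from the marginal divergence at the lower endpoint of $\int_{A/\sqrt t}^\infty z^{-1}\, e^{-z^2/4}\, \md z$. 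Transporting the bound back to $u$ via $u = e^{-\lambda_* \xi} w$ and solving $u(t, X_m(t)) = m$ then produces the upper estimate on $X_m(t)$.

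For the lower bound, the $C^2$-KPP assumption gives $f(u)\ge f'(0) u - K u^2$ on $[0,1]$, so $w$ is a supersolution of $w_t - w_{\xi\xi}\ge -K e^{-\lambda_* \xi} w^2$. Setting $X_\epsilon(t) := \sup\{x : u(t,x)\ge\epsilon\}$ and $\xi_\epsilon(t) := X_\epsilon(t) - c_* t$, a traveling-wave sliding argument provides the a priori bound $\xi_\epsilon(t) = \mathcal{O}(\ln t)$; on the half-line $\{\xi > \xi_\epsilon(t)\}$ where $u\le\epsilon$, the quadratic correction is tamed by the smallness of $u$, and a Duhamel fixed-point argument shows that $w$ dominates the solution of the homogeneous Dirichlet heat equation on $\{\xi > \xi_\epsilon(t)\}$ with initial datum $w_0$, up to a lower-order correction. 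The same Gaussian asymptotics as above then yield the matching lower bound on $X_m(t)$.

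The hardest point, I expect, is the bootstrap by which one sharpens the position of the effective absorbing boundary: a naive choice of $M(t)$ would give only $\mathcal{O}(\ln \ln t)$ precision via the factor $\xi + M$ appearing in the Gaussian estimate, and the $\mathcal{O}(1)$ precision stated in the theorem requires placing the boundary dynamically within $\mathcal{O}(1)$ of the true level set, which couples the upper and lower estimates in a nontrivial way. The critical case $k = -3$ is the most delicate: the $\ln \ln t$ correction appearing in the statement is intrinsic to the marginally divergent tail integral and must be tracked by a second-order expansion, while in the supercritical cases $k > -3$ one has to verify that the analogous $\ln \ln t$ that naively emerges from the linear analysis can be absorbed into the $\mathcal{O}(1)$ remainder through the bootstrap.
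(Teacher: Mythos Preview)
Your linear analysis is right and matches the paper: the transformation $v=e^{\lambda_*(x-c_*t)}u$, the comparison with the heat equation, and the asymptotics $p(t,y)\sim \varpi\, y\, t^{k/2}$ (resp.\ $\varpi\, y\, t^{-3/2}\ln t$ for $k=-3$) on the diffusive scale are exactly the content of Lemma~2.1 and Proposition~2.1, including your identification of the marginal divergence $\int_{A/\sqrt t}^\infty z^{-1}e^{-z^2/4}\,\md z$ as the source of the extra $\ln t$.

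The gap is in how you pass from this to the $\mathcal{O}(1)$ statement. Solving the level-set equation $m\,e^{\lambda_*\xi}\sim C_k\,\xi\, t^{k/2}$ directly yields $\xi=\frac{k}{2\lambda_*}\ln t+\frac{1}{\lambda_*}\ln|\xi|+\mathcal{O}(1)$, hence only $\mathcal{O}(\ln\ln t)$ precision, as you note. Your proposed remedy---bootstrap the absorbing boundary to within $\mathcal{O}(1)$ of the level set---is circular as stated: each iteration needs as input the location you are trying to output, with no evident contraction. Neither this paper nor \cite{HNRR13} does this. The paper proceeds in two steps. First (Section~3), super- and subsolutions for $v$ of the form $\xi(t)\,w(t,x)\pm(\text{cosine bump})$, with $w$ the linear Dirichlet solution on $\{x>c_*t\}$, pin down $v(t,c_*t+t^\mu)\sim\varpi\, t^{\mu+k/2}$ sharply at a point \emph{far ahead} of the level set (Proposition~3.1). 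Second---and this is the idea you are missing---on the strip $|x-c_*t-\frac{k}{2\lambda_*}\ln t|\le t^\theta$ the paper compares $V:=t^{-k/2}v$ with shifted waves
\[
\psi_\tau(t,x)=e^{\lambda_*(x-c_*t-\frac{k}{2\lambda_*}\ln t)}\,U_{c_*}\Bigl(x-c_*t-\tfrac{k}{2\lambda_*}\ln t+\tau\Bigr),
\]
using the sharp value from step one as boundary data at the right edge and closing the comparison with a separate cosine supersolution (Proposition~5.1). The reason no $\ln\ln t$ survives for $k>-3$ is that $U_{c_*}(z)\sim z\,e^{-\lambda_* z}$ as $z\to+\infty$: the linear factor $z$ in the wave's tail absorbs precisely the factor $(x-c_*t)$ in the heat-kernel asymptotics. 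For $k=-3$ the genuine $\frac{1}{\lambda_*}\ln\ln t$ comes from the extra $\ln t$ in the linear estimate, not from that factor.
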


\begin{prop}\label{prop1}
		 Under the assumption of Theorem \ref{thm1-O(1) k>=-3}, there are some constant $C>0$ and a function $\zeta:(0,+\infty)\rightarrow\R$ with $|\zeta(t)|\leq C$ for $t>0$ such that
	\begin{equation}
		\label{thm2-eqn1}
		\begin{aligned}
			\begin{cases}
				\displaystyle\lim_{t\to+\infty}\Big\Vert u(t,\cdot)-U_{c_*}\Big(\cdot-c_*t-\frac{k}{2\lambda_*}\ln t+\zeta(t)\Big)\Big\Vert_{L^\infty(\R_+)}=0,~~~ &\text{if}~~k>-3,\vspace{4pt}\\
				\displaystyle	\lim_{t\to+\infty}\Big\Vert u(t,\cdot)-U_{c_*}\Big(\cdot-c_*t+\frac{3}{2\lambda_*}\ln t-\frac{1}{\lambda_*}\ln\ln t+\zeta(t)\Big)\Big\Vert_{L^\infty(\R_+)}=0,~~~ &\text{if}~~k=-3,\vspace{4pt}.
			\end{cases}
		\end{aligned}
	\end{equation}
	Furthermore, for every $m\in(0,1)$ and every sequence $(t_n,x_n)_{n\in\N}$ such that $t_n\to+\infty$ as $n\to+\infty$ and $x_n\in X_m(t_n)$ for all $n\in\N$, there holds
	\begin{equation}
		\label{thm2-eqn2}
		u(t+t_n,x+x_n)\to U_{c_*}(x-c_*t+U^{-1}_{c_*}(m))~~\text{as}~n\to+\infty,~\text{locally uniformly in}~(t,x)\in\R^2,
	\end{equation}
	where $U^{-1}_{c_*}$ denotes the inverse of the function $U_{c_*}$.
\end{prop}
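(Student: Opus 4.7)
The plan is to upgrade the $\mathcal{O}(1)$ level-set bound of Theorem \ref{thm1-O(1) k>=-3} to a uniform two-sided squeeze by translates of the critical traveling wave $U_{c_*}$, and then to read off both \eqref{thm2-eqn1} and \eqref{thm2-eqn2} through a compactness-and-Liouville argument. Write
\begin{equation*}
\Phi(t):=\begin{cases} c_*t+\tfrac{k}{2\lambda_*}\ln t,& k>-3,\\ c_*t-\tfrac{3}{2\lambda_*}\ln t+\tfrac{1}{\lambda_*}\ln\ln t,& k=-3,\end{cases}
\end{equation*}
so that Theorem \ref{thm1-O(1) k>=-3} says $X_m(t)-\Phi(t)=\mathcal{O}(1)$ for every $m\in(0,1)$. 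The first step is to produce constants $A>0$ and $T_0>0$ such that
\begin{equation*}
U_{c_*}\bigl(x-\Phi(t)+A\bigr)\ \le\ u(t,x)\ \le\ U_{c_*}\bigl(x-\Phi(t)-A\bigr),\qquad t\ge T_0,\ x\in\R.
\end{equation*}
The upper bound can be extracted from the supersolution used to establish the upper half of Theorem \ref{thm1-O(1) k>=-3}: for $A$ large, $\min(1,U_{c_*}(\cdot-\Phi(t)-A))$ is a supersolution thanks to the $xe^{-\lambda_*x}$ tail of $U_{c_*}$ and the KPP condition $f(s)\le f'(0)s$. For the lower bound, applying the $\mathcal{O}(1)$ estimate to several levels $m,m'\in(0,1)$ yields $u(t,x)\ge 1-\varepsilon$ for $x\le\Phi(t)-C_\varepsilon$ uniformly in large $t$; planting a compactly supported KPP subsolution in that region and letting it invade produces $u(t,x)\ge U_{c_*}(x-\Phi(t)+A)$ for $A$ sufficiently large.

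With this uniform squeezing in hand, for any sequence $t_n\to+\infty$ the functions $v_n(t,x):=u(t+t_n,x+\Phi(t_n))$ are precompact in $C^{1,2}_{\mathrm{loc}}(\R^2)$ by parabolic interior regularity, and any limit $v$ is an entire $[0,1]$-valued solution of \eqref{kpp} still satisfying $U_{c_*}(x+A)\le v(t,x)\le U_{c_*}(x-A)$. A Liouville-type rigidity result for critical KPP waves (in the spirit of \cite{HNRR13,NRR17}) then forces $v(t,x)=U_{c_*}(x-c_*t-\sigma)$ for some $\sigma\in\R$. Setting $\zeta(t_n):=-\sigma$ along each extraction yields a bounded sequence, and a standard contradiction argument delivers a bounded function $\zeta:\R_+\to\R$ for which \eqref{thm2-eqn1} holds. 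Restricting the $L^\infty$ norm to $\R_+$ is harmless: both $u(t,\cdot)$ and $U_{c_*}(\cdot-\Phi(t)+\zeta(t))$ decay to $0$ at $+\infty$, so locally uniform convergence upgrades to uniform convergence on $\R_+$.

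The statement \eqref{thm2-eqn2} then follows by running the same compactness argument on $\widetilde v_n(t,x):=u(t+t_n,x+x_n)$: continuity gives $u(t_n,x_n)=m$, the uniform squeeze transfers because $x_n-\Phi(t_n)=\mathcal{O}(1)$, and Liouville identifies any limit as $U_{c_*}(\cdot-c_*t-\widetilde\sigma)$; the normalization $\widetilde v(0,0)=m$ pins down $\widetilde\sigma=-U^{-1}_{c_*}(m)$ uniquely, so every subsequence has the same limit and the whole sequence converges. The main obstacle is the lower half of the uniform squeeze: converting the $\mathcal{O}(1)$ level-set control into a time-uniform front estimate valid for all $x\in\R$ (not merely near the level line) is sharper than classical $o(t)$ propagation estimates, and is where the bulk of the work behind Theorem \ref{thm1-O(1) k>=-3} gets repackaged.
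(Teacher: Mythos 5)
Your overall architecture (translation along the level line, parabolic compactness, identification of the limit by a Liouville-type rigidity, contradiction to produce a bounded $\zeta$) matches the paper's, and that part is sound. The genuine gap is the uniform two-sided squeeze $U_{c_*}(x-\Phi(t)+A)\le u(t,x)\le U_{c_*}(x-\Phi(t)-A)$ for \emph{all} $x\in\R$, which you propose as the first step and on which everything else rests. This squeeze is not just ``sharper than classical $o(t)$ estimates'': it is false for $k>0$ and unobtainable by the route you sketch for $-3\le k<0$. Indeed, for $k>0$ the lower barrier of Section~\ref{sec_3.1} (via Remark~\ref{rk_1'}(ii)) gives $u(t,x)\ge c\,(x-c_*t)^{k+1}e^{-\lambda_*(x-c_*t)}$ for $x-c_*t\ge\max(\sqrt t,1)$, while any fixed translate $U_{c_*}(x-\Phi(t)-A)$ decays like $(x-c_*t)e^{-\lambda_*(x-c_*t)}$ up to a $t$-dependent constant, so the upper inequality fails for $x-c_*t$ large. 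Your claimed supersolution $\min(1,U_{c_*}(\cdot-\Phi(t)-A))$ also has the wrong sign for $k<0$: there $\Phi'(t)<c_*$ and the sign of $-(\Phi'(t)-c_*)U_{c_*}'$ makes it a \emph{sub}solution, not a supersolution. Symmetrically, the proposed lower bound via a compactly supported KPP subsolution invades with the Bramson delay $c_*t-\frac{3}{2\lambda_*}\ln t$ which lags $\Phi(t)=c_*t+\frac{k}{2\lambda_*}\ln t$ arbitrarily far behind as $t\to\infty$ for every $k>-3$, so it cannot produce $u(t,x)\ge U_{c_*}(x-\Phi(t)+A)$.

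The paper avoids asking for this global squeeze. It establishes the trapping $\psi_1\le V\le\psi_2$ only on the diffusive window $\Phi(t)-t^\theta\le x\le\Phi(t)+t^\theta$ (Propositions~\ref{prop-upper lower bd for V} and \ref{prop-upper lower bd for V k=-3}), which after the translation yields a squeeze of the entire limit $u_\infty$ between translates of $U_{c_*}$ only in the region $y>1$ \emph{ahead} of the moving front. This one-sided, localized information is exactly the hypothesis of the Liouville theorem \cite[Theorem~3.5]{BH2007}, which then forces $u_\infty$ to be a shifted critical wave, and the bounded function $\zeta$ is produced by contradiction rather than by direct construction. So the fix is not to seek a stronger squeeze but to weaken what you ask of it: you need the trapping only in a moving diffusive window ahead of the front, and then a Liouville theorem adapted to that weaker input.
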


\begin{thm}
	\label{thm1-O(1) any nu}
		Assume that $u_0$ satisfies \eqref{initial-flat} with $\boldsymbol{\nu}\in\R$, then
	\begin{equation*}
		X_m(t)=
		ct+\frac{\boldsymbol{\nu}}{\lambda}\ln t+\mathcal{O}_{t\to+\infty}(1).
	\end{equation*}
\end{thm}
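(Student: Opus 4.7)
My plan is to derive matching upper and lower bounds for $X_m(t)$ by comparing $u$ with the solution $v$ of the linearized equation $v_t=v_{xx}+f'(0)v$, whose behavior can be computed explicitly via the heat kernel. The KPP hypothesis $f(s)\leq f'(0)s$ directly gives $u\leq v$; conversely, writing $f(s)=f'(0)s-g(s)$ with $g\in C^1$, $g(0)=g'(0)=0$, and $0\leq g(s)\leq K s^2$ on $[0,1]$ (using $f\in C^2$), we will see that the defect $v-u$ has strictly faster exponential decay than $v$ itself at the front, and thus becomes negligible there.

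For the upper bound I would split $u_0$ according to $x\geq A$ (where $u_0\leq a_2 x^{\boldsymbol{\nu}} e^{-\lambda x}$) and $x<A$ (where $u_0\leq 1$), and use the Gaussian identity $-\tfrac{(x-y)^2}{4t}-\lambda y=\lambda^2 t-\lambda x-\tfrac{(y-(x-2\lambda t))^2}{4t}$ together with $\lambda^2+f'(0)=c\lambda$ to rewrite
\begin{equation*}
 v(t,x)\leq C\,e^{-\lambda(x-ct)}\int_A^{\infty}\frac{1}{\sqrt{4\pi t}}\,e^{-(y-(x-2\lambda t))^2/(4t)}\,y^{\boldsymbol{\nu}}\,\md y+R(t,x),
\end{equation*}
where $R(t,x)$ carries the contribution of $\{x<A\}$ and is exponentially smaller. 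For $x$ in the window $ct+\tfrac{\boldsymbol{\nu}}{\lambda}\ln t+\mathcal{O}(1)$, the Gaussian concentrates near $y=x-2\lambda t\approx(c-2\lambda)t$, with $c-2\lambda=(f'(0)-\lambda^2)/\lambda>0$ since $\lambda<\lambda_*$, so the remaining integral is of order $t^{\boldsymbol{\nu}}$. Imposing $m=u(t,X_m(t))\leq v(t,X_m(t))$ then gives $X_m(t)\leq ct+\tfrac{\boldsymbol{\nu}}{\lambda}\ln t+C$.

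For the lower bound I would study the nonnegative defect $w:=v-u\geq 0$, which by KPP comparison satisfies
\begin{equation*}
 w_t-w_{xx}-f'(0)w=g(u)\in[0,Kv^2],\qquad w(0,\cdot)=0.
\end{equation*}
Duhamel's formula then gives $w(t,x)\leq K\int_0^t e^{f'(0)(t-s)}\bigl(G_{t-s}\ast v(s,\cdot)^2\bigr)(x)\,\md s$, and since $v^2(s,y)$ has exponential decay $e^{-2\lambda(y-cs)}$, twice that of $v$, another completion of the square (with $2\lambda$ in place of $\lambda$) produces an integrand with exponential factor $e^{-2\lambda(x-ct)-(2\lambda^2-f'(0))(t-s)}$ weighted by a polynomial in $s$. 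Because $v$ itself carries only the factor $e^{-\lambda(x-ct)}$, the ratio $w(t,x)/v(t,x)$ contains an extra decaying factor $e^{-\lambda(x-ct)}$ which tends to $0$ uniformly as $x-ct$ grows. A careful Duhamel bookkeeping (splitting the time integral into $[0,t/2]$ and $[t/2,t]$ and matching Gaussian and Gamma-type contributions) shows $w(t,x)\leq \tfrac12 v(t,x)$ throughout the window $ct+\tfrac{\boldsymbol{\nu}}{\lambda}\ln t+[-L,L]$ for $t$ large, hence $u\geq\tfrac12 v$ there. Inverting the asymptotic expression of $v$ then yields $X_m(t)\geq ct+\tfrac{\boldsymbol{\nu}}{\lambda}\ln t-C$.

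The main obstacle I expect is this Duhamel estimate on $w$, where the polynomial prefactor $s^{2\boldsymbol{\nu}}$ and the precise tail behavior of $v(s,\cdot)$ both demand care. For $\boldsymbol{\nu}\leq -1/2$, $s^{2\boldsymbol{\nu}}$ is borderline non-integrable near $s=0$, and one cannot use the asymptotic form of $v$ there—one must replace it by the non-asymptotic bound $v\leq e^{f'(0)s}$ coming from $u_0\leq 1$; for $\boldsymbol{\nu}$ large positive, one must verify that the time integration does not spuriously amplify the polynomial beyond $t^{2\boldsymbol{\nu}}$, so the integrand is analyzed separately near $s=0^+$ and near $s=t^-$. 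Once this $w\ll v$ estimate is secured, both bounds on $X_m(t)$ follow from the same heat-kernel inversion, and the theorem is proved.
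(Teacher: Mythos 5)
Your plan takes a genuinely different route from the paper: the paper never passes through a Duhamel representation for the defect $v-u$, but instead builds super- and subsolutions for the conjugated function $v=e^{\lambda(x-ct)}u$ around a Dirichlet linear solution with cosine perturbations, and in particular uses a special ``intermediate'' conjugation $z=e^{\frac{c}{2}(x-ct)}u$ (Section~4.2) precisely to avoid the scale mismatch between the nonlinear term and the linear flow. Your upper bound via $u\le v$, the Gaussian completion of the square, and the inversion $m\le v(t,X_m(t))\approx Ct^{\boldsymbol{\nu}}e^{-\lambda(X_m(t)-ct)}$ is correct and straightforward.

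The gap is in the lower bound, and it is not the one you flag. Carrying out the completion of the square in your Duhamel integrand actually produces the exponential factor $e^{-2\lambda(x-ct)+(2\lambda^2-f'(0))(t-s)}$; the sign on $(2\lambda^2-f'(0))$ is the opposite of what you wrote. More importantly, $2\lambda^2-f'(0)=2\lambda^2-\lambda_*^2$ is \emph{positive} whenever $\lambda>\lambda_*/\sqrt{2}$, i.e.\ for all $c$ sufficiently close to $c_*$; in that range the time integral $\int_0^t e^{(2\lambda^2-f'(0))(t-s)}\,\md s$ grows exponentially in $t$ and the bound $w\le Kv^2$-based Duhamel estimate does not close. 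The issue is structural, not a bookkeeping subtlety about $\boldsymbol{\nu}$: the bound $g(u)\le Ku^2\le Kv^2$ is simply too lossy when $\lambda$ is close to $\lambda_*$, because the nonlinear correction $v^2$ decays as $e^{-2\lambda(x-ct)}$ and the rate $2\lambda$ can exceed $\lambda_*^2/\lambda$, the rate beyond which the linear propagator amplifies rather than damps. The classical fix, going back to Uchiyama and Lau, is to use $g(u)\le Ku^q$ with an exponent $1<q<(\lambda_*/\lambda)^2$ (always possible since $\lambda<\lambda_*$), which replaces $(2\lambda^2-f'(0))$ by $(q-1)(q\lambda^2-f'(0))<0$ and makes the $\tau$-integral converge. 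One must also be careful to use the cap $g(u)\le K$ (not $g(u)\le Kv^2$) behind the front of $u$, where $v\ge e^{f'(0)s}\gg 1$ and the $v^2$-bound is astronomically bad; there the smallness comes entirely from the Gaussian tail $e^{-(c^2/4-f'(0))(t-s)}$. As stated, your proof does not go through for the full range $\lambda\in(0,\lambda_*)$; with the exponent fix and the careful treatment behind the front, the Duhamel/subsolution approach does work and would give a proof independent of the paper's barrier machinery.
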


\begin{prop}\label{prop2}
	
Under the assumption of Theorem \ref{thm1-O(1) any nu}, there are some constant $C>0$ and a function $\zeta:(0,+\infty)\rightarrow\R$ with $|\zeta(t)|\leq C$ for $t>0$ such that
	\begin{equation*}\label{thm2-eqn3}
		\lim_{t\to+\infty}\Big\Vert u(t,\cdot)-U_{c}\Big(\cdot-ct-\frac{\boldsymbol{\nu}}{\lambda}\ln t+\zeta(t)\Big)\Big\Vert_{L^\infty(\R_+)}=0.
	\end{equation*}
	Furthermore, for every $m\in(0,1)$ and every sequence $(t_n,x_n)_{n\in\N}$ such that $t_n\to+\infty$ as $n\to+\infty$ and $x_n\in X_m(t_n)$ for all $n\in\N$, there holds
	\begin{equation*}
		\label{thm2-eqn4}
		u(t+t_n,x+x_n)\to U_{c}(x-ct+U^{-1}_{c}(m))~~\text{as}~n\to+\infty,~\text{locally uniformly in}~(t,x)\in\R^2,
	\end{equation*}
	where $U^{-1}_{c}$ denotes the inverse of the function $U_{c}$.
\end{prop}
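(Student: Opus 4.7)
The strategy parallels Proposition~\ref{prop1}: we first establish \eqref{thm2-eqn4} (convergence along level sets) and then deduce from it the $L^\infty$ statement. Fix once and for all some $m_0\in(0,1)$ and set
\[
\zeta(t)\;:=\;U_c^{-1}(m_0)-X_{m_0}(t)+ct+\frac{\boldsymbol{\nu}}{\lambda}\ln t,
\]
which is bounded on $(0,+\infty)$ by Theorem~\ref{thm1-O(1) any nu}. Given $m\in(0,1)$ and $(t_n,x_n)$ with $t_n\to+\infty$ and $u(t_n,x_n)=m$, the translated sequence $v_n(t,x):=u(t+t_n,x+x_n)$ is relatively compact in $C^{1,2}_{\mathrm{loc}}(\R^2)$ by interior parabolic regularity; extracting a subsequence, $v_n\to v_\infty$ locally uniformly, where $v_\infty$ is a bounded entire solution of \eqref{kpp} with $v_\infty(0,0)=m$. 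Moreover, the super- and subsolutions driving the proof of Theorem~\ref{thm1-O(1) any nu} -- built from translates of $U_c(\cdot-ct-\tfrac{\boldsymbol{\nu}}{\lambda}\ln t)$ -- pass to the limit and trap $v_\infty$ between two translates of the moving wave: there exist $\alpha\le\beta$ such that
\[
U_c(x-ct-\beta)\;\le\; v_\infty(t,x)\;\le\; U_c(x-ct-\alpha),\qquad(t,x)\in\R^2.
\]

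\textbf{Identifying the limit.} The heart of the proof is a Liouville-type statement: any bounded entire solution of \eqref{kpp} sandwiched between two translates of $U_c(\cdot-ct)$ is itself a translate of $U_c(\cdot-ct)$. I would argue by sliding: consider the family $s\mapsto U_c(\cdot-ct-s)$ and let $s^*$ be the optimal translate for which $v_\infty\le U_c(\cdot-ct-s^*)$. Since $\lambda\in(0,\lambda_*)$ is a simple exponent of the linearisation $w_t=w_{xx}+f'(0)w$, the matching exponential tails $e^{-\lambda x}$ at $+\infty$ of $v_\infty$ and $U_c(\cdot-ct-s^*)$ preclude the infimum from being attained at $x=+\infty$, and the lower trapping prevents it at $x=-\infty$; so contact occurs at a finite point of $\R^2$. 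The strong parabolic maximum principle applied to $U_c(\cdot-ct-s^*)-v_\infty\ge 0$, combined with the KPP inequality $f(u)\le f'(0)u$, then forces equality. Hence $v_\infty(t,x)=U_c(x-ct-a)$ for some $a\in\R$, and $v_\infty(0,0)=m$ gives $a=-U_c^{-1}(m)$. Since the limit is independent of the extracted subsequence, the full sequence converges and \eqref{thm2-eqn4} follows.

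\textbf{$L^\infty$ convergence and main obstacle.} If the displayed $L^\infty$ limit failed, one could extract $\varepsilon_0>0$, $t_n\to+\infty$ and $y_n\ge 0$ with $|u(t_n,y_n)-U_c(z_n)|\ge\varepsilon_0$, where $z_n:=y_n-ct_n-\tfrac{\boldsymbol{\nu}}{\lambda}\ln t_n+\zeta(t_n)$. The case $z_n\to+\infty$ is eliminated by the sharp upper tail $u(t_n,y_n)\le C\,U_c(z_n)$ stemming from Theorem~\ref{thm1-O(1) any nu}. Otherwise, along a subsequence, $z_n\to z_\infty\in\R$ and $\zeta(t_n)\to\zeta_\infty$; setting $m:=U_c(z_\infty)\in(0,1)$ and applying \eqref{thm2-eqn4} at the level $m$, together with the bounded distance between $X_m(t_n)$ and $y_n$ and the uniform continuity of $U_c$, contradicts $\varepsilon_0>0$. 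The principal technical obstacle lies in the Liouville step: one must verify that the sandwich of $v_\infty$ inherited from Theorem~\ref{thm1-O(1) any nu} is stringent enough to rule out competing bounded entire KPP solutions with the same exponential tail. This relies on the simplicity of the root $\lambda\in(0,\lambda_*)$ of $X^2-cX+f'(0)=0$, which sharply distinguishes the supercritical regime considered here from the critical situation treated in Proposition~\ref{prop1}.
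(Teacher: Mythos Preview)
Your overall strategy coincides with the paper's: extract an entire limit by parabolic compactness, trap it between two translates of $U_c(\cdot-ct)$ via the $\mathcal{O}(1)$ estimates, and invoke a Liouville-type rigidity. The paper proves the $L^\infty$ statement first (by contradiction) and derives the level-set convergence afterwards; you reverse the order, which is harmless.

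The substantive difference is the Liouville step. The paper does not slide: it cites \cite[Theorem~3.5]{BH2007}, which asserts that any entire solution of \eqref{kpp} trapped between two translates of a front---and the trapping is only required on a half-line $x-ct\ge\text{const}$, which is exactly what Proposition~\ref{prop-upper lower bd for V_flat} and the paragraph following it actually deliver---must itself be a translate. Your sliding sketch has a real gap at the key point. You claim the matching tails $e^{-\lambda x}$ ``preclude the infimum from being attained at $x=+\infty$'', but this is unjustified: at the optimal $s^*$, both $v_\infty$ and $U_c(\cdot-ct-s^*)$ behave like $\text{const}\cdot e^{-\lambda(x-ct)}$ as $x-ct\to+\infty$, and nothing in your argument forces the two constants to coincide. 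The nonnegative difference $U_c(\cdot-ct-s^*)-v_\infty$ may then tend to $0$ at $+\infty$ without any finite touching point, so the strong maximum principle yields nothing. Simplicity of the root $\lambda$ does not by itself close this; the argument in \cite{BH2007} is more delicate. You should invoke that result, as the paper does, rather than sketch an incomplete sliding argument.

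Two smaller points: the trapping you claim for all $(t,x)\in\R^2$ is overstated---the paper's estimates give it only for $x-ct-\tfrac{\boldsymbol{\nu}}{\lambda}\ln t>0$, which is still enough for \cite[Theorem~3.5]{BH2007}; and in your $L^\infty$ case analysis you omit $z_n\to-\infty$, which is disposed of by the spreading property ($u\to 1$ behind the front, cf.\ Theorem~\ref{thm1-O(1) any nu}) giving $u(t_n,y_n)\to 1=U_c(-\infty)$.
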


 Theorem \ref{thm1-O(1) k>=-3} and Proposition \ref{prop1}, concerning \eqref{initial} type initial data with $k\ge -3$ and general KPP nonlinearities, recover the main conclusion of Alfaro-Giletti-Xiao \cite{AGX2024}, albeit via entirely different PDE techniques. 
 As explained in \cite{AGX2024},  the logarithemic correction of the level sets when $k>-3$ is precisely $\frac{k}{2\lambda_*}\ln t$,
which reveals that front propagation  may lag behind the linear spreading  when $-3<k<0$, may keep pace with it when $k=0$, and may go beyond it when $k>0$. 
Finally, let us point out that the case $k = -3$, in contrast to the other cases (together with Theorem \ref{thm3: o(1) k<-3}  below), constitutes a critical threshold, at which there is the emergence of a “$\ln\ln t$” correction term in the asymptotic front position, due to the contribution of order $\ln t$ from initial data.

Regarding  \eqref{initial-flat} type initial data,  Theorem \ref{thm1-O(1) any nu} demonstrates that the logarithmic correction phenomenon of the level sets, although depending on the algebraic power $\boldsymbol{\nu}$, obeys a much  simpler mechanism, in a sharp contrast with \eqref{initial} type initial data.

Moreover, taking the possibility that $u_0$ may be trapped between two multiples (i.e. when $a_1\neq a_2$) of the decay far to the right for \eqref{initial} type initial data  when\footnote{In contrast, $k<-3$ is an exception, see Theorem \ref{thm3: o(1) k<-3}.} $k\ge -3$ and for \eqref{initial-flat} type initial data, the ``convergence to a traveling wave'' results in general are not achievable. Instead,  the conclusions we can establish are necessarily weaker, i.e. 
Propositions \ref{prop1}-\ref{prop2},  stating convergence of the solution $u$ to a family of logarithmically shifted traveling fronts  uniformly in $x>0$ and also the ``convergence along level sets'' results.

\vskip 3mm

\noindent
{\bf Sharp asymptotics in $o(1)$ term}
\vskip 2mm

Assume further that $f$ satisfies
\begin{equation}\label{strong KPP}
	\frac{f(s)}{s} ~~\text{is nonincreasing with respect to}~ s\in(0,1].
\end{equation}

Our next result states that, when \eqref{initial} type initial functions  $u_0$ are confined to the situation $k<-3$, the solution eventually converges to a translate of the minimal traveling wave in the reference frame moving as  $c_*t-\frac{3}{2\lambda_*}\ln t$, which coincides with  the sharp asymptotics result \eqref{1.4} for {\it  localized} 
initial data.  This amounts to saying that neither the algebraic power  nor the possible oscillation between two multiples of such decay makes any difference on  sharp asymptotics of \eqref{kpp} {\it at least} up to $o(1)$ error. 

\begin{thm}
	\label{thm3: o(1) k<-3}

 Assume that $u_0$ satisfies \eqref{initial} with $k<-3$, then there exists $\sigma_\infty\in\R$ depending on $u_0$ such that 
\begin{equation*}\label{k<-3 convergence}
	\lim_{t\to+\infty}\Big\Vert u(t,\cdot)-U_{c_*}\Big(\cdot-c_*t+\frac{3}{2\lambda_*}\ln t-\sigma_\infty\Big)\Big\Vert_{L^\infty(\R_+)}=0.
\end{equation*}
Therefore, the above conclusion is true for  the class of {\it sufficiently steep}\footnote{By {\it sufficiently steep}, we mean  that the functions belong to $ \mathcal{O}(x^{k+1}e^{-\lambda_* x})$ for $x$ large, with some $k<-3$. Obviously, {\it localized} functions are sufficiently steep.}  initial data.
\end{thm}

On the other hand, in terms of  initial data of type \eqref{initial} with $k\ge -3$ and of type \eqref{initial-flat}, if we further assume  that\footnote{Of course, it is possible to consider a broader class of oscillations  of \eqref{initial} and \eqref{initial-flat} in high order terms, such as $u_0(x)=a(1+o_{x\to+\infty}(1))x^{k+1}e^{-\lambda_*x}$. We prefer to keep the form of \eqref{initial} and \eqref{initial-flat} for the sake of simplicity and clarity.} $a_1=a_2$, 
 Theorems \ref{thm1-O(1) k>=-3}-\ref{thm1-O(1) any nu} can be refined to the following ``convergence to a single wave''  results.

\begin{thm}\label{thm4: o(1) k>=-3} Assume that  $u_0$ satisfies \eqref{initial} with $k\ge -3$ and $a_1=a_2=:a$, then there exists $\sigma_\infty\in\R$ depending on $u_0$ such that 
	\begin{equation*}
	\label{thm3-eqn2}
		\begin{aligned}
		\begin{cases}
			\displaystyle	\lim_{t\to+\infty}\Big\Vert u(t,\cdot)-U_{c_*}\Big(\cdot-c_*t-\frac{k}{2\lambda_*}\ln t-\sigma_\infty\Big)\Big\Vert_{L^\infty(\R_+)}=0,~~~ &\text{if}~~k>-3,\vspace{4pt}\\
			\displaystyle	\lim_{t\to+\infty}\Big\Vert u(t,\cdot)-U_{c_*}\Big(\cdot-c_*t+\frac{3}{2\lambda_*}\ln t-\frac{1}{\lambda_*}\ln\ln t-\sigma_\infty\Big)\Big\Vert_{L^\infty(\R_+)}=0,~~~ &\text{if}~~k=-3.
		\end{cases}
	\end{aligned}
\end{equation*}
\end{thm}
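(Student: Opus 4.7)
The starting point is Proposition \ref{prop1}, which already provides a bounded function $\zeta(t)$ and the $L^\infty$ convergence
\begin{equation*}
\big\|u(t,\cdot) - U_{c_*}\big(\cdot - m(t) + \zeta(t)\big)\big\|_{L^\infty(\mathbb{R}_+)} \longrightarrow 0,
\end{equation*}
where $m(t) = c_*t + \tfrac{k}{2\lambda_*}\ln t$ for $k > -3$, and $m(t) = c_*t - \tfrac{3}{2\lambda_*}\ln t + \tfrac{1}{\lambda_*}\ln\ln t$ for $k = -3$. The task therefore reduces to proving that $\zeta(t)$ admits a finite limit $\sigma_\infty$ as $t \to +\infty$. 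The sharpened hypothesis $a_1 = a_2 = a$ is exactly what rules out bounded oscillation of $\zeta(t)$, and the plan is to show how it feeds in through the leading edge.

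The core strategy is to combine a leading-edge linearization with a matched-asymptotic argument, in the spirit of \cite{HNRR13, NRR17}. Let $w$ solve the linearized Cauchy problem $w_t = w_{xx} + f'(0)w$ with initial datum agreeing with $u_0$ on $[A,+\infty)$, i.e. $w_0(y) = a y^{k+1}e^{-\lambda_* y}$ there. By the heat-kernel representation and a saddle-point expansion of the resulting Gaussian integral, one derives the pointwise asymptotic
\begin{equation*}
w(t, m(t) + \xi) = a\,\gamma_k\,\xi\,e^{-\lambda_* \xi}\,(1 + o(1)), \qquad t \to +\infty,
\end{equation*}
locally uniformly in $\xi \in \mathbb{R}$, for an explicit positive constant $\gamma_k$ depending only on $k$ and $\lambda_*$. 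It is precisely the logarithmic shape of $m(t)$, including the $\ln\ln t$ term when $k=-3$, that renders this limit finite and nontrivial.

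This asymptotic must be transferred to the nonlinear solution $u$. Since $f(s) \le f'(0)s$, the comparison principle gives $u \le w$ globally, hence the upper half of the expansion. For the matching lower bound in the leading edge, I would exploit \eqref{strong KPP}: wherever $u \le \delta$ one has $f(u) \ge (f'(0) - \eta(\delta))u$ with $\eta(\delta) \to 0$, so $u$ dominates the solution of a linear equation with slightly reduced reaction rate, and combining this with the $\mathcal{O}(1)$ positional information from Theorem \ref{thm1-O(1) k>=-3} yields
\begin{equation*}
u(t, m(t)+\xi) = a\,\gamma_k\,\xi\,e^{-\lambda_*\xi}(1 + o(1))
\end{equation*}
in an intermediate window $\xi_n \le \xi \le t^{1/4}$ with $\xi_n \to +\infty$ slowly. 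On the other hand, by Proposition \ref{prop1} together with the known tail $U_{c_*}(z) = C_*\,z\,e^{-\lambda_* z}(1+o(1))$ of the minimal wave, in that same window the solution also satisfies $u(t,m(t)+\xi) = C_*(\xi+\zeta(t))e^{-\lambda_*(\xi+\zeta(t))}(1+o(1))$, once the $L^\infty$ statement is upgraded to a relative one via the two-sided comparison just constructed. Equating the two expressions and letting $\xi \to +\infty$ after $t \to +\infty$ forces $\zeta(t) \to \sigma_\infty := \tfrac{1}{\lambda_*}\ln(C_*/(a\gamma_k))$, and reinjecting this limit into Proposition \ref{prop1} delivers the desired $o(1)$ conclusion.

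The main obstacle will be the critical case $k = -3$: the Gaussian integral representing $w(t, m(t)+\xi)$ is borderline divergent, and its finite part must be extracted by a careful expansion in which the $\ln\ln t$ correction plays the role of a renormalization; tracking constants through this critical balance, and showing that the lower-bound subsolution produced from \eqref{strong KPP} reproduces the same critical balance rather than a perturbed one, is the delicate technical point. A secondary difficulty is the upgrade from the $L^\infty$ convergence of Proposition \ref{prop1} to a \emph{relative} asymptotic at the right edge of the front, which is required by the matching step and seems to need its own dedicated supersolution/subsolution construction built from the linearized profile.
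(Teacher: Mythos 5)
Your starting point (squeeze $\zeta(t)$ from Proposition \ref{prop1} to a limit by matching the leading-edge asymptotic of $u$ against the wave tail) captures the right intuition, and the constant $\sigma_\infty=\tfrac{1}{\lambda_*}\ln(a\varpi)$ does come out of exactly such a matching in the paper, but the proof is not actually organised this way and your version has a genuine gap precisely at the step you call a ``secondary difficulty.'' The paper never tries to upgrade the $L^\infty$ statement of Proposition \ref{prop1} to a relative asymptotic at the edge. Instead it works directly with $V(t,x)=t^{-k/2}v(t,x)$ (resp.\ $t^{3/2}(\ln t)^{-1}v$ for $k=-3$), uses the two-sided bound of Proposition \ref{prop4.1} with $a_1=a_2=a$ at the single curve $x=\mathcal{X}_\mu(t)$, and builds an auxiliary \emph{nonlinear} initial-boundary-value problem for $v_\alpha$ on $\{x\le\mathcal{Y}^+(t)\}$ whose boundary data is a shifted wave profile $\psi_\alpha$. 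The sandwich $v_{a\varpi-\varepsilon}<V<v_{a\varpi+\varepsilon}$, the pinching estimate $v_{a\varpi+\varepsilon}\le\frac{a\varpi+\varepsilon}{a\varpi-\varepsilon}v_{a\varpi-\varepsilon}$ obtained from \eqref{strong KPP}, and a cosine-barrier argument (Propositions \ref{prop-5.3}, \ref{prop-5.2}) then give $\psi_{a\varpi\pm\varepsilon}-v_{a\varpi\pm\varepsilon}\to0$ uniformly in the strip $\mathcal{Y}^-\le x\le\mathcal{Y}^+$, and sending $\varepsilon\to0$ forces the single-wave limit. This is the ``dedicated supersolution/subsolution construction'' you anticipate having to do; since it is the load-bearing step, deferring it leaves the proof incomplete.

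Two specific points in your sketch would not survive scrutiny as written. First, your lower bound for $u$ in the leading edge: replacing $f'(0)$ by $f'(0)-\eta(\delta)$ gives a linear comparison solution with decay rate $\sqrt{f'(0)-\eta}<\lambda_*$, so the comparison function decays \emph{slower} than $e^{-\lambda_*x}$ and cannot reproduce the constant $a\varpi$ in $a\varpi\,\xi e^{-\lambda_*\xi}$ with the $(1+o(1))$ accuracy the matching step needs; the paper's lower barriers in Section \ref{sec_3.1} are purpose-built (linear solution plus cosine correction, with the leading coefficient controlled to within $1\pm T^{-\gamma}$) precisely because a crude reduced-rate comparison is not sharp enough. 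The paper also uses \eqref{strong KPP} quite differently, namely to get the multiplicative pinching between $v_{a\varpi+\varepsilon}$ and $v_{a\varpi-\varepsilon}$, not to adjust the linear rate. Second, the ``equate the two expressions and let $\xi\to+\infty$ after $t\to+\infty$'' step requires the $o(1)$ in both asymptotics to be uniform over a window of $\xi$ that grows with $t$; Proposition \ref{prop4.1} only furnishes the two-sided bound along one curve $x=\mathcal{X}_\mu(t)$, and the extension to a uniform window is exactly what the IBVP-plus-cosine-barrier machinery of Section \ref{sec-6} provides. In short: right heuristic, same target constant, but the technical core is missing, and one of the proposed ingredients (the reduced-rate lower bound) would in fact break the matching.
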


\begin{thm}
	\label{thm5: o(1) any nu}
	 Assume that $u_0$ satisfies \eqref{initial-flat} with $\boldsymbol{\nu}\in\R$ and  $a_1=a_2=:a$, then  there exists $\sigma_\infty\in\R$ depending on $u_0$ such that
	\begin{equation*}\label{thm4-eqn3}
		\lim_{t\to+\infty}\Big\Vert u(t,\cdot)-U_{c}\Big(\cdot-ct-\frac{\boldsymbol{\nu}}{\lambda}\ln t-\sigma_\infty\Big)\Big\Vert_{L^\infty(\R_+)}=0.
	\end{equation*}
\end{thm}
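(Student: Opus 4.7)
The plan is to upgrade the bounded shift $\zeta(t)$ provided by Proposition~\ref{prop2} to a convergent function, exploiting the sharp equality $u_0(x)=ax^{\boldsymbol{\nu}}e^{-\lambda x}$ for $x\ge A$ forced by $a_1=a_2=a$. First, I pass to the moving frame $\xi:=x-ct-\frac{\boldsymbol{\nu}}{\lambda}\ln t$ and set $v(t,\xi):=u(t,\xi+ct+\frac{\boldsymbol{\nu}}{\lambda}\ln t)$. Proposition~\ref{prop2} gives a bounded $\zeta(t)$ with $\|v(t,\cdot)-U_c(\cdot+\zeta(t))\|_{L^\infty(\R_+)}\to 0$, so the problem reduces to showing that $\zeta(t)$ admits a limit, which will be identified explicitly.

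The second step is a precise asymptotic analysis of $u$ far ahead of the front through the linearized problem $w_t=w_{xx}+f'(0)w$ with the same initial datum. By \eqref{strong KPP}, $f(s)\le f'(0)s$ and hence $u\le w$; conversely, $f\in C^2([0,1])$ with $f(0)=0$ gives $f(s)\ge f'(0)s-Ms^2$, and a Duhamel estimate combined with Theorem~\ref{thm1-O(1) any nu} yields $u(t,x)=w(t,x)(1+o(1))$ uniformly on $\{x\ge ct+\frac{\boldsymbol{\nu}}{\lambda}\ln t+R\}$ as $t\to+\infty$, for $R$ large. A direct heat-kernel computation (complete the square against the Gaussian centered at $x-2\lambda t$, using the identity $\lambda^2-c\lambda+f'(0)=0$) then gives
\begin{equation*}
w\bigl(t,\xi+ct+\tfrac{\boldsymbol{\nu}}{\lambda}\ln t\bigr)\longrightarrow a(c-2\lambda)^{\boldsymbol{\nu}}e^{-\lambda\xi}\quad\text{as }t\to+\infty,
\end{equation*}
locally uniformly in $\xi$, where $c-2\lambda=(\lambda_*^2-\lambda^2)/\lambda>0$ guarantees that the Gaussian mass lies far to the right of $A$.

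Next, I match against the traveling-wave tail $U_c(\xi)=K_ce^{-\lambda\xi}(1+o(1))$ as $\xi\to+\infty$, with an explicit $K_c>0$ coming from the ODE $U_c''+cU_c'+f(U_c)=0$. Evaluating the identity $v(t,\xi)\approx U_c(\xi+\zeta(t))\approx K_c e^{-\lambda\zeta(t)}e^{-\lambda\xi}$ at a fixed large $\xi=R$ in the overlap region pins down $\zeta(t)$ and forces
\begin{equation*}
\zeta(t)\longrightarrow -\tfrac{1}{\lambda}\ln\bigl(a(c-2\lambda)^{\boldsymbol{\nu}}/K_c\bigr)=:-\sigma_\infty,
\end{equation*}
which identifies the shift $\sigma_\infty$ claimed in the theorem as a function of $u_0$ via the constants $a$, $\boldsymbol{\nu}$, and $K_c$.

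The main obstacle will be propagating this convergence of $\zeta(t)$ from the far-field asymptotic region back to the uniform convergence on $\R_+$ required by the statement. My plan is to construct, for each $\varepsilon>0$, super- and sub-solutions of the form $U_c(\xi-\sigma_\infty\pm\varepsilon+\psi_\pm(t))$ with $\psi_\pm(t)=O(1/t)$ chosen to absorb the non-autonomous drift $\frac{\boldsymbol{\nu}}{\lambda t}v_\xi$ appearing in the equation satisfied by $v$, in the spirit of \cite{NRR17,HNRR13}. These barriers are pinned at $\xi=M_\varepsilon$ large by the tail matching of the previous step, and on $\{\xi<M_\varepsilon\}$ the strong KPP condition \eqref{strong KPP} makes the comparison principle applicable and ensures that $U_c$ is a stable attractor in the moving frame. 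Letting $\varepsilon\to 0$ then yields the desired uniform convergence.
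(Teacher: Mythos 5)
Your overall strategy — identify $\sigma_\infty$ by matching the linearized tail against the traveling--wave asymptotics, then propagate back by comparison — is the same skeleton as the paper's, and the explicit formula you obtain, $\sigma_\infty=\tfrac{1}{\lambda}\ln\bigl(a(c-2\lambda)^{\boldsymbol{\nu}}/K_c\bigr)$, matches the paper's $\sigma_\infty=\tfrac{1}{\lambda}\ln(a\Lambda_\mu)$ once the wave is normalized so that $K_c=1$ and once one checks $\Lambda_\mu=\mu^{\boldsymbol{\nu}}$ from Lemma~\ref{lem2.3_p with nu}. Your heat-kernel computation is correct.

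The barrier construction in your last paragraph, however, does not close. In the frame $\xi=x-ct-\tfrac{\boldsymbol{\nu}}{\lambda}\ln t$, the translated solution $v(t,\xi)$ satisfies $v_t=v_{\xi\xi}+(c+\tfrac{\boldsymbol{\nu}}{\lambda t})v_\xi+f(v)$, and plugging $\overline v(t,\xi)=U_c(\xi+\psi(t))$ into this equation yields the residual $(\dot\psi-\tfrac{\boldsymbol{\nu}}{\lambda t})U_c'$. Since $U_c'<0$, a supersolution requires $\dot\psi\le\tfrac{\boldsymbol{\nu}}{\lambda t}$ and a subsolution requires $\dot\psi\ge\tfrac{\boldsymbol{\nu}}{\lambda t}$, \emph{pointwise in $t$}. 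For $\boldsymbol{\nu}<0$ the supersolution inequality forces $\psi(t)\le\psi(t_0)+\tfrac{\boldsymbol{\nu}}{\lambda}\ln(t/t_0)\to-\infty$; for $\boldsymbol{\nu}>0$ the subsolution forces $\psi(t)\to+\infty$. A shift $\psi_\pm(t)=\mathcal{O}(1/t)$ has $\dot\psi_\pm=\mathcal{O}(1/t^2)=o(1/t)$ and so cannot dominate $\tfrac{\boldsymbol{\nu}}{\lambda t}$ in the required direction: the non-autonomous drift has a divergent time integral and is therefore not ``absorbable'' by any bounded time-dependent shift of the profile. Pinning at $\xi=M_\varepsilon$ does not save this, because the maximum principle on $\{\xi<M_\varepsilon\}$ still needs the differential inequality to hold on the interior. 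The paper circumvents precisely this obstruction by working with the rescaled leading-edge variable $V=t^{-\boldsymbol{\nu}}v$, whose equation \eqref{eqn-V_flat} carries the logarithmic correction as a bounded zeroth-order term $\tfrac{\boldsymbol{\nu}}{t}V$ rather than as a drift on $v_\xi$, comparing $V$ with $\psi_\alpha$ on the moving window $[\mathcal{Y}^-(t),\mathcal{Y}^+(t)]$, and controlling the difference with the linear-in-$\xi$ supersolution $\overline{\mathcal{S}}(t,x)=\mathcal{B}t^{-1}\bigl(t^\varsigma-(x-ct-\tfrac{\boldsymbol{\nu}}{\lambda}\ln t)+1\bigr)$, which gains a definite sign from the spectral gap $\mu=c-2\lambda>0$ acting through the first-order term $2\lambda\,\partial_x$; this is structurally different from a shifted-wave barrier.

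A secondary imprecision: the assertion ``$u(t,x)=w(t,x)(1+o(1))$ uniformly on $\{x\ge ct+\tfrac{\boldsymbol{\nu}}{\lambda}\ln t+R\}$'' is not obviously true for $R$ fixed, since there $u$ is order one and the nonlinear correction is not negligible in the multiplicative sense; the paper obtains the needed closeness only at $x=ct+\tfrac{\boldsymbol{\nu}}{\lambda}\ln t+t^\varsigma$ (Proposition~\ref{prop4.2_match sol and TW}) via the explicit leading-edge barriers of Section~\ref{sec-4}, where the nonlinearity is damped by the factor $e^{-\lambda(x-ct)}$. You would need to restate your matching step at that far-right location, which in turn feeds into the boundary data for the comparison rather than a pointwise identity of $\zeta(t)$.
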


\vskip 3mm

\noindent
{\bf Discussion}
\vskip 2mm

Significantly different from slowly decaying initial data \cite{HR2010} where the level sets of the solutions move infinitely fast as time goes to infinity  and from localized initial data \cite{KPP37,Bram1,Bram2,Uchiyama,Lau,HNRR13,NRR17} where the solution exhibits universal sharp asymptotics,
the KPP equation under \eqref{initial} and \eqref{initial-flat} types of initial data   results in remarkably delicate asymptotics.

 One of the main ingredients in our proofs is the precise estimates for Dirichlet linear solutions for $t$ sufficiently large. The significant difference from the literature contributed to  Bramson's result when facing {\it localized} initial data is that the key information there  - the conservation of the first momentum $\int_0^\infty xp(t,x)\md x$ in the heat kernel estimate, where $p$ is the solution to the heat equation on $\R_+$ with a Dirichlet boundary condition, and the heat kernel itself provides the correct estimate - is not necessarily true here. 
On the contrary, 
although prior results are available, it can also be {\it a priori} expected  - based on the  intuition by taking a traveling wave profile as a particular initial function -  that there should be an indispensable contribution from the initial data in addition to the heat kernel, {\it at least} when $k$ and $\boldsymbol{\nu}$ are not too small. As such, we find it more proper to refer to our estimates here as {\it linear solution estimates}: on the one hand, we aim to stress that the results, in terms of \eqref{initial} type initial data,  indeed come from a combination of the influence from the initial data  - contribution of order $t^{\frac{k+3}{2}}$ when $k>-3$, and of order $\ln t$ when $k=-3$, and of $\mathcal{O}(1)$ when $k<-3$, and also from the heat kernel - contribution of order $x t^{-\frac{3}{2}}$;  on the other hand, when facing \eqref{initial-flat} type initial data,  the entire contribution, quite surprisingly, stems solely from the initial datum (of order $t^{\boldsymbol{\nu}}$), whereas the effect of the heat kernel becomes negligible.  These estimates - measured by the parameters $k$ and  $\boldsymbol{\nu}$ - have been identified  precisely, see Propositions \ref{prop2.2}-\ref{prop2.3}.

 Once the linear solution estimates are ready, we can construct upper and lower barriers which, among other things, can be built in a unified way and enable us to capture the asymptotic location of the level sets. The fundamental strategy motivated from \cite{HNRR13,Roquejoffre2023} is now to use the linear solution  as the key element, supplemented by compact perturbations.
It turns out that the part of  upper barriers follows relatively easily from this approach. In front of  \eqref{initial} type initial data,   we  are able to work in the target region $x\gtrsim c_*t$, as  for dealing with  {\it localized} initial data \cite{Roquejoffre2023, BFRZ2023, FRZ2024}. The same idea in principle ought to be expected to apply  when facing \eqref{initial-flat} type initial data. However, this is not the case. The difficulty is that the behavior  of the linear solution in the regime  $0\le x-ct\le \sqrt{t}$ makes it nearly impossible to  find any auxiliary corrections in helping build upper and lower bounds ahead of $x-ct\approx 0$.
	Instead, we make  a compromise, that is
	to consider the domain ahead of $x\approx 2\lambda t$, which seems 
	a roundabout route but proves to be unexpectedly effective, in that we find it convenient to apply the upper bounds constructed previously for dealing with \eqref{initial} type initial data.
	The estimates thus obtained for the nonlinear KPP equation are precise enough to allow comparison with  the traveling wave.
In contrast, the construction of lower barriers faces significant challenges. Under \eqref{initial} type initial data, the challenge arises particularly across different ranges of 
$k$. To be more precise, when 
	$k\in[-1,0)$, by noticing that the boundary comparison becomes particularly intricate when a portion of the boundary locates beyond the diffusive regime, we will address this issue by leveraging the linear solution from a suitably large time 
	$\tau$ and ensuring that the boundary stays within the diffusive scale. This situation will be incorporated  into the  analysis for the case of $k\ge -3$ and discussed in Section \ref{sec_3.1}.
	On the other hand, particular care should be taken when $k<-3$: the compact perturbation in the lower barrier here should be introduced at a ``proper'' moment, so that it will not obscure the effect of the initial data on the asymptotic behavior of the linear solution. At the level of sharp asymptotics, this formulation of the lower barrier necessitates corresponding adjustments to the upper barrier,  which will be treated separately in Section \ref{sec-3.2}. 
  Under \eqref{initial-flat} type initial data,  
  the scale discrepancy between the nonlinear equation 
  and the associated linear equation 
	  greatly complicates the construction of lower barrier. Noticing that previous idea  is no more applicable,  we propose a novel ``intermediate'' transformation, and reformulate the KPP equation into a proper frame, so that we are able to  proceed with our analysis on the nonlinear problem by using the associated Dirichlet linear	 equation, under the same scale.   Moreover, let us stress that, at the technical level,  we have to devise different  control functions  in order to show that 
  the solution is very close to the traveling wave in the targeted regime under these two types of initial data.

To the best of our knowledge, our work provides the {\it first} PDE proof {\it not only} for the question of sharp asymptotics under \eqref{initial-flat} type initial data up to  $\mathcal{O}(1)$ and $o(1)$ precision, i.e. Theorem \ref{thm1-O(1) any nu} and Theorem \ref{thm5: o(1) any nu}, {\it but also} for   the ``convergence to a traveling wave'' results under \eqref{initial} type initial data, namely,  Theorems \ref{thm3: o(1) k<-3}-\ref{thm4: o(1) k>=-3}. 
Besides, in terms of Theorem \ref{thm1-O(1) k>=-3} and the associated Proposition \ref{prop1} for \eqref{initial} type initial data with  $k\ge -3$,  although it was previously established by Alfaro-Giletti-Xiao \cite{AGX2024}, their PDE approach does not seem easily applicable to \eqref{initial-flat} type initial data, let alone more general KPP frameworks.  In contrast, our arguments based on the ideas  for localized initial data \cite{HNRR13,NRR17,Roquejoffre2023} has a {\it unified and systematic} formulation and work effectively in treating both \eqref{initial} and \eqref{initial-flat} types of initial data, which allow us to achieve Theorems \ref{thm1-O(1) k>=-3}-\ref{thm1-O(1) any nu} and Theorems \ref{thm3: o(1) k<-3}-\ref{thm5: o(1) any nu}.
We believe that our idea can be carried out and similar  results of sharp asymptotics can be expected for instance in the setting of  nonlocal diffusion as in \cite{Roquejoffre2023, BFRZ2023}   and even more complicated situations with KPP feature such as \cite{FRZ2024}. 
  Our work completes a key step in order to study further refinement of the $o(1)$ results via PDE techniques which   as a very interesting project goes   beyond the scope of this paper and  will be  investigated in a separate work.

\vskip 3mm

The article is organized as follows. In Section \ref{sec-2}, we  prove sharp estimates for  linear problems with Dirichlet moving boundaries. In Sections \ref{sec-3}-\ref{sec-4}, we establish  super- and subsolutions, that will be sufficient for both classes of initial data. 
 Section \ref{sec-5} is devoted to sharp asymptotics in $\mathcal{O}(1)$ terms, where we prove Theorems \ref{thm1-O(1) k>=-3}-\ref{thm1-O(1) any nu} and Propositions \ref{prop1}-\ref{prop2}. Eventually, we prove in Section \ref{sec-6} the ``convergence to a traveling wave'' results, i.e.  Theorems \ref{thm3: o(1) k<-3}-\ref{thm5: o(1) any nu}, refining  sharp asymptotics to $o(1)$ error.  In this paper, we find it convenient to employ the same notation in different situations which are actually independent from one another and, we believe, can  be easily identified and understood.

\section{Linear solution estimates}\label{sec-2}

This section is devoted to precise estimates for  solutions to linear problems with respect to different regimes. Since these results will be frequently used 
in the sequel, we state them in sufficient generality to cover all of the applications
which occur in this paper. Hereafter, we denote by $C>0$ the universal constant  that may change from line to line.

\subsection{Initial data of type (\ref{initial})}\label{sec2.1}

We recast problem \eqref{kpp}-\eqref{initial} into a new reference frame by doing the leading edge transformation
\begin{equation*}
	v(t,x)=e^{\lambda_*(x-c_*t)}u(t,x),\quad t>0,~x\in\R.
\end{equation*} 
This leads to
\begin{equation}\label{v-eqn}
	\begin{aligned}
		\begin{cases}
				v_t -v_{xx}+c_*v_x+R(t,x;v)=0, ~~~~~~ t>0,~&x\in\R,\\
				v_0(x)=e^{\lambda_*x}u_0(x),~~~&x\in\R.
		\end{cases}
	\end{aligned}
\end{equation}
Here, the nonlinear term $R(t,x;s)$ is given by
\begin{equation}
	\label{1-R term}
	R(t,x;s):=f'(0)s-e^{\lambda_*(x-c_*t)}f\big(e^{-\lambda_*(x-c_*t)}s\big)=e^{\lambda_*(x-c_*t)}g\big(e^{-\lambda_*(x-c_*t)}s\big)\ge 0,~~~s\in\R,
\end{equation}
with $g(s):=f'(0)s-f(s)\ge 0$ for $s\in\R$.

Our analysis will focus mainly on the function $v$. To do so, the main idea, as already emphasized in the introduction,  is to control $v$ by the associated linear problem
\begin{equation}
	\label{linear-w}
	(\partial_t-\mathcal{N})w:=w_t-w_{xx}+c_*w_x=0, \quad t>0,~x\in\R,
\end{equation}
  starting from an  odd initial condition $w_0$  such that 
\begin{equation}\label{w_0}
	\begin{aligned}
		w_0(x)=x~~~~~~\text{for}~x\in[0,1),~~~~~~~&w_0(x)= x^{k+1} ~~~\text{for}~x\in[1,+\infty),~~~~~\text{if}~~k\ge -3,\\
			w_0(x)=x v_0(A)/A~~~\text{for}~x\in[0,A),~~~&w_0(x)= v_0(x) ~~~\text{for}~x\in[A,+\infty),~~~~~\text{if}~~k< -3,
	\end{aligned}
\end{equation}
with $A>0$  sufficiently large. The precise estimates of the solution $w$ to problem \eqref{linear-w}-\eqref{w_0} will provide  essential information to capture the behavior of the nonlinear problem \eqref{v-eqn}.

Observe that the function $p(t,y)=w(t,y+c_*t)$ satisfies $p_t-p_{yy}=0$ for $(t,y)\in(0,+\infty)\times\R$ with odd initial datum $p_0=w_0$ satisfying \eqref{w_0}. We have
\begin{lem}\label{lem1_property of p in R+}
	Let $p(t,y)$ be the solution to  $p_t-p_{yy}=0$ for $(t,y)\in(0,+\infty)\times\R$ with odd initial datum $p_0=w_0$ satisfying \eqref{w_0}. Then $p(t,\cdot)$ for each $t\ge 0$ is an odd function and $p(t,y)>0$ for $t>0$ and $y>0$. Furthermore,
	\begin{itemize}
		\item[(i)] when $|y|\le \sqrt{t}$,
there exists  $\varpi>0$   depending on $w_0$ such that
		\begin{equation}\label{p-asymptotic}
		\begin{aligned}
			p(t,y)\approx\begin{cases}
				\displaystyle	\varpi y e^{-\frac{y^2}{4t}}t^{\frac{k}{2}}, ~~~~~~&\text{if}~~k>-3,\\
				\displaystyle	\varpi y e^{-\frac{y^2}{4t}}t^{-\frac{3}{2}}\ln t, ~~~
				&\text{if}~~k=-3,\\
				\displaystyle	\varpi y e^{-\frac{y^2}{4t}}t^{-\frac{3}{2}},~~~~~&\text{if}~~k<-3,
			\end{cases}~~~~~~t\gg1;
		\end{aligned}
	\end{equation}

	\item[(ii)]	when $y\ge \max(\sqrt{t},1)$,
	\begin{equation}
		\label{p-asymp-outside diffu}
	p(t,y)=\mathcal{O}\big(y^{k+1}\big),~~~~~~~~~~~~ t>0.
	\end{equation}

	\end{itemize}
\end{lem}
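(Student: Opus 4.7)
The plan is to use the explicit half-line Dirichlet heat kernel. Oddness of $p(t,\cdot)$ is immediate from the evenness of the free heat kernel in the space variable combined with uniqueness for the Cauchy problem, and strict positivity on $y>0$ then follows from the strong maximum principle applied to the equivalent half-line Dirichlet problem obtained by odd reflection. The quantitative part rests on
\[
p(t,y) \;=\; \frac{1}{\sqrt{4\pi t}}\int_0^{+\infty}\Bigl[e^{-(y-z)^2/(4t)}-e^{-(y+z)^2/(4t)}\Bigr]\,w_0(z)\,\md z,
\]
or, after the self-similar rescaling $s=z/\sqrt{t}$, $\eta=y/\sqrt{t}$,
\[
p(t,y)\;=\;\frac{e^{-\eta^2/4}}{\sqrt{\pi}}\int_0^{+\infty}e^{-s^2/4}\sinh\!\Bigl(\tfrac{\eta s}{2}\Bigr)\,w_0(\sqrt{t}\,s)\,\md s.
\]
All three estimates will be read off this identity.

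\textbf{Part (i): the three regimes.} For $|y|\leq\sqrt{t}$ we have $\eta\in[-1,1]$, and on the bulk of the range where the Gaussian weight is sizeable, $\sinh(\eta s/2)$ is comparable (both above and below, uniformly in $\eta\in[0,1]$) to $\eta s$. I would split the integral at $s=1/\sqrt{t}$ (resp.\ $s=A/\sqrt{t}$ when $k<-3$) and treat the three cases separately. (a) When $k>-3$, the inner slice $0\le s\le 1/\sqrt{t}$ contributes $\mathcal{O}(\eta/t)$ and is of lower order, while on $s\ge 1/\sqrt{t}$ one substitutes $w_0(\sqrt{t}s)=t^{(k+1)/2}s^{k+1}$ and uses dominated convergence to extract $p(t,y)\asymp \varpi\, y\, t^{k/2}e^{-y^2/(4t)}$, with $\varpi$ proportional to the convergent integral $\int_0^{+\infty}e^{-s^2/4}s^{k+2}\,\md s$ (finite precisely because $k+2>-1$). (b) When $k=-3$, this last integral diverges logarithmically at zero; with $\sinh(\eta s/2)\sim\eta s/2$ near the origin, the integrand behaves like $\eta/(2s)$, so $\int_{1/\sqrt{t}}^{1}\eta/(2s)\,\md s=(\eta/4)\ln t$ yields exactly the extra $\ln t$ factor. (c) When $k<-3$, the first moment $M_1:=\int_0^{+\infty}z\,w_0(z)\,\md z$ converges, and returning to the $z$ variable the uniform expansion $\sinh(yz/(2t))=yz/(2t)(1+o(1))$ on bounded ranges of $yz/(2t)$, combined with Gaussian tail control beyond, yields by dominated convergence $p(t,y)\sim(M_1/(2\sqrt{\pi}))\,y\,t^{-3/2}\,e^{-y^2/(4t)}$.

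\textbf{Part (ii) and the main obstacle.} For $y\ge\max(\sqrt{t},1)$ the reflected piece $e^{-(y+z)^2/(4t)}$ is exponentially smaller than the main one, so it suffices to bound $\int_0^{+\infty}G(t,y-z)w_0(z)\,\md z$: this is of Laplace type, concentrated in a window of width $\sqrt{t}\le y$ around $z=y$, on which $w_0(z)=\mathcal{O}(z^{k+1})=\mathcal{O}(y^{k+1})$, while far from $y$ the Gaussian decay absorbs any algebraic growth of $w_0$, giving $p(t,y)=\mathcal{O}(y^{k+1})$. The main technical difficulty will be the critical case $k=-3$: the $\ln t$ correction comes from a tight balance between the $s^{-2}$ singularity of the rescaled initial datum and the linear behaviour of $\sinh$ at the origin, and one must truncate at precisely the right scale to retain this with constants uniform in $\eta\in[0,1]$. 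In case (c) I additionally need to handle carefully the transition near $z\sim A$ between the interior shape of $w_0$ (linear near the origin) and its polynomial tail, so that the first-moment computation indeed produces the advertised $y\,t^{-3/2}$ scaling rather than some spurious interior contribution.
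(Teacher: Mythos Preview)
Your proposal is correct and takes essentially the same approach as the paper: both start from the half-line Dirichlet heat kernel representation and extract the asymptotics from the resulting integral after the self-similar rescaling. The only difference in execution is that the paper Taylor-expands $\sinh$ in full and tracks each moment integral $\int_0^\infty z^{2n+1}e^{-z^2/(4t)}w_0(z)\,\md z$ separately, whereas you use the leading-order approximation $\sinh(\eta s/2)\sim\eta s/2$ directly and absorb the remainder with the Gaussian weight---the two are equivalent, yours being slightly more streamlined (note a missing factor $t^{-1}$ in your intermediate display for $k=-3$, though your conclusion is right).
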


\begin{rmk}\label{rk_1}
	
	\textnormal{ We also have the following observation:
		\begin{itemize}
			\item[(i)]
		In the case of $-1\le k\le 0$, it is not difficult to verify that $y^{k+1}$ is a supersolution to the heat equation of $p$ for $(t,y)\in\R_+^2$, and the maximum principle gives that $p(t,y)\le y^{k+1}$ for $(t,y)\in\R_+^2$.
\item[(ii)]
	When $k\ge 0$, one can easily check that the function $q_*(t,y):= y^{k+1}$ for $(t,y)\in\R_+^2$ satisfies $\partial_tq_*\le \partial_{yy} q_*$ in $\R_+^2$, $q_*(t,0)=0$ and $p(0,y)\ge q_*(0,y)$ for $y\in\R_+$. The maximum principle implies that $p(t,y)\ge q_*(t,y)= y^{k+1}$ for $(t,y)\in\R_+^2$. This, together with \eqref{p-asymp-outside diffu}, yields that there exists some constant $C\ge 1$ such that
	\begin{equation*}\label{2.8_rmk}
	y^{k+1}\le 	p(t,y)\le Cy^{k+1},~~~~t>0,~y\ge\max( \sqrt{t},1).
	\end{equation*}
	\end{itemize}
}
\end{rmk}

\begin{proof}[Proof of Lemma \ref{lem1_property of p in R+}]
{\it Proof of (i).} For all $(t,y)\in(0,+\infty)\times\R$, we have
\begin{align*}
p(t,y)&=\frac{1}{\sqrt{4\pi t}}\int_0^{+\infty}\Big(e^{-\frac{(y-z)^2}{4t}}-e^{-\frac{(y+z)^2}{4t}}\Big)w_0(z)\md z\\
&=\frac{1}{\sqrt{4\pi t}}e^{-\frac{y^2}{4t}}\int_0^{+\infty}2\sinh\Big(\frac{yz}{2t}\Big)e^{-\frac{z^2}{4t}}w_0(z)\md z\\
&=\frac{1}{\sqrt{4\pi t}}e^{-\frac{y^2}{4t}}\int_0^{+\infty}2\sum_{n=0}^\infty\frac{1}{(2n+1)!}\Big(\frac{yz}{2t}\Big)^{2n+1}e^{-\frac{z^2}{4t}}w_0(z)\md z\\
&=\frac{1}{\sqrt{4\pi }}y e^{-\frac{y^2}{4t}}t^{-\frac{3}{2}}\int_0^{+\infty}\sum_{n=0}^\infty \frac{z^{2n+1}}{(2n+1)!}\Big(\frac{y}{2t}\Big)^{2n}e^{-\frac{z^2}{4t}}w_0(z)\md z.
\end{align*}
Since  $\sum_{n=0}^\infty \frac{z^{2n+1}}{(2n+1)!  }\big(\frac{y}{2t}\big)^{2n}e^{-\frac{z^2}{4t}}w_0(z)$ is uniformly convergent in $z\in(0,+\infty)$ whenever $t>0$ and $|y|\le \sqrt{t}$, we can write $p(t,y)$ for $t>0$ and $|y|\le\sqrt{t}$ as
\begin{equation}\label{sol-p}
	p(t,y)=\frac{1}{\sqrt{4\pi }}y e^{-\frac{y^2}{4t}}t^{-\frac{3}{2}}\sum_{n=0}^\infty\frac{1}{(2n+1)!}\Big(\frac{y}{2t}\Big)^{2n}\int_0^{+\infty}z^{2n+1}e^{-\frac{z^2}{4t}}w_0(z)\md z.
\end{equation}

\noindent
{\bf Case 1: $k\ge -3$.} According to the definition \eqref{w_0} of $w_0$, \eqref{sol-p} can be written as
	\begin{align*}
p(t,y)\!=\!\frac{1}{\sqrt{4\pi }}y e^{-\frac{y^2}{4t}}t^{-\frac{3}{2}}\sum_{n=0}^\infty\frac{1}{(2n+1)!}\Big(\frac{y}{2t}\Big)^{2n}\bigg(\underbrace{\int_0^{1}z^{2n+1}e^{-\frac{z^2}{4t}}z\md z}_{=:\mathcal{I}_1^n(t)}+\underbrace{\int_1^{+\infty}z^{2n+1}e^{-\frac{z^2}{4t}}  z^{k+1}\md z}_{=:\mathcal{I}_2^n(t)}\bigg),~~t>0,~|y|\le \sqrt{t}.
	\end{align*}
By observing that $0<\mathcal{I}_1^n(t)\le  \int_0^{1}z^{2n+2}\md z<\frac{1}{2n+3}\le \frac{1}{3}$ for all $n\in\N$ uniformly in~$t>0$,
 it follows that  
 \begin{equation}\label{p_1}
\begin{aligned}
	p_1(t,y):=\frac{1}{\sqrt{4\pi }}y e^{-\frac{y^2}{4t}}t^{-\frac{3}{2}}\sum_{n=0}^\infty\frac{1}{(2n+1)!}\Big(\frac{y}{2t}\Big)^{2n}\mathcal{I}_1^n(t)
\end{aligned}
 \end{equation}
has the same order as $y e^{-\frac{y^2}{4t}}t^{-\frac{3}{2}}$ for $t\ge 1$ and $|y|\le \sqrt{t}$, and $p_1(t,y)\approx \varpi_1 y e^{-\frac{y^2}{4t}}t^{-\frac{3}{2}}$ for $t\gg 1$ and $|y|\le \sqrt{t}$,  with some $\varpi_1$ depending on $w_0|_{[0,1]}$.

Next, let us deal with  \begin{equation}\label{p_2}
	\begin{aligned}
		p_2(t,y):=\frac{1}{\sqrt{4\pi }}y e^{-\frac{y^2}{4t}}t^{-\frac{3}{2}}\sum_{n=0}^\infty\frac{1}{(2n+1)!}\Big(\frac{y}{2t}\Big)^{2n}\mathcal{I}_2^n(t)
	\end{aligned}
\end{equation}
 by distinguishing $k>-3$ and $k=-3$. 

\noindent
{\bf Case 1.1: $k>-3$}.
By  the change of variable $\xi=\frac{z^2}{4t}$, we derive  that
\begin{align*}
	0<\mathcal{I}_2^n(t)=\int_1^{+\infty}z^{2n+2+k}e^{-\frac{z^2}{4t}} \md z= 2^{2n+2+k}t^{n+\frac{k+3}{2}}\int_{\frac{1}{4t}}^\infty \xi^{n+\frac{k+1}{2}}e^{-\xi} \md \xi\approx \Gamma\Big(n+\frac{k+3}{2}\Big)2^{2n+2+k}t^{n+\frac{k+3}{2}},~~t\gg 1.
\end{align*}
As a consequence,    
\begin{align*}
	\sum_{n=0}^\infty\frac{1}{(2n+1)!}\Big(\frac{y}{2t}\Big)^{2n}\mathcal{I}_2^n(t)
	&\approx\sum_{n=0}^\infty\frac{1}{(2n+1)!}\Big(\frac{y}{2t}\Big)^{2n}\Gamma\Big(n+\frac{k+3}{2}\Big)2^{2n+2+k}t^{n+\frac{k+3}{2}}\\
	&=2^{2+k}t^{\frac{k+3}{2}}\sum_{n=0}^\infty\frac{1}{(2n+1)!} \Gamma\Big(n+\frac{k+3}{2}\Big)\Big(\frac{y^2}{t}\Big)^n=C t^{\frac{k+3}{2}},~~~~~t\gg 1,~~|y|\le \sqrt{t},
\end{align*}
which together with \eqref{p_2} gives that
 $p_2(t,y)\approx\varpi ye^{-\frac{y^2}{4t}}t^{\frac{k}{2}}$ for $t\gg 1$ and $|y|\le \sqrt{t}$,
 with some constant $\varpi>0$ depending on $w_0|_{[1,+\infty)}$. Combining this with the analysis of \eqref{p_1}, we have that $p_2(t,y)$ dominates the behavior of $p(t,y)$    for $t\gg 1$ and $|y|\le \sqrt{t}$, such that
\begin{equation}\label{p-asymp-within diffusive k>-3}
	\begin{aligned}
		&p(t,y)\approx\varpi ye^{-\frac{y^2}{4t}}t^{\frac{k}{2}},~~~~~~~~~t\gg1,~~~|y|\le \sqrt{t}.
			\end{aligned}
\end{equation}

\noindent
{\bf Case 1.2: $k=-3$}. Again, by  the change of variable $\xi=\frac{z^2}{4t}$, it follows  that for $n\in\N\backslash\{0\}$,
\begin{align*}\label{I_2--k=-3}
	0<\mathcal{I}_2^n(t)=\int_1^{+\infty}z^{2n-1}e^{-\frac{z^2}{4t}} \md z= 2^{2n-1}t^{n}\int_{\frac{1}{4t}}^\infty \xi^{n-1}e^{-\xi} \md \xi\approx \Gamma(n) 2^{2n-1} t^{n},~~~~t\gg 1,
\end{align*}
  whereas
	\begin{align*}
	\mathcal{I}_2^0(t)=\int_1^\infty z^{-1} e^{-\frac{z^2}{4t}}\md z=\frac{1}{2}\int_{\frac{1}{4t}}^\infty \xi^{-1}e^{-\xi} \md \xi
	=\frac{1}{2}\left(\int_{\frac{1}{4t}}^1 \xi^{-1}e^{-\xi} \md \xi+\int_{1}^\infty \xi^{-1}e^{-\xi} \md \xi\right)\approx \frac{\varsigma}{2}\ln t,~~~~~~t\gg 1,
\end{align*}
for some constant $\varsigma\in(e^{-1},1)$. Consequently,
\begin{align*}
	\mathcal{I}_2^0(t)+\sum_{n=1}^\infty\frac{1}{(2n+1)!}\Big(\frac{y}{2t}\Big)^{2n}\mathcal{I}_2^n(t)&\approx \frac{\varsigma}{2}\ln t+\sum_{n=1}^\infty\frac{\Gamma(n)}{(2n+1)!}\Big(\frac{y}{2t}\Big)^{2n}2^{2n-1}t^{n}\\
	&= \frac{\varsigma}{2}\ln t+\frac{1}{2}\sum_{n=1}^\infty\frac{\Gamma(n)}{(2n+1)!}\Big(\frac{y^2}{t}\Big)^{n}\approx \frac{\varsigma}{2}\ln t,~~~~~t\gg 1,~~|y|\le \sqrt{t},
\end{align*}
thus we turn to \eqref{p_2} and derive that
 $p_2(t,y)\approx \varpi y e^{-\frac{y^2}{4t}}t^{-\frac{3}{2}}\ln t$ for $t\gg1$ and $|y|\le \sqrt{t}$, with some parameter $\varpi>0$ uniquely determined by $w_0$. This together with the analysis of \eqref{p_1} gives 
\begin{equation}\label{p-asymp-within diffusive_k=-3}
	\begin{aligned}
&	p(t,y)\approx\varpi y e^{-\frac{y^2}{4t}}t^{-\frac{3}{2}}\ln t,~~~~~~~~~t\gg1,~~~|y|\le \sqrt{t}.
		\end{aligned}
\end{equation}

\noindent
{\bf Case 2: $k<-3$}. 
We substitute the definition \eqref{w_0} of $w_0$ into \eqref{sol-p} and derive that for $t>0$ and $|y|\le \sqrt{t}$,
\begin{equation}\label{2.16-p}
	p(t,y)
	=\frac{1}{\sqrt{4\pi }}y e^{-\frac{y^2}{4t}}t^{-\frac{3}{2}}\sum_{n=0}^\infty\frac{1}{(2n+1)!}\Big(\frac{y}{2t}\Big)^{2n}\bigg(\underbrace{\frac{v_0(A)}{A}\int_0^{A}z^{2n+2}e^{-\frac{z^2}{4t}}\md z}_{=:\mathcal{I}_3^n(t)}+\underbrace{\int_{A}^{+\infty}z^{2n+1}e^{-\frac{z^2}{4t}} v_0(z) \md z}_{=:\mathcal{I}_4^n(t)}\bigg).
\end{equation}
We notice that	$0<\mathcal{I}_3^n(t)\le  \frac{v_0(A)}{A}\int_0^{A}z^{2n+2}\md z<\frac{v_0(A) A^{2n+2}}{2n+3}$ uniformly in $t>0$ 
for each $n\in\N$, and  
\begin{equation*}\label{2.14-1}
	0<\mathcal{I}_4^0(t)=\int_A^{+\infty}z e^{-\frac{z^2}{4t}}v_0(z) \md z\le \int_A^{+\infty}z v_0(z)\md z<+\infty,~~~~\text{uniformly in}~t>0.
\end{equation*}
Moreover, 
 we derive from $v_0(z)z^2\le a_2z^{k+3}\le a_2A^{k+3}$ for $z\in[A,+\infty)$ and from the change of variable $\xi=\frac{z^2}{4t}$ that for $n\in\N\backslash\{0\}$,
\begin{align*}\label{2.14}
\mathcal{I}_4^n(t)\le a_2A^{k+3}\!\int_{A}^{+\infty}\!z^{2n-1}e^{-\frac{z^2}{4t}}  \md z=a_2A^{k+3} 2^{2n-1}t^{n}\!\int_{\frac{A^2}{4t}}^{+\infty}\!\xi^{n-1}e^{-\xi}\md \xi\approx  a_2A^{k+3}\Gamma(n) 2^{2n-1}t^{n},~~t\gg 1.
\end{align*} 
We then find that for $t\gg 1$ and $|y|\le \sqrt{t}$, 
\begin{equation*}
	\label{2.15}
\begin{aligned}
	\sum_{n=0}^\infty\frac{1}{(2n+1)!}\Big(\frac{y}{2t}\Big)^{2n}\mathcal{I}_3^n(t)	&\le  v_0(A)A^2\Big(\frac{1}{3}+\sum_{n=1}^\infty\frac{1}{(2n+1)!}\Big(\frac{y^2}{t}\Big)^{n}\frac{1}{2n+3}\Big(\frac{A^2}{4t}\Big)^n\Big)<+\infty,\\
\sum_{n=0}^\infty\frac{1}{(2n+1)!}\Big(\frac{y}{2t}\Big)^{2n}\mathcal{I}_4^n(t)	&\le  \int_A^{+\infty}zv_0(z)\md z+a_2A^{k+3}\sum_{n=1}^\infty\frac{\Gamma(n)}{(2n+1)!2}\Big(\frac{y^2}{t}\Big)^{n}  <+\infty.
\end{aligned}
\end{equation*}
Consequently, we conclude based on \eqref{2.16-p}  that
 there exists $\varpi>0$ depending on $w_0$ such that
\begin{equation}\label{p-asymp-within diffusive_k<-3}
	\begin{aligned}
	&p(t,y)\approx \varpi y e^{-\frac{y^2}{4t}}t^{-\frac{3}{2}},~~~~~~~~~t\gg1,~~~|y|\le \sqrt{t}.
		\end{aligned}
\end{equation}
Gathering \eqref{p-asymp-within diffusive k>-3}, \eqref{p-asymp-within diffusive_k=-3} and \eqref{p-asymp-within diffusive_k<-3}, we  achieve (i).

\vskip 2mm

{\it  Proof of (ii).} According to Remark \ref{rk_1}, it is enough to  consider situations: either $k+1> 1$ or $k+1<0$.

\noindent
{\bf Case 1. $k+1>1$.}
It follows that
\begin{align*}
p(t,y)\le \underbrace{\frac{1}{\sqrt{4\pi t}}\int_0^1 e^{-\frac{(y-z)^2}{4t}}z\md z}_{=:q_1(t,y)}+	\underbrace{\frac{a_2}{\sqrt{4\pi t}}\int_1^{+\infty}e^{-\frac{(y-z)^2}{4t}} z^{k+1}\md z}_{=:q_2(t,y)},~~~t>0,~y\in\R.
\end{align*}
 It is easy to see that
 \begin{align}\label{q1}
 	q_1(t,y)\le \min\bigg( \frac{1}{\sqrt{\pi }}\int_\R e^{-\eta^2}\md \eta,\frac{1}{\sqrt{4\pi t}}\int_0^1 z\md z \bigg)=\min\Big(1,\frac{1}{4\sqrt{\pi t}}\Big),~~~~~t>0,~~y>0.
 \end{align}
To estimate  $q_2(t,y)$, we derive from the change of variable $\eta=\frac{z-y}{\sqrt{4t}}$ that
\begin{equation*}\label{p_outdiff}
	\begin{aligned}
		q_2(t,y)=\frac{a_2}{\sqrt{\pi }} \int_{\frac{1-y}{\sqrt{4t}}}^{+\infty}e^{-\eta^2}\big(y+\eta\sqrt{4t}\big)^{k+1}\md \eta\le \frac{a_2}{\sqrt{\pi }}y^{k+1} \int_{-\infty}^{+\infty}e^{-\eta^2}\Big(1+2|\eta|\frac{\sqrt{t}}{y}\Big)^{k+1}\md \eta,~~t>0,~y>0.
	\end{aligned}
\end{equation*}
Since 
\begin{align*}
\Big(1+2|\eta|\frac{\sqrt{t}}{y}\Big)^{k+1}\le \Big(1+2|\eta|\frac{\sqrt{t}}{y}\Big)^{[k]+2}=\sum_{n=0}^{[k]+2}{[k]+2 \choose n} \Big(\frac{\sqrt{t}}{y}\Big)^{n}2^n|\eta|^n\le\sum_{n=0}^{[k]+2}{[k]+2 \choose n} 2^{n}|\eta|^n,~~t>0,~y\ge\sqrt{t},
\end{align*} 
we conclude from $	\int_{-\infty}^{+\infty}e^{-\eta^2}|\eta|^{n}\md \eta= 	2\int_{0}^{+\infty}e^{-\eta^2}\eta^{n}\md \eta=\int_0^{+\infty} e^{-t}t^{\frac{n-1}{2}}\md t=\Gamma\big(\frac{n+1}{2}\big)$ for $n\in\mathbb{N}$ that 
\begin{equation}\label{q_k+1>0}
	q_2(t,y)\le \frac{a_2}{\sqrt{\pi }}y^{k+1}\sum_{n=0}^{[k]+2}{[k]+2 \choose n} 2^n\Gamma\Big(\frac{n+1}{2}\Big)\le Cy^{k+1},~~~~~~~~t>0,~y\ge  \sqrt{t}.
\end{equation} 
Thus, it follows from \eqref{q1} and \eqref{q_k+1>0} that
\begin{equation}
	\label{q_k+1 ge 0}
	p(t,y)\le Cy^{k+1},~~~~~~t>0,~y\ge\max(\sqrt{t},1).
\end{equation}

\noindent
{\bf Case 2. $k+1< 0$.} It is easily observed that
\begin{equation}\label{2.19}
	\begin{aligned}
	p(t,y)&\le 	\frac{a_2}{\sqrt{4\pi t}}\int_0^{+\infty}e^{-\frac{(y-z)^2}{4t}} z^{k+1}\md z<\frac{a_2}{\sqrt{4\pi t}}\int_{-y}^{+\infty}e^{-\frac{\eta^2}{4t}} (y+\eta)^{k+1}\md \eta~~~(\eta=z-y)\\
	&=\frac{a_2y^{k+1}}{\sqrt{4\pi t}}\int_{-y}^{+\infty}e^{-\frac{\eta^2}{4t}} \Big(1+\frac{\eta}{y}\Big)^{k+1}\md \eta\\
	&= \underbrace{\frac{a_2y^{k+1}}{\sqrt{4\pi t}}\int_{-y}^{0}e^{-\frac{\eta^2}{4t}} \Big(1+\frac{\eta}{y}\Big)^{k+1} \md \eta}_{=:\bar q_1(t,y)}+\underbrace{\frac{a_2y^{k+1}}{\sqrt{4\pi t}}\int_{0}^{+\infty}e^{-\frac{\eta^2}{4t}}\Big(1+\frac{\eta}{y}\Big)^{k+1} \md \eta}_{=:\bar q_2(t,y)},~~~~t>0,~y>0,
\end{aligned}
\end{equation}
where $\bar q_2(t,y)\le 	
\frac{a_2y^{k+1}}{\sqrt{4\pi t}}\int_{0}^{+\infty}e^{-\frac{\eta^2}{4t}} \md \eta=\frac{a_2}{2}y^{k+1}$ for $t>0$ and $y>0$,
thanks to $k+1<0$.
To estimate $\bar q_1(t,y)$ in the regime $t>0$ and $y\ge\sqrt{t}$, we apply the Taylor expansion and obtain that 
\begin{align*}
	\bar q_1(t,y)&=\frac{a_2y^{k+1}}{\sqrt{4\pi t}}\int_{-y}^{0}e^{-\frac{\eta^2}{4t}} e^{(k+1)\ln (1+\frac{\eta}{y})}\md \eta=\frac{a_2y^{k+1}}{\sqrt{4\pi t}}\int_{-y}^{0}e^{-\frac{\eta^2}{4t}} e^{(k+1)\frac{\eta}{y}+|k+1|\mathcal{O}(\frac{\eta^2}{y^2})}\md \eta\\
		&=\frac{a_2y^{k+1}}{\sqrt{4\pi t}}\int_{-y}^{0}e^{-\big(\frac{\eta}{2\sqrt{t}}-(k+1)\frac{\sqrt{t}}{y}\big)^2} \md \eta~ e^{C|k+1|+(k+1)^2\big(\frac{\sqrt{t}}{y}\big)^2}\le a_2 e^{(k+1)^2+C|k+1|} y^{k+1},~~~~~t>0,~~y\ge\sqrt{t}.
\end{align*}
Substituting the above estimates 
into \eqref{2.19}, we have that
\begin{equation}
	\label{q_k+1<0}
	p(t,y)\le C y^{k+1},~~~~~~~t>0,~~y\ge\sqrt{t}.
\end{equation}
Gathering \eqref{q_k+1 ge 0} and \eqref{q_k+1<0} leads to the conclusion.
The proof of Lemma \ref{lem1_property of p in R+} is therefore complete.
\end{proof}

In particular, we have
\begin{lem}
	\label{lem2.2}  
	Under the assumption of Lemma \ref{lem1_property of p in R+} with $k\ge -1$, and given any  $t_0>0$,  there exist some constants $0<C_1<C_2$ such that
\begin{equation*}
	C_1	y^{k+1}\le p(t,y)\le C_2 y^{k+1}
\end{equation*}
for $t\in[0,t_0]$  and $y\ge\max( \sqrt{t},1)$. 
\end{lem}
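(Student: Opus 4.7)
The upper bound $p(t,y)\le C_2\, y^{k+1}$ follows directly from Lemma~\ref{lem1_property of p in R+}(ii) for $t>0$, and holds trivially at $t=0$ since $p(0,y)=w_0(y)=y^{k+1}$ on $[1,+\infty)$. For the lower bound, the case $k\ge 0$ is immediately dispatched by Remark~\ref{rk_1}(ii), which gives the global comparison $p(t,y)\ge y^{k+1}$ on the entire half-plane $\R_+^2$. The genuine obstacle is therefore the range $-1\le k<0$: here, by Remark~\ref{rk_1}(i), the candidate profile $y^{k+1}$ is a \emph{super}solution of the heat equation on the half-line, so no comparison-principle argument can yield a lower bound of the right size. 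I will instead work directly with the reflection representation.

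For $k\in[-1,0)$, I start from
\begin{equation*}
p(t,y)=\int_0^{+\infty}\!\big[G(t,y-z)-G(t,y+z)\big]\,w_0(z)\,\md z = \int_0^{+\infty}\! G(t,y-z)\big(1-e^{-yz/t}\big)\,w_0(z)\,\md z,
\end{equation*}
where $G(t,\cdot)$ is the one-dimensional heat kernel and the second identity uses $G(t,y+z)/G(t,y-z)=e^{-yz/t}$. For $y\ge 2$ with $y\ge\sqrt{t}$, I would restrict the integration to $z\in[y/2,2y]$; on this band three favorable estimates hold simultaneously. First, $w_0(z)=z^{k+1}$ is increasing in $z$ (since $k+1\in[0,1)$), so $w_0(z)\ge(y/2)^{k+1}=2^{-(k+1)}y^{k+1}$. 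Second, $yz/t\ge y^2/(2t)\ge 1/2$, hence $1-e^{-yz/t}\ge 1-e^{-1/2}>0$. Third, the change of variable $u=y-z$ combined with $y\ge\sqrt{t}$ gives
\begin{equation*}
\int_{y/2}^{2y}G(t,y-z)\,\md z=\int_{-y}^{y/2}G(t,u)\,\md u\ge\int_{-\sqrt{t}}^{\sqrt{t}/2}G(t,u)\,\md u=\frac{1}{\sqrt{\pi}}\int_{-1/2}^{1/4}e^{-v^2}\,\md v,
\end{equation*}
a strictly positive constant independent of $t$. Multiplying these three contributions produces $p(t,y)\ge C\, y^{k+1}$ throughout $\{y\ge 2,\ y\ge\sqrt{t},\ t\in(0,t_0]\}$, and the inequality extends to $t=0$ with $C=1$.

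The remaining strip $\{(t,y):t\in[0,t_0],\ 1\le y\le 2,\ y\ge\sqrt{t}\}$ is compact. On it $p$ is continuous (with $p(0,y)=w_0(y)=y^{k+1}$) and strictly positive by the strong maximum principle together with $p(0,\cdot)\ge 1$ on $[1,2]$. Hence $p$ attains a positive minimum $C_3$ on the strip, and since $y^{k+1}\le 2$ there, one obtains $p(t,y)\ge (C_3/2)\,y^{k+1}$. Assembling the two regions yields the announced constant $C_1$. The principal difficulty of the proof is squarely the band-restriction step above, where the failure of $y^{k+1}$ to be a subsolution for $k\in[-1,0)$ forces a direct kernel estimate in place of a comparison argument; the compactness step near $y=1$ is then routine.
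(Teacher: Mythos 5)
Your proof is correct, and for the genuinely hard range $-1\le k<0$ it runs on the same engine as the paper's: write the reflection kernel in the form $G(t,y-z)\big(1-e^{-yz/t}\big)$, exploit the monotonicity of $z^{k+1}$ on $[1,+\infty)$, bound $1-e^{-yz/t}$ away from zero, and bound the surviving Gaussian mass from below by a constant. The one place you diverge is the choice of integration window. You restrict to the band $z\in[y/2,2y]$, which forces the side condition $y\ge 2$ (so the band sits inside $[1,+\infty)$ where $w_0(z)=z^{k+1}$), and this in turn obliges you to patch the strip $1\le y\le 2$ separately via a compactness plus strong maximum principle argument. The paper instead integrates over the half-line $z\ge y$: there $z\ge y\ge 1$ automatically, so $z^{k+1}\ge y^{k+1}$ holds without any restriction beyond $y\ge 1$, the product $yz/t\ge 1/t_0$ gives a uniform lower bound $1-e^{-1/t_0}$ on the exponential factor, and $\int_y^\infty e^{-(y-z)^2/4t}\,\md z=\sqrt{\pi t}$ closes the estimate for every $y\ge\max(\sqrt{t},1)$ in one stroke, with no boundary region to treat separately. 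Both proofs are valid and yield a constant depending only on $t_0$ and $k$; the paper's tail window simply avoids the extra compactness step your band window incurs.
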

\begin{proof} Fix any $t_0>0$.  Based on Lemma \ref{lem1_property of p in R+} as well as Remark \ref{rk_1}, it is enough  to consider $-1\le k<0$. Since $z^{k+1}$ is now nondecreasing in $[1,+\infty)$ and $\frac{yz}{t}\ge \frac{1}{t_0}$ for $t\in[0,t_0]$, $y\ge \max(\sqrt{t},1)$ and $z\ge 1$, it follows that  
 \begin{align*}
 	p(t,y)&\ge \frac{1}{\sqrt{4\pi t}} \int_1^{+\infty} \Big(e^{-\frac{(y-z)^2}{4t}}-e^{-\frac{(y+z)^2}{4t}}\Big)z^{k+1}\md z\ge \frac{1}{\sqrt{4\pi t}}\int_{y}^{+\infty} \Big(e^{-\frac{(y-z)^2}{4t}}-e^{-\frac{(y+z)^2}{4t}}\Big)z^{k+1}\md z\\
 	&\ge\frac{1}{\sqrt{4\pi t}} y^{k+1} \int_{y}^{+\infty} e^{-\frac{(y-z)^2}{4t}}\big(1-e^{-\frac{yz}{t}}\big)\md z\ge\frac{1}{\sqrt{4\pi t}}\Big(1-e^{-\frac{1}{t_0}}\Big) y^{k+1} \int_{y}^{+\infty} e^{-\frac{(y-z)^2}{4t}}\md z\\
 	&=\frac{1}{\sqrt{\pi }}\Big(1-e^{-\frac{1}{t_0}}\Big) y^{k+1} \int_{0}^{+\infty} e^{-\eta^2}\md \eta= \frac{1}{2}\Big(1-e^{-\frac{1}{t_0}}\Big)y^{k+1}
 \end{align*}
for $t\in[0,t_0]$ and $y\ge\max(\sqrt{t},1)$.
 This completes the proof. 
 \end{proof}

As a matter of fact, Lemmas \ref{lem1_property of p in R+}-\ref{lem2.2}  still hold, up to  an odd and compactly supported perturbation\footnote{We assume that the  initial datum after perturbation remains nonnegative for $x\in\R_+$.} $\chi_0$, with an indipensable modification in \eqref{p-asymptotic} for $k<-3$. In fact, an easy observation from the argument  for localized initial data \cite{HNRR13} is that the solution $p$ to the heat equation starting from $\chi_0$ satisfies 
\begin{align}\label{p_odd compact spt}
	p(t,y;\chi_0)=\frac{1}{\sqrt{4\pi t}}\int_{\text{supp}(\chi_0)\cap\R_+}\Big(e^{-\frac{(y-z)^2}{4t}}-e^{-\frac{(y+z)^2}{4t}}\Big)\chi_0(z)\md z\approx	\frac{1}{\sqrt{4\pi }}ye^{-\frac{y^2}{4t}}t^{-\frac{3}{2}}\int_{0}^{+\infty}z \chi_0(z)\md z
\end{align}
for $t\gg1$ and $|y|\le \sqrt{t}$. To be more precise, we have 
\begin{lem}
	\label{lem2_perturbation of p}
	 	 Let $p$ be the solution to  $p_t-p_{yy}=0$ with odd initial data $w_0\pm\chi_0$, where $w_0$ satisfies \eqref{w_0} and  $\chi_0$ is an odd and compactly supported function in $\R$ such that $w_0\pm\chi_0\ge 0$ in $\R_+$. Then, the conclusion of Lemmas \ref{lem1_property of p in R+}-\ref{lem2.2} remain true, except that \eqref{p-asymptotic} for $k<-3$ needs to be modified as  
	 	\begin{equation}
	 	\label{p_asymptotic+pertur}
	 	p(t,y;w_0\pm\chi_0)\approx\varpi_\sharp y e^{-\frac{y^2}{4t}}t^{-\frac{3}{2}},~~~~~~~t\gg1,~~|y|\le \sqrt{t},
	 \end{equation}
 with $\varpi_\sharp=\varpi\pm 	\frac{1}{\sqrt{4\pi }}\int_{0}^{+\infty}z \chi_0(z)\md z>0$.
\end{lem}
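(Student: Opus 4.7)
The plan is to exploit the linearity of the heat equation: by superposition,
\begin{equation*}
p(t,y;w_0\pm\chi_0) = p(t,y;w_0) \pm p(t,y;\chi_0),
\end{equation*}
so it suffices to analyze the two pieces separately and recombine. The first piece is already controlled by Lemmas \ref{lem1_property of p in R+}--\ref{lem2.2}. For the second piece, the identity \eqref{p_odd compact spt} already displayed in the statement provides the leading-order behavior
\begin{equation*}
p(t,y;\chi_0) \approx \frac{1}{\sqrt{4\pi}}\,y\,e^{-y^2/(4t)}\,t^{-3/2}\int_0^{+\infty} z\,\chi_0(z)\,\md z, \qquad t\gg 1,\ |y|\le\sqrt{t}.
\end{equation*}
This can be justified by exactly the Taylor expansion of $\sinh(yz/(2t))$ used in the proof of Lemma \ref{lem1_property of p in R+}; the higher-order terms are controlled by $(y/(2t))^{2n}$ multiplied by moments of $\chi_0$, which are all finite since $\chi_0$ is compactly supported.

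Once both pieces are in hand, I would compare orders in the diffusive regime $|y|\le\sqrt{t}$. When $k>-3$, the factor $t^{k/2}$ in $p(t,y;w_0)$ dominates the $t^{-3/2}$ contribution of the perturbation, so the leading-order asymptotic is unaffected. When $k=-3$, the logarithmic factor $\ln t$ in $p(t,y;w_0)$ beats the perturbation for the same reason. Only when $k<-3$ do the two contributions share the same order $y\,e^{-y^2/(4t)}\,t^{-3/2}$; summing them then produces the modified constant $\varpi_\sharp = \varpi \pm \frac{1}{\sqrt{4\pi}}\int_0^{+\infty}z\,\chi_0(z)\,\md z$ claimed in \eqref{p_asymptotic+pertur}.

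For the outer regime $y\ge\max(\sqrt{t},1)$ covered by Lemma \ref{lem1_property of p in R+}(ii) and Lemma \ref{lem2.2}, I would absorb the perturbation into the constants. Since $\chi_0$ is supported in some $[-M,M]$, for $y\ge 2M$ the Gaussian factor $e^{-(y-z)^2/(4t)}$ yields super-polynomial decay in $y$ that can be swallowed into an enlarged multiple of $y^{k+1}$; for bounded $y$ with $t$ of order one, boundedness of $p(t,y;\chi_0)$ and the positive lower bound of $y^{k+1}$ on a compact range suffice. Hence the $O(y^{k+1})$ estimate and the two-sided bound of Lemma \ref{lem2.2} persist after the perturbation.

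Finally, strict positivity of $\varpi_\sharp$ follows from the hypothesis $w_0\pm\chi_0\ge 0$ on $\R_+$ together with the non-compact support of $w_0$. Indeed $\varpi_\sharp$ is a positive multiple of the first moment $\int_0^{+\infty} z(w_0\pm\chi_0)(z)\,\md z$, which is nonnegative by the sign hypothesis and can vanish only if $w_0\pm\chi_0\equiv 0$ on $\R_+$; since $w_0$ equals $v_0$ on $[A,+\infty)$ while $\chi_0$ is compactly supported, this is ruled out. The main technical point throughout is the uniformity of the order comparisons in the interior regime, which relies on the fact that the $(y^2/t)^n$-series produced by the $\sinh$-expansion stays bounded for $|y|\le\sqrt{t}$; this mechanism is already established in the proof of Lemma \ref{lem1_property of p in R+} and applies verbatim once $w_0$ is replaced by the compactly supported $\chi_0$.
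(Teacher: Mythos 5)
Your proof matches the paper's argument: decompose by linearity, apply the $\sinh$-expansion / first-moment formula from the proof of Lemma \ref{lem1_property of p in R+} to the compactly supported piece inside the diffusive zone, note that the extra $t^{-3/2}$ term changes the leading constant only when $k<-3$, and use the compact support of $\chi_0$ to show its contribution is negligible in the outer regime $y\ge\max(\sqrt t,1)$; the paper reaches the last point by a slightly different device (a supersolution $\min(1,e^{-y/\sqrt{1+t}})$ and the maximum principle rather than the raw heat-kernel estimate), but that is cosmetic. Your explicit positivity argument for $\varpi_\sharp$, identifying it as a positive multiple of $\int_0^{\infty}z(w_0\pm\chi_0)(z)\,\md z$ and invoking the sign hypothesis together with nontriviality of $w_0$ beyond the support of $\chi_0$, is a detail the paper states but does not justify.
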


\begin{proof} It follows from \eqref{p_odd compact spt} that the solution $p(t,y;w_0\pm\chi_0)$ to the heat equation has asymptotics \eqref{p-asymptotic} for $t\gg1$ and $y\le \sqrt{t}$,  up to the modification  \eqref{p_asymptotic+pertur} when $k<-3$.

On the other hand, it follows from a straightforward computation that $\overline p(t,y):=\min(1,e^{-\frac{y}{\sqrt{1+t}}})$ satisfies $p_t-p_{yy}\ge0$ for $t>0$ and $y\in\R$. Furthermore, up to a multiple and some shifts of $\overline p$, one can show that $-\overline p(t,y)\le p(t,y;\pm\chi_0)\le \overline p(t,y)$ for $t\ge 0$ and $y\in\R$. This implies
\begin{equation*}
\big|	p(t,y;\pm\chi_0)\big|\le C e^{-\frac{y}{\sqrt{1+t}}},~~~~~~t>0,~y\ge \sqrt{t}.
\end{equation*}
Therefore, the contribution of 	$p(t,y;\pm\chi_0)$ in the region $t>0$ and $y\ge\sqrt{t}$, compared with that of $p(t,y;w_0)$, is negligible. This completes the proof.
\end{proof}

	An immediate consequence of Lemmas \ref{lem1_property of p in R+}-\ref{lem2_perturbation of p} is
\begin{prop}\label{prop2.2}
		Let $w$ be the solution  to \eqref{linear-w} in $\R_+\times\R$ associated with odd initial datum $w_0$ satisfying \eqref{w_0}. Then,   for each $t\ge 0$, $w(t,c_*t+\cdot)=-w(t,c_*t-\cdot)$ in  $\R$ and $w(t,x)>0$ for  $x>c_*t$, and
		\begin{itemize}
			\item[(i)] when $|x-c_*t|\le \sqrt{t}$,	there exists  $\varpi>0$   depending on $w_0$ such that
	\begin{equation}
		\label{w-asymptotic}
		\begin{aligned}
		w(t,x)\approx\begin{cases}
			\displaystyle	\varpi(x-c_*t)e^{-\frac{(x-c_*t)^2}{4t}}t^{\frac{k}{2}}, ~~~~~~~~~&\text{if}~~k>-3,\\
			\displaystyle	\varpi (x-c_*t) e^{-\frac{(x-c_*t)^2}{4t}}t^{-\frac{3}{2}}\ln t, ~~~&\text{if}~~k=-3,\\
			\displaystyle	\varpi (x-c_*t) e^{-\frac{(x-c_*t)^2}{4t}}t^{-\frac{3}{2}},~~~~~~~&\text{if}~~k<-3,
		\end{cases}~~~~~~~t\gg1;
	\end{aligned}
	\end{equation}
\item[(ii)] when $x-c_*t\ge\max( \sqrt{t},1)$,	
	\begin{equation}
		\label{w-out diffusive}
		\begin{aligned}
		w(t,x)=\mathcal{O}\big( (x-c_*t)^{k+1}\big),~~~~~~~~~~~~~~~t>0,
		\end{aligned}
	\end{equation}
moreover, when $k\ge -1$, then for any given $t_0>0$,  there exist some constants $0<C_1<C_2$ such that
\begin{equation}\label{w-outdiff-bdd-time}
	C_1	(x-c_*t)^{k+1}\le w(t,x)\le C_2 (x-c_*t)^{k+1}
\end{equation}
for $t\in[0,t_0]$  and $x-c_*t\ge\max( \sqrt{t},1)$;

\item[(iii)]	if $w_0$ is replaced by $w_0\pm\chi_0$ with odd and compact perturbation $\chi_0$  such that $w_0\pm\chi_0\ge 0$ in $\R_+$, then the conclusions (i) and (ii) above  remain true, except the following modification in \eqref{w-asymptotic} for $k<-3$:
	 \begin{equation*}
		\label{w_asymptotic+pertur}
		w(t,x;w_0\pm\chi_0)\approx\varpi_\sharp (x-c_*t) e^{-\frac{(x-c_*t)^2}{4t}}t^{-\frac{3}{2}},~~~~~~~t\gg1,~~|x-c_*t|\le  \sqrt{t},
	\end{equation*}
 where $\varpi_\sharp=\varpi\pm 	\frac{1}{\sqrt{4\pi }}\int_{0}^{+\infty}z \chi_0(z)\md z>0$.
	\end{itemize}
\end{prop}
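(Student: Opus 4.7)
The plan is to reduce everything to the heat equation estimates already established in Lemmas \ref{lem1_property of p in R+}--\ref{lem2_perturbation of p}, via a moving-frame change of variables. Specifically, set $p(t,y):=w(t,y+c_*t)$. A direct computation gives $p_t = w_t + c_* w_x$ and $p_{yy}=w_{xx}$, so the transport-diffusion equation $w_t-w_{xx}+c_*w_x=0$ becomes the heat equation $p_t-p_{yy}=0$ on $(0,+\infty)\times\R$, with initial datum $p_0(y)=w_0(y)$, which is exactly the odd function described in \eqref{w_0}. Thus $p$ fits the framework of the preceding lemmas with $y$ playing the role of the shifted spatial variable $x-c_*t$.

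For part (i), I would note that the heat semigroup on $\R$ preserves oddness (the heat kernel is even), so $p(t,\cdot)$ remains odd for each $t\geq 0$. Pulling back through $y=x-c_*t$ immediately yields $w(t,c_*t+\cdot)=-w(t,c_*t-\cdot)$. Positivity of $w(t,x)$ for $x>c_*t$ corresponds to positivity of $p(t,y)$ for $y>0$, which is exactly the first part of Lemma \ref{lem1_property of p in R+}, itself a consequence of the strong maximum principle applied to the odd reflection.

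For parts (ii) and the first statement of part (iii), I would translate the asymptotic expansions \eqref{p-asymptotic} of Lemma \ref{lem1_property of p in R+} by substituting $y=x-c_*t$; the three cases $k>-3$, $k=-3$, $k<-3$ transfer verbatim to \eqref{w-asymptotic}, and the bound \eqref{p-asymp-outside diffu} for $y\geq\max(\sqrt t,1)$ becomes \eqref{w-out diffusive}. For the two-sided bound \eqref{w-outdiff-bdd-time} valid on a bounded time interval $[0,t_0]$ when $k\geq -1$, I would invoke Lemma \ref{lem2.2} together with Remark \ref{rk_1}: the upper bound already comes from part (ii); the lower bound is new only in the range $-1\leq k<0$, and is exactly what Lemma \ref{lem2.2} provides (the cases $k\geq 0$ are covered by Remark \ref{rk_1}(ii)).

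Finally, for part (iv) concerning odd compactly supported perturbations $\chi_0$, the linearity of the heat equation gives $p(t,y;w_0\pm\chi_0)=p(t,y;w_0)\pm p(t,y;\chi_0)$. Lemma \ref{lem2_perturbation of p} shows that $p(t,y;\chi_0)$ contributes a term of the same form $\frac{1}{\sqrt{4\pi}}ye^{-y^2/(4t)}t^{-3/2}\int_0^\infty z\chi_0(z)\,\md z$ on the diffusive scale, which is absorbed into the leading order when $k<-3$ (modifying the constant $\varpi$ into $\varpi_\sharp$) but is negligible compared with the $t^{k/2}$ or $t^{-3/2}\ln t$ contributions when $k\geq -3$; outside the diffusive scale its decay is controlled by the supersolution $\min(1,e^{-y/\sqrt{1+t}})$ and is therefore dominated by the polynomial bound $y^{k+1}$. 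Reverting to $w$ via the moving frame completes part (iii) of the proposition. Since every step reduces to invoking a lemma already proven, there is no genuine obstacle here; the only minor point requiring care is checking that oddness of $w_0\pm\chi_0$ and the assumed nonnegativity on $\R_+$ are preserved so that the comparison arguments from the preceding lemmas apply without modification.
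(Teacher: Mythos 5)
Your proof is correct and follows the same reduction the paper intends: Proposition \ref{prop2.2} is stated in the paper as an immediate consequence of Lemmas \ref{lem1_property of p in R+}--\ref{lem2_perturbation of p} via precisely the moving-frame substitution $p(t,y)=w(t,y+c_*t)$ that you spell out, under which \eqref{linear-w} becomes the heat equation and $y=x-c_*t$ carries each of \eqref{p-asymptotic}, \eqref{p-asymp-outside diffu}, Lemma \ref{lem2.2}, and Lemma \ref{lem2_perturbation of p} verbatim into \eqref{w-asymptotic}, \eqref{w-out diffusive}, \eqref{w-outdiff-bdd-time}, and item (iii). Only a cosmetic remark: your item labels are shifted by one relative to the proposition's (you call the preamble on oddness and positivity ``part (i)'' and refer to a nonexistent ``part (iv)''), but every claim you address is correctly identified and the lemmas you invoke are the right ones.
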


\begin{rmk}\label{rk_1'} 
	\textnormal{We also conclude from Remark \ref{rk_1} that } 
	
	\noindent
	\textnormal{(i)
		In the case of $-1\le k\le 0$, we have  $w(t,x)\le (x-c_*t)^{k+1}$ for $t\ge 0$ and $x\ge c_*t$.
	}
	
	\noindent
	\textnormal{(ii)
		When $k\ge 0$, we have $w(t,x)\ge (x-c_*t)^{k+1}$ for $t\ge 0$ and $x\ge c_*t$. This, together with \eqref{w-out diffusive}, yields that there exists some constant $C\ge 1$ such that  
		\begin{equation*}\label{2.8_rmk'}
			(x-c_*t)^{k+1}\le 	w(t,x)\le C(x-c_*t)^{k+1},~~~~t>0,~x-c_*t\ge\max( \sqrt{t},1).
		\end{equation*}
	}
\end{rmk}

\subsection{Initial data of type (\ref{initial-flat})}\label{sec2.2}

Regarding \eqref{kpp}-\eqref{initial-flat}, we proceed with similar strategy as Section \ref{sec2.1}. The transformation
\begin{equation*}
	v(t,x)=e^{\lambda(x-ct)}u(t,x),\quad t>0,~x\in\R,
\end{equation*} 
gives
\begin{equation}\label{v-eqn-flat}
	\begin{aligned}
		\begin{cases}
			v_t -v_{xx}+2\lambda v_x+\overline R(t,x;v)=0, ~~~~~~ t>0,~&x\in\R,\\
			v_0(x)=e^{\lambda x}u_0(x),~~~&x\in\R,
		\end{cases}
	\end{aligned}
\end{equation}
where
\begin{equation}
	\label{1-R term-flat}
	\overline R(t,x;s):=f'(0)s-e^{\lambda(x-ct)}f\big(e^{-\lambda(x-ct)}s\big)=e^{\lambda(x-ct)}g\big(e^{-\lambda(x-ct)}s\big)\ge 0,~~~s\in\R,
\end{equation}
with $g(s):=f'(0)s-f(s)\ge 0$ for $s\in\R$.

The associated linear problem  reads 
 \begin{equation}
	\label{linear-w-flat}
	(\partial_t-\mathcal{N})w:=w_t-w_{xx}+2\lambda w_x=0, \quad t>0,~x\in\R.
\end{equation}
By imposing an odd  initial condition $w_0$ in $\R$ such that
\begin{equation}\label{w_0-flat}
	w_0(x)=x~~~\text{for}~[0,1),~~~~w_0(x)=x^{\boldsymbol{\nu}}~~~\text{for}~x\in[1,+\infty),
\end{equation}
our goal is to analyze the asymptotic behavior of the solution $w$ to \eqref{linear-w-flat} associated with odd initial condition $w_0$ satisfying \eqref{w_0-flat}.

Set $p(t,y)=w(t,y+2\lambda t)$ for $(t,y)\in\R_+\times\R$, then  $p$ satisfies  heat equation $p_t-p_{yy}=0$ for $(t,y)\in(0,+\infty)\times\R$ with odd initial condition $w_0$ satisfying \eqref{w_0-flat}.

\begin{lem}
	\label{lem2.3_p with nu}
		Let $p(t,y)$ be the solution to  $p_t-p_{yy}=0$ for $(t,y)\in(0,+\infty)\times\R$ with odd initial datum $p_0=w_0$ satisfying \eqref{w_0-flat}. Then $p(t,\cdot)$ for each $t\ge 0$ is an odd function and $p(t,y)>0$ for $t>0$ and $y>0$. Furthermore,
	\begin{itemize}
		\item[(i)]  the conclusions of Lemmas \ref{lem1_property of p in R+}-\ref{lem2.2} hold true  $($by taking $k=\boldsymbol{\nu}-1$$)$;
		
		 \item[(ii)] given any $\varrho>0$, there  exists  $\Lambda_\varrho>0$  depending on  $w_0$ such that 
		\begin{equation}\label{p-key estimate}
			p(t,y)\approx \Lambda_\varrho t^{\boldsymbol{\nu}} e^{-\frac{(y-\varrho t)^2}{4t}}, ~~~~~~t\gg1,~0\le y-\varrho t\le\sqrt{t};
		\end{equation}

		\item[(iii)] if $w_0$ is replaced by  $w_0\pm\chi_0$ with an odd and compact perturbation $\chi_0$ such that $w_0\pm\chi_0\ge 0$ in $\R_+$, the above conclusions (i) and (ii) remain true,  except the following modification in \eqref{p-asymptotic} for $k=\boldsymbol{\nu}-1<-3$:
		\begin{equation*}
			\label{p_asymptotic+pertur-flat}
			p(t,y;w_0\pm\chi_0)\approx\varpi_\sharp y e^{-\frac{y^2}{4t}}t^{-\frac{3}{2}},~~~~~~~t\gg1,~~|y|\le \sqrt{t},
		\end{equation*}
	 where $\varpi_\sharp=\varpi\pm 	\frac{1}{\sqrt{4\pi }}\int_{0}^{+\infty}z \chi_0(z)\md z>0$.
	\end{itemize}
\end{lem}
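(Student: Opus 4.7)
The plan is as follows. For part (i), the initial datum prescribed in \eqref{w_0-flat} matches exactly the first case of \eqref{w_0} (the one for $k\geq -3$) under the identification $k+1=\boldsymbol{\nu}$. Consequently, the computations in the proofs of Lemmas \ref{lem1_property of p in R+} and \ref{lem2.2} transfer verbatim with $k$ replaced by $\boldsymbol{\nu}-1$ throughout, yielding (i). The new content is part (ii), which concerns asymptotics of $p$ in the regime $y\approx \varrho t$, well outside the diffusive scale.

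For (ii), I would work directly from the reflection-principle formula
\begin{equation*}
p(t,y)=\frac{1}{\sqrt{4\pi t}}\int_0^{+\infty}\Big(e^{-\frac{(y-z)^2}{4t}}-e^{-\frac{(y+z)^2}{4t}}\Big)w_0(z)\md z.
\end{equation*}
In the target regime $y\geq \varrho t$, the reflection term is pointwise bounded by $e^{-\varrho^2 t/4}$ for all $z\geq 0$, so its contribution is exponentially small in $t$ and negligible compared with any polynomial in $t$. In the main term I would change variable $z=y+\zeta\sqrt{4t}$ to rewrite the integral as $\frac{1}{\sqrt{\pi}}\int_{-y/\sqrt{4t}}^{+\infty} e^{-\zeta^2}\, w_0(y+\zeta\sqrt{4t})\md \zeta$. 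Since $y\geq \varrho t \gg \sqrt{t}$, on any effective window $|\zeta|\lesssim \sqrt{\ln t}$ one has $y+\zeta\sqrt{4t}\geq \varrho t/2\gg 1$ for $t$ large, so $w_0(y+\zeta\sqrt{4t})=(y+\zeta\sqrt{4t})^{\boldsymbol{\nu}}=y^{\boldsymbol{\nu}}(1+\zeta\sqrt{4t}/y)^{\boldsymbol{\nu}}$. Since $\sqrt{4t}/y\leq 2/(\varrho\sqrt{t})\to 0$, the factor $(1+\zeta\sqrt{4t}/y)^{\boldsymbol{\nu}}$ tends to $1$ uniformly on the effective window, and Gaussian integration yields $p(t,y)=y^{\boldsymbol{\nu}}(1+o(1))$. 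In the target strip $0\leq y-\varrho t\leq \sqrt{t}$ both $y^{\boldsymbol{\nu}}\approx (\varrho t)^{\boldsymbol{\nu}}$ and $e^{-(y-\varrho t)^2/(4t)}\in[e^{-1/4},1]$ hold, so the announced two-sided bound \eqref{p-key estimate} follows with $\Lambda_\varrho$ proportional to $\varrho^{\boldsymbol{\nu}}$.

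For part (iii), I would follow the strategy of Lemma \ref{lem2_perturbation of p}: the contribution $p(t,\cdot;\pm\chi_0)$ is controlled, up to a multiple and shifts, by the supersolution $\min(1,e^{-y/\sqrt{1+t}})$, hence is exponentially small compared with the polynomial order of $p(t,\cdot;w_0)$ in the outer regime; inside the diffusive scale, the explicit heat-kernel formula for the compactly supported odd $\chi_0$ produces the additive correction $\frac{1}{\sqrt{4\pi}}\int_0^{+\infty}z\,\chi_0(z)\md z$ as in \eqref{p_odd compact spt}, which only modifies the leading constant when $\boldsymbol{\nu}-1<-3$. The main obstacle I foresee is the uniform control of $(1+\zeta\sqrt{4t}/y)^{\boldsymbol{\nu}}$ when $\boldsymbol{\nu}<0$: for $\zeta$ close to $-y/\sqrt{4t}$ the base approaches $0$ and the expansion could blow up. To handle this, I would split the integration into $|\zeta|\leq T(t)$ with $T(t)\to\infty$ slowly enough (e.g.\ $T(t)=\sqrt{\ln t}$) and the complementary tail; on the tail, the Gaussian weight $e^{-\zeta^2}$ dominates any polynomial factor in $z$, while on the bulk region the Taylor-type expansion above is uniformly valid, closing the argument.
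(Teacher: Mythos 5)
Your proposal is correct but takes a genuinely different (and arguably more elementary) route than the paper for part~(ii). The paper keeps the $\sinh$ form of the odd heat kernel, changes variable $z=\xi\sqrt{t}+\varrho t$ (centering the integral at the location of the initial-datum mass that is diffusively connected to $y\approx\varrho t$), completes the square, and exhibits the Gaussian factor $e^{-(y-\varrho t)^2/4t}$ explicitly with a bounded remaining integral. You instead drop the reflection term (exponentially small for $y\geq \varrho t$), change variable $z=y+\zeta\sqrt{4t}$ (centering at $z=y$, i.e.\ standard Laplace asymptotics for the forward heat kernel), and conclude $p(t,y)=y^{\boldsymbol{\nu}}(1+o(1))$ uniformly on $0\le y-\varrho t\le\sqrt{t}$. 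Both routes are sound, and your conclusion is in fact slightly sharper: since $y^{\boldsymbol{\nu}}\sim(\varrho t)^{\boldsymbol{\nu}}$ and $e^{-(y-\varrho t)^2/4t}\in[e^{-1/4},1]$ on the strip, your asymptotic $p\sim \varrho^{\boldsymbol{\nu}}t^{\boldsymbol{\nu}}$ recovers \eqref{p-key estimate} with the $\approx$ interpreted as two-sided comparability (which is how it is used downstream; the paper's own derivation, if tracked carefully, gives the same uniform limit once the $e^{\xi(y-\varrho t)/\sqrt{4t}}$ factor is seen to cancel the front Gaussian). Parts~(i) and~(iii) you correctly reduce to Lemmas~\ref{lem1_property of p in R+}--\ref{lem2_perturbation of p} with $k=\boldsymbol{\nu}-1$, and the supersolution control of the perturbation in the far regime $y\geq\varrho t$ matches the paper's estimate $|p(t,y;\chi_0)|\lesssim t^{-1/2}e^{-y^2/4t}$ in spirit.

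Two small repairs are needed. First, the bound ``the reflection term is pointwise bounded by $e^{-\varrho^2 t/4}$'' is not by itself enough, because for $\boldsymbol{\nu}\geq -1$ the integral $\int_0^\infty w_0$ diverges; you should keep a Gaussian factor, e.g.\ $e^{-(y+z)^2/4t}\leq e^{-\varrho^2 t/8}e^{-z^2/8t}$, to get a bound of the form $Ce^{-\varrho^2 t/8}\,t^{\boldsymbol{\nu}/2}$, still superexponentially small relative to $t^{\boldsymbol{\nu}}$. Second, the window $T(t)=\sqrt{\ln t}$ is too small when $\boldsymbol{\nu}\leq -1$: the Gaussian tail mass $\sim e^{-T(t)^2}\sim t^{-1}$ then competes with $y^{\boldsymbol{\nu}}\lesssim t^{-1}$. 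Take $T(t)=t^{1/4}$ (or $T(t)=\sqrt{C\ln t}$ with $C>|\boldsymbol{\nu}|$); the bulk condition $T(t)\sqrt{4t}/y\to 0$ still holds, and the tail becomes $e^{-\sqrt{t}}$ (respectively $t^{-C}$), negligible for every $\boldsymbol{\nu}\in\R$. With these modifications the argument is complete.
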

\begin{rmk}\label{rk_1_flat}  
	\textnormal{
		From Remark \ref{rk_1}, we have
	\begin{itemize}
		\item[(i)]
		In the case of $-1\le \boldsymbol{\nu}-1\le 0$, the function $y^{\boldsymbol{\nu}}$ is a supersolution to the heat equation of $p$ for $(t,y)\in\R_+^2$, such that $p(t,y)\le y^{\boldsymbol{\nu}}$ for $(t,y)\in\R_+^2$.
\item[(ii)]
		When $\boldsymbol{\nu}-1> 0$, the function $y^{\boldsymbol{\nu}}$ is a subsolution to the heat equation of $p$ for $(t,y)\in\R_+^2$, such that $p(t,y)\ge y^{\boldsymbol{\nu}}$ for $(t,y)\in\R_+^2$. This, together with \eqref{p-asymp-outside diffu} with $k+1$ replaced by $\boldsymbol{\nu}$, yields that there exists some constant $C\ge 1$ such that
		\begin{equation*}
			y^{\boldsymbol{\nu}}\le 	p(t,y)\le Cy^{\boldsymbol{\nu}},~~~~t>0,~y\ge\max( \sqrt{t},1).
		\end{equation*}
	\end{itemize}
}
\end{rmk}

\begin{proof}[Proof of Lemma \ref{lem2.3_p with nu}]
	Thanks to Lemmas \ref{lem1_property of p in R+}-\ref{lem2_perturbation of p}, it is sufficient to prove  (ii), without and with the perturbation $\chi_0$. In fact, in the region $t\gg 1$ and $0\le y-\varrho t\le\sqrt{t}$, we have
	\begin{align*}
		p(t,y;w_0)=&\frac{1}{\sqrt{4\pi t}}\int_0^{+\infty}\Big(e^{-\frac{(y-z)^2}{4t}}-e^{-\frac{(y+z)^2}{4t}}\Big)w_0(z)\md z\\
		=&\frac{1}{\sqrt{4\pi t}}e^{-\frac{y^2}{4t}}\int_0^{+\infty} 2\sinh\Big(
		\frac{yz}{2t}	\Big) e^{-\frac{z^2}{4t}}w_0(z)\md z
	\\
		=&\frac{1}{\sqrt{4\pi t}}e^{-\frac{y^2}{4t}}\int_{-\varrho \sqrt{t}}^{+\infty} 2\sinh\Big(
		\frac{y\varrho }{2}+\frac{y\xi}{\sqrt{4t}}\Big) e^{-\frac{(\xi\sqrt{t}+\varrho  t)^2}{4t}} w_0(\xi\sqrt{t}+\varrho  t)\sqrt{t}\md \xi ~~~~(\text{set} ~~z=\xi\sqrt{t}+\varrho t>0)\\
		=&\frac{t^{\boldsymbol{\nu}}}{\sqrt{\pi }}e^{-\frac{y^2}{4t}+\frac{y\varrho }{2}}e^{-\frac{\varrho^2t^2}{4t}}\int_{-\varrho \sqrt{t}}^{+\infty}  e^{\frac{y\xi}{\sqrt{4t}} }
		\left(1-e^{-
		\big(y\varrho +\frac{y\xi}{\sqrt{t}}\big)}\right)
		 e^{-\frac{\xi^2+2\varrho  \xi\sqrt{t}}{4}} \frac{w_0(\xi\sqrt{t}+\varrho  t)}{t^{\boldsymbol{\nu}}}\md \xi\\
		 =&\frac{t^{\boldsymbol{\nu}}}{\sqrt{\pi }}e^{-\frac{(y-\varrho  t)^2}{4t}}  \int_{-\varrho \sqrt{t}}^{+\infty} e^{\frac{y\xi}{\sqrt{4t}} }
		 \left(1-e^{-
		 	\big(y\varrho +\frac{y\xi}{\sqrt{t}}\big)}\right)
		  e^{-\frac{\xi^2+2\varrho  \xi\sqrt{t}}{4}} \frac{w_0(\xi\sqrt{t}+\varrho  t)}{t^{\boldsymbol{\nu}}}\md \xi\\
		=& \frac{t^{\boldsymbol{\nu}}}{\sqrt{\pi }}e^{-\frac{(y-\varrho  t)^2}{4t}} \int_{-\varrho \sqrt{t}}^{+\infty} 
		 \left(1-e^{-
		 	\big(y\varrho +\frac{y\xi}{\sqrt{t}}\big)}\right)
	 	e^{-\frac{\xi^2}{4}+\frac{\xi(y-\varrho t)}{\sqrt{4t}}}
		  \frac{w_0(\xi\sqrt{t}+\varrho  t)}{t^{\boldsymbol{\nu}}}\md \xi.
	\end{align*}
We claim that the  integral in the last line of the above formula is  bounded, which  will immediately imply \eqref{p-key estimate}. As a matter of fact, we observe that 
\begin{align*}
	\bigg|\Big(1-e^{-
		\big(y\varrho +\frac{y\xi}{\sqrt{t}}\big)}\Big) e^{-\frac{\xi^2}{4}+\frac{\xi(y-\varrho t)}{\sqrt{4t}}} \bigg|&\le e^{
	-\frac{\xi^2}{4}+C\xi}.
\end{align*}
 Moreover, since $w_0(z)\le C(1+z)^{\boldsymbol{\nu}}$ for $z\in\R_+$ for any given $\boldsymbol{\nu}\in\R$, and $\xi\sqrt{t}+\varrho  t\ge 0$, it follows that
\begin{align*}
	\frac{w_0(\xi\sqrt{t}+\varrho  t)}{t^{\boldsymbol{\nu}}}\le 
	C\frac{(1+\xi\sqrt{t}+\varrho  t )^{\boldsymbol{\nu}}}{t^{\boldsymbol{\nu}}}=C\varrho ^{\boldsymbol{\nu}}\Big(1+\frac{\xi\sqrt{t}+1}{\varrho  t}
	\Big)^{\boldsymbol{\nu}}\le 
	C\varrho ^{\boldsymbol{\nu}}e^{|\boldsymbol{\nu}|\frac{\xi\sqrt{t}+1}{\varrho  t}}.
\end{align*} 
It then follows from the dominated convergence theorem  that
\begin{align*}
&	\lim_{t\to+\infty}\int_\R  \left(1-e^{-
	\big(y\varrho +\frac{y\xi}{\sqrt{t}}\big)}\right)
e^{-\frac{\xi^2}{4}+\frac{\xi(y-\varrho t)}{\sqrt{4t}}}
\frac{w_0(\xi\sqrt{t}+\varrho  t)}{t^{\boldsymbol{\nu}}}\md \xi\le	C \varrho ^{\boldsymbol{\nu}}\int_\R e^{
	-\frac{\xi^2}{4}+C\xi} \md \xi\le C\varrho ^{\boldsymbol{\nu}}.
\end{align*}
Therefore, our claim is achieved. This leads to (ii).

On the other hand, 
\begin{align*}
|p(t,y;\chi_0)|=&\frac{1}{\sqrt{4\pi t}}\int_0^{+\infty}\Big(e^{-\frac{(y-z)^2}{4t}}-e^{-\frac{(y+z)^2}{4t}}\Big)|\chi_0(z)|\md z\\
	\le &\frac{C}{\sqrt{4\pi t}}e^{-\frac{y^2}{4t}}\int_{\frac{\pi}{2}T^\alpha}^{\frac{3\pi}{2}T^\alpha} 2\sinh\Big(
	\frac{yz}{2t}	\Big) \md z
    \le \frac{C}{\sqrt{ t}}e^{-\frac{y^2}{4t}} e^{C(\varrho +1)} 
    \le \frac{C}{\sqrt{ t}}e^{-\frac{y^2}{4t}},~~~t\gg1,~0\le y-\varrho t\le\sqrt{t},
\end{align*} 
since  $2\sinh z< e^{z}$ for $z>0$ and since 
$0<	\frac{yz}{2t}\le C \frac{y}{t}\le C(\varrho +1)$
for $t\gg 1$, $0\le y-\varrho t\le\sqrt{t}$ and for $z$ bounded.
Therefore, for any $\boldsymbol{\nu}\in\R$,
\begin{equation*}
	|p(t,y;\chi_0)|\le C t^{-\frac{1}{2}}e^{-\frac{y^2}{4t}}\ll Ct^{\boldsymbol{\nu}} e^{-\frac{(y-\varrho t)^2}{4t}}, ~~~~t\gg1,~0\le y-\varrho t\le\sqrt{t},
\end{equation*} 
  by virtue of $	\frac{y^2}{t}\ge \varrho  ^2t\gg \mathcal{O}(1)\ge \frac{(y-\varrho  t)^2}{4t}$ in this region.
Thus, $\chi_0$ as an initial perturbation is negligible, in the sense that  $p(t,y; w_0\pm\chi_0)$ satisfies the same property \eqref{p-key estimate} as $p(t,y;w_0)$.
 This completes the proof.
\end{proof}

A straightforward consequence of Lemma \ref{lem2.3_p with nu} is
\begin{prop}\label{prop2.3}
		Let $w$ be the solution  to \eqref{linear-w-flat} in $\R_+\times\R$ associated with odd initial datum $w_0$ satisfying \eqref{w_0-flat}. Then,   for each $t\ge 0$,  $w(t,2\lambda t +\cdot)=-w(t,2\lambda t-\cdot)$  in $\R$ and $w(t,x)>0$ for  $x>2\lambda t$. Furthermore,
		\begin{itemize}
			\item[(i)] when $|x-2\lambda t|\le \sqrt{t}$, there exists  $\varpi>0$   depending on $w_0$ such that
	\begin{equation}
		\label{w-asymptotic_flat}
		\begin{aligned}
		w(t,x)\approx\begin{cases}
			\displaystyle	\varpi(x-2\lambda t)e^{-\frac{(x-2\lambda t)^2}{4t}}t^{\frac{\boldsymbol{\nu}-1}{2}}, ~~~~~~~~~&\text{if}~~\boldsymbol{\nu}-1>-3,\\
			\displaystyle	\varpi (x-2\lambda t) e^{-\frac{(x-2\lambda t)^2}{4t}}t^{-\frac{3}{2}}\ln t, ~~~~~~&\text{if}~~\boldsymbol{\nu}-1=-3,\\
			\displaystyle	\varpi (x-2\lambda t) e^{-\frac{(x-2\lambda t)^2}{4t}}t^{-\frac{3}{2}},~~~~~~~~~~&\text{if}~~\boldsymbol{\nu}-1<-3,
		\end{cases}~~~t\gg1,
		\end{aligned}
	\end{equation}
\item[(ii)] when $x-2\lambda t\ge\max( \sqrt{t},1)$,	
	\begin{equation}
		\label{w-out diffusive_flat}
		w(t,x)=\mathcal{O}\big((x-2\lambda t)^{\boldsymbol{\nu}}\big),~~~~~~~~t>0,
	\end{equation}
moreover,  when $k\ge -1$, then for any given $t_0>0$,  there exist some constants $0<C_1<C_2$ such that
\begin{equation*}
	C_1	(x-2\lambda t)^{\boldsymbol{\nu}}\le w(t,x)\le C_2 (x-2\lambda t)^{\boldsymbol{\nu}}
\end{equation*}
for $t\in[0,t_0]$  and $x-2\lambda t\ge\max( \sqrt{t},1)$;
\item[(iii)] when $0\le x-2\lambda t-\varrho t\le\sqrt{t}$ with any given $\varrho>0$, there  exists  $\Lambda_\varrho>0$  depending on  $w_0$ such that 
\begin{equation*}
	w(t,x)\approx \Lambda_\varrho t^{\boldsymbol{\nu}} e^{-\frac{(x-2\lambda t-\varrho t)^2}{4t}}, ~~~~~~t\gg1,~0\le x-2\lambda t-\varrho t\le\sqrt{t},
\end{equation*}
in particular, by setting $\mu :=\sqrt{c^2-c_*^2}>0$, together with $2\lambda+\mu=c$, it follows that 
\begin{equation}
	\label{w-key estimate}
	w(t,x)\approx \Lambda_\mu t^{\boldsymbol{\nu}} e^{-\frac{(x-ct)^2}{4t}}, ~~~~~~t\gg1,~~~~0\le x-ct\le\sqrt{t},
\end{equation}
for some  $ \Lambda_\mu>0$  depending on   $w_0$;

	\item[(iv)] if $w_0$ is replaced by $w_0\pm\chi_0$ with an odd and compact perturbation $\chi_0$  such that $w_0\pm\chi_0\ge 0$ in $\R_+$, the above conclusions (i)-(iii)  remain true, except that  \eqref{w-asymptotic_flat} for $\boldsymbol{\nu}-1<-3$ has to be modified as:
	\begin{equation*}
		\label{w_asymptotic+pertur-flat}
		w(t,x;w_0\pm\chi_0)\approx\varpi_\sharp (x-2\lambda t) e^{-\frac{(x-2\lambda t)^2}{4t}}t^{-\frac{3}{2}},~~~~~~~t\gg1,~~|x-2\lambda t|\le \sqrt{t},
	\end{equation*}
where $\varpi_\sharp=\varpi\pm 	\frac{1}{\sqrt{4\pi }}\int_{0}^{+\infty}z \chi_0(z)\md z>0$.
		\end{itemize}
\end{prop}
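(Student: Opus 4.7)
The plan is to reduce Proposition \ref{prop2.3} to Lemma \ref{lem2.3_p with nu} by the change of reference frame $p(t,y) = w(t, y + 2\lambda t)$. A direct computation shows that if $w(t,x) = p(t, x - 2\lambda t)$, then $w_t = p_t - 2\lambda p_y$, $w_x = p_y$ and $w_{xx} = p_{yy}$, so that the convective term cancels and $p$ satisfies the heat equation $p_t = p_{yy}$ on $(0,+\infty)\times\R$, starting from the same odd initial datum $w_0$ obeying \eqref{w_0-flat}. Hence every assertion about $w$ at a point $x$ becomes an assertion about $p$ at $y = x - 2\lambda t$, and the oddness together with the positivity claim $w(t,x) > 0$ for $x > 2\lambda t$ follow immediately from the corresponding statements for $p$.

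With this reduction in hand, items (i), (ii), and (iv) are direct translations of the corresponding parts of Lemma \ref{lem2.3_p with nu}, applied with the identification $k = \boldsymbol{\nu} - 1$ (so that the tail $x^{\boldsymbol{\nu}}$ corresponds to $x^{k+1}$). The diffusive-regime asymptotics \eqref{w-asymptotic_flat} follow from \eqref{p-asymptotic} after substituting $y = x - 2\lambda t$; the outer-regime bound \eqref{w-out diffusive_flat} and the two-sided bounded-time estimate (for $k \ge -1$) come from \eqref{p-asymp-outside diffu} and from Lemma \ref{lem2.2} in the same way; and the compact odd perturbation statement in (iv) is inherited from Lemma \ref{lem2_perturbation of p} as invoked through Lemma \ref{lem2.3_p with nu}(iii).

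The more interesting statement is (iii), which probes $w$ at locations further to the right than the diffusive scale around $2\lambda t$. After the change of frame, the window $0 \le x - 2\lambda t - \varrho t \le \sqrt{t}$ becomes $0 \le y - \varrho t \le \sqrt{t}$, which is precisely the regime covered by Lemma \ref{lem2.3_p with nu}(ii). Applying \eqref{p-key estimate} and translating back gives
\begin{equation*}
w(t,x) \approx \Lambda_\varrho\, t^{\boldsymbol{\nu}} e^{-\frac{(x - 2\lambda t - \varrho t)^2}{4t}}, \qquad t \gg 1,~~0 \le x - 2\lambda t - \varrho t \le \sqrt{t}.
\end{equation*}
To specialize to $\varrho = \mu := \sqrt{c^2 - c_*^2}$ and obtain \eqref{w-key estimate}, it suffices to verify the algebraic identity $2\lambda + \mu = c$. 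From the dispersion relation $\lambda^2 - c\lambda + f'(0) = 0$ one computes $c^2 - c_*^2 = c^2 - 4f'(0) = (c - 2\lambda)^2$, so $\mu = |c - 2\lambda|$; since $\lambda < \lambda_* = \sqrt{f'(0)}$ gives $c - 2\lambda = f'(0)/\lambda - \lambda > 0$, one has $\mu = c - 2\lambda$, and the identity $x - 2\lambda t - \mu t = x - ct$ yields \eqref{w-key estimate}.

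Since all the delicate analysis has already been carried out for the heat semigroup in Lemmas \ref{lem1_property of p in R+}, \ref{lem2.2}, \ref{lem2_perturbation of p}, and packaged in Lemma \ref{lem2.3_p with nu}, there is no real obstacle here: the proof of Proposition \ref{prop2.3} is essentially transcriptional. The only point requiring any care is the algebraic identification of the drift speed $\mu$ with $c - 2\lambda$ in (iii), and checking that the initial datum $w_0$ satisfying \eqref{w_0-flat} fits exactly the hypothesis of Lemma \ref{lem2.3_p with nu}, which it does by construction.
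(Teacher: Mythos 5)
Your proof is correct and coincides with the paper's (implicit) argument: the paper itself offers no separate proof, stating only that Proposition \ref{prop2.3} is ``a straightforward consequence'' of Lemma \ref{lem2.3_p with nu} via the change of frame $p(t,y)=w(t,y+2\lambda t)$, which is exactly your reduction. Your verification of the identity $\mu=c-2\lambda$ from the dispersion relation is a sensible inclusion that the paper leaves implicit.
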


\begin{rmk}\label{rk_1'-flat} 
	\textnormal{We also conclude from Remark \ref{rk_1_flat} that }
	
	\noindent
	\textnormal{(i)
		In the case of $-1\le \boldsymbol{\nu}-1\le 0$, we have  $w(t,x)\le (x-2\lambda t)^{\boldsymbol{\nu}}$ for $t\ge 0$ and $x\ge 2\lambda t$.
	}
	
	\noindent
	\textnormal{(ii)
		When $\boldsymbol{\nu}-1> 0$, we have $w(t,x)\ge (x-2\lambda t)^{\boldsymbol{\nu}}$ for $t\ge 0$ and $x\ge 2\lambda t$. This, together with \eqref{w-out diffusive_flat}, yields that there exists some constant $C\ge 1$ such that
		\begin{equation*}\label{2.8_rmk'}
			(x-2\lambda t)^{\boldsymbol{\nu}}\le 	w(t,x)\le C(x-2\lambda t)^{\boldsymbol{\nu}},~~~~t>0,~x-2\lambda t\ge\max( \sqrt{t},1).
		\end{equation*}
	}
\end{rmk}

We close this section by the following result.
\begin{lem}
	\label{lemma-u(t,.) to 0}
	The solutions $u$ to \eqref{kpp}  with  \eqref{initial} or \eqref{initial-flat} type initial data  satisfy $u(t,x)\to 0$ as $x\to+\infty$ for each $t> 0$.
\end{lem}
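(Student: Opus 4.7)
The plan is to dominate $u$ pointwise by a solution of the linearised equation and to exploit the decay of $u_0$ at $+\infty$. Since $0\le f(s)\le f'(0)s$ on $[0,1]$ and since $0\le u(t,\cdot)\le 1$ on $\R$ by the parabolic maximum principle, the function
\begin{equation*}
\bar u(t,x):=e^{f'(0)t}\int_{\R} G_t(x-y)\,u_0(y)\,\md y,\qquad G_t(z):=\frac{1}{\sqrt{4\pi t}}\,e^{-z^2/(4t)},
\end{equation*}
solves $\bar u_t-\bar u_{xx}=f'(0)\bar u\ge f(\bar u)$ with $\bar u(0,\cdot)=u_0$. The parabolic comparison principle then gives $u(t,x)\le\bar u(t,x)$ for every $(t,x)\in(0,+\infty)\times\R$, so it is enough to prove that $\bar u(t,x)\to 0$ as $x\to+\infty$ for every fixed $t>0$.

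Both \eqref{initial} and \eqref{initial-flat} force $u_0(y)\to 0$ as $y\to+\infty$, since in either case the polynomial prefactor is dominated by the exponential. Given $\varepsilon>0$, pick $M>A$ so large that $u_0(y)\le\varepsilon$ for all $y\ge M$, and split
\begin{equation*}
\bar u(t,x)=e^{f'(0)t}\int_{-\infty}^{M}G_t(x-y)\,u_0(y)\,\md y+e^{f'(0)t}\int_{M}^{+\infty}G_t(x-y)\,u_0(y)\,\md y.
\end{equation*}
Bounding $u_0\le 1$ in the first integral and changing variables shows that it equals $e^{f'(0)t}\int_{(x-M)/\sqrt{4t}}^{+\infty}\pi^{-1/2}e^{-\eta^2}\,\md\eta$, which tends to $0$ as $x\to+\infty$ with $t$ fixed. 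The second integral is bounded by $\varepsilon\,e^{f'(0)t}$ uniformly in $x$. Letting first $x\to+\infty$ and then $\varepsilon\to 0$ gives $\bar u(t,x)\to 0$, and hence $u(t,x)\to 0$.

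I expect no real obstacle here: the argument uses only the KPP bound $f(s)\le f'(0)s$, parabolic comparison, and the elementary fact that the convolution of a fixed Gaussian with a datum that vanishes at $+\infty$ itself vanishes at $+\infty$. An alternative route, more in the spirit of Section~\ref{sec-2}, would be to run the leading-edge transformation and dominate $v(t,\cdot)$ by a sufficiently large multiple of the linear solution $w$ of \eqref{linear-w} (resp.\ \eqref{linear-w-flat}) starting from a suitable majorant of the initial datum in \eqref{w_0} (resp.\ \eqref{w_0-flat}); Proposition~\ref{prop2.2}(ii) (resp.\ Proposition~\ref{prop2.3}(ii)) then furnishes $w(t,x)=\mathcal{O}((x-c_*t)^{k+1})$ (resp.\ $\mathcal{O}((x-2\lambda t)^{\boldsymbol{\nu}})$) on the far field, and the exponential factor in $u=e^{-\lambda_*(x-c_*t)}v$ (resp.\ $u=e^{-\lambda(x-ct)}v$) at once yields the decay.
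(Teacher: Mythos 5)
Your argument is correct, but it takes a different route from the paper's. The paper constructs an explicit exponential supersolution: fixing $\omega_1>c_*$ (resp.\ $\omega_2>c$) and the associated decay rate $\lambda_1\in(0,\lambda_*)$ (resp.\ $\lambda_2\in(0,\lambda)$) solving $\lambda_i^2-\omega_i\lambda_i+f'(0)=0$, it checks that $\min\bigl(1,\vartheta_i e^{-\lambda_i(x-\omega_i t)}\bigr)$ is a supersolution dominating $u_0$, then invokes comparison. You instead take $\bar u(t,\cdot)=e^{f'(0)t}\,G_t* u_0$, use the KPP inequality $f(s)\le f'(0)s$ (together with the linear extension of $f$ for $s>1$, which keeps $f'(0)\bar u\ge f(\bar u)$ even where $\bar u>1$) to see this is a supersolution with the same initial datum, and then split the convolution at a cutoff $M$ beyond which $u_0\le\varepsilon$. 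Both proofs are valid applications of the comparison principle. The paper's version buys a clean, time-uniform exponential bound $u(t,x)\le\vartheta_i e^{-\lambda_i(x-\omega_i t)}$, which is genuinely stronger than mere vanishing and is reusable later; your version is more elementary and needs no tuning of exponential rates — it works verbatim for any $u_0$ that merely tends to $0$ at $+\infty$ — at the cost of a prefactor $e^{f'(0)t}$ that grows in $t$, harmless here since $t$ is fixed. Your sketched alternative via the leading-edge transformation and Propositions \ref{prop2.2}(ii), \ref{prop2.3}(ii) would also work and is closer in spirit to the rest of the paper, but it is heavier machinery than the lemma requires.
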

\begin{proof} {\it Case of \eqref{initial} type initial data.} Fix any  $\omega_1\in(c_*,+\infty)$, then there is a unique parameter $\lambda_1\in(0,\lambda_*)$ such that $\lambda_1^2-\omega_1\lambda_1+f'(0)=0$. Since $0\le u_0(x)\le 1$ for $x\in\R$ and since there exists  $a_2>0$ such  that $u_0(x)\le a_2x^{k+1}e^{-\lambda_*x}$ for $x\gg 1$, one can  choose $B>0$ and  $\vartheta_1>0$ large enough such that
	\begin{equation*}
		u_0(x)\le a_2x^{k+1}e^{-\lambda_*x}<\vartheta_1 e^{-\lambda_1 x}~~~~~x\ge B.
	\end{equation*}
	As a consequence, one can easily verify that    
 $\overline u(t,x):=\min\big(1, \vartheta_1 e^{-\lambda_1 (x- \omega_1 t)} \big)$
	is a supersolution to \eqref{kpp}-\eqref{initial} for $t\ge 0$ and $x\in\R$ such that $u_0(x)\le \overline u(0,x)$ for $x\in\R$. The comparison principle implies that
	$u(t,x)\le\overline u(t,x)$ for $t>0$ and $x\in\R$.  The conclusion immediately follows.
	
	\vspace{2mm}
	
	\noindent	
	{\it Case of \eqref{initial-flat} type initial data.} Argued as above, for any fixed  $\omega_2\in(c,\omega)$ together with  the associated  $\lambda_2\in(0,\lambda)$ such that $\lambda_2^2-\omega_2\lambda_2+f'(0)=0$, one can show that $\overline u(t,x):=\min\big(1,\vartheta_2 e^{-\lambda_2(x-\omega_2 t)}\big)$, with some $\vartheta_2>0$ large, is a supersolution to \eqref{kpp}-\eqref{initial-flat} for $t\ge 0$ and $x\in\R$, which leads to the conclusion.
\end{proof}

\section{Upper and lower barriers under initial data of type \eqref{initial}}
\label{sec-3}
 This section is devoted to establishing upper and lower barriers for the function $v$ - variant of the solution $u$ introduced in Section \ref{sec2.1} - which will be essential to capturing the precise location of the level sets. Under \eqref{initial}  type initial data,  the front of $v$ is expected to stay very close to that of the solution $w$ to the linear equation \eqref{linear-w} associated with odd initial condition $w_0$ satisfying \eqref{w_0}  within the diffusive regime $0\le x-c_*t\le \sqrt{t}$, therefore the basic technique is the use of the linear solution $w$ as the central term together with some helpful perturbations. One can go smoothly through this idea in the course of constructing an upper barrier for any $k\in\R$.

 Unfortunately, the construction of  lower barriers according to different ranges of $k$ becomes much more delicate. Such type of initial perturbation can hardly work when building the lower barrier for $k<-3$, in that this perturbation, although compact, may contribute much more to the asymptotic behavior of the linear solution  than the initial data. As such, it is indispensable to  refine the idea further when  $k<-3$, which will be discussed separately in Section \ref{sec-3.2}. In addition, even though we now restrict ourselves to the case of $k\ge -3$, the lower barrier turns out to be challenging to satisfy the boundary comparison  when $k\in[-1,0)$ provided that there exists a portion of the boundary located beyond the diffusive regime. To tackle this obstacle, we shall leverage Remark \ref{rk_1'} (i) and make use of the linear solution from some large time $\tau$, and confine the boundary to the diffusive scale.

\subsection{Upper barrier for $k\in\R$ and lower barrier for $k\ge -3$}\label{sec_3.1}
We notice from \eqref{initial} that there exists $A>0$ large enough such that 
\begin{equation*}
	a_1x^{k+1}e^{-\lambda_*x}\le u_0(x)\le a_2 x^{k+1}e^{-\lambda_*x},~~~x\ge A.
\end{equation*}
Define $\kappa:=\max\{k,-3\}$.
 Then fix positive parameters $\delta,\gamma,\beta,\alpha$ such that
\begin{equation}
	\label{parameters}
	0<\delta<\gamma<\beta<\frac{4}{25}<\frac{7}{15}<\alpha<\frac{1}{2},
\end{equation}
in which we choose specifically $\alpha=\frac{1}{2}-\frac{1}{45\kappa}$ if $\kappa>1$, while $\alpha$ is independent of $\kappa$ provided that $\kappa\le 1$. 
Let  $T>A$ be sufficiently large such that
\begin{equation}\label{T}
\min(c_*T-T^\delta, T^\delta)>A,~~~~~~	 \cos\big(T^{\frac{4}{25}-\alpha}\big)>\frac{1}{2}.
\end{equation}
Finally, let $\chi_0$ be an odd and compactly supported function  in $\R$ such that
	\begin{equation}\label{chi_0}
		\chi_0(x)=	T^{\frac{\kappa}{2}+\beta}\cos\Big(\frac{x}{T^\alpha}\Big)\mathbbm{1}_{\left\{x\in\R|\frac{\pi}{2}T^\alpha\le x\le  \frac{3\pi}{2}T^\alpha\right\}}~~~~~~~\text{for}~~x\in\R_+.
	\end{equation}
With the above choice of  $\beta$ and $\alpha$, we find that\footnote{To achieve this, we have to verify $T^{\alpha(k+1)}>T^{\frac{\kappa}{2}+\beta}$, i.e. $\alpha-\beta>\kappa(\frac{1}{2}-\alpha)$, with any  $\kappa=k\ge -3$ fixed. In fact, $\alpha-\beta>\kappa(\frac{1}{2}-\alpha)$ automatically holds true if $-3\le k\le 1$. For $k> 1$, we derive from $\alpha=\frac{1}{2}-\frac{1}{45\kappa}$ that $\alpha-\beta>\frac{7}{15}-\frac{4}{25}>\frac{3}{15}>\frac{1}{45}=\kappa(\frac{1}{2}-\alpha)$.} $w_0(x)+\chi_0(x)\ge0$ for $x\in\R_+$ provided that $k\ge -3$.   Moreover, it follows from \eqref{p_odd compact spt} that the solution $p$ to the heat equation with initial datum $\chi_0$ satisfying \eqref{chi_0} has the following asymptotics
\begin{align}
	p(t,y;\chi_0)\approx	\frac{1}{\sqrt{4\pi }}ye^{-\frac{y^2}{4t}}t^{-\frac{3}{2}}\int_{0}^{+\infty}z \chi_0(z)\md z=-\sqrt{\pi}T^{\beta+2\alpha-\frac{3}{2}}ye^{-\frac{y^2}{4t}}t^{-\frac{3}{2}}, ~~~~~~~~~t\gg1,~~~ |y|\le \sqrt{t}.
\end{align}

\subsection*{Upper barrier}

Let $w_1(t,x)$ be the solution to \eqref{linear-w} for $(t,x)\in(0,+\infty)\times\R$ associated with an odd and continuous initial function $w_1(0,x)$ such that 
\begin{equation*}
	\label{upper-initial}
	\begin{aligned}
	w_1(0,x)=\begin{cases}
		a_2 w_0(x)-M\chi_0(x-c_*T),~~~~&\text{if}~~k\ge -3,\\
		w_0(x)-M\chi_0(x-c_*T),~~~~&\text{if}~~k< -3,
	\end{cases}
		\end{aligned}	~~~~x\in\R_+,
\end{equation*}
 where $w_0$ and   $\chi_0$ satisfy respectively  \eqref{w_0} and \eqref{chi_0}, and the parameter $M>0$ will be fixed in the course of our investigation. We easily  find that the function $w_1(t,\cdot)$ for each $t\ge 0$  satisfies $w_1(t,x-c_*t)=-w_1(t,c_*t-x)$ for $x\in\R$,  and $w_1(t,x)>0$ for $t>0$ and $x>c_*t$.
Moreover, it is also worth noticing that $w_1$ is indeed an actual supersolution for the equation in \eqref{v-eqn} by construction. However,  since $v(t,x)$ is positive everywhere for $t>0$, to make the comparison possible at the boundary $x\approx c_*t$, the idea is to introduce additionally a cosine perturbation as a complement \cite{Roquejoffre2023, BFRZ2023, FRZ2024}, which essentially has the same flavor as that in Fife-McLeod \cite{FMcL}. 

For $t\ge 0$ and $x-c_*(t+T)\ge -(t+T)^\delta$, define
\begin{equation}
	\label{v-upper}
	\overline v(t,x)=\xi(t) w_1(t,x)+\mathcal{V}_1(t,x),
\end{equation}
with 
\begin{equation*}
	\xi(t)=1+\frac{1}{T^\gamma}-\frac{1}{(t+T)^\gamma},
\end{equation*}
and
\begin{equation*}
\mathcal{V}_1(t,x)= M	(t+T)^{\frac{\kappa}{2}+\beta}\cos\left(\frac{x-c_*(t+T)}{(t+T)^\alpha}\right)\mathbbm{1}_{\left\{(t,x)\in\R_+\times\R| -(t+T)^{\delta}\le x-c_*(t+T)\le  \frac{3\pi}{2}(t+T)^\alpha\right\}}.
\end{equation*} 
We shall check that $\overline v$ is a supersolution to the nonlinear problem \eqref{v-eqn} for  $t\ge 0$ and $x-c_*(t+T)\ge -(t+T)^\delta$. To do so, we first note that, up to increasing $T$,
$$\overline v(0,x)=w_1(0,x)+\mathcal{V}_1(0,x)\ge v(0,x)~~~\text{for}~x\ge c_*T-T^\delta(>A).$$ 
At the boundary $\bar x-c_*(t+T)=-(t+T)^\delta$, we have $\mathcal{V}_1(t,\bar x)>\frac{M}{2}(t+T)^{\frac{\kappa}{2}+\beta}$ for $t\ge 0$. Moreover, since 
 $w_1(t,\bar x)\ge 0$ as long as $\bar x-c_*t=c_*T-(t+T)^\delta$ is nonnegative, i.e. when $0\le t\le t^*:=(c_*T)^{\frac{1}{\delta}}-T$, we have
 \begin{equation*}
 	\overline v(t,\bar x)\ge \mathcal{V}_1(t,\bar x)>\frac{M}{2}(t+T)^{\frac{\kappa}{2}+\beta},
 	~~~~~~~~~~~~ 0\le t\le t^*.
 \end{equation*}
  Nevertheless, $w_1(t,x)$ becomes negative when $t>t^*$, in which  $w_1(t,\bar x)$ actually satisfies the asymptotics \eqref{w-asymptotic} up to increasing $T$, by noticing that $-\sqrt{t}\le \bar x-c_*t=c_* T-(t+T)^\delta<0$. To be specific,
  \begin{itemize}
  	\item  when $k\ge 0$, we have that $0>\xi(t)w_1(t,\bar x)\ge C\big(c_*T-(t+T)^\delta\big) t^{\frac{\kappa}{2}}\ge -C(t+T)^{\delta+\frac{\kappa}{2}}$ for $t>t^*$, up to increasing $T$. Consequently, 
\begin{align*}
	\overline v(t,\bar x)\ge -C(t+T)^{\delta+\frac{\kappa}{2}} +\frac{M}{2}(t+T)^{\frac{\kappa}{2}+\beta}\ge \frac{M}{4}(t+T)^{\frac{\kappa}{2}+\beta},
	~~~~~~~~~t>t^*.
\end{align*}

\item  when $k\le 0$, it follows that $0>\xi(t)w_1(t,\bar x)\ge C\big(c_*T-(t+T)^\delta\big) t^{\frac{\kappa}{2}}\ln t\ge -C(t+T)^{\delta+\frac{\kappa}{2}+\varep}\ln(t+T)$ for $t>t^*$, up to increasing $T$, with some $\varep\in(0,\frac{\beta-\delta}{2})$, whence 
\begin{align*}
	\overline v(t,\bar x)\ge -C(t+T)^{\delta+\frac{\kappa}{2}+\varep}\ln(t+T) +\frac{M}{2}(t+T)^{\frac{\kappa}{2}+\beta}\ge \frac{M}{4}(t+T)^{\frac{\kappa}{2}+\beta},
	~~~~~~~~~t>t^*.
\end{align*}
\end{itemize}
On the other hand, we have
  $v(t,\bar x)=e^{\lambda_*(\bar x-c_*t)}u(t,\bar x)\le e^{\lambda_*(c_*T-(t+T)^\delta)}$ for $t\ge 0$, since $0\le u(t,x)\le1$ for $(t,x)\in\R_+\times\R$. Then, we deduce that  
  \begin{equation*}
  	\overline v(t,\bar x)\ge \frac{M}{4}(t+T)^{\frac{\kappa}{2}+\beta}>e^{\lambda_*(c_*T-(t+T)^\delta)}\ge v(t,\bar x)
  \end{equation*}
for $t\ge \bar t$ with some $\bar t>0$ large. For $t\in[0,\bar t]$, the above can still be valid by choosing $M>0$ properly.

   Next, it is left to verify that $\overline v$ satisfies $\overline v_t-\overline v_{xx}+c_* \overline v_x+R(t,x; \overline v)\ge 0$ for $t>0$ and $x-c_*(t+T)\ge -(t+T)^\delta$. Since $R(t,x;\overline v)$ is always nonnegative due to \eqref{1-R term}, it then suffices to check that $\big(\partial_t-\mathcal{N}\big)\overline v:=\overline v_t-\overline v_{xx}+c_* \overline v_x\ge 0$ for $t>0$ and $x-c_*(t+T)\ge -(t+T)^\delta$.

\vskip 2mm

\noindent
{\bf Step 1}. We first consider $t>0$ and $ x-c_*(t+T)\ge \frac{3\pi}{2}(t+T)^\alpha$. Since $\overline v(t,x)=\xi(t)w_1(t,x)$ in this region, it follows that $\big(\partial_t-\mathcal{N}\big)\overline v(t,x)=\xi'(t)w_1(t,x)\ge 0$. The conclusion is therefore trivial.

\vskip 2mm

\noindent
{\bf Step 2}. We now look at $t>0$ and $-(t+T)^\delta\le  x-c_*(t+T)\le \frac{3\pi}{2}(t+T)^\alpha$. For convenience, let us define
\begin{equation*}
	\zeta(t,x)=\frac{x-c_*(t+T)}{(t+T)^\alpha}.
\end{equation*}
A straightforward computation gives
\begin{equation*}
	\big(\partial_t-\mathcal{N}\big)\big(\xi(t)w_1(t,x)\big)=\xi'(t)w_1(t,x)=\gamma (t+T)^{-\gamma-1}w_1(t,x),
\end{equation*}
and
\begin{equation*}
	\begin{aligned}
	\big(\partial_t-\mathcal{N}\big)\mathcal{V}_1(t,x)=	&M\big(\partial_t-\mathcal{N}\big)\left(	(t+T)^{\frac{\kappa}{2}+\beta}\cos\big(\zeta(t,x)\big)\right)\\
		=&M\left(\Big(\frac{\kappa}{2}+\beta\Big)(t+T)^{\frac{\kappa}{2}+\beta-1}+(t+T)^{\frac{\kappa}{2}+\beta-2\alpha}\right)\cos\big(\zeta(t,x)\big)\\
		&~~~~~~~~~~~~~~~~~~~~~~~~~~~+M\alpha\big(x-c_*(t+T)\big)(t+T)^{\frac{\kappa}{2}+\beta-\alpha-1}\sin\big(\zeta(t,x)\big)
		\\
		=&M(t+T)^{\frac{\kappa}{2}+\beta}\Bigg(\bigg(\frac{\frac{\kappa}{2}+\beta}{t+T}+\frac{1}{(t+T)^{2\alpha}}\bigg)\!\cos\!\big(\zeta(t,x)\big)\!+\!\frac{\alpha\big(x-c_*(t+T)\big)}{(t+T)^{\alpha+1}}\sin\big(\zeta(t,x)\big)\Bigg).
	\end{aligned}
\end{equation*}

Let us proceed with our analysis by dividing the region into two zones for  $t>0$:
\begin{itemize}
	\item $-(t+T)^\delta\le  x-c_*(t+T)\le \frac{\pi}{4}(t+T)^\alpha$.
 Due to the choice of $T$, we find that $\cos\big(\zeta(t,x)\big)>\frac{1}{2}$, accordingly up to increasing $T$,
	\begin{equation*}
		\big(\partial_t-\mathcal{N}\big)\mathcal{V}_1(t,x)\ge C(t+T)^{\frac{\kappa}{2}+\beta-2\alpha}>0.
	\end{equation*}
 Moreover, when $\max\big(0,c_*T-(t+T)^{\delta}\big)\le x-c_*t\le \frac{\pi}{4}(t+T)^\alpha$,
 we have $w_1(t,x)\ge 0$. This immediately gives that $\big(\partial_t-\mathcal{N}\big)\overline v(t,x)\ge \xi'(t)w_1(t,x)+C(t+T)^{\frac{\kappa}{2}+\beta-2\alpha}> 0$. Nevertheless, when $c_*T-(t+T)^{\delta}\le x-c_*t\le 0$, i.e. $t\ge t^*$ (recall that $t^*:=(c_*T)^\frac{1}{\delta}-T$), we infer that this region actually locates in the diffusive regime such that $-\sqrt{t}\le c_*T-(t+T)^{\delta}\le x-c_*t\le 0$, up to increasing $T$, so that \eqref{w-asymptotic} can be applied. By repeating the argument in the boundary comparison, 
  we deduce that up to increasing $T$,
  \begin{equation*}
  	0>	w_1(t,x)\ge  C\big(c_*T-(t+T)^\delta\big) t^{\frac{\kappa}{2}}\ge -C(t+T)^{\frac{\kappa}{2}+\delta} ~~~~~~~\text{if}~~k\ge 0,
  \end{equation*}
and
\begin{equation*}
 0>	w_1(t,x)\ge  C\big(c_*T-(t+T)^\delta\big) t^{\frac{\kappa}{2}}\ln t\ge -C(t+T)^{\frac{\kappa}{2}+\delta+\varep}\ln (t+T) ~~~~~~~\text{if}~~k<0,
\end{equation*}
for some $\varep\in(0,\frac{\beta-\delta}{2})$.
Consequently, it follows from \eqref{parameters} that $\big(\partial_t-\mathcal{N}\big)\overline v(t,x)\ge \xi'(t)w_1(t,x)+C(t+T)^{\frac{\kappa}{2}+\beta-2\alpha}>0$, up to increasing $T$.

	\item $\frac{\pi}{4}(t+T)^\alpha\le  x-c_*(t+T)\le \frac{3\pi}{2}(t+T)^\alpha$.  In this region, we easily notice that $w_1(t,x)>0$, and 
		\begin{equation*}
		\big(\partial_t-\mathcal{N}\big)\mathcal{V}_1(t,x)\ge -C(t+T)^{\frac{\kappa}{2}+\beta-2\alpha}.
	\end{equation*} 
Let us distinguish two subdomains: either $\Omega_1:=\big\{
(t,x)\in(0,+\infty)\times\R | c_*T+\frac{\pi}{4}(t+T)^\alpha\le  x-c_*t\le \min\big(c_*T+ \frac{3\pi}{2}(t+T)^\alpha,\sqrt{t}\big)
\big\}$ or $\Omega_2:=\big\{
(t,x)\in(0,+\infty)\times\R | \max\big(c_*T+\frac{\pi}{4}(t+T)^\alpha,\sqrt{t}\big)\le  x-c_*t\le c_*T+ \frac{3\pi}{2}(t+T)^\alpha
\big\}$.
Whenever $(t,x)\in\Omega_1$, the function $w_1$ satisfies \eqref{w-asymptotic} up to increasing $T$, whence
\begin{align*}
	w_1(t,x)\ge C\big(c_*T+\frac{\pi}{4}(t+T)^\alpha\big)t^{\frac{\kappa}{2}}\ge C(t+T)^{\alpha+\frac{\kappa}{2}-\varep},~~~~\text{if}~~k\ge 0,
\end{align*}
for some $\varep\in(0,\frac{2}{25})$, and   
\begin{align*}
	w_1(t,x)\ge C\big(c_*T+\frac{\pi}{4}(t+T)^\alpha\big)t^{\frac{\kappa}{2}}\ge C(t+T)^{\alpha+\frac{\kappa}{2}},~~~~\text{if}~~k< 0.
\end{align*}
Collecting the above estimates, together with \eqref{parameters}, we have  that for $(t,x)\in\Omega_1$, 
\begin{equation*}
	\big(\partial_t-\mathcal{N}\big)\overline v(t,x)\ge \xi'(t)w_1(t,x)-C(t+T)^{\frac{\kappa}{2}+\beta-2\alpha}>0,
\end{equation*} 
up to increasing $T$.
 Let us turn to $\Omega_2$, which is actually bounded. When $k+1<0$, since $w_1(t,x)>0$ for $t\ge 0$ and $x-c_*t\ge c_*T+\frac{\pi}{4}(t+T)^\alpha(>A)$, we have that, up to increasing $T$, 
 \begin{equation*}
 	\big(\partial_t-\mathcal{N}\big)\overline v(t,x)\ge \min_{(t,x)\in\Omega_2}w_1(t,x)\gamma (t+T)^{-\gamma-1}-C(t+T)^{\frac{\kappa}{2}+\beta-2\alpha}>0.
 \end{equation*}
 Suppose that $k+1\ge 0$, we deduce from
  \eqref{w-outdiff-bdd-time}   that 
	\begin{align*}
		\big(\partial_t-\mathcal{N}\big)\overline v(t,x)&=\xi'(t)w_1(t,x)-C(t+T)^{\frac{\kappa}{2}+\beta-2\alpha}\\
		&\ge C\Big(\max\big(c_*T+\frac{\pi}{4}(t+T)^\alpha,\sqrt{t}\big)\Big)^{k+1}\gamma (t+T)^{-\gamma-1}-C(t+T)^{\frac{\kappa}{2}+\beta-2\alpha}\\
	&\ge C(t+T)^{\frac{\kappa+1}{2}-\gamma-1}-C(t+T)^{\frac{\kappa}{2}+\beta-2\alpha}>0 ,,~~~(t,x)\in\Omega_2,
\end{align*}
by noticing that $\max\big(c_*T+\frac{\pi}{4}(t+T)^\alpha,\sqrt{t}\big)\ge C(t+T)^{\frac{1}{2}}$ for those $t$ in $\Omega_2$ with some small constant $C>0$.

\end{itemize}
\noindent
{\bf Conclusion}. We have checked that the function $\overline v$ defined in \eqref{v-upper} is indeed a supersolution to \eqref{v-eqn} for $t\ge 0$ and $x-c_*(t+T)\ge -(t+T)^\delta$. The comparison principle implies that 
\begin{equation*}
	~~~~~~~~~~~~~~~~~~~~\overline v(t,x)\ge v(t,x)~~~~~~\text{for}~~t\ge 0,~~x-c_*(t+T)\ge -(t+T)^\delta.
\end{equation*}

\subsection*{Lower barrier when $k\ge -3$}

Let $w_2(t,x)$ be the solution to \eqref{linear-w} for $(t,x)\in(0,+\infty)\times\R$ associated with an odd and continuous initial function  $w_2(0,x)$ such that
\begin{align*}
	\label{lower-initial}
	w_2(0,x)=
	\begin{cases}
		a_1w_0(x)+\chi_0(x),~~&\text{if}~~k\in[-3,-1)\cup[0,+\infty),\\
		a_1w(\tau,x+c_*\tau)+\chi_0(x),~~&\text{if}~~k\in[-1,0),
	\end{cases}	~~~~x\in\R_+,
\end{align*} 
 where  $w_0$ and $\chi_0$ satisfies respectively \eqref{w_0} and \eqref{chi_0}, $w$ is the solution to \eqref{linear-w}-\eqref{w_0}, and $\tau>T$.

 We now show that $w_2(0,x)\ge0$ for $x>0$, which will imply that $w_2(t,x)>0$ for $t>0$ and $x-c_*t>0$. In fact, as analyzed earlier for \eqref{chi_0}, this is trivial   as long as $k\in[-3,-1)\cup[0,+\infty)$. Consider now $k\in[-1,0)$, then we derive from \eqref{w-outdiff-bdd-time}  as well as Remark \ref{rk_1'} (i)  that there exists some constant $\underline{a}\in(0,a_1)$ such that at time $\tau$,     
 \begin{equation}\label{3.5}
 	\underline{a} x^{k+1}\le a_1w(\tau,x+c_*\tau)\le a_1 x^{k+1},~~~~x\ge\sqrt{\tau},
 \end{equation} 
exhibiting the same decay rate as $w_0$ satisfying \eqref{w_0}.
 This enables us to go back to the simple analysis as for \eqref{w_0}, then it follows from $T^{\alpha(k+1)}>T^{\frac{\kappa}{2}+\beta}$ with $k\in[-1,0)$ that $w_2(0,x)\ge0$ for $x>0$.  Beyond this, we also derive that $w_2(t,x)$ starting from such $w_2(0,x)$ satisfies Proposition \ref{prop2.2}.

For $t\ge 0$ and  $x-c_*t\ge (t+T)^\delta$, set
\begin{equation*}
	\label{v-lower}
	\underline v(t,x)=\eta(t) w_2(t,x)-\mathcal{V}_2(t,x),
\end{equation*}
with
\begin{equation*}
\eta(t)=1-\frac{1}{T^\gamma}+\frac{1}{(t+T)^{\gamma}},
\end{equation*}
and
\begin{equation*}
\label{cosine-lower}
\mathcal{V}_2(t,x)= 	(t+T)^{\frac{\kappa}{2}+\beta}\cos\left(\frac{x-c_*t}{(t+T)^\alpha}\right)\mathbbm{1}_{\left\{(t,x)\in\R_+\times\R| (t+T)^{\delta}\le x-c_*t\le  \frac{3\pi}{2}(t+T)^\alpha\right\}}.
\end{equation*}
We are going to verify that $\underline v$ is a subsolution to problem \eqref{v-eqn} for $t\ge 0$ and $x-c_*t\ge (t+T)^\delta$.
Indeed, based on \eqref{3.5}, one can easily check that 
$$\underline v(0,x)=w_2(0,x)-\mathcal{V}_2(0,x)\le v(0,x)~~~\text{for}~x\ge T^\delta(>A).$$ 
Let us now show that $\underline v(t,\hat x)<v(t,\hat x)$ at  the boundary $t\ge 0$ and $\hat x-c_*t=(t+T)^\delta$. Note that $\mathcal{V}_2(t,\hat x)\ge \frac{1}{2}(t+T)^{\frac{\kappa}{2}+\beta}$. Moreover, 
\begin{itemize}
	\item  Case of $k\in[-3,-1)\cup[0,+\infty)$. The boundary can be divided into two sub-intervals: either $\Sigma_1:=\big\{(t,\hat x)|t\ge 0,~(t+T)^\delta= \hat x-c_*t\le \sqrt{t}\big\}$ or $\Sigma_2:=\big\{(t,\hat x)|t\ge 0,~\sqrt{t}\le (t+T)^\delta= \hat x-c_*t\big\}$. In the  sub-interval $\Sigma_1$, one deduces from  \eqref{w-asymptotic} that up to increasing $T$,
\begin{align*}
	\underline v(t,\hat x)\le w_2(t,\hat x)-\frac{1}{2}(t+T)^{\frac{\kappa}{2}+\beta}<C(t+T)^{\frac{\kappa}{2}+\delta}\ln(t+T)-\frac{1}{2}(t+T)^{\frac{\kappa}{2}+\beta}<0.
\end{align*} 	
In the  sub-interval $\Sigma_2$, one infers from \eqref{w-out diffusive} that when $k\ge 0$, 
\begin{align*}
	\underline v(t,\hat x)\le w_2(t,\hat x)-\frac{1}{2}(t+T)^{\frac{\kappa}{2}+\beta}\le C(t+T)^{\delta(k+1)}- \frac{1}{2}(t+T)^{\frac{\kappa}{2}+\beta}<0<v(t,\hat x),
\end{align*}
up to increasing $T$, whereas
 when  $-3\le k<-1$, 
\begin{align*}
	\underline v(t,\hat x)\le w_2(t,\hat x)\le C (t+T)^{\delta(k+1)}<\min_{(t,\hat x)\in\Sigma_2}v(t,\hat x)<v(t,\hat x),
\end{align*} 
up to increasing $T$.

 \item Case of $-1\le k<0$. First of all, one deduces from the maximum principle that 
 \begin{equation*}
 	w_2(t,x)\le a_1 w(t+\tau, x+c_*\tau),~~~~~~t\in\R_+,~x\ge c_*t.
 \end{equation*}
  Note also from  $\tau>T$ that $\hat x+c_*\tau-c_*(t+\tau)=\hat x-c_*t=(t+T)^\delta<\sqrt{t+\tau}$ for all $t\ge 0$. It follows from \eqref{w-asymptotic} that
 	\begin{align*}
 		\underline v(t,\hat x)&<w_2(t,\hat x)-\frac{1}{2}(t+T)^{\frac{\kappa}{2}+\beta}\le a_1 w(t+\tau, \hat x+c_*\tau)-\frac{1}{2}(t+T)^{\frac{\kappa}{2}+\beta}\\
 		&\le C(t+T)^\delta(t+\tau)^{\frac{\kappa}{2}}-\frac{1}{2}(t+T)^{\frac{\kappa}{2}+\beta}\le C(t+T)^\delta(t+\tau)^{\frac{\kappa}{2}}-\frac{1}{2}(t+T)^{\frac{\kappa}{2}}(t+\tau)^\beta\\
 		&=(t+T)^\delta(t+\tau)^\beta\Big(C(t+\tau)^{\frac{\kappa}{2}-\beta}-\frac{1}{2}(t+T)^{\frac{\kappa}{2}-\delta}\Big)\\
 		&<(t+T)^\delta(t+\tau)^\beta\Big(C(t+T)^{\frac{\kappa}{2}-\beta}-\frac{1}{2}(t+T)^{\frac{\kappa}{2}-\delta}\Big)
 	<0<v(t,\hat x),~~~~t\ge 0.
 \end{align*}
\end{itemize}
 Our conclusion is therefore achieved.

We are now in a position to verify that $\big(\partial_t-\mathcal{N}\big)\underline v+R(t,x;\underline v):=\underline v_t-\underline v_{xx}+c_* \underline v_x+R(t,x; \underline v)\le 0$ for $t>0$ and $x-c_*t\ge (t+T)^\delta$. First, the term  $R(t,x;\underline v)$ is always nonnegative due to \eqref{1-R term}, therefore it needs to be handled carefully this time. Specifically, it is clear that $R(t,x;\underline v)=0$ whenever $\underline v\le 0$, thanks to the linear extension of $f$ on $\R_-$. Otherwise, we infer from the regularity of $f$ that  there exist constants $0<c_g<C_g$ such that 
\begin{equation*}
	0<c_g s^2\le g(s):=f'(0)s-f(s)\le C_g s^2~~~~\text{for}~s\in(0,1),
\end{equation*}
which, along with the expression \eqref{1-R term} of $R$, implies that
\begin{equation}\label{R-sub}
	R(t,x;\underline v)\le C_g e^{-\lambda_*(x-c_*t)}\underline v(t,x)^2.
\end{equation}

We claim that there exists $C>0$ such that, up to increasing $T$, 
\begin{equation}
	\label{claim}
	e^{-\lambda_*(x-c_*t)}w_2(t,x)<\frac{C}{(t+T)^{1+\beta}},~~~~~~~~t>0,~~x-c_*t\ge  (t+T)^\delta.
\end{equation}  
Indeed, for $t>0$ and   $x-c_*t\ge \max\big((t+T)^\delta, \sqrt{t}\big)$, it easily follows from \eqref{w-out diffusive} that,  up to increasing $T$,    
\begin{equation*}\label{c-1}
	e^{-\lambda_*(x-c_*t)}w_2(t,x)<e^{-\lambda_*(x-c_*t)}C(x-c_*t)^{k+1}<Ce^{-\lambda_*(t+T)^\delta}(t+T)^{\delta(k+1)}<\frac{C}{(t+T)^{1+\beta}},
\end{equation*}
 where  we have used that the function $x\mapsto x^{k+1} e^{-\lambda_*x}$ is decreasing for all $x>0$ large enough.  It remains to consider the region $t>0$ and $(t+T)^\delta\le x-c_*t\le   \sqrt{t}$, for which we apply \eqref{w-asymptotic} up to increasing $T$. Specifically, when $k>0$, we have that
\begin{equation*}
	\begin{aligned}
		e^{-\lambda_*(x-c_*t)}w_2(t,x)&\le C e^{-\lambda_*(x-c_*t)}(x-c_*t)t^{\frac{\kappa}{2}}
		\le 
		C e^{-\lambda_*(x-c_*t)}(x-c_*t)(t+T)^{\frac{\kappa}{2}}\\
		&\le C e^{-\lambda_*(t+T)^\delta}  (t+T)^{\delta+\frac{\kappa}{2}}
		<
		\frac{C}{(t+T)^{1+\beta}},~~~~~t>0,~~(t+T)^\delta\le x-c_*t\le   \sqrt{t},
	\end{aligned}
\end{equation*}
up to increasing $T$; when $-3\le k\le 0$, 
\begin{equation*}
	\begin{aligned}
		e^{-\lambda_*(x-c_*t)}w_2(t,x)&\le C e^{-\lambda_*(x-c_*t)}(x-c_*t)t^{\frac{\kappa}{2}}\ln t 
		\le C e^{-\lambda_*(x-c_*t)}(x-c_*t)\ln (t+T)\\
		&\le Ce^{-\lambda_*(t+T)^\delta}  (t+T)^{\delta}\ln (t+T)
		<
		\frac{C}{(t+T)^{1+\beta}},~~~~~t>0,~~(t+T)^\delta\le x-c_*t\le   \sqrt{t},
	\end{aligned}
\end{equation*}
up to increasing $T$. Therefore, we arrive at  \eqref{claim}, as claimed.

\noindent
{\bf Step 1}. We begin by considering $t>0$ and $ x-c_*t\ge \frac{3\pi}{2}(t+T)^\alpha$. 
Here, $\underline v(t,x)=\eta(t)w_2(t,x)$. By virtue of \eqref{R-sub}-\eqref{claim}, one has, up to increasing $T$,
\begin{align*}
\big(\partial_t-\mathcal{N}\big)\underline v+R(t,x;\underline v)&=\eta'(t)w_2(t,x)+	R(t,x; \underline v)\\
&\le \eta'(t)w_2(t,x)+	C_g e^{-\lambda_*(x-c_*t)}\eta(t)^2w_2(t,x)^2~~(\text{notice that}~\eta(t)\le 1)\\
&\le \big(\eta'(t)+	C_g e^{-\lambda_*(x-c_*t)}w_2(t,x)\big)w_2(t,x)\\
&<\big(- \gamma(t+T)^{-1-\gamma}+C (t+T)^{-1-\beta}\big)w_2(t,x)<0.
\end{align*}

\vskip 2mm

\noindent
{\bf Step 2}. We now look at $t>0$ and $(t+T)^\delta\le  x-c_*t\le \frac{3\pi}{2}(t+T)^\alpha$. For convenience, let us define
\begin{equation*}
	\phi(t,x)=\frac{x-c_*t}{(t+T)^\alpha}.
\end{equation*}
Notice that
\begin{equation*}
	\big(\partial_t-\mathcal{N}\big)\big(\eta(t)w_2(t,x)\big)=\eta'(t)w_2(t,x)=- \gamma(t+T)^{-1-\gamma}w_2(t,x),
\end{equation*}
and
\begin{equation*}
	\begin{aligned}
		\big(\partial_t-\mathcal{N}\big)\mathcal{V}_2(t,x)=	&\big(\partial_t-\mathcal{N}\big)\left(	(t+T)^{\frac{\kappa}{2}+\beta}\cos\big(\phi(t,x)\big)\right)\\
		=&\left(\Big(\frac{\kappa}{2}+\beta\Big)(t+T)^{\frac{\kappa}{2}+\beta-1}+(t+T)^{\frac{\kappa}{2}+\beta-2\alpha}\right)\cos\big(\phi(t,x)\big)\\
		&~~~~~~~~~~~~~~~~~~~~~~~~~~~+\alpha\big(x-c_*t\big)(t+T)^{\frac{\kappa}{2}+\beta-\alpha-1}\sin\big(\phi(t,x)\big)
		\\
		=&(t+T)^{\frac{\kappa}{2}+\beta}\Bigg(\bigg(\frac{\frac{\kappa}{2}+\beta}{t+T}+\frac{1}{(t+T)^{2\alpha}}\bigg)\cos\big(\phi(t,x)\big)+\frac{\alpha\big(x-c_*t\big)}{(t+T)^{\alpha+1}}\sin\big(\phi(t,x)\big)\Bigg).
	\end{aligned}
\end{equation*}

Analogous to the preceding discussion for supersolution, we distinguish  two zones for  $t>0$:
\begin{itemize}
	\item $(t+T)^\delta< x-c_*t\le \frac{\pi}{4}(t+T)^\alpha$. We find  $\cos\big(\phi(t,x)\big)>\frac{1}{2}$, and	\begin{equation}\label{item-1}
		\big(\partial_t-\mathcal{N}\big)\mathcal{V}_2(t,x)\ge C(t+T)^{\frac{\kappa}{2}+\beta-2\alpha}>0.
	\end{equation}
Whenever $\underline v(t,x)\le0$, we have $R(t,x;\underline v)=0$ for $t>0$ in this zone, thanks to the linear extension of $f$ in $\R_-$. Therefore, 
	\begin{equation*}
	\big(\partial_t-\mathcal{N}\big)\underline v+R(t,x; \underline v)=	\eta'(t)w_2(t,x)-\big(\partial_t-\mathcal{N}\big)\mathcal{V}_2(t,x)< 0.
	\end{equation*}
Regarding the situation that $\underline v(t,x)>0$,  we have $\eta(t)w_2(t,x)>\mathcal{V}_2(t,x)>0$.
Gathering \eqref{R-sub}, \eqref{claim}  and \eqref{item-1} implies that  for $t>0$,
\begin{align*}
	\big(\partial_t-\mathcal{N}\big) \underline v+R(t,x; \underline v)&=\eta'(t)w_2(t,x)-C(t+T)^{\frac{\kappa}{2}+\beta-2\alpha}+R(t,x;\underline v)\\
	&<\eta'(t)w_2(t,x)+C_g e^{-\lambda_*(x-c_*t)}\underline v(t,x)^2\\
	&\le \eta'(t)w_2(t,x)+	4C_g e^{-\lambda_*(x-c_*t)}\eta(t)^2w_2(t,x)^2~~~(\text{notice that}~\eta(t)\le 1)\\
	&\le \big(\eta'(t)+	4C_g e^{-\lambda_*(x-c_*t)}w_2(t,x)\big)w_2(t,x)\\
	&\le \big(-\gamma(t+T)^{-1-\gamma}+	C(t+T)^{-1-\beta}\big)w_2(t,x)<0.
\end{align*}

	\item $\frac{\pi}{4}(t+T)^\alpha\le  x-c_*t\le \frac{3\pi}{2}(t+T)^\alpha$. Let us divide into two situations: either $k\ge 0$ or $k<0$.
	\begin{itemize}
		\item[(i)] Suppose $k\ge 0$. We claim that, up to increasing $T$, 
		\begin{align*}
			B_1(t+T)^{\alpha+\frac{\kappa}{2}-\varep}\le B_1(t+T)^\alpha t^{\frac{\kappa}{2}}\le w_2(t,x)\le B_2 (t+T)^\alpha t^{\frac{\kappa}{2}}\le B_2(t+T)^{\alpha+\frac{\kappa}{2}}
		\end{align*}
	for some constants $0<B_1<B_2$ and for some $\varep\in(0,\frac{2}{25})$.  In fact, this is true when $(t,x)$ locates within the diffusive scale $\frac{\pi}{4}(t+T)^\alpha\le  x-c_*t\le\min\big( \frac{3\pi}{2}(t+T)^\alpha,\sqrt{t}\big)$, which is a consequence of \eqref{w-asymptotic}, up to increasing $T$.
	On the other hand, we notice that the remaining domain $\max\big(\frac{\pi}{4}(t+T)^\alpha,\sqrt{t}\big)\le  x-c_*t\le \frac{3\pi}{2}(t+T)^\alpha$ is actually bounded, therefore the above estimate remains true, up to reducing $B_1$ and increasing $B_2$. Based on this claim, we have $\underline v(t,x)\le B_2(t+T)^{\alpha+\frac{\kappa}{2}}+(t+T)^{\frac{\kappa}{2}+\beta}\le C(t+T)^{\alpha+\frac{\kappa}{2}}$, and 
	\begin{align*}
		\big(\partial_t-\mathcal{N}\big) \underline v+R(t,x; \underline v)&= \eta'(t)w_2(t,x)+C(t+T)^{\frac{\kappa}{2}+\beta-2\alpha}+R(t,x;\underline v)\\
		&\le -B_1\gamma (t+T)^{\alpha+\frac{\kappa}{2}-\varep-1-\gamma}+C(t+T)^{\frac{\kappa}{2}+\beta-2\alpha}+C e^{-\lambda_*(x-c_*t)}\underline v(t,x)^2\\
			&\le -C (t+T)^{\alpha+\frac{\kappa}{2}-\varep-1-\gamma}+C e^{-\frac{\pi}{4}\lambda_*(t+T)^\alpha}(t+T)^{2\alpha+\kappa}<0,
	\end{align*}
up to increasing $T$.
	
	\item[(ii)] Suppose $-3\le k<0$. We follow the same idea as above and show that, up to increasing $T$,  
	\begin{equation*}
	B_1(t+T)^{\alpha+\frac{\kappa}{2}}\le B_1(t+T)^\alpha t^{\frac{\kappa}{2}}	\le w_2(t,x)\le B_2(t+T)^\alpha t^{\frac{\kappa}{2}}\ln t\le B_2 (t+T)^\alpha \ln (t+T),
	\end{equation*}
for some constants $0<B_1<B_2$. In the area  $\frac{\pi}{4}(t+T)^\alpha\le  x-c_*t\le\min\big( \frac{3\pi}{2}(t+T)^\alpha,\sqrt{t}\big)$,  the conclusion follows from \eqref{w-asymptotic}, up to increasing $T$. In addition, since the region where $\max\big(\frac{\pi}{4}(t+T)^\alpha,\sqrt{t}\big)\le  x-c_*t\le \frac{3\pi}{2}(t+T)^\alpha$ is bounded, the  estimate above still holds, up to reducing $B_1$ and increasing $B_2$. Therefore, $\underline v(t,x)\le B_2(t+T)^{\alpha}\ln(t+T)+(t+T)^{\frac{\kappa}{2}+\beta}\le C(t+T)^{\alpha}\ln(t+T)$, and up to increasing $T$,
	\begin{align*}
	\big(\partial_t-\mathcal{N}\big) \underline v+R(t,x; \underline v)&= \eta'(t)w_2(t,x)+C(t+T)^{\frac{\kappa}{2}+\beta-2\alpha}+R(t,x;\underline v)\\
	&\le -B_1\gamma (t+T)^{\alpha+\frac{\kappa}{2}-1-\gamma}+C(t+T)^{\frac{\kappa}{2}+\beta-2\alpha}+C e^{-\lambda_*(x-c_*t)}\underline v(t,x)^2\\
	&\le -C (t+T)^{\alpha+\frac{\kappa}{2}-1-\gamma}+C e^{-\frac{\pi}{4}\lambda_*(t+T)^\alpha}(t+T)^{2\alpha}\big(\ln(t+T)\big)^2<0.
\end{align*}
	\end{itemize}
\end{itemize}

\noindent
{\bf Conclusion}. We therefore derive that the function $\underline v$ given in \eqref{v-lower} is a subsolution to \eqref{v-eqn} for $t\ge 0$ and $x-c_*t\ge (t+T)^\delta$. It follows from the comparison principle that 
\begin{equation*}
	~~~~~~~~~~~~~~~~~~~~\underline v(t,x)\le v(t,x)~~~~~~\text{for}~~t\ge 0,~~x-c_*t\ge (t+T)^\delta.
\end{equation*}

\subsection{Upper and lower barriers for $k<-3$}\label{sec-3.2}

To establish 
super- and subsolutions for function 
$v$ when $k<-3$, the strategy  in  the preceding section should work in principle. Nevertheless,
the key difficulty in the case of $k<-3$ is that,   
 in contrast with  the situation when $k\ge-3$, such a perturbation seems to some extent too large now.
Therefore, it requires more effort in order for  the linear solution, as the primary term in the subsolution, to ``initially'' be placed below  the targeted function $v$. In this spirit, we now make the cosine term 
  perturb  $v(T,\cdot)$ with some large time $T$. However, this causes a new problem - the upper barrier established in the previous subsection can no longer match this subsolution and thus  the sharp asymptotics of the function $v$ cannot  be  captured. As a result, it is  indispensable to refine the upper barrier  accordingly.

Let us fix  parameters $\delta^*$, $\gamma^*$ and $\beta^*$ such that
\begin{align}\label{parameters-k<-3}
	\beta<\delta^*<\gamma^*<\beta^*<\frac{4}{25},
\end{align}
where $\beta\in(0,\frac{4}{25})$ was given in \eqref{parameters}.

 We start with $v(T,\cdot)$ with some large $T>0$ given in \eqref{T}. We first deduce from the upper barrier given in Section \ref{sec_3.1} that, up to increasing $T$,
\begin{equation*}
~~~~~~~~~	v(T,x)\le C\frac{x-c_*T}{T^{\frac{3}{2}}}~~~~~~~\text{for}~~~T^{\delta^*}\le x-c_*T\le \sqrt{T}.
\end{equation*}
On the other hand, based upon the sharp asymptotics for \eqref{v-eqn} starting from localized initial data  \cite[Section 2]{BFRZ2023}, it follows that the reverse of the above inequality is also true, by possibly decreasing the coefficient. That is, there eixst $0<C_1<C_2$ such that
 \begin{equation*}
 C_1\frac{x-c_*T}{T^{\frac{3}{2}}}\le 	v(T,x)\le C_2\frac{x-c_*T}{T^{\frac{3}{2}}}~~~~~~~\text{for}~~~T^{\delta^*}\le x-c_*T\le \sqrt{T}.
 \end{equation*}
Moreover, it also follows from the upper barrier in Section \ref{sec_3.1}, together with \eqref{w-out diffusive} and  Proposition \ref{prop2.2} (iii) that there exists $C_3>0$ such that  
 \begin{equation*}
	v(T,x)\le C_3(x-c_*T)^{k+1}~~~~~~~\text{for}~~~ x-c_*T\ge \sqrt{T}.
\end{equation*}

Let now $w_0^*$ be an odd function in $\R$ such that 
\begin{align}\label{w*0}
	w_0^*(z)=v(T,1+c_*T)z,~~~~z\in[0,1],~~~~~w_0^*(z)=v(T,z+c_*T),~~~~z\in[1,+\infty).
\end{align}
Then, we observe that
\begin{equation}
	\label{w*0-1}
	C_1T^{-\frac{3}{2}}z\le 	w_0^*(z)\le C_2T^{-\frac{3}{2}}z,~~~~~~~~~T^{\delta^*}\le z\le \sqrt{T},
\end{equation}
and
\begin{equation*}
	w_0^*(z)\le C_3z^{k+1},~~~~~~~z\ge \sqrt{T}.
\end{equation*}  
This suggests that as $z\to+\infty$, $w_0^*$ decays  no slower  than $w_0$ given in \eqref{w_0} for $k<-3$. Therefore,
the solution $p(t,y)$ to the heat equation $p_t=p_{yy}$ for $(t,y)\in(0,+\infty)\times\R$ with $p(0,\cdot)=w_0^*$ in $\R$ satisfies \eqref{p-asymptotic} (with $k<-3$) and \eqref{p-asymp-outside diffu} which are invariant under a compact perturbation. As a consequence, the solution $w^*$ to \eqref{linear-w}-\eqref{w*0} satisfies \eqref{w-asymptotic} (with $k<-3$) and \eqref{w-out diffusive} which remain unchanged under a compact perturbation.   We are now in position to build upper and lower barriers for the function $v$.
 
 \subsection*{Upper barrier}
 Let $w_1^*(t,x)$ be the solution to \eqref{linear-w} for $(t,x)\in(0,+\infty)\times\R$ associated with an odd and continuous initial function  $w_1^*(0,x)$ such that
 \begin{align*}
 	\label{upper-initial_k<-3}
 	w_1^*(0,x)=
 	w_0^*(x)-MT^{\beta^*-\frac{3}{2}}\cos\Big(\frac{x-c_*T}{T^\alpha}\Big)\mathbbm{1}_{\left\{x\in\R|\frac{\pi}{2}T^\alpha\le x-c_* T\le  \frac{3\pi}{2}T^\alpha\right\}},~~	~~~~x\in\R_+,
 \end{align*} 
 where  $w_0^*$  satisfies  \eqref{w*0}. 
 It is easy to see that $w_1^*(0,x)\ge 0$ for $x\in\R_+$. Moreover, $w_1^*(t,x)$ satisfies 
 \begin{equation}\label{w1*-asymptotic-within}
 	w_1^*(t,x)\approx\Big(\varpi+\sqrt{\pi}T^{\beta^*+2\alpha-\frac{3}{2}}\Big)\frac{x-c_*t}{t^{\frac{3}{2}}},~~~~~~t\gg 1,~~|x-c_*t|\le \sqrt{t},
 \end{equation}
with some $\varpi>0$ depending on $v(T,\cdot)$ (and thus on $u_0$).

For $t\ge 0$ and $x-c_*(t+T)\ge -(t+T)^{\delta^*}$, define
\begin{equation}
	\label{v-upper_k<-3}
	\overline v(t,x)=\xi(t) w_1^*(t,x)+\mathcal{V}_1^*(t,x),
\end{equation}
with 
\begin{equation*}
	\xi(t)=1+\frac{1}{T^{\gamma^*}}-\frac{1}{(t+T)^{\gamma^*}},
\end{equation*}
and
\begin{equation*}
	\mathcal{V}_1^*(t,x)= M	(t+T)^{\beta^*-\frac{3}{2}}\cos\left(\frac{x-c_*(t+T)}{(t+T)^\alpha}\right)\mathbbm{1}_{\left\{(t,x)\in\R_+\times\R| -(t+T)^{\delta^*}\le x-c_*(t+T)\le  \frac{3\pi}{2}(t+T)^\alpha\right\}}.
\end{equation*}
We shall check that $\overline v$ is a supersolution to the nonlinear problem \eqref{v-eqn} for  $t\ge 0$ and $x-c_*(t+T)\ge -(t+T)^{\delta^*}$. 

We first notice that
$\overline v(0,x)=w_1^*(0,x)+\mathcal{V}_1^*(0,x)\ge v(0,x)$ for $x\ge c_*T-T^{\delta^*}(>1).$
At the boundary $t\ge 0$ and $\bar x-c_*(t+T)=-(t+T)^{\delta^*}$, we have $\mathcal{V}_1(t,\bar x)>\frac{M}{2}(t+T)^{\beta^*-\frac{3}{2}}$ for $t\ge 0$. Moreover, we notice that
$w_1^*(t,\bar x)\ge 0$ as long as $\bar x-c_*t=c_*T-(t+T)^{\delta^*}\ge 0$, i.e. when $0\le t\le t^*:= (c_*T)^{\frac{1}{\delta^*}}-T$. Therefore,
\begin{equation*}
	\overline v(t,\bar x)\ge \mathcal{V}_1(t,\bar x)>\frac{M}{2}(t+T)^{\beta^*-\frac{3}{2}},
	~~~~~~~~~~~~ 0\le t\le t^*.
\end{equation*}
Nevertheless, $w_1^*(t,x)<0$ when $t>t^*$, where, up to increasing $T$, we deduce from $-\sqrt{t}\le \bar x-c_*t=c_* T-(t+T)^{\delta^*}<0$ that $w_1^*(t,\bar x)$ satisfies \eqref{w1*-asymptotic-within}. Specifically, one can pick $\varep\in(0,\frac{\beta^*-{\delta^*}}{2})$ such that
$0>\xi(t)w_1^*(t,\bar x)\ge C\big(c_*T-(t+T)^{\delta^*}\big) t^{-\frac{3}{2}}\ge -C(t+T)^{\delta^*-\frac{3}{2}+\varep}$ for $t>t^*$  up to increasing $T$, whence  up to further increasing $T$, we have that
	\begin{align*}
		\overline v(t,\bar x)\ge -C(t+T)^{\delta^*-\frac{3}{2}+\varep} +\frac{M}{2}(t+T)^{\beta^*-\frac{3}{2}}\ge \frac{M}{4}(t+T)^{\beta^*-\frac{3}{2}},
		~~~~~~~~~t>t^*.
	\end{align*}
In addition, we note from $0\le u(t,x)\le1$ for $(t,x)\in\R_+\times\R$ that
$v(t,\bar x)=e^{\lambda_*(\bar x-c_*t)}u(t,\bar x)\le e^{\lambda_*(c_*T-(t+T)^{\delta^*})}$ for $t\ge 0$, thus $v(t,\bar x)$ can be put below $\overline v(t,\bar x)$ for all $t$ large. By choosing $M>0$ properly,
we can further derive that
\begin{equation*}
	\overline v(t,\bar x)\ge \frac{M}{4}(t+T)^{\beta^*-\frac{3}{2}}>e^{\lambda_*(c_*T-(t+T)^{\delta^*})}\ge v(t,\bar x),~~~t\ge 0.
\end{equation*}

Next, it is left to verify that $\overline v$ satisfies $\overline v_t-\overline v_{xx}+c_* \overline v_x+R(t,x; \overline v)\ge 0$ for $t>0$ and $x-c_*(t+T)\ge -(t+T)^{\delta^*}$. Due to $R(t,x;\overline v)\ge 0$, it is enough to check that $\big(\partial_t-\mathcal{N}\big)\overline v:=\overline v_t-\overline v_{xx}+c_* \overline v_x\ge 0$ for $t>0$ and $x-c_*(t+T)\ge -(t+T)^{\delta^*}$.

\vskip 2mm

\noindent
{\bf Step 1}. We first consider $t>0$ and $ x-c_*(t+T)\ge \frac{3\pi}{2}(t+T)^\alpha$. Since $\overline v(t,x)=\xi(t)w_1^*(t,x)$ in this region, it immediately follows that $\big(\partial_t-\mathcal{N}\big)\overline v(t,x)=\xi'(t)w_1^*(t,x)\ge 0$. 

\vskip 2mm

\noindent
{\bf Step 2}. We now look at $t>0$ and $-(t+T)^{\delta^*}\le  x-c_*(t+T)\le \frac{3\pi}{2}(t+T)^\alpha$. For convenience, let us define
\begin{equation*}
	\zeta(t,x)=\frac{x-c_*(t+T)}{(t+T)^\alpha}.
\end{equation*}
It follows from direct computation that
\begin{equation*}
	\big(\partial_t-\mathcal{N}\big)\big(\xi(t)w_1^*(t,x)\big)=\xi'(t)w_1^*(t,x)=\gamma^* (t+T)^{-\gamma^*-1}w_1^*(t,x),
\end{equation*}
and
\begin{equation*}
	\begin{aligned}
		\big(\partial_t-\mathcal{N}\big)\mathcal{V}_1(t,x)
		=&M(t+T)^{\beta^*-\frac{3}{2}}\Bigg(\bigg(\frac{\beta^*-\frac{3}{2}}{t+T}+\frac{1}{(t+T)^{2\alpha}}\bigg)\!\cos\!\big(\zeta(t,x)\big)\!+\!\frac{\alpha\big(x-c_*(t+T)\big)}{(t+T)^{\alpha+1}}\sin\big(\zeta(t,x)\big)\Bigg).
	\end{aligned}
\end{equation*}

As done in Section \ref{sec_3.1}, we now divide the region into two zones for  $t>0$:
\begin{itemize}
	\item $-(t+T)^{\delta^*}\le  x-c_*(t+T)\le \frac{\pi}{4}(t+T)^\alpha$.
	We have that $\cos\big(\zeta(t,x)\big)>\frac{1}{2}$, thereby $	\big(\partial_t-\mathcal{N}\big)\mathcal{V}_1(t,x)\ge C(t+T)^{\beta^*-\frac{3}{2}-2\alpha}>0$.
 In addition, in the domain where $\max\big(0,c_*T-(t+T)^{\delta^*}\big)\le x-c_*t\le \frac{\pi}{4}(t+T)^\alpha$,
	we have $w_1^*(t,x)\ge 0$, which immediately gives that $\big(\partial_t-\mathcal{N}\big)\overline v(t,x)\ge \xi'(t)w_1^*(t,x)+C(t+T)^{\beta^*-\frac{3}{2}-2\alpha}> 0$. Nevertheless, in the area where $c_*T-(t+T)^{\delta^*}\le x-c_*t\le 0$, we deduce that $t\ge \hat t:=(c_*T)^\frac{1}{\delta^*}-T$. As we discussed earlier for the boundary,  this region is completely included in the diffusive regime up to increasing $T$, i.e. $-\sqrt{t}\le c_*T-(t+T)^{\delta^*}\le x-c_*t\le 0$. Therefore,
	it follows from  \eqref{w1*-asymptotic-within} that one can choose some $\varep\in(0,\frac{\beta^*-\delta^*}{2})$ such that up to increasing $T$,
	\begin{equation*}
		0>	w_1^*(t,x)\ge  C\big(c_*T-(t+T)^{\delta^*}\big) t^{-\frac{3}{2}}\ge -C(t+T)^{-\frac{3}{2}+{\delta^*}+\varep}. 
	\end{equation*}
	Consequently,  $\big(\partial_t-\mathcal{N}\big)\overline v(t,x)\ge \xi'(t)w_1^*(t,x)+C(t+T)^{\beta^*-\frac{3}{2}-2\alpha}>0$, up to increasing $T$.

	\item $\frac{\pi}{4}(t+T)^\alpha\le  x-c_*(t+T)\le \frac{3\pi}{2}(t+T)^\alpha$.  In this region, we observe that $w_1^*(t,x)>0$ and  $\big(\partial_t-\mathcal{N}\big)\mathcal{V}_1(t,x)\ge -C(t+T)^{\beta^*-\frac{3}{2}-2\alpha}$.
	We carry out our analysis by dividing the region into two  parts: either $\Omega_1:=\big\{
	(t,x) | t>0, c_*T+\frac{\pi}{4}(t+T)^\alpha\le  x-c_*t\le \min\big(c_*T+ \frac{3\pi}{2}(t+T)^\alpha,\sqrt{t}\big)
	\big\}$ or $\Omega_2:=\big\{
	(t,x) | t>0, \max\big(c_*T+\frac{\pi}{4}(t+T)^\alpha,\sqrt{t}\big)\le  x-c_*t\le c_*T+ \frac{3\pi}{2}(t+T)^\alpha
	\big\}$.
	
	We first deduce from \eqref{w1*-asymptotic-within} that, up to increasing $T$,   
	\begin{align*}
		w_1^*(t,x)\ge C\big(c_*T+\frac{\pi}{4}(t+T)^\alpha\big)t^{-\frac{3}{2}}\ge C(t+T)^{\alpha-\frac{3}{2}},~~~~~(t,x)\in\Omega_1.
	\end{align*}
  It follows that $\big(\partial_t-\mathcal{N}\big)\overline v(t,x)\ge \xi'(t)w_1^*(t,x)-C(t+T)^{\beta^*-\frac{3}{2}-2\alpha}>0$ for $(t,x)\in\Omega_1$, up to increasing $T$. 
As for $\Omega_2$, by noticing that it is bounded, we then have that, up to increasing $T$,
	\begin{align*}
		\big(\partial_t-\mathcal{N}\big)\overline v(t,x)&=\xi'(t)w_1^*(t,x)-C(t+T)^{-\frac{3}{2}+\beta-2\alpha}\\
		&\ge \min_{(t,x)\in\Omega_2}w_1^*(t,x)\gamma^* (t+T)^{-\gamma^*-1}-C(t+T)^{\beta^*-\frac{3}{2}-2\alpha}>0.
	\end{align*}

\end{itemize}
\noindent
{\bf Conclusion}. We have checked that the function $\overline v$ defined in \eqref{v-upper_k<-3} is indeed a supersolution to \eqref{v-eqn} for $t\ge 0$ and $x-c_*(t+T)\ge -(t+T)^{\delta^*}$. The comparison principle implies that 
\begin{equation*}
	~~~~~~~~~~~~~~~~~~~~\overline v(t,x)\ge v(t+T,x+c_*T)~~~~~~\text{for}~~t\ge 0,~~x-c_*(t+T)\ge -(t+T)^{\delta^*}.
\end{equation*}

 \subsection*{Lower barrier}
 
 Let $w_2^*(t,x)$ be the solution to \eqref{linear-w} for $(t,x)\in(0,+\infty)\times\R$ associated with an odd and continuous initial function  $w_2^*(0,x)$ such that
 \begin{align*}
 	\label{lower-initial_k<-3}
 	w_2^*(0,x)=
 		w_0^*(x)+T^{\beta^*-\frac{3}{2}}\cos\Big(\frac{x}{T^\alpha}\Big)\mathbbm{1}_{\left\{x\in\R|\frac{\pi}{2}T^\alpha\le x \le  \frac{3\pi}{2}T^\alpha\right\}},~~	~~~~x\in\R_+,
 \end{align*} 
 where  $w_0^*$  satisfies  \eqref{w*0}. 
Obviously, $w_2^*(0,x)\ge 0$ for $x\in\R_+$ up to increasing $T$, by noticing from \eqref{w*0-1} that $w_2^*(0,T^\alpha)\ge w_0^*(T^\alpha)-T^{\beta^*-\frac{3}{2}}\ge C_1T^{\alpha-\frac{3}{2}}-T^{\beta^*-\frac{3}{2}}\ge 0$. Moreover, $w_2^*(t,x)$ satisfies 
\begin{equation}\label{w2*-asym_within}
	w_2^*(t,x)\approx\Big(\varpi-\sqrt{\pi}T^{\beta^*+2\alpha-\frac{3}{2}}\Big)\frac{x-c_*t}{t^{\frac{3}{2}}},~~~~~t\gg1,~~|x-c_*t|\le \sqrt{t},
\end{equation}
with some $\varpi>0$ depending on $v(T,\cdot)$ (and thus on $u_0$), and
\begin{equation}
	\label{w2*-outdiffu}
~~~~~~~~~~~~	w_2^*(t,x)\le C (x-c_*t)^{k+1},~~~~~~~~~~~~ t>0,~~|x-c_*t|\ge \sqrt{t}.
\end{equation}

For $t\ge 0$ and  $x-c_*t\ge (t+T)^{\delta^*}$, set
\begin{equation}
	\label{v-lower_k<-3}
	\underline v(t,x)=\eta(t) w_2^*(t,x)-\mathcal{V}_2^*(t,x),
\end{equation}
with 
\begin{equation*}
	\eta(t)=1-\frac{1}{T^{\gamma^*}}+\frac{1}{(t+T)^{\gamma^*}},
\end{equation*}
and
\begin{equation*}
	\label{cosine-lower_k<-3}
	\mathcal{V}_2^*(t,x)= 	(t+T)^{\beta^*-\frac{3}{2}}\cos\left(\frac{x-c_*t}{(t+T)^\alpha}\right)\mathbbm{1}_{\left\{(t,x)\in\R_+\times\R| (t+T)^{\delta^*}\le x-c_*t\le  \frac{3\pi}{2}(t+T)^\alpha\right\}}.
\end{equation*}
We are going to verify that $\underline v$ is a subsolution to problem \eqref{v-eqn} for $t\ge 0$ and $x-c_*t\ge (t+T)^{\delta^*}$.

First of all, one can easily check that 
$\underline v(0,x)=w_2^*(0,x)-\mathcal{V}_2^*(0,x)\le v(T,x+c_*T)$ for $x\ge T^{\delta^*}(>1)$. Next, let us consider the boundary $t\ge 0$ and $\hat x-c_*t=(t+T)^{\delta^*}$.  We find that $	\mathcal{V}_2^*(t,\hat x)\ge \frac{1}{2} (t+T)^{\beta^*-\frac{3}{2}}$. Moreover, for those $(t,\hat x)$ such that $\hat x-c_*t=(t+T)^{\delta^*}\le\sqrt{t}$, we infer from \eqref{w2*-asym_within} that $w_2^*(t,x)\le C(t+T)^{\delta^*-\frac{3}{2}}$, thus $\underline v(t,\hat x)\le C(t+T)^{\delta^*-\frac{3}{2}}-\frac{1}{2} (t+T)^{\beta^*-\frac{3}{2}}\le 0$, 
up to increasing $T$. Noticing that the remaining subinterval  $I^*:=\{(t,\hat x)\in\R_+\times\R | \hat x-c_*t=(t+T)^{\delta^*}\ge\sqrt{t}\}$ is bounded, we derive from \eqref{w2*-outdiffu} and $k+1<0$ that $\underline v(t,\hat x)\le C(t+T)^{\delta^*(k+1)}<\min_{(t,\hat x)\in I^*}v(t+T,\hat x+c_*T)\le v(t+T,\hat x+c_*T)$, up to increasing $T$.

Next, let us verify that 
 $\big(\partial_t-\mathcal{N}\big)\underline v+R(t,x;\underline v):=\underline v_t-\underline v_{xx}+c_* \underline v_x+R(t,x; \underline v)\le 0$ for $t>0$ and $x-c_*t\ge (t+T)^{\delta^*}$. We recall that $R(t,x;\underline v)=0$ provided that $\underline v(t,x)\le 0$, while $0\le R(t,x;\underline v)\le C_g e^{-\lambda_*(x-c_*t)}\underline v(t,x)^2$ whenever $\underline v(t,x)>0$.
 
\vskip 2mm

\noindent
{\bf Step 1}. We begin by considering $t>0$ and $ x-c_*t\ge \frac{3\pi}{2}(t+T)^\alpha$.   
It is easily seen that $\underline v(t,x)=\eta(t)w_2^*(t,x)$, and
\begin{align*}
	\big(\partial_t-\mathcal{N}\big)\underline v+R(t,x;\underline v)&=\eta'(t)w_2^*(t,x)+	R(t,x; \underline v)\\
	&\le \eta'(t)w_2^*(t,x)+	C_g e^{-\lambda_*(x-c_*t)}\eta(t)^2w_2^*(t,x)^2~~(\text{notice that}~\eta(t)\le 1)\\
	&\le \big(- {\gamma^*}(t+T)^{-1-\gamma^*}+	C_g e^{-\lambda_*(x-c_*t)}w_2^*(t,x)\big)w_2^*(t,x).
\end{align*}
For further discussion, let us
divide the domain into two parts: $\Omega_1:=\{(t,x)|t>0, \frac{3\pi}{2}(t+T)^\alpha\le x-c_*t\le\sqrt{t}\}$ and $\Omega_2:=\{(t,x)| t>0, \max\big(\frac{3\pi}{2}(t+T)^\alpha,\sqrt{t}\big)\le x-c_*t\}$, for which we infer from \eqref{w2*-asym_within} and \eqref{w2*-outdiffu} respectively that
\begin{align*}
	e^{-\lambda_*(x-c_*t)}w_2^*(t,x)\le C e^{-\lambda_*(x-c_*t)}(x-c_*t)t^{-\frac{3}{2}}\le  C e^{-\lambda_*(x-c_*t)}(x-c_*t)\le e^{-\lambda_*(t+T)^\alpha}(t+T)^\alpha,~~~(t,x)\in\Omega_1,
\end{align*}
and 
\begin{align*}
	e^{-\lambda_*(x-c_*t)}w_2^*(t,x)\le C e^{-\lambda_*(x-c_*t)}(x-c_*t)^{k+1}\le  e^{-\lambda_*(t+T)^\alpha}(t+T)^{\alpha(k+1)},~~~(t,x)\in\Omega_2.
\end{align*}
Therefore, one has $- {\gamma^*}(t+T)^{-1-\gamma^*}+	C_g e^{-\lambda_*(x-c_*t)}w_2^*(t,x)<0$, up to increasing $T$. This gives $\big(\partial_t-\mathcal{N}\big)\underline v+R(t,x;\underline v)\le 0$.

\vskip 2mm

\noindent
{\bf Step 2}. We now look at $t>0$ and $(t+T)^{\delta^*}\le  x-c_*t\le \frac{3\pi}{2}(t+T)^\alpha$. For convenience, let us define
\begin{equation*}
	\phi(t,x)=\frac{x-c_*t}{(t+T)^\alpha}.
\end{equation*}
Notice that $\big(\partial_t-\mathcal{N}\big)\big(\eta(t)w_2^*(t,x)\big)=\eta'(t)w_2^*(t,x)=- \gamma^*(t+T)^{-1-\gamma^*}w_2^*(t,x)$,
and
\begin{equation*}
	\begin{aligned}
		\big(\partial_t-\mathcal{N}\big)\mathcal{V}_2(t,x)
		=&(t+T)^{\beta^*-\frac{3}{2}}\Bigg(\bigg(\frac{\beta^*-\frac{3}{2}}{t+T}+\frac{1}{(t+T)^{2\alpha}}\bigg)\cos\big(\phi(t,x)\big)+\frac{\alpha\big(x-c_*t\big)}{(t+T)^{\alpha+1}}\sin\big(\phi(t,x)\big)\Bigg).
	\end{aligned}
\end{equation*}

For further analysis, let us  distinguish again  two zones for  $t>0$:
\begin{itemize}
	\item $(t+T)^{\delta^*}< x-c_*t\le \frac{\pi}{4}(t+T)^\alpha$. We have $\big(\partial_t-\mathcal{N}\big)\mathcal{V}_2(t,x)\ge C(t+T)^{\beta^*-\frac{3}{2}-2\alpha}>0$. 	Given that $\underline v(t,x)\le0$, it follows that $R(t,x;\underline v)=0$, thus obviously
	\begin{equation*}
		\big(\partial_t-\mathcal{N}\big)\underline v+R(t,x; \underline v)=	\big(\partial_t-\mathcal{N}\big)\big(\eta(t)w_2^*(t,x)\big)-\big(\partial_t-\mathcal{N}\big)\mathcal{V}_2(t,x)< 0.
	\end{equation*}
	When $\underline v(t,x)>0$, it implies that $\eta(t)w_2^*(t,x)>\mathcal{V}_2(t,x)>0$. Then,
	\begin{align*}
		\big(\partial_t-\mathcal{N}\big) \underline v+R(t,x; \underline v)&=\eta'(t)w_2^*(t,x)-C(t+T)^{\beta^*-\frac{3}{2}-2\alpha}+R(t,x;\underline v)\\
		&<\eta'(t)w_2^*(t,x)+C_g e^{-\lambda_*(x-c_*t)}\underline v(t,x)^2\\
		&\le \eta'(t)w_2^*(t,x)+	4C_g e^{-\lambda_*(x-c_*t)}\eta(t)^2w_2^*(t,x)^2~~~(\text{notice that}~\eta(t)\le 1)\\
		&\le \big(-\gamma^*(t+T)^{-1-\gamma^*}+	4C_g e^{-\lambda_*(x-c_*t)}w_2^*(t,x)\big)w_2^*(t,x).
	\end{align*} 
	By revisiting the arguments in Step 1, taking into account the diffusive scale and beyond respectively, one can eventually  conclude that $\big(\partial_t-\mathcal{N}\big) \underline v+R(t,x; \underline v)<0$.

	\item $\frac{\pi}{4}(t+T)^\alpha\le  x-c_*t\le \frac{3\pi}{2}(t+T)^\alpha$.
 For those $(t,x)$ such that $\frac{\pi}{4}(t+T)^\alpha\le  x-c_*t\le\min\big( \frac{3\pi}{2}(t+T)^\alpha,\sqrt{t}\big)$, it follows from \eqref{w2*-asym_within} that up to increasing $T$,
		\begin{equation*}
			B_1(t+T)^{\alpha-\frac{3}{2}}\le B_1(t+T)^\alpha t^{-\frac{3}{2}}	\le w_2^*(t,x)\le B_2(t+T)^\alpha t^{-\frac{3}{2}}\le B_2 (t+T)^\alpha,
		\end{equation*}
		for some constants $0<B_1<B_2$. In the region where $\max\big(\frac{\pi}{4}(t+T)^\alpha,\sqrt{t}\big)\le  x-c_*t\le \frac{3\pi}{2}(t+T)^\alpha$,  the above estimate still holds, up to reducing $B_1$ and increasing $B_2$, by noticing that the domain is actually bounded. Therefore, we have that  $\underline v(t,x)=\eta(t)w_2^*(t,x)-\mathcal{V}_2^*(t,x)\le B_2(t+T)^{\alpha}+(t+T)^{\beta^*-\frac{3}{2}}\le C(t+T)^{\alpha}$, and 
		\begin{align*}
			\big(\partial_t-\mathcal{N}\big) \underline v+R(t,x; \underline v)&= \eta'(t)w_2(t,x)+C(t+T)^{\beta^*-\frac{3}{2}-2\alpha}+R(t,x;\underline v)\\
			&\le -B_1\gamma^* (t+T)^{\alpha-\frac{3}{2}-1-\gamma^*}+C(t+T)^{\beta^*-\frac{3}{2}-2\alpha}+C e^{-\lambda_*(x-c_*t)}\underline v(t,x)^2\\
			&\le -C (t+T)^{\alpha-\frac{3}{2}-1-\gamma^*}+C e^{-\frac{\pi}{4}\lambda_*(t+T)^\alpha}(t+T)^{2\alpha}<0.
		\end{align*}
up to increasing $T$.
\end{itemize}

\noindent
{\bf Conclusion}. We therefore derive that the function $\underline v$ given in \eqref{v-lower_k<-3} is indeed a subsolution to  \eqref{v-eqn} for $t\ge 0$ and $x-c_*t\ge (t+T)^{\delta^*}$. The comparison principle implies that 
\begin{equation*}
	~~~~~~~~~~~~~~~~~~~~\underline v(t,x)\le v(t+T,x+c_*T)~~~~~~\text{for}~~t\ge 0,~~x-c_*t\ge (t+T)^{\delta^*}.
\end{equation*}

\subsection{Conclusion}

Based upon the upper and lower barriers  in Sections \ref{sec_3.1}-\ref{sec-3.2} together with Proposition \ref{prop2.2}, it is immediate to obtain the following result, provided that $u_0$ is of \eqref{initial} type.

Fix any $\mu\in(4/25, 1/2)$ and set
\begin{equation*}
	\label{x_mu}
	\mathcal{X}_\mu(t):=c_*t+ t^\mu+o(t^\mu),~~~~~~~~t\gg 1.
\end{equation*}
\begin{prop}
	\label{prop4.1}
	Under the assumption \eqref{initial}  on $u_0$,
	the function $v(t,x)=e^{\lambda_*(x-c_*t)}u(t,x)$ satisfies 
	\begin{align*}
		B^- a_1 \varpi(x-c_*t)e^{-\frac{(x-c_*t)^2}{4t}}t^{\frac{k}{2}}\le v(t&,x)\le	B^+ a_2 \varpi(x-c_*t)e^{-\frac{(x-c_*t)^2}{4t}}t^{\frac{k}{2}},~~~~~~~~~&k>-3,\\
		B^- a_1	\varpi (x-c_*t) e^{-\frac{(x-c_*t)^2}{4t}}t^{-\frac{3}{2}}\ln t\le v(t&,x)\le	B^+ a_2	\varpi (x-c_*t) e^{-\frac{(x-c_*t)^2}{4t}}t^{-\frac{3}{2}}\ln t,~~~&k=-3,\\
		\mathcal{B}^-\varpi^- (x-c_*t) e^{-\frac{(x-c_*t)^2}{4t}}t^{-\frac{3}{2}}	\le v(t+T&,x+c_*T)\le	\mathcal{B}^+ \varpi^+ (x-c_*t) e^{-\frac{(x-c_*t)^2}{4t}}t^{-\frac{3}{2}},&k<-3,
	\end{align*}
	for $t\gg 1$ and $x=\mathcal{X}_\mu(t)$,	where   $T>A$ satisfies \eqref{T},  $\varpi>0$ depends on $w_0$ given in \eqref{w_0},  $B^\pm:=1\pm \frac{1}{T^\gamma}$ and $\mathcal{B}^\pm:=1\pm \frac{1}{T^{\gamma^*}}$ and $\varpi^\pm:=\varpi\pm\sqrt{\pi}T^{\beta^*+2\alpha-\frac{3}{2}}$ with $\gamma$, $\alpha$ given in \eqref{parameters}, and $\gamma^*$, $\beta^*$  given  in \eqref{parameters-k<-3}.
	
	If we further assume that $a_1=a_2=:a$ in \eqref{initial}, then the above conclusion remains true, with particularly  $a_1=a_2=a$ in the  estimates for $k\ge -3$.  
\end{prop}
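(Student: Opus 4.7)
The plan is to evaluate the barriers constructed in Sections \ref{sec_3.1}--\ref{sec-3.2} at the curve $x = \mathcal{X}_\mu(t)$ and exploit the linear solution asymptotics from Proposition \ref{prop2.2}. Since $\mu > 4/25 > \delta$ and $\mu > 4/25 > \delta^*$, the curve lies in the region of validity of the super- and subsolutions for $t$ large enough: for $k \geq -3$ this gives $\underline v(t, \mathcal{X}_\mu(t)) \leq v(t, \mathcal{X}_\mu(t)) \leq \overline v(t, \mathcal{X}_\mu(t))$, and for $k < -3$ the same holds after the time shift $t \mapsto t+T$ and $x \mapsto x + c_*T$ as in Section~\ref{sec-3.2}.

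Second, since $\mu < 1/2$, we have $0 < x - c_*t = t^\mu + o(t^\mu) \leq \sqrt{t}$ for $t \gg 1$, so we sit in the diffusive regime covered by Proposition \ref{prop2.2}(i),(iii). For $k > -3$ the compact odd perturbation $\chi_0$ in the initial datum of $w_1$ (resp.\ $w_2$) only contributes at a strictly lower order, hence
\[
w_1(t,x) \approx a_2 \varpi (x-c_*t) e^{-(x-c_*t)^2/(4t)} t^{k/2}, \qquad w_2(t,x) \approx a_1 \varpi (x-c_*t) e^{-(x-c_*t)^2/(4t)} t^{k/2},
\]
with an analogous result (extra $\ln t$ factor) when $k=-3$. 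For $k<-3$, Proposition \ref{prop2.2}(iii) shows instead $w_1^*(t,x) \approx \varpi^+ (x-c_*t) e^{-(x-c_*t)^2/(4t)} t^{-3/2}$ and $w_2^*(t,x) \approx \varpi^- (x-c_*t) e^{-(x-c_*t)^2/(4t)} t^{-3/2}$, since in that regime the compact cosine perturbation does modify the leading coefficient according to the formula $\varpi_\sharp = \varpi \pm \sqrt{\pi}T^{\beta^* + 2\alpha - 3/2}$.

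Third, I would verify that the cosine correction $\mathcal{V}_j(t,x)$ (resp.\ $\mathcal{V}_j^*$) is negligible compared to $w_j(t,x)$ (resp.\ $w_j^*$) along $x = \mathcal{X}_\mu(t)$. Indeed, $|\mathcal{V}_j(t,x)| \leq C t^{\kappa/2 + \beta}$ while the main term is of order $t^{\mu + \kappa/2} e^{-t^{2\mu-1}/4} \sim t^{\mu + \kappa/2}$ (up to a $\ln t$ factor when $k=-3$); since $\mu > 4/25 > \beta$ by \eqref{parameters}, the ratio $\mathcal{V}_j/w_j = O(t^{\beta - \mu}) \to 0$. The analogous estimate for $k<-3$ uses $\beta^* < 4/25 < \mu$ from \eqref{parameters-k<-3}, namely $|\mathcal{V}_j^*|\le Ct^{\beta^*-3/2}$ against $|w_j^*|\sim t^{\mu-3/2}$. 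Combining this with the elementary bounds $B^- \leq \eta(t)$, $\xi(t) \leq B^+$ (resp.\ $\mathcal{B}^\pm$) yields the stated inequalities. The case $a_1 = a_2 = a$ follows with no change, because the initial data of $w_1$ and $w_2$ then carry the same multiplicative constant.

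The main obstacle is exactly the third step: confirming that the cosine corrections remain subdominant uniformly along $\mathcal{X}_\mu(t)$, which is the raison d'être of the parameter constraints \eqref{parameters} and \eqref{parameters-k<-3} (in particular $\beta,\beta^* < 4/25$). A secondary subtlety occurs when $k \in [-1, 0)$, where $w_2$ starts from $a_1 w(\tau, \cdot + c_*\tau) + \chi_0$ rather than $a_1 w_0 + \chi_0$; here one uses the semigroup property and applies Proposition \ref{prop2.2} at time $t+\tau$ to recover the same diffusive-scale asymptotic with leading coefficient $a_1 \varpi$, so that the bookkeeping of constants is preserved.
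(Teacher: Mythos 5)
Your proposal is correct and follows essentially the same route the paper intends: plug the curve $x=\mathcal{X}_\mu(t)$ (which lies strictly inside the region of validity of the barriers since $\mu>4/25>\delta,\delta^*$ and inside the diffusive strip since $\mu<1/2$) into the super-/subsolutions of Sections~\ref{sec_3.1}--\ref{sec-3.2}, read off the leading coefficients of $w_1,w_2$ (resp.\ $w_1^*,w_2^*$) from Proposition~\ref{prop2.2}, and observe that the cosine correctors are subdominant because $\beta,\beta^*<\mu$. You also correctly flag the two genuine subtleties — the $k\in[-1,0)$ case handled via the semigroup shift by $\tau$, which leaves the asymptotic constant $a_1\varpi$ intact, and the fact that for $k<-3$ the compact perturbation \emph{does} alter the leading coefficient, yielding $\varpi^\pm$ — which is precisely what the paper's terse ``it is immediate to obtain'' relies on.
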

 
\section{Upper and lower barriers under initial data of type \eqref{initial-flat}}\label{sec-4}

Parallel to Section \ref{sec-3}, under  \eqref{initial-flat} type initial data, it suffices to 
 devise upper and lower barriers for the function $v$  introduced in Section \ref{sec2.2}, by using the solution $w$ to the linear equation  \eqref{linear-w-flat} associated with odd initial condition $w_0$ satisfying \eqref{w_0-flat}   within the diffusive regime   $0\le x-ct\le \sqrt{t}$. However,  when we address this issue, the path  we choose is less straightforward now, for which the idea behind should be intuitively clear after some thought.

 First, we build 
 the upper barrier in the domain ahead of $x\approx 2\lambda t$, for which our comments are two-folds: on the one hand, the asymptotics  \eqref{w-key estimate} of $w$ in the regime  $0\le x-ct\le \sqrt{t}$ unfortunately prevents us from borrowing the idea of dealing with \eqref{initial} type initial data to devise  upper and lower bounds ahead of $x\approx ct$; on the other hand,
 such a roundabout route actually makes it convenient to employ the upper  barriers constructed for the case of \eqref{initial} type initial data in Section \ref{sec_3.1}. 
 
 Then, it is left to create the lower barrier.  
 Unlike the case of   \eqref{initial} type initial data,  the inconsistency of scales now becomes the key difficulty. Specifically, the linear equation \eqref{linear-w-flat} motivates us to focus on the region ahead of $x\approx 2\lambda t$, however the nonlinear term   $\overline{R}(t,x;s)$ given in \eqref{1-R term-flat} can only be controlled ahead of $ x\approx ct$. This scale difference leads to the failure of the previous arguments. The novel idea here is that we introduce an intermediate transformation, which  not only unifies the scale but also enables us to utilize the information of the solution $w$ to the linear equation  \eqref{linear-w-flat} associated with odd initial condition $w_0$ satisfying \eqref{w_0-flat}. The latter is crucial, as it bridges the super- and sub-solutions and produces the sharp asymptotics of $v$.

To start with, let us recall from \eqref{initial-flat} that there exists $A>0$ large enough such that
\begin{equation*}
	a_1x^{\boldsymbol{\nu}}e^{-\lambda x}\le u_0(x)\le a_2 x^{\boldsymbol{\nu}}e^{-\lambda x},~~~x\ge A.
\end{equation*}

\subsection{Upper barrier}
The construction of upper barrier follows the same strategy as in Section \ref{sec_3.1}, by noticing that  the nonlinear term $\overline{R}(t,x;s)$ given in \eqref{1-R term-flat} is nonnegative for all $s\in\R$. 

Specifically, set $\kappa:=\max\{\boldsymbol{\nu}-1, -3\}$, and let $\delta,\gamma,\beta,\alpha$ be chosen as in \eqref{parameters}. Fix $T>0$ sufficiently large such that \eqref{T} is satisfied. Let $w_1(t,x)$ be the solution to \eqref{linear-w-flat} for $(t,x)\in(0,+\infty)\times\R$ associated with an odd and continuous initial function $w_1(0,x)$ such that 
\begin{equation*}
	\begin{aligned}
		w_1(0,x)=\begin{cases}
			a_2 w_0(x)-M\chi_0(x-2\lambda T),~~~~&\text{if}~~\boldsymbol{\nu}-1\ge -3,\\
			w_0(x)-M\chi_0(x-2\lambda T),~~~~&\text{if}~~\boldsymbol{\nu}-1< -3,
		\end{cases}
	\end{aligned}	~~~~x\in\R_+,
\end{equation*}
where $w_0$ is given  by \eqref{w_0-flat}. 

For $t\ge 0$ and $x-2\lambda(t+T)\ge -(t+T)^\delta$, define
\begin{equation}
	\label{v-upper-flat---}
	\overline v(t,x)=\xi(t) w_1(t,x)+\mathcal{V}_1(t,x),
\end{equation}
with 
\begin{equation*}
	\xi(t)=1+\frac{1}{T^\gamma}-\frac{1}{(t+T)^\gamma},
\end{equation*}
and
\begin{equation*}
	\mathcal{V}_1(t,x)= M	(t+T)^{\frac{\kappa}{2}+\beta}\cos\left(\frac{x-2\lambda(t+T)}{(t+T)^\alpha}\right)\mathbbm{1}_{\left\{(t,x)\in\R_+\times\R| -(t+T)^{\delta}\le x-2\lambda(t+T)\le  \frac{3\pi}{2}(t+T)^\alpha\right\}}.
\end{equation*}

\noindent
{\bf Conclusion}. The function
 $\overline v$ defined in \eqref{v-upper-flat---} is indeed a supersolution to the nonlinear problem \eqref{v-eqn-flat} for  $t\ge 0$ and $x-2\lambda(t+T)\ge -(t+T)^\delta$. 
 The comparison principle implies that  
\begin{equation*}
	~~~~~~~~~~~~~~~~~~~~\overline v(t,x)\ge v(t,x)~~~~~~\text{for}~~t\ge 0,~~x-2\lambda(t+T)\ge -(t+T)^\delta.
\end{equation*}

\subsection{Lower barrier}\label{Sec4.2}
We introduce the following transformation
\begin{equation*}
	z(t,x)=e^{\frac{c}{2}(x-ct)}u(t,x),~~~~t>0,~x\in\R,
\end{equation*}
then the function $z$ satisfies  
\begin{equation}\label{eqn-z_sec4.2_lower barrier}
	\begin{aligned}
		\begin{cases}
			z_t -z_{xx}+c z_x+\frac{{\mu }^2}{4}z+\widehat R(t,x;z)=0, ~~~~~~ t>0,~&x\in\R,\\
			z_0(x)=e^{\frac{c}{2}x}u_0(x)=e^{\frac{\mu }{2}x}e^{\lambda x}u_0(x)=e^{\frac{\mu }{2}x}v_0(x),~~~&x\in\R,
		\end{cases}
	\end{aligned}
\end{equation}
where $\mu=\sqrt{c^2-c_*^2}>0$, $v_0$ is as given in \eqref{v-eqn-flat}, and
\begin{equation}
	\label{R term-flat_lower}
	\widehat R(t,x;s):=f'(0)s-e^{\frac{c}{2}(x-ct)}f\big(e^{-\frac{c}{2}(x-ct)}s\big)=e^{\frac{c}{2}(x-ct)}g\big(e^{-\frac{c}{2}(x-ct)}s\big)\ge 0,~~~s\in\R,
\end{equation}
with $g(s)=f'(0)s-f(s)\ge 0$ for $s\in\R$. 

Given  the solution $w$ to \eqref{linear-w-flat}, it is worth noting that   
\begin{equation*}
	e^{\frac{\mu }{2}(x-ct)}w(t,x)
\end{equation*} 
satisfies 
\begin{equation}
	\label{linear-eqn-phi}
	(\partial_t-\mathcal{L})\varphi:=\varphi_t-\varphi_{xx}+c \varphi_x+\frac{\mu ^2}{4}\varphi=0, \quad t>0,~x\in\R.
\end{equation}  
Our goal is to establish a sharp lower barrier for the nonlinear problem \eqref{eqn-z_sec4.2_lower barrier} by taking the function $e^{\frac{\mu }{2}(x-ct)}w(t,x)$ as the central term. 

\vskip 2mm  

Before proceeding, let us first fix  positive parameters
 $\delta,\gamma,\beta$  as in \eqref{parameters} and then choose $\alpha\in(\beta,\frac{4}{25})$. That is,
\begin{equation*}
	0<\delta<\gamma<\beta<\alpha<\frac{4}{25}.
\end{equation*}
Let  $T>A$ be sufficiently large such that
	\begin{equation}\label{T-flat}
		 T^\delta>A,~~~~~~	 \cos\big(T^{\delta-\alpha}\big)>\frac{1}{2},~~~~~~e^{\frac{\mu }{2}T^\alpha}T^{\alpha\boldsymbol{\nu}-2-\boldsymbol{\nu}}-e^{\frac{\mu }{2}T^\delta}>0.
	\end{equation}

Let $w_2(t,x)$ be the solution to \eqref{linear-w-flat} for $(t,x)\in(0,+\infty)\times\R$ associated with an odd and continuous initial function  $w_2(0,x)$ such that
\begin{align*}
	\label{lower-initial-flat}
	w_2(0,x)=
		a_1w_0(x)+T^{\alpha\boldsymbol{\nu}-2+\beta}\cos\Big(\frac{x}{T^\alpha}\Big)\mathbbm{1}_{\left\{x\in\R|\frac{\pi}{2}T^\alpha\le x \le \frac{3\pi}{2}T^\alpha\right\}},~~
	~~~~x\in\R_+,
\end{align*} 
where  $w_0$  satisfies \eqref{w_0-flat}. We observe that $w_2(0,x)\ge 0$ for $x\in\R_+$, due to $T^{\alpha\boldsymbol{\nu}}-T^{\alpha\boldsymbol{\nu}-2+\beta}>0.$ Moreover, the function $w_2$ satisfies Proposition \ref{prop2.3}. In particular,  
\begin{equation}
	\label{w_2-flat_asymptotics}
	w_2(t,x)\approx a_1\Lambda_\mu t^{\boldsymbol{\nu}}e^{-\frac{(x-ct)^2}{4t}},~~~~~~t\gg1,~~0\le x-ct\le \sqrt{t}.
\end{equation} 
\vskip 2mm

For $t\ge 0$ and  $x-ct\ge (t+T)^\delta$, set
\begin{equation}
	\label{z-lower}
	\underline z(t,x)=\eta(t) \boldsymbol{w}(t,x)-\mathcal{V}_3(t,x),
\end{equation}
with
\begin{equation*}
	\eta(t)=1-\frac{1}{T^\gamma}+\frac{1}{(t+T)^{\gamma}},
\end{equation*}
\begin{equation*}
	\boldsymbol{w}(t,x)=e^{\frac{\mu }{2}(x-ct)}w_2(t,x),
\end{equation*}
and
\begin{equation*}
	\label{cosine-lower-z}
	\mathcal{V}_3(t,x)= 	e^{\frac{\mu }{2}(t+T)^\delta}(t+T)^{\boldsymbol{\nu}+\beta}\cos\left(\frac{x-c t}{(t+T)^\alpha}\right)\mathbbm{1}_{\left\{(t,x)\in\R_+\times\R| (t+T)^{\delta}\le x-ct\le  \frac{3\pi}{2}(t+T)^\alpha\right\}}.
\end{equation*}
We now check that $\underline z$ is a subsolution to problem \eqref{eqn-z_sec4.2_lower barrier} for $t\ge 0$ and $x-c t\ge (t+T)^\delta$.

First of all, we notice from \eqref{T-flat} that
\begin{align*} 
	\underline z(0,x)&=\boldsymbol{w}(0,x)-\mathcal{V}_3(0,x)=e^{\frac{\mu }{2}x}w_2(0,x)-e^{\frac{\mu }{2}T^\delta}T^{\boldsymbol{\nu}+\beta}\cos\left(\frac{x}{T^\alpha}\right)\mathbbm{1}_{\left\{x\in\R|T^\delta\le x\le \frac{3\pi}{2}T^\alpha\right\}}\\
	&=e^{\frac{\mu }{2}x}a_1w_0(x)+e^{\frac{\mu }{2}x}T^{\alpha\boldsymbol{\nu}-2+\beta}\cos\Big(\frac{x}{T^\alpha}\Big)\mathbbm{1}_{\left\{x\in\R|\frac{\pi}{2}T^\alpha\le x\le \frac{3\pi}{2}T^\alpha\right\}}-e^{\frac{\mu }{2}T^\delta}T^{\boldsymbol{\nu}+\beta}\cos\left(\frac{x}{T^\alpha}\right)\mathbbm{1}_{\left\{x\in\R|T^\delta\le x\le \frac{3\pi}{2}T^\alpha\right\}}\\
	&\le e^{\frac{\mu }{2}x}a_1w_0(x)+\left(e^{\frac{\mu }{2}x}T^{\alpha\boldsymbol{\nu}-2-\boldsymbol{\nu}} -e^{\frac{\mu }{2}T^\delta}\right)T^{\boldsymbol{\nu}+\beta}
	\cos\Big(\frac{x}{T^\alpha}\Big)\mathbbm{1}_{\left\{x\in\R|\frac{\pi}{2}T^\alpha\le x\le \frac{3\pi}{2}T^\alpha\right\}}\\
	&\le e^{\frac{\mu }{2}x}a_1w_0(x)\le  e^{\frac{\mu }{2}x}e^{\lambda x} u_0(x)=z_0(x),~~~~~~~~~~~~~x\ge T^\delta.
\end{align*}
At the boundary $t\ge 0$ and $\hat x=ct+(t+T)^\delta$, we claim that $w_2(t,\hat x)<\frac{1}{2}(t+T)^{\boldsymbol{\nu}+\beta}$. In fact, this is obviously true for $t\ge t^*$ with $t^*>0$ sufficiently large, thanks to \eqref{w_2-flat_asymptotics} and $\beta>0$. For $t\in[0,t^*]$, since $w_2(t,\hat x)$ is positive and bounded, the conclusion can also be reached up to increasing $T$. Therefore,
\begin{align*}
	\underline z(t,\hat x)\le \boldsymbol{w}(t,\hat x)-\mathcal{V}_3(t,\hat x)\le e^{\frac{\mu }{2}(t+T)^\delta}w_2(t,\hat x)-\frac{1}{2}e^{\frac{\mu }{2}(t+T)^\delta}(t+T)^{\boldsymbol{\nu}+\beta}<0<z(t,\hat x),~~~~t\ge 0.
\end{align*}

It remains to verify that $\big(\partial_t-\mathcal{L}\big)\underline z+\widehat R(t,x;\underline z):=\underline z_t-\underline z_{xx}+c\underline z_x+\frac{\mu ^2}{4}\underline z+\widehat{R}(t,x;\underline z)\le 0$ for $t\ge 0$ and $x-ct\ge (t+T)^\delta$. Remember from the linear extension and the regularity of $f$ that $\widehat R(t,x;\underline z)=0$ provided that $\underline z(t,x)\le 0$, otherwise $0\le \widehat R(t,x;\underline z)\le C_g e^{-\frac{c}{2}(x-ct)}\underline z(t,x)^2$. Let us first show that, up to increasing $T$,
\begin{equation}
	\label{R-widehat-upper bound}
	e^{-\frac{c}{2}(x-ct)} \boldsymbol{w}(t,x)=e^{-\frac{c}{2}(x-ct)}e^{\frac{\mu }{2}(x-ct)}w_2(t,x)=e^{-\lambda(x-ct)}w_2(t,x)\le \frac{C}{(t+T)^2},~~~~~t\ge 0,~x-ct\ge (t+T)^\delta.
\end{equation}
As a matter of fact, in the region $\Omega_1=\{(t,x)\in\R_+\times\R|(t+T)^\delta\le x-ct\le \sqrt{t}\}$, we deduce from \eqref{w-key estimate} that, up to increasing $T$,  
\begin{equation*}
	e^{-\lambda(x-ct)}w_2(t,x)\le Ce^{-\lambda(t+T)^\delta}t^{\boldsymbol{\nu}}\le \frac{C}{(t+T)^2}.
\end{equation*}
Regarding $\Omega_2=\{(t,x)\in\R_+\times\R| x-ct\ge \max(\sqrt{t},(t+T)^\delta)\}$, we derive from \eqref{w-out diffusive_flat} and $c=2\lambda+\mu $ that
\begin{align*}
	e^{-\lambda(x-ct)}w_2(t,x)&\le C e^{-\lambda(x-ct)}(x-2\lambda t)^{\boldsymbol{\nu}}=C e^{\lambda\mu t}e^{-\lambda(x-2\lambda t)}(x-2\lambda t)^{\boldsymbol{\nu}}\\
	&\le C e^{\lambda\mu t}e^{-\lambda(\mu t+(t+T)^\delta)}\big(\mu t+(t+T)^\delta\big)^{\boldsymbol{\nu}}=C e^{-\lambda(t+T)^\delta}\big(\mu t+(t+T)^\delta\big)^{\boldsymbol{\nu}}\le  \frac{C}{(t+T)^2}
\end{align*}
up to increasing $T$, where we have used that $x\mapsto e^{-\lambda x}x^{\boldsymbol{\nu}}$ is decreasing for $x>0$ large. Therefore, \eqref{R-widehat-upper bound} is achieved.

\noindent
{\bf Step 1}. We start with the regime $t>0$ and $ x-ct\ge \frac{3\pi}{2}(t+T)^\alpha$. 
Here, $\underline z(t,x)=\eta(t)\boldsymbol{w}(t,x)$. It follows from \eqref{R-widehat-upper bound} and $\eta(t)\le 1$ that, up to increasing $T$,
\begin{align*}
	\big(\partial_t-\mathcal{L}\big)\underline z+\widehat R(t,x;\underline z)&=\eta'(t)\boldsymbol{w}(t,x)+	\widehat R(t,x; \underline z)\\
	&\le \eta'(t)\boldsymbol{w}(t,x)+	C_g e^{-\frac{c}{2}(x-ct)}\boldsymbol{w}(t,x)^2\\
	&= \big(\eta'(t)+	C_g e^{-\frac{c}{2}(x-ct)}\boldsymbol{w}(t,x)\big)\boldsymbol{w}(t,x)\\
	&\le \big(- \gamma(t+T)^{-1-\gamma}+C (t+T)^{-2}\big)\boldsymbol{w}(t,x)<0.
\end{align*}

\vskip 2mm

\noindent
{\bf Step 2}. We now look at $t>0$ and $(t+T)^\delta\le  x-c t\le \frac{3\pi}{2}(t+T)^\alpha$. For convenience, let us define
\begin{equation*}
	\phi(t,x)=\frac{x-c t}{(t+T)^\alpha}.
\end{equation*}
Notice that
\begin{equation*}
	\big(\partial_t-\mathcal{L}\big)\big(\eta(t)\boldsymbol{w}(t,x)\big)=\eta'(t)\boldsymbol{w}(t,x)=- \gamma(t+T)^{-1-\gamma}\boldsymbol{w}(t,x),
\end{equation*}
and
\begin{equation*}
	\begin{aligned}
		\big(\partial_t-\mathcal{L}\big)\mathcal{V}_3(t,x)=	&\big(\partial_t-\mathcal{L}\big)\left(	e^{\frac{\mu }{2}(t+T)^\delta}(t+T)^{\boldsymbol{\nu}+\beta}\cos\big(\phi(t,x)\big)\right)\\
		=&e^{\frac{\mu }{2}(t+T)^\delta}(t+T)^{\boldsymbol{\nu}+\beta}\bigg(\Big(\frac{\mu \delta}{2(t+T)^{1-\delta}}+\frac{\boldsymbol{\nu}+\beta}{t+T}+\frac{1}{(t+T)^{2\alpha}}+\frac{\mu ^2}{4}\Big)\cos\big(\phi(t,x)\big)\\
		&~~~~~~~~~~~~~~~~~~~~~~~~~~~~~~~~~~~~~~~~~~~+\frac{\alpha\big(x-ct\big)}{(t+T)^{\alpha+1}}\sin\big(\phi(t,x)\big)\bigg).
	\end{aligned}
\end{equation*}

We distinguish  two zones for  $t>0$:
\begin{itemize}
	\item $(t+T)^\delta< x-ct\le \frac{\pi}{4}(t+T)^\alpha$. We find that $\cos(\phi(t,x))\ge \frac{1}{2}$, and	\begin{equation}\label{item-1-flat}
		\big(\partial_t-\mathcal{L}\big)\mathcal{V}_3(t,x)\ge C e^{\frac{\mu }{2}(t+T)^\delta}(t+T)^{\boldsymbol{\nu}+\beta}>0.
	\end{equation}
	Whenever $\underline z(t,x)\le0$, we have $\widehat R(t,x;\underline z)=0$, thanks to the linear extension of $f$ in $\R_-$. Noticing also that $\eta'(t)\le 0$ for $t\ge 0$, it then follows that 
	\begin{equation*}
		\big(\partial_t-\mathcal{L}\big)\underline z+\widehat R(t,x; \underline z)=	\eta'(t)\boldsymbol{w}(t,x)- C e^{\frac{\mu }{2}(t+T)^\delta}(t+T)^{\boldsymbol{\nu}+\beta} < 0.
	\end{equation*}
	As for the situation that $\underline z(t,x)>0$,  we have $\eta(t)\boldsymbol{w}(t,x)>\mathcal{V}_3(t,x)\ge \frac{1}{2}e^{\frac{\mu }{2}(t+T)^\delta}(t+T)^{\boldsymbol{\nu}+\beta}$.
	We deduce from \eqref{R-widehat-upper bound} and \eqref{item-1-flat} that  up to increasing $T$,
	\begin{align*}
		\big(\partial_t-\mathcal{L}\big) \underline z+\widehat R(t,x; \underline z)&=\eta'(t)\boldsymbol{w}(t,x)- C e^{\frac{\mu }{2}(t+T)^\delta}(t+T)^{\boldsymbol{\nu}+\beta}+\widehat R(t,x;\underline z)\\
		&<\eta'(t)\boldsymbol{w}(t,x)+C_g e^{-\lambda(x-ct)}\underline z(t,x)^2\\
		&\le \eta'(t)\boldsymbol{w}(t,x)+	4C_g e^{-\lambda(x-c t)}\boldsymbol{w}(t,x)^2\\
		&= \big(\eta'(t)+	4C_g e^{-\lambda(x-ct)}\boldsymbol{w}(t,x)\big)\boldsymbol{w}(t,x)\\
		&\le \big(-\gamma(t+T)^{-1-\gamma}+	C(t+T)^{-2}\big)\boldsymbol{w}(t,x)<0.
	\end{align*}

	\item $\frac{\pi}{4}(t+T)^\alpha\le  x-c_*t\le \frac{3\pi}{2}(t+T)^\alpha$. By noticing that $\mathcal{V}_3(t,x)\ge  -Ce^{\frac{\mu }{2}(t+T)^\delta}(t+T)^{\boldsymbol{\nu}+\beta}$ and $\big(\partial_t-\mathcal{L}\big)\mathcal{V}_3(t,x)\ge C\mathcal{V}_3(t,x)\ge  -Ce^{\frac{\mu }{2}(t+T)^\delta}(t+T)^{\boldsymbol{\nu}+\beta}$, one can follow similar arguments to \eqref{R-widehat-upper bound} to derive that
	\begin{align*}
		\underline z(t,x)\le \boldsymbol{w}(t,x)-\mathcal{V}_3(t,x)=e^{\frac{\mu }{2}(x-ct)}w_2(t,x)+Ce^{\frac{\mu }{2}(t+T)^\delta}(t+T)^{\boldsymbol{\nu}+\beta}\le Ce^{\frac{\mu }{2}(x-ct)}w_2(t,x)=C\boldsymbol{w}(t,x),
	\end{align*}
and
	\begin{align*}
		\big(\partial_t-\mathcal{L}\big) \underline z&=\eta'(t)\boldsymbol{w}(t,x)+Ce^{\frac{\mu }{2}(t+T)^\delta}(t+T)^{\boldsymbol{\nu}+\beta}\\
		&=-C(t+T)^{-1-\gamma} e^{\frac{\mu }{2}(x-ct)}w_2(t,x)+Ce^{\frac{\mu }{2}(t+T)^\delta}(t+T)^{\boldsymbol{\nu}+\beta}\\
		&=-(t+T)^{-1-\gamma}\Big(Ce^{\frac{\mu }{2}(x-ct)}w_2(t,x)-Ce^{\frac{\mu }{2}(t+T)^\delta}(t+T)^{\boldsymbol{\nu}+\beta+1+\gamma}\Big)\\
		&\le-C (t+T)^{-1-\gamma}e^{\frac{\mu }{2}(x-ct)}w_2(t,x)= C\eta'(t)\boldsymbol{w}(t,x).
	\end{align*}
Therefore, it follows from \eqref{R-widehat-upper bound} that, up to increasing $T$,
\begin{align*}
		\big(\partial_t-\mathcal{L}\big) \underline z+\widehat R(t,x; \underline z)&\le C\eta'(t)\boldsymbol{w}(t,x)+C_g e^{-\lambda(x-ct)}\underline z(t,x)^2\\
		&\le C\eta'(t)\boldsymbol{w}(t,x)+	C e^{-\lambda(x-c t)}\boldsymbol{w}(t,x)^2\\
		&=C\big(\eta'(t)+	C e^{-\lambda(x-c t)}\boldsymbol{w}(t,x)
		\big)\boldsymbol{w}(t,x)\\
		&\le 	C \big(-\gamma(t+T)^{-1-\gamma}+	C(t+T)^{-2}\big)\boldsymbol{w}(t,x)<0.
\end{align*}

\end{itemize}

\noindent
{\bf Conclusion}. We obtain that the function $\underline z$ given in \eqref{z-lower} is a subsolution to  \eqref{eqn-z_sec4.2_lower barrier} for $t\ge 0$ and $x-c t\ge (t+T)^{\delta}$. The comparison principle implies that 
\begin{equation*}
	~~~~~~~~~~~~~~~~~~~~\underline z(t,x)\le z(t,x)~~~~~~\text{for}~~t\ge 0,~~x-ct\ge (t+T)^{\delta}.
\end{equation*}
It then follows from 
\begin{align*}
z(t,x)=e^{\frac{c}{2}(x-ct)}u(t,x)=e^{\frac{c}{2}(x-ct)}e^{-\lambda(x-ct)}v(t,x)=e^{\frac{\mu }{2}(x-ct)}v(t,x),~~~~t>0,~x\in\R,
\end{align*}
that 
\begin{equation*}
\eta(t)w_2(t,x)=e^{-\frac{\mu }{2}(x-ct)}\underline z(t,x)\le	v(t,x),~~~~~~~~~~~~~~~ ~t\ge 0,~~x-ct\ge (t+T)^{\frac{4}{25}}.
\end{equation*}

\subsection{Conclusion}

Fix any $\varsigma\in(4/25, 1/2)$, and define 
\begin{equation*}
	\label{x_varsigma}
	\mathcal{X}_\varsigma(t):=ct+t^\varsigma+o(t^\varsigma),~~~~~~~~t\gg 1,
\end{equation*}
we  deduce from the upper and lower barriers in this section  as well as Proposition \ref{prop2.3} that 
\begin{prop} 
	\label{prop4.2_match sol and TW}
	Under the assumption \eqref{initial-flat}  on $u_0$, 
	the function $v(t,x)=e^{\lambda(x-ct)}u(t,x)$ satisfies for $t\gg 1$ and $x=\mathcal{X}_\varsigma(t):$
	\begin{equation*}
		\Big(1- \frac{1}{T^\gamma}\Big) a_1 \Lambda_\mu t^{\boldsymbol{\nu}} e^{-\frac{(x-ct)^2}{4t}}\le v(t,x)\le	 \Big(
		1+\frac{1}{T^\gamma}\Big)a_2 \Lambda_\mu  t^{\boldsymbol{\nu}} e^{-\frac{(x-ct)^2}{4t}},
	\end{equation*}
	with $\Lambda_\mu>0$ depending on $w_0$ given in \eqref{w_0-flat} and with $\gamma$ given in \eqref{parameters}. 
	If we further assume that $a_1=a_2=:a$ in \eqref{initial-flat}, then  the above conclusion remains true, with  $a_1=a_2=a$.
\end{prop}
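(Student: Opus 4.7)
My plan is to evaluate the upper barrier from Section \ref{sec-3} subsection on upper barriers under \eqref{initial-flat} and the lower barrier from Section \ref{Sec4.2} at the specific location $x = \mathcal{X}_\varsigma(t)$, and then invoke Proposition \ref{prop2.3}(iii)-(iv) to replace the linear solutions by their sharp Gaussian asymptotics. Since $\varsigma \in (4/25, 1/2)$, at $x = \mathcal{X}_\varsigma(t)$ we have $x - ct = t^\varsigma + o(t^\varsigma) \in [0,\sqrt{t}]$ for $t$ large, which is precisely the regime where \eqref{w-key estimate} gives $w(t,x) \approx \Lambda_\mu t^{\boldsymbol{\nu}} e^{-(x-ct)^2/(4t)}$.

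For the upper bound, at $x = \mathcal{X}_\varsigma(t)$ the displacement $x - 2\lambda(t+T) = \mu t + O(t^\varsigma)$ grows linearly and thus exceeds $\tfrac{3\pi}{2}(t+T)^\alpha$ once $t$ is large (because $\alpha < 1$). Hence the indicator in the cosine correction $\mathcal{V}_1$ vanishes, and the upper barrier reduces to $\overline v(t,x) = \xi(t)\,w_1(t,x)$ with $\xi(t) \le 1 + T^{-\gamma}$. The function $w_1$ solves \eqref{linear-w-flat} with an odd initial datum obtained from $a_2 w_0$ (with $w_0$ of the form \eqref{w_0-flat}) by subtracting the odd compactly supported function $M\chi_0(\cdot - 2\lambda T)$. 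Proposition \ref{prop2.3}(iii)-(iv) then yields $w_1(t,\mathcal{X}_\varsigma(t)) \approx a_2 \Lambda_\mu t^{\boldsymbol{\nu}} e^{-(x-ct)^2/(4t)}$, which gives the stated upper bound once we combine with $\xi(t) \le 1 + T^{-\gamma}$.

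For the lower bound, I would directly use the conclusion of Section \ref{Sec4.2}: $v(t,x) \ge \eta(t) w_2(t,x)$ for $x - ct \ge (t+T)^\delta$, with $\eta(t) \ge 1 - T^{-\gamma}$. Since $w_2$ solves \eqref{linear-w-flat} starting from an odd initial datum of the form $a_1 w_0$ plus an odd compactly supported cosine bump, Proposition \ref{prop2.3}(iii)-(iv) gives $w_2(t,\mathcal{X}_\varsigma(t)) \approx a_1 \Lambda_\mu t^{\boldsymbol{\nu}} e^{-(x-ct)^2/(4t)}$, producing the lower bound. The refinement when $a_1 = a_2 = a$ is automatic, since then both barriers involve initial data of the form $a w_0$ plus an odd compact perturbation, so the same constant $a\Lambda_\mu$ appears on both sides. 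I do not expect any substantive obstacle here; the proof is essentially a check that the two windows match, namely that $x - ct$ simultaneously lies in the diffusive scale where (iii) applies and lies far enough ahead of $2\lambda t$ that the cosine correction in $\overline v$ is genuinely zero, both of which are guaranteed by $\alpha < 4/25 < \varsigma < 1/2$ together with $c > 2\lambda$.
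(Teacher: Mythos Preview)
Your proposal is correct and matches the paper's approach exactly: the paper states Proposition \ref{prop4.2_match sol and TW} as an immediate consequence of the upper and lower barriers of Section~\ref{sec-4} together with Proposition~\ref{prop2.3}, and you have supplied precisely the missing details (vanishing of the cosine corrections at $x=\mathcal{X}_\varsigma(t)$, application of \eqref{w-key estimate} via (iii)--(iv), and the bounds $\xi(t)\le 1+T^{-\gamma}$, $\eta(t)\ge 1-T^{-\gamma}$). One minor caveat: in your closing sentence you write ``$\alpha<4/25$'', but note that two different $\alpha$'s are in play---the upper-barrier $\alpha$ from \eqref{parameters} lies in $(7/15,1/2)$, while only the lower-barrier $\alpha$ from Section~\ref{Sec4.2} satisfies $\alpha<4/25$; your actual argument for the upper barrier (using $\alpha<1$) is nonetheless correct.
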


\section{Sharp asymptotics up to $\mathcal{O}(1)$ precision}\label{sec-5}  
This section is devoted to  sharp asymptotics up to $\mathcal{O}(1)$ precision  for the solutions of \eqref{kpp}   associated with \eqref{initial} type initial data for $k\ge -3$ and associated with \eqref{initial-flat} type initial data for any $\boldsymbol{\nu}\in\R$ respectively, as well as the ``convergence along level sets'' results, i.e. Theorems \ref{thm1-O(1) k>=-3}-\ref{thm1-O(1) any nu} and Propositions \ref{prop1}-\ref{prop2}.

\subsection{Proof of Theorem \ref{thm1-O(1) k>=-3}}

Fix some $t_0\gg 1$ and choose parameters $\theta$, $\nu$ and $\sigma$ such that
\begin{equation}\label{parameters-theta lambda}
	\frac{4}{25}<\theta<\frac{1}{4}<\sigma<\frac{1}{3},~~~~~0<\nu<\frac{1}{12}.
\end{equation}
Recall from Section \ref{sec2.1} that 
\begin{equation*}
		v(t,x)=e^{\lambda_*(x-c_*t)}u(t,x),\quad t>0,~x\in\R.
\end{equation*}

\vspace{3mm}

\noindent
{\bf The case of $k>-3$.}
\vspace{2mm}

Set
\begin{equation*}
	V(t,x)=t^{-\frac{k}{2}}v(t,x),~~~~~~~~~~t\ge t_0,~~x\in\R,
\end{equation*}
then the function $V$ satisfies
\begin{equation}
	\label{eqn-V}
	V_t-V_{xx}+c_* V_x+\frac{k}{2t}V+\underbrace{f'(0)V-e^{\lambda_*(x-c_*t-\frac{k}{2\lambda_*}\ln t)}f\big(e^{-\lambda_*(x-c_*t-\frac{k}{2\lambda_*}\ln t)} V\big)}_{=:Q(t,x;V)}=0,~~t\ge t_0,~x\in\R,
\end{equation}
associated with $V(t_0,x)=t_0^{-\frac{k}{2}}v(t_0,x)$ for $x\in\R$.
  
  Introduce $$\mathcal{X}^\pm(t):=c_*t+\frac{k}{2\lambda_*}\ln t\pm t^\theta,~~~~~~~~t\ge t_0.$$ 
  Then, define for $n=1,2$,
\begin{equation*}
	\psi_n(t,x)= e^{\lambda_*(x-c_*t-\frac{k}{2\lambda_*}\ln t)} U_{c_*}\left(x-c_*t-\frac{k}{2\lambda_*}\ln t+\tau_n\right)~~~~t\ge t_0,~~~\mathcal{X}^-(t)\le x \le \mathcal{X}^+(t),
\end{equation*}
 where the parameters    $\tau_2<\tau_1$   are chosen such that, up to increasing $t_0$, 
\begin{equation*}
~~~~~~~~~~~~~~~~~~~~~~~	\psi_1(t,x)\le V(t,x)\le 	\psi_2(t,x),~~~~~~~~~~~~~~~~~~t\ge t_0,~~x=\mathcal{X}^+(t),
\end{equation*}
and such that 
\begin{equation*}\label{B-2}
~~~~~~~~~~~~~~~~~~~~~	\psi_1(t_0,x)\le V(t_0,x)\le\psi_2(t_0,x),~~~~~~~~~~~~~~\mathcal{X}^-(t_0)\le x \le \mathcal{X}^+(t_0).
\end{equation*}
The above constraints are achievable, in that the former follows from Proposition \ref{prop4.1} and the asymptotics	$U_{c_*}(z)\approx z e^{-\lambda_* z}$ as $z\to+\infty$, while the latter can hold by further increasing $\tau_1$ and reducing $\tau_2$ if necessary.

\begin{prop}
	\label{prop-upper lower bd for V}
	 There holds
	\begin{equation*}
\limsup_{t\to+\infty}\big(\psi_1(t,x)- V(t,x)\big)	\le 0\le \liminf_{t\to+\infty}\big(\psi_2(t,x)- V(t,x)\big),
\end{equation*}
uniformly in $\mathcal{X}^-(t)\le x \le \mathcal{X}^+(t)$.
\end{prop}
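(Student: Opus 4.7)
The plan is to exhibit, on the slab $\mathcal{S}:=\{t\ge t_0,\ \mathcal{X}^-(t)\le x\le\mathcal{X}^+(t)\}$, a supersolution $\overline V$ and a subsolution $\underline V$ of equation \eqref{eqn-V} which converge uniformly to $\psi_2$ and $\psi_1$ respectively as $t\to+\infty$, and then to let $t\to+\infty$ in the parabolic comparison principle.

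\textbf{Step 1 (residual of $\psi_n$).} Setting $y=x-c_*t-\tfrac{k}{2\lambda_*}\ln t$ and using the profile equation $U_{c_*}''+c_*U_{c_*}'+f(U_{c_*})=0$ together with $\lambda_*^2=f'(0)$, a direct computation yields
\[
\Big(\partial_t-\partial_{xx}+c_*\partial_x+\tfrac{k}{2t}\Big)\psi_n+Q(t,x;\psi_n)\;=\;-\frac{k}{2\lambda_*\,t}\,e^{\lambda_* y}\,U_{c_*}'(y+\tau_n),\qquad n=1,2.
\]
Since $U_{c_*}'<0$ and $z\mapsto e^{\lambda_* z}|U_{c_*}'(z+\tau)|\le C(1+|z|)$, the right-hand side is $\mathcal{O}(t^{\theta-1})$ on $\mathcal{S}$ and its sign equals the sign of $k$; in particular $\psi_2$ is already a supersolution and $\psi_1$ already a subsolution when $k>0$, while the roles reverse when $k<0$.

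\textbf{Step 2 (corrected barriers).} When $\psi_n$ is not a barrier of the desired type, I will perturb it. Trying the time-dependent-shift ansatz $\overline V(t,x)=e^{\lambda_* y}U_{c_*}(y+\tau_2-\rho(t))$ with positive $\rho(t)\to 0$ gives, by the same computation, the residual $\big(-\rho'(t)-\tfrac{k}{2\lambda_*\,t}\big)e^{\lambda_* y}U_{c_*}'(y+\tau_2-\rho(t))$, which has the correct sign provided $\rho'(t)\ge -\tfrac{k}{2\lambda_*\,t}$. An algebraic choice $\rho(t)=Ct^{-\nu}$ with $\nu$ as in \eqref{parameters-theta lambda} suffices when $k\ge 0$, but for $k<0$ the required inequality forces $\rho$ to grow logarithmically in $t$; in that case I add to $\overline V$ a spatial envelope of the form $\eta(t)e^{\lambda_* y}(-U_{c_*}'(y+\tau_2))$ with $\eta(t)\to 0$, and use the quadratic bound $c_g s^2\le g(s)\le C_g s^2$ on $g(s)=f'(0)s-f(s)$ to produce an extra positive contribution in $Q(t,x;\overline V)-Q(t,x;\psi_2)$ that absorbs the negative residual. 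A symmetric construction produces $\underline V$.

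\textbf{Step 3 (boundary matching and conclusion).} At $t=t_0$, $\psi_1(t_0,\cdot)\le V(t_0,\cdot)\le\psi_2(t_0,\cdot)$ holds by the choice of $\tau_1,\tau_2$, and the positive perturbations only strengthen these inequalities. On the right lateral boundary $x=\mathcal{X}^+(t)$, Proposition~\ref{prop4.1} (applied with an appropriate $\mu\in(4/25,1/2)$ matching $\theta$) gives two-sided control
\[
B^-a_1\varpi(x-c_*t)e^{-\frac{(x-c_*t)^2}{4t}}t^{k/2}\le v(t,x)\le B^+a_2\varpi(x-c_*t)e^{-\frac{(x-c_*t)^2}{4t}}t^{k/2},
\]
which, combined with the far-field asymptotics $U_{c_*}(z)\sim z e^{-\lambda_* z}$, yields the required inequality between $V$ and $\psi_n$ modulo the correction. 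On the left lateral boundary $x=\mathcal{X}^-(t)$, the spreading property obtained from the lower barrier of Section~\ref{sec_3.1} together with $u\le 1$ gives $u(t,\mathcal{X}^-(t))\to 1$, while $U_{c_*}(-t^\theta+\tau_n)=1+\mathcal{O}(e^{-c\,t^\theta})$, so both $V$ and $\psi_n$ are of exponential order $e^{-\lambda_*\,t^\theta}$ there and their discrepancy is comfortably absorbed by the polynomial correction. The parabolic comparison on $\mathcal{S}$ then gives $\underline V\le V\le\overline V$, and letting $t\to+\infty$ closes the argument. The principal difficulty is the case $k<0$, where a purely additive-in-$t$ correction cannot dominate the residual in the leading-edge region $y\sim t^\theta$ and one must resort to the spatially modulated correction aligned with $-U_{c_*}'$.
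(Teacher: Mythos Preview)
Your Step~1 computation of the residual is correct, but the conclusion you draw from it is not: the sign of
\[
-\frac{k}{2\lambda_*\,t}\,e^{\lambda_* y}\,U_{c_*}'(y+\tau_n)
\]
depends only on $\mathrm{sign}(k)$ and not on $n$. Hence for $k>0$ \emph{both} $\psi_1$ and $\psi_2$ are supersolutions of \eqref{eqn-V}, and for $k<0$ both are subsolutions; in neither case do you get one of each for free. So a genuine correction is needed on one side for every $k\neq 0$.

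The correction you propose in Step~2 cannot close the argument at the leading edge $y\sim t^\theta$. Write $W(y)=e^{\lambda_* y}(-U_{c_*}'(y+\tau))$; as $y\to+\infty$ one has $W(y)\sim \lambda_* y$ while $W''(y)=-e^{\lambda_* y}(f'(0)-f'(U_{c_*}))U_{c_*}'$ is $\mathcal{O}(e^{-\lambda_* y})$. Thus on applying the linear operator to $\eta(t)W(y)$ the only term that survives at $y\sim t^\theta$ is $\eta'(t)W(y)\sim\eta'(t)\,t^\theta$; absorbing the residual $\mathcal{O}(t^{\theta-1})$ there forces $|\eta'(t)|\gtrsim t^{-1}$, hence $\eta(t)$ diverges logarithmically, defeating the purpose. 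The nonlinear term cannot help either: at $y\sim t^\theta$ the arguments $e^{-\lambda_* y}\psi_n$, $e^{-\lambda_* y}\overline V$ are all $\mathcal{O}(t^\theta e^{-\lambda_* t^\theta})$, so the quadratic bound $c_g s^2\le g(s)\le C_g s^2$ produces a contribution that is exponentially small, far below the polynomial residual $t^{\theta-1}$. The same obstruction applies to the pure shift ansatz (as you noted), so neither of your corrections yields the missing barrier.

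The paper circumvents this by \emph{not} trying to build a super/subsolution of \eqref{eqn-V} itself. Instead it sets $\mathcal{Z}=(\psi_1-V)^+$, observes that the resulting inequality carries a zeroth-order term $\mathcal{W}(t,x;\mathcal{Z})=(f'(0)-d(t,x))\mathcal{Z}\ge 0$ which can be dropped, and dominates $\mathcal{Z}$ by the cosine barrier
\[
\overline{\mathcal{Z}}(t,x)=t^{-\nu}\cos\!\Big(\frac{x-c_*t-\tfrac{k}{2\lambda_*}\ln t}{t^\sigma}\Big),\qquad \theta<\sigma,\ \ 2\sigma+\nu<1-\theta.
\]
The key point is that the \emph{curvature} of the cosine, not the nonlinearity, supplies the forcing: $-\partial_{xx}\overline{\mathcal{Z}}\sim t^{-2\sigma-\nu}$ uniformly on $|y|\le t^\theta$, which dominates $t^{\theta-1}$ by the parameter choices \eqref{parameters-theta lambda}, while $\overline{\mathcal{Z}}\to 0$ uniformly. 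This is precisely the mechanism your construction lacks at the leading edge.
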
  
\begin{proof}
	We outline the proof for the first inequality, and the second one can be dealt with exactly in the same way.
	
	Substituting $\psi_1(t,x)$ into \eqref{eqn-V} yields
	\begin{align*}
	\Big|	\partial_t\psi_1-\partial_{xx}\psi_1+&c_*\partial_x\psi_1+\frac{k}{2t}\psi_1+Q(t,x;\psi_1)\Big|\\
		&=\bigg|-\frac{k}{2\lambda_* t}e^{\lambda_*(x-c_*t-\frac{k}{2\lambda_*}\ln t)} U'_{c_*}\left(x-c_*t-\frac{k}{2\lambda_*}\ln t+\tau_1\right)\bigg|\le Ct^{\theta-1}
	\end{align*}
for $t\ge t_0$ and $\mathcal{X}^-(t)\le x \le \mathcal{X}^+(t)$.

Set now\footnote{We use the notation $(z)^+:=\max(z,0)$.} $\mathcal{Z}(t,x):=(\psi_1-V)^+(t,x)$ for $t\ge t_0$ and $\mathcal{X}^-(t)\le x \le \mathcal{X}^+(t)$. We notice that
\begin{align*}
\mathcal{W}(t,x;\mathcal{Z}):&=	Q(t,x;\psi_1)-Q(t,x;V)\\
&=f'(0)\mathcal{Z}-e^{\lambda_*(x-c_*t-\frac{k}{2\lambda_*}\ln t)}\Big(f\big(e^{-\lambda_*(x-c_*t-\frac{k}{2\lambda_*}\ln t)} \psi_1\big)-f\big(e^{-\lambda_*(x-c_*t-\frac{k}{2\lambda_*}\ln t)} V\big)
	\Big)\\
	&=f'(0)\mathcal{Z}-d\mathcal{Z}\ge 0,
\end{align*} 
 uniformly for $t\ge t_0$ and 
$\mathcal{X}^-(t)\le x \le \mathcal{X}^+(t)$, where $d(t,x)$ is some bounded function  for $t\ge t_0$ and $\mathcal{X}^-(t)\le x \le \mathcal{X}^+(t)$ satisfying  $\Vert d(t,x)\Vert_{L^\infty}\le f'(0)$, since $f$ is Lipschitz continuous in $[0,1]$ and since $0<f(s)\le f'(0)s$ for $s\in[0,1]$. The function $\mathcal{Z}$ satisfies 
 \begin{equation}
 	\label{z-eqn}
\begin{aligned}
	\begin{cases}
	\displaystyle	\mathcal{Z}_t-\mathcal{Z}_{xx}+c_* \mathcal{Z}_x+\frac{k}{2t}\mathcal{Z}+\mathcal{W}(t,x;\mathcal{Z})\le Ct^{\theta-1},~~~~&t\ge t_0,~\mathcal{X}^-(t)\le x \le \mathcal{X}^+(t),\\
	\displaystyle\mathcal{Z}(t,\mathcal{X}^+(t))=0, &t\ge t_0,\\
	\displaystyle\mathcal{Z}(t,\mathcal{X}^-(t))\le e^{-\lambda_*t^\theta}, &t\ge t_0,\\
\displaystyle	\mathcal{Z}(t_0,x)=0, &\mathcal{X}^-(t_0)\le x \le \mathcal{X}^+(t_0).
	\end{cases}
\end{aligned}
 \end{equation}

We claim that 
\begin{equation}
	\label{upper bd-z}
	\mathcal{Z}(t,x)\to 0~~~~\text{as}~t\to+\infty,~~~\text{uniformly in}~~\mathcal{X}^-(t)\le x \le \mathcal{X}^+(t).
\end{equation}
 To do so, we construct
\begin{equation*}
~~~~~~~~~~~~~~	\overline{\mathcal{Z}}(t,x)=\frac{1}{t^\nu}\cos\left(\frac{x-c_*t-\frac{k}{2\lambda_*}\ln t}{t^\sigma}\right),~~~~~~~t\ge t_0,~~\mathcal{X}^-(t)\le x \le \mathcal{X}^+(t).
\end{equation*}
Remember that the parameters $\theta$, $\nu$ and $\sigma$ are given in \eqref{parameters-theta lambda}.
Up to increasing $t_0$, we have $$\cos\left(t^{\theta-\sigma}\right)>\frac{1}{2},~~~~~~~~t^{-\nu}>2 e^{-\lambda_*t^\theta},~~~~~~~t\ge t_0.$$ 
Then, it follows that $\overline{\mathcal{Z}}(t,x)>\frac{1}{2}t^{-\nu}$ for $t\ge t_0$, uniformly in $\mathcal{X}^-(t)\le x \le \mathcal{X}^+(t)$.
We are going to show that $\overline{\mathcal{Z}}$ is a supersolution of \eqref{z-eqn} for  $t\ge t_0$ and $\mathcal{X}^-(t)\le x \le \mathcal{X}^+(t)$. In fact, we observe that
\begin{equation*}	\overline{\mathcal{Z}}(t_0,x)>0=\mathcal{Z}(t_0,x),~~~\mathcal{X}^-(t_0)\le x \le \mathcal{X}^+(t_0),
\end{equation*}
and
$\overline{\mathcal{Z}}(t,\mathcal{X}^\pm(t))\ge \frac{1}{2}t^{-\nu}>e^{-\lambda_*t^\theta}\ge \mathcal{Z}(t,\mathcal{X}^\pm(t))$
 for $t\ge t_0$. Moreover, up to increasing $t_0$,
 \begin{align*}
 	\displaystyle	\overline{\mathcal{Z}}_t-\overline{\mathcal{Z}}_{xx}+c_* \overline{\mathcal{Z}}_x+\frac{k}{2t}\overline{\mathcal{Z}}&=\Bigg(\frac{-\nu}{t}+\frac{1}{t^{2\sigma}}+\frac{k}{2t}\Bigg)\overline{\mathcal{Z}}+\frac{1}{t^{\nu+\sigma}}\!\left(\frac{\sigma(x-c_*t-\frac{k}{2\lambda_*}\ln t)}{t}+\frac{k}{2\lambda_* t}\right)\!\sin\left(\frac{x-c_*t-\frac{k}{2\lambda_*}\ln t}{t^\sigma}\right)\\
 	&\ge Ct^{-2\sigma-\nu}\gg  Ct^{\theta-1},~~~~~~~~~~~~~t\ge t_0,~~\mathcal{X}^-(t)\le x \le \mathcal{X}^+(t).
 \end{align*}
This, together with $\mathcal{W}(t,x;\overline{\mathcal{Z}})\ge 0$ uniformly for $t\ge t_0$ and 
   $\mathcal{X}^-(t)\le x \le \mathcal{X}^+(t)$, implies that  $\overline{\mathcal{Z}}$ is indeed a supersolution of problem \eqref{z-eqn} for  $t\ge t_0$ and $\mathcal{X}^-(t)\le x \le \mathcal{X}^+(t)$. It follows from the comparison principle  that $\mathcal{Z}(t,x)\le \overline{\mathcal{Z}}(t,x)$ for $t\ge t_0$ and $\mathcal{X}^-(t)\le x \le \mathcal{X}^+(t)$, thus \eqref{upper bd-z} is an immediate consequence of the fact that $\overline{\mathcal{Z}}(t,x)=o_{t\to+\infty}(1)$ uniformly in $\mathcal{X}^-(t)\le x \le \mathcal{X}^+(t)$. One then has
   \begin{equation*}
   	\psi_1(t,x)-V(t,x)\le \overline{\mathcal{Z}}(t,x)= o_{t\to+\infty}(1),~~~~~~\text{uniformly in}~~\mathcal{X}^-(t)\le x \le \mathcal{X}^+(t).
   \end{equation*}
The conclusion follows.
\end{proof}

Note that
\begin{equation*}
		V(t,x)=t^{-\frac{k}{2}}e^{-\lambda_*(x-c_*t)}u(t,x),~~~~~~~~~~t\ge t_0,~~x\in\R,
\end{equation*}
we then infer from Proposition \ref{prop-upper lower bd for V} that,  for any given $x'\in[0,t^\theta]$,
\begin{equation*}
	\limsup_{t\to+\infty}\left(U_{c_*}(x'+\tau_1)- u\Big(t, c_*t+\frac{k}{2\lambda_*}\ln t+x'\Big)\right)	\le 0\le \liminf_{t\to+\infty}\left(U_{c_*}(x'+\tau_2)- u\Big(t, c_*t+\frac{k}{2\lambda_*}\ln t+x'\Big)\right),
\end{equation*}
which demonstrates that for any $m\in(0,1)$,
\begin{equation*}
	X_m(t)= c_*t+\frac{k}{2\lambda_*}\ln t+O_{t\to+\infty}(1).
\end{equation*}

\vspace{3mm}

\noindent
{\bf The case of $k=-3$.}
\vspace{2mm}

We apply the transformation
\begin{equation*}
	V(t,x)=t^{\frac{3}{2}}(\ln t)^{-1}v(t,x), ~~~~~~~~~~t\ge t_0,~~x\in\R,
\end{equation*}
then the function $V$ satisfies
\begin{equation}
	\label{eqn-V k=-3}
	V_t-V_{xx}+c_* V_x+\Big(\frac{1}{t\ln t}-\frac{3}{2t}\Big)V+\underbrace{f'(0)V-e^{\lambda_*(x-c_*t+\frac{3}{2\lambda_*}\ln t-\frac{1}{\lambda_*}\ln\ln t)}f\big(e^{-\lambda_*(x-c_*t+\frac{3}{2\lambda_*}\ln t-\frac{1}{\lambda_*}\ln\ln t)} V\big)}_{=:Q(t,x;V)}=0
\end{equation}
for $t\ge t_0$ and $x\in\R,$ associated with $V(t_0,x)=t_0^{\frac{3}{2}}(\ln t_0)^{-1}v(t_0,x)$ for $x\in\R$.

Define for convenience $$\mathcal{X}^\pm(t):=c_*t-\frac{3}{2\lambda_*}\ln t+\frac{1}{\lambda_*}\ln\ln t\pm t^\theta,~~~~~~~~t\ge t_0.$$
Set for $n=1,2$,
\begin{equation*}
	\psi_n(t,x)= e^{\lambda_*(x-c_*t+\frac{3}{2\lambda_*}\ln t-\frac{1}{\lambda_*}\ln\ln t)} U_{c_*}\left(x-c_*t+\frac{3}{2\lambda_*}\ln t-\frac{1}{\lambda_*}\ln\ln t+\tau_n\right) 
\end{equation*}
for $t\ge t_0$ and $\mathcal{X}^-(t)\le x \le \mathcal{X}^+(t).$  Again, based on Proposition \ref{prop4.1} and the asymptotics $	U_{c_*}(z)\approx z e^{-\lambda_* z}$ as $z\to+\infty$, one can choose  $\tau_2<\tau_1$   such that, up to increasing $t_0$, 
\begin{equation*}\label{B-1 k=-3}
~~~~~~~~~~~~~~~~~~~~~	\psi_1(t,x)\le V(t,x)\le 	\psi_2(t,x),~~~~~~~~~~~~~~t\ge t_0,~~x=\mathcal{X}^+(t).  
\end{equation*}
Up to further decreasing $\tau_2$ and increasing $\tau_1$, there also holds
\begin{equation*}\label{B-2 k=-3}
~~~~~~~~~~~~~~~~~~~~~~~	\psi_1(t_0,x)\le V(t_0,x)\le\psi_2(t_0,x),~~~~~~~~~~~~~\mathcal{X}^-(t_0)\le x \le \mathcal{X}^+(t_0).
\end{equation*}

\begin{prop}
	\label{prop-upper lower bd for V k=-3}
	There holds
	 \begin{equation*}
		\limsup_{t\to+\infty}\big(\psi_1(t,x)- V(t,x)\big)	\le 0\le \liminf_{t\to+\infty}\big(\psi_2(t,x)- V(t,x)\big),
	\end{equation*}
	uniformly in $\mathcal{X}^-(t)\le x \le \mathcal{X}^+(t)$.
\end{prop}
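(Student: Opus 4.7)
\begin{Proof}[Proposal for Proposition \ref{prop-upper lower bd for V k=-3}]
The plan is to mirror the argument used for Proposition \ref{prop-upper lower bd for V}, tracking only the (few) places where the critical case $k=-3$ produces extra terms. Focus on the first inequality; the second is symmetric. Set $\eta:=x-c_*t+\frac{3}{2\lambda_*}\ln t-\frac{1}{\lambda_*}\ln\ln t$, $\zeta:=\eta+\tau_1$, and $\mathcal{Z}(t,x):=(\psi_1-V)^+(t,x)$ on the domain $t\ge t_0$, $\mathcal{X}^-(t)\le x\le \mathcal{X}^+(t)$.

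The first step is to substitute $\psi_1(t,x)=e^{\lambda_*\eta}U_{c_*}(\zeta)$ into \eqref{eqn-V k=-3} and compute the residual. Using the traveling-wave identity $U_{c_*}''+c_*U_{c_*}'+f(U_{c_*})=0$ together with $c_*=2\lambda_*$ and $\lambda_*^2=f'(0)$, the terms involving $U_{c_*}''$, $U_{c_*}$, and $f(U_{c_*})$ cancel against the nonlinear contribution $Q(t,x;\psi_1)=e^{\lambda_*\eta}(f'(0)U_{c_*}-f(U_{c_*}))$, and the zeroth-order pieces produced by the $\ln t$ and $\ln\ln t$ time-derivative terms combine with the $(\tfrac{1}{t\ln t}-\tfrac{3}{2t})\psi_1$ coefficient so that only the first-derivative piece survives:
\begin{equation*}
\partial_t\psi_1-\partial_{xx}\psi_1+c_*\partial_x\psi_1+\Big(\frac{1}{t\ln t}-\frac{3}{2t}\Big)\psi_1+Q(t,x;\psi_1)=e^{\lambda_*\eta}\Big(\frac{3}{2\lambda_* t}-\frac{1}{\lambda_* t\ln t}\Big)U_{c_*}'(\zeta).
\end{equation*}
Since $U_{c_*}(z)\approx ze^{-\lambda_* z}$ as $z\to+\infty$, we get $|e^{\lambda_*\eta}U_{c_*}'(\zeta)|\le C(1+|\eta|)\le Ct^\theta$ on the strip, so the residual is bounded in absolute value by $Ct^{\theta-1}$, exactly as in the case $k>-3$.

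The second step is to derive, by the same Lipschitz argument as in Proposition \ref{prop-upper lower bd for V}, the parabolic inequality
\begin{equation*}
\begin{cases}
\mathcal{Z}_t-\mathcal{Z}_{xx}+c_*\mathcal{Z}_x+\big(\tfrac{1}{t\ln t}-\tfrac{3}{2t}\big)\mathcal{Z}+\mathcal{W}(t,x;\mathcal{Z})\le Ct^{\theta-1}, & t\ge t_0,\ \mathcal{X}^-(t)\le x\le\mathcal{X}^+(t),\\
\mathcal{Z}(t,\mathcal{X}^+(t))=0,\quad \mathcal{Z}(t,\mathcal{X}^-(t))\le e^{-\lambda_* t^\theta}, & t\ge t_0,\\
\mathcal{Z}(t_0,x)=0, & \mathcal{X}^-(t_0)\le x\le\mathcal{X}^+(t_0),
\end{cases}
\end{equation*}
with $\mathcal{W}(t,x;\mathcal{Z})\ge 0$.

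The third step is to reuse the same cosine supersolution, now translated by the critical shift:
\begin{equation*}
\overline{\mathcal{Z}}(t,x):=\frac{1}{t^\nu}\cos\!\bigg(\frac{x-c_*t+\frac{3}{2\lambda_*}\ln t-\frac{1}{\lambda_*}\ln\ln t}{t^\sigma}\bigg).
\end{equation*}
Since $|x-c_*t+\frac{3}{2\lambda_*}\ln t-\frac{1}{\lambda_*}\ln\ln t|\le t^\theta$ and $\theta<\sigma$, up to increasing $t_0$ the cosine stays $>1/2$ on the whole strip, so $\overline{\mathcal{Z}}\ge \frac12 t^{-\nu}>e^{-\lambda_* t^\theta}$ at both moving boundaries. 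A direct computation (analogous to the one already carried out) shows that the dominant contribution to $\overline{\mathcal{Z}}_t-\overline{\mathcal{Z}}_{xx}+c_*\overline{\mathcal{Z}}_x+\big(\tfrac{1}{t\ln t}-\tfrac{3}{2t}\big)\overline{\mathcal{Z}}$ is $t^{-\nu-2\sigma}\cos(\cdot)\gtrsim t^{-\nu-2\sigma}$: the correction terms due to $\partial_t$ of the $\ln t$ and $\ln\ln t$ pieces of the phase contribute $O(t^{-\nu-\sigma-1+\theta})$, while the zeroth-order coefficient contributes $O(t^{-\nu-1})$; both are strictly lower order than $t^{-\nu-2\sigma}$ because $2\sigma<1$ and $\sigma<1-\theta$. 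By the choice of parameters in \eqref{parameters-theta lambda} one has $\nu+2\sigma+\theta<1$, hence $t^{-\nu-2\sigma}\gg Ct^{\theta-1}$ for $t_0$ large, and $\overline{\mathcal{Z}}$ is a supersolution of the $\mathcal{Z}$-inequality.

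The final step is the comparison principle, which yields $\mathcal{Z}(t,x)\le \overline{\mathcal{Z}}(t,x)=o_{t\to+\infty}(1)$ uniformly in $x$ on the strip, hence $\limsup_{t\to+\infty}(\psi_1-V)\le 0$ uniformly in $\mathcal{X}^-(t)\le x\le\mathcal{X}^+(t)$. The other inequality is obtained by running the same argument on $(V-\psi_2)^+$. The only genuinely new bookkeeping lies in checking that the two extra lower-order pieces introduced by the critical scaling - the $\tfrac{1}{t\ln t}$ term in the coefficient of $V$ and the $-\tfrac{1}{\lambda_*}\ln\ln t$ shift in the phase - never exceed the $t^{-\nu-2\sigma}$ margin built into the supersolution; this is the main (though benign) obstacle, handled by the strict inequalities in \eqref{parameters-theta lambda}.
\end{Proof}
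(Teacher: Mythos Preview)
Your proposal is correct and follows essentially the same approach as the paper's own proof: the same residual computation yielding the $Ct^{\theta-1}$ bound, the same differential inequality for $\mathcal{Z}=(\psi_1-V)^+$, and the same cosine supersolution $\overline{\mathcal{Z}}(t,x)=t^{-\nu}\cos\big((x-c_*t+\tfrac{3}{2\lambda_*}\ln t-\tfrac{1}{\lambda_*}\ln\ln t)/t^{\sigma}\big)$. Your explicit bookkeeping of the lower-order corrections coming from the $\tfrac{1}{t\ln t}$ coefficient and the $\ln\ln t$ phase shift, and the verification that they are dominated by the $t^{-\nu-2\sigma}$ margin via the strict inequalities in \eqref{parameters-theta lambda}, matches the paper's treatment exactly.
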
  
\begin{proof}
	The main ingredients are essentially the same as Proposition \ref{prop-upper lower bd for V}. We sketch the proof briefly for the first inequality.
	
	Substituting $\psi_1(t,x)$ into \eqref{eqn-V k=-3} yields
	\begin{align*}
		\Big|	\partial_t\psi_1-&\partial_{xx}\psi_1+c_*\partial_x\psi_1+\Big(\frac{1}{t\ln t}-\frac{3}{2t}\Big)\psi_1+Q(t,x;\psi_1)\Big|\\
		&=\bigg|\Big( \frac{1}{\lambda_*t\ln t}-\frac{3}{2\lambda_*t} \Big) e^{\lambda_*(x-c_*t+\frac{3}{2\lambda_*}\ln t-\frac{1}{\lambda_*}\ln\ln t)} U'_{c_*}\left(x-c_*t+\frac{3}{2\lambda_*}\ln t-\frac{1}{\lambda_*}\ln\ln t+\tau_1\right)\bigg|\le Ct^{\theta-1}
	\end{align*}
	for $t\ge t_0$ and $\mathcal{X}^-(t)\le x \le \mathcal{X}^+(t)$.
	
	Set now $\mathcal{Z}(t,x):=(\psi_1-V)^+(t,x)$ for $t\ge t_0$ and $\mathcal{X}^-(t)\le x \le \mathcal{X}^+(t)$. We notice that
	\begin{align*}
		\mathcal{W}(t,x;\mathcal{Z}):=	Q(t,x;\psi_1)-Q(t,x;V)
		=f'(0)\mathcal{Z}-d\mathcal{Z}\ge 0,
	\end{align*} 
	uniformly for $t\ge t_0$ and 
	$\mathcal{X}^-(t)\le x \le \mathcal{X}^+(t)$, and for  some bounded function $d(t,x)$ such that  $\Vert d(t,x)\Vert_{L^\infty}\le f'(0)$. 
	The function $\mathcal{Z}$ satisfies
	\begin{equation*}
		\label{z-eqn k=-3}
		\begin{aligned}
			\begin{cases}
				\displaystyle	\mathcal{Z}_t-\mathcal{Z}_{xx}+c_* \mathcal{Z}_x+\Big(\frac{1}{t\ln t}-\frac{3}{2t}\Big)\mathcal{Z}+\mathcal{W}(t,x;\mathcal{Z})\le Ct^{\theta-1},~~~~&t\ge t_0,~\mathcal{X}^-(t)\le x \le \mathcal{X}^+(t),\\
				\displaystyle\mathcal{Z}(t,\mathcal{X}^+(t))=0, &t\ge t_0,\\
				\displaystyle\mathcal{Z}(t,\mathcal{X}^-(t))\le e^{-\lambda_*t^\theta}, &t\ge t_0,~\\
				\displaystyle	\mathcal{Z}(t_0,x)=0, &\mathcal{X}^-(t_0)\le x \le \mathcal{X}^+(t_0).
			\end{cases}
		\end{aligned}
	\end{equation*}

	By constructing
	\begin{equation*}
		~~~~~~~~~~~~~~	\overline{\mathcal{Z}}(t,x):=\frac{1}{t^\nu}\cos\left(\frac{x-c_*t+\frac{3}{2\lambda_*}\ln t-\frac{1}{\lambda_*}\ln\ln t}{t^\sigma}\right),~~~~~~~t\ge t_0,~~\mathcal{X}^-(t)\le x \le \mathcal{X}^+(t),
	\end{equation*}
one can apply a comparison argument to prove that
	\begin{equation*}
		\psi_1(t,x)-V(t,x)\le \mathcal{Z}(t,x)\le \overline{\mathcal{Z}}(t,x)=o_{t\to+\infty}(1),~~~~~~\text{uniformly in}~~\mathcal{X}^-(t)\le x \le \mathcal{X}^+(t),
	\end{equation*}
	which concludes the proof.
\end{proof}

 Proposition \ref{prop-upper lower bd for V k=-3} then implies that,
for any given $x'\in[0,t^\theta]$,
\begin{equation}\label{thm1-eqn1 k=-3}
	\begin{aligned}
&\limsup_{t\to+\infty}\left(	U_{c_*}(x'+\tau_1)- u\Big(t, c_*t-\frac{3}{2\lambda_*}\ln t+\frac{1}{\lambda_*}\ln\ln t+x'\Big)\right)\le 0\\
&	~~~~~~~~~~~~~~~~~~~~~~~~~~~~~~~~~~~~~~~~~~\le \liminf_{t\to+\infty}\left(	U_{c_*}(x'+\tau_2)- u\Big(t, c_*t-\frac{3}{2\lambda_*}\ln t+\frac{1}{\lambda_*}\ln\ln t+x'\Big)\right),
	\end{aligned}
\end{equation}
showing that for any $m\in(0,1)$,
\begin{equation*}
	X_m(t)= c_*t-\frac{3}{2\lambda_*}\ln t+\frac{1}{\lambda_*}\ln\ln t+O_{t\to+\infty}(1).
\end{equation*}
The proof of Theorem \ref{thm1-O(1) k>=-3} is therefore complete.

\subsection{Proof of Theorem \ref{thm1-O(1) any nu}}

We believe now it is the best time moving into the proof of Theorem \ref{thm1-O(1) any nu}.

Fix $t_0\gg 1$, and set  
\begin{equation*}
	V(t,x)=t^{-{\boldsymbol{\nu}}}v(t,x),~~~~~~~~~~t\ge t_0,~~x\in\R,
\end{equation*}
then it follows from \eqref{v-eqn-flat} that $V$ satisfies
\begin{equation}
	\label{eqn-V_flat}
	V_t-V_{xx}+2\lambda  V_x+\frac{\boldsymbol{\nu}}{t}V+\underbrace{f'(0)V-e^{\lambda(x-ct-\frac{\boldsymbol{\nu}}{\lambda}\ln t)}f\big(e^{-\lambda(x-ct-\frac{\boldsymbol{\nu}}{\lambda}\ln t)} V\big)}_{=:Q(t,x;V)}=0,~~t\ge t_0,~x\in\R,
\end{equation}
associated with $V(t_0,x)=t_0^{-\boldsymbol{\nu}}v(t_0,x)$ for $x\in\R$.

For any fixed $\theta\in(4/25,1/2)$,  we introduce $$\mathcal{X}^\pm(t):=ct+\frac{\boldsymbol{\nu}}{\lambda}\ln t\pm t^\theta,~~~~~~~~t\ge t_0.$$ 
Define for $n=1,2$,
\begin{equation*}
	\psi_n(t,x)= e^{\lambda(x-ct-\frac{\boldsymbol{\nu}}{\lambda}\ln t)} U_{c}\left(x-ct-\frac{\boldsymbol{\nu}}{\lambda}\ln t+\tau_n\right)~~~~t\ge t_0,~~~\mathcal{X}^-(t)\le x \le \mathcal{X}^+(t),
\end{equation*}
where the parameters   $\tau_2<\tau_1$   are chosen such that, up to increasing $t_0$, 
\begin{equation*}\label{B-1_flat}
	\psi_1(t,x)\le V(t,x)\le 	\psi_2(t,x),~~~t\ge t_0,~~x=\mathcal{X}^+(t),
\end{equation*}
thanks to Proposition \ref{prop4.2_match sol and TW} and the asymptotics	$U_{c}(z)\approx  e^{-\lambda z}$ as $z\to+\infty$, and such that 
\begin{equation*}\label{B-2_flat}
	\psi_1(t_0,x)\le V(t_0,x)\le\psi_2(t_0,x),~~~\mathcal{X}^-(t_0)\le x \le \mathcal{X}^+(t_0),
\end{equation*}
up to further increasing $\tau_1$ and decreasing $\tau_2$.

\begin{prop}
	\label{prop-upper lower bd for V_flat}
	There holds
		\begin{equation*}
			\limsup_{t\to+\infty}\big(\psi_1(t,x)- V(t,x)\big)	\le 0\le \liminf_{t\to+\infty}\big(\psi_2(t,x)- V(t,x)\big),
		\end{equation*}
		uniformly in $\mathcal{X}^-(t)\le x \le \mathcal{X}^+(t)$.
\end{prop}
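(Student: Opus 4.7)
The plan is to follow the template of Propositions~\ref{prop-upper lower bd for V} and~\ref{prop-upper lower bd for V k=-3}: compute the residual of $\psi_1$ in \eqref{eqn-V_flat}, derive a linear parabolic inequality for $\mathcal{Z}:=(\psi_1-V)^+$, and dominate $\mathcal{Z}$ by an explicit supersolution going to zero. I describe only the first inequality; the second follows from the symmetric argument applied to $(V-\psi_2)^+$.

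Writing $\xi=x-ct-\frac{\boldsymbol{\nu}}{\lambda}\ln t$ and combining the dispersion identity $\lambda^2-c\lambda+f'(0)=0$ with the wave equation $U_c''+cU_c'+f(U_c)=0$, a direct calculation yields
\[
\partial_t\psi_1-\partial_{xx}\psi_1+2\lambda\partial_x\psi_1+\tfrac{\boldsymbol{\nu}}{t}\psi_1+Q(t,x;\psi_1)=-\tfrac{\boldsymbol{\nu}}{\lambda t}\,e^{\lambda\xi}U_c'(\xi+\tau_1).
\]
Since $e^{\lambda\eta}U_c'(\eta)$ is bounded on $\mathbb{R}$ (from the standard asymptotics of $U_c$ at both ends), this residual is $O(1/t)$. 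The same Lipschitz/MVT argument used in the earlier propositions then gives, on $\{t\ge t_0,\ \mathcal{X}^-(t)\le x\le \mathcal{X}^+(t)\}$,
\[
\mathcal{Z}_t-\mathcal{Z}_{xx}+2\lambda\mathcal{Z}_x+\tfrac{\boldsymbol{\nu}}{t}\mathcal{Z}+\mathcal{W}(t,x;\mathcal{Z})\le \tfrac{C}{t},\qquad \mathcal{W}\ge 0,
\]
with $\mathcal{Z}(t,\mathcal{X}^+(t))=0$, $\mathcal{Z}(t,\mathcal{X}^-(t))\le e^{-\lambda t^\theta}$, and $\mathcal{Z}(t_0,\cdot)=0$.

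The hard part is constructing the supersolution, because the cosine ansatz $t^{-\nu}\cos\phi$ used in Propositions~\ref{prop-upper lower bd for V} and~\ref{prop-upper lower bd for V k=-3} is \emph{not viable} here: setting $\mu:=c-2\lambda>0$, the parabolic operator applied to $\cos\phi$ produces a leftover term of order $\mu\,t^{-\nu-\sigma}\sin\phi$ of indefinite sign, which is strictly larger than the diffusive gain $t^{-\nu-2\sigma}\cos\phi$. I therefore propose converting this obstruction into a driving contribution by replacing the cosine with the \emph{affine} profile
\[
\overline{\mathcal{Z}}(t,x):=t^{-\nu}\bigl(A-B\,\phi(t,x)\bigr),\qquad \phi(t,x):=\frac{x-ct-\frac{\boldsymbol{\nu}}{\lambda}\ln t}{t^{\theta}},
\]
with fixed constants $A>B>0$ and $\nu\in(0,1-\theta)$. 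A short computation then gives
\[
\overline{\mathcal{Z}}_t-\overline{\mathcal{Z}}_{xx}+2\lambda\overline{\mathcal{Z}}_x+\tfrac{\boldsymbol{\nu}}{t}\overline{\mathcal{Z}}=B\mu\,t^{-\nu-\theta}+O(t^{-\nu-1}),
\]
so the positive leading term $B\mu\,t^{-\nu-\theta}$ dominates both the residual $C/t$ and the remainder for $t$ large enough (thanks to $\nu+\theta<1$). Combined with $\mathcal{W}(t,x;\overline{\mathcal{Z}})\ge 0$, this makes $\overline{\mathcal{Z}}$ an admissible supersolution; the condition $A>B$ secures positivity and the boundary comparisons for $t_0$ large enough.

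The comparison principle then yields $\mathcal{Z}(t,x)\le \overline{\mathcal{Z}}(t,x)\le (A+B)\,t^{-\nu}\to 0$ uniformly on $[\mathcal{X}^-(t),\mathcal{X}^+(t)]$, which is the claim. The main obstacle is precisely the scale mismatch $c\neq 2\lambda$: it forces us to abandon the oscillatory control that worked under initial data of type~\eqref{initial} and instead to exploit monotonicity of the barrier in the moving frame of speed $c$, which is the barrier-level counterpart of the ``intermediate transformation'' strategy introduced in Section~\ref{Sec4.2}.
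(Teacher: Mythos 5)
Your proof is correct and takes essentially the same approach as the paper. You correctly diagnose that the scale mismatch $\mu=c-2\lambda>0$ rules out the oscillatory supersolution and force an affine barrier whose positive contribution comes from the drift term; the paper's choice $\overline{\mathcal{Z}}(t,x)=\mathcal{D}\,t^{-1}\big(t^\theta-(x-ct-\tfrac{\boldsymbol{\nu}}{\lambda}\ln t)\big)$ is precisely your ansatz $t^{-\nu}(A-B\phi)$ with the borderline parameters $\nu=1-\theta$, $A=B=\mathcal{D}$ (compensated by taking $\mathcal{D}>4C_1/\mu$), so the two differ only in this immaterial normalization.
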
  
\begin{proof}
	Again we outline only the proof for the first inequality, and the second one can be treated similarly.
	
	Substituting $\psi_1(t,x)$ into \eqref{eqn-V_flat} yields
	\begin{align*}
		\Big|	\partial_t\psi_1-\partial_{xx}\psi_1+&2\lambda\partial_x\psi_1+\frac{\boldsymbol{\nu}}{t}\psi_1+Q(t,x;\psi_1)\Big|=\bigg|-\frac{\boldsymbol{\nu}}{\lambda t}e^{\lambda(x-ct-\frac{\boldsymbol{\nu}}{\lambda}\ln t)} U'_{c}\left(x-ct-\frac{\boldsymbol{\nu}}{\lambda}\ln t+\tau_1\right)\bigg|\le C_1t^{-1}
	\end{align*}
	for $t\ge t_0$ and $\mathcal{X}^-(t)\le x \le \mathcal{X}^+(t)$, with some $C_1>0$.

	Set now $\mathcal{Z}(t,x):=(\psi_1-V)^+(t,x)$ for $t\ge t_0$ and $\mathcal{X}^-(t)\le x \le \mathcal{X}^+(t)$. We notice that
	\begin{align*}
		\mathcal{W}(t,x;\mathcal{Z}):&=	Q(t,x;\psi_1)-Q(t,x;V)\\
		&=f'(0)\mathcal{Z}-e^{\lambda(x-ct-\frac{\boldsymbol{\nu}}{\lambda}\ln t)}\Big(f\big(e^{-\lambda(x-ct-\frac{\boldsymbol{\nu}}{\lambda}\ln t)} \psi_1\big)-f\big(e^{-\lambda(x-ct-\frac{\boldsymbol{\nu}}{\lambda}\ln t)} V\big)
		\Big)\\
		&=f'(0)\mathcal{Z}-d\mathcal{Z}\ge 0,~~~~~~~~~~t\ge t_0,~~~~~\mathcal{X}^-(t)\le x \le \mathcal{X}^+(t),
	\end{align*} 
	for  some bounded function  $d$ such that  $\Vert d(t,x)\Vert_{L^\infty}\le f'(0)$. Moreover, $\mathcal{Z}$ satisfies
	\begin{equation}
		\label{z-eqn_flat}
		\begin{aligned}
			\begin{cases}
				\displaystyle	\mathcal{Z}_t-\mathcal{Z}_{xx}+2\lambda \mathcal{Z}_x+\frac{\boldsymbol{\nu}}{t}\mathcal{Z}+\mathcal{W}(t,x;\mathcal{Z})\le C_1t^{-1},~~~~&t\ge t_0,~\mathcal{X}^-(t)\le x \le \mathcal{X}^+(t),\\
				\displaystyle\mathcal{Z}(t,\mathcal{X}^+(t))=0, &t\ge t_0,\\
				\displaystyle\mathcal{Z}(t,\mathcal{X}^-(t))\le e^{-\lambda t^\theta}, &t\ge t_0,\\
				\displaystyle	\mathcal{Z}(t_0,x)=0, &\mathcal{X}^-(t_0)\le x \le \mathcal{X}^+(t_0).
			\end{cases}
		\end{aligned}
	\end{equation}
Define
	\begin{equation*}
		~~~~~~~~~~~~~~	\overline{\mathcal{Z}}(t,x):=\mathcal{D}t^{-1}\left(t^\theta-\Big(x-ct-\frac{\boldsymbol{\nu}}{\lambda}\ln t\Big) \right),~~~~~~~t\ge t_0,~~\mathcal{X}^-(t)\le x \le \mathcal{X}^+(t),
	\end{equation*}
	with $\mathcal{D}>4C_1/\mu>0$.
	Let us now check that $\overline{\mathcal{Z}}$ 
	 is a supersolution of \eqref{z-eqn_flat} for  $t\ge t_0$ and $\mathcal{X}^-(t)\le x \le \mathcal{X}^+(t)$. 
	 
	First,  we observe that
	$\overline{\mathcal{Z}}(t_0,x)\ge 0=\mathcal{Z}(t_0,x)$ for $\mathcal{X}^-(t_0)\le x \le \mathcal{X}^+(t_0)$, $\overline{\mathcal{Z}}(t,\mathcal{X}^+(t))=0= \mathcal{Z}(t,\mathcal{X}^+(t))$ for $t\ge t_0$, and, up to increasing $t_0$, $\overline{\mathcal{Z}}(t,\mathcal{X}^-(t))=2\mathcal{D}t^{\theta-1} \gg e^{-\lambda t^\theta}\ge  \mathcal{Z}(t,\mathcal{X}^-(t))$ 
	for $t\ge t_0$. Moreover, up to increasing $t_0$,
	 \begin{align*}
	 	\overline{\mathcal{Z}}_t-\overline{\mathcal{Z}}_{xx}+2\lambda \overline{\mathcal{Z}}_x+\frac{\boldsymbol{\nu}}{t}\overline{\mathcal{Z}}=\frac{\mathcal{D}}{t}\left(\mu+\theta t^{\theta-1}+\frac{\boldsymbol{\nu}}{\lambda t}+\frac{\boldsymbol{\nu}-1}{t}\Big(t^{\theta}-(x-ct-\frac{\boldsymbol{\nu}}{\lambda}\ln t)\Big)\right)\ge \frac{\mathcal{D}}{t}\big(\mu-C t^{\theta-1}\big)\ge \frac{\mathcal{D\mu}}{2t}
	 \end{align*}
	for $t\ge t_0$ and $\mathcal{X}^-(t)\le x \le \mathcal{X}^+(t)$. This implies that  $\overline{\mathcal{Z}}$ is indeed a supersolution of problem \eqref{z-eqn_flat} for  $t\ge t_0$ and $\mathcal{X}^-(t)\le x \le \mathcal{X}^+(t)$. It follows from the comparison principle  that $\mathcal{Z}(t,x)\le \overline{\mathcal{Z}}(t,x)$ for $t\ge t_0$ and $\mathcal{X}^-(t)\le x \le \mathcal{X}^+(t)$, thus 
	\begin{equation*}
		\psi_1(t,x)-V(t,x)\le \mathcal{Z}(t,x)\le \overline{\mathcal{Z}}(t,x)= o_{t\to+\infty}(1),~~~~~~\text{uniformly in}~~\mathcal{X}^-(t)\le x \le \mathcal{X}^+(t),
	\end{equation*}
	which completes the proof.
\end{proof}

We then infer from Proposition \ref{prop-upper lower bd for V_flat} that, for any given $x'\in[0,t^\theta]$,
\begin{equation*}
	\limsup_{t\to+\infty}\left(U_{c}(x'+\tau_1)-u\Big(t, c t+\frac{\boldsymbol{\nu}}{\lambda}\ln t+x'\Big)\right)	\le 0\le \liminf_{t\to+\infty}\left(U_{c}(x'+\tau_1)- u\Big(t, c t+\frac{\boldsymbol{\nu}}{\lambda}\ln t+x'\Big)\right),
\end{equation*}
which gives that for any $m\in(0,1)$,
\begin{equation*}
	X_m(t)= ct+\frac{\boldsymbol{\nu}}{\lambda}\ln t+O_{t\to+\infty}(1).
\end{equation*}
Therefore, Theorem \ref{thm1-O(1) any nu} follows.

\subsection{Proofs of Propositions \ref{prop1}-\ref{prop2}}
We will apply  contradiction arguments as that of  \cite[Theorem 1.2]{HNRR13}.
\begin{proof}[Proof of Proposition \ref{prop1}]   We just outline the details for the case of $k=-3$, and the case of $k>-3$ can be easily  handled  by simply modifying the proof of \cite[Theorem 1.2]{HNRR13} with $3$ replaced by $-k$.  

\vskip 2mm

\noindent
{\bf The case of $k=-3$.} 
Assume that \eqref{thm2-eqn1} were not true, then one can find $\varep>0$ and a sequence of positive times $(t_n)_{n\in\N}$ such that $t_n\to+\infty$ as $n\to+\infty$ and 
\begin{equation*}
	\label{thm2-eqn1-pf k=-3}
	\min_{|\zeta|\le C}\Big\Vert u(t_n,\cdot)-U_{c_*}\Big(\cdot-c_*t_n+\frac{3}{2\lambda_*}\ln t_n-\frac{1}{\lambda_*}\ln\ln t_n+\zeta\Big)\Big\Vert_{L^\infty(\R_+)}\ge \varep
\end{equation*}
for all $n\in\N$ and for some $C\ge 0$ to be determined later.

First of all, since $u(t,x)\to 1$ as $t\to+\infty$ locally uniformly in $x\in\R$, it follows from Theorem \ref{thm1-O(1) k>=-3} that 
\begin{equation}
	\label{1.2-1 k=-3}
	\liminf_{t\to+\infty}\Big(\min_{0\le x\le c_*t-\frac{3}{2\lambda_*}\ln t+\frac{1}{\lambda_*}\ln\ln t-\Lambda}u(t,x)\Big)\to 1~~~~~~~\text{as}~~\Lambda\to+\infty,
\end{equation}
and, together with Lemma \ref{lemma-u(t,.) to 0}, that
\begin{equation}
	\label{1.2-2 k=-3}
	\limsup_{t\to+\infty}\Big(\max_{x\ge c_*t-\frac{3}{2\lambda_*}\ln t+\frac{1}{\lambda_*}\ln\ln t+\Lambda}u(t,x)\Big)\to 0~~~~~~~\text{as}~~\Lambda\to+\infty.
\end{equation}
One then derives from \eqref{1.2-1 k=-3}-\eqref{1.2-2 k=-3} that there exists $L>0$ such that
\begin{equation}
	\label{thm2-eqn2-pf k=-3}
	\min_{|\zeta|\le C}\bigg(\max_{|y|\le L}\Big| u\Big(t_n,y+c_*t_n-\frac{3}{2\lambda_*}\ln t_n+\frac{1}{\lambda_*}\ln\ln t_n\Big)-U_{c_*}(y+\zeta)\Big|\bigg)\ge \varep
\end{equation}
for all $n\in\N$.

Define
\begin{equation*}
	u_n(t,x)=u\Big(t+t_n,x+c_*t_n-\frac{3}{2\lambda_*}\ln t_n+\frac{1}{\lambda_*}\ln\ln t_n\Big),~~~~~~~(t,x)\in\R^2,
\end{equation*}
then the sequence $(u_n)_{n\in\N}$ converges, up to extraction of a subsequence, locally uniformly in $\R^2$ to a limiting function $u_\infty$ which satisfies $0\le u_\infty\le 1$ in $\R^2$ and solves
\begin{equation*}
	\partial_t u_\infty=\partial_{xx} u_\infty+f(u_\infty),~~~~~~~(t,x)\in\R^2.
\end{equation*}

On the other hand, for each fixed $t\in\R$ and $y>1$, we have
 $y_n=y+\frac{3}{2\lambda_*}\ln\frac{t+t_n}{t_n}-\frac{1}{\lambda_*}\ln\frac{\ln (t+t_n)}{\ln t_n}\in[0,t^\theta]$ for $n$ large enough, with $\theta$ given in \eqref{parameters-theta lambda}. It then follows from \eqref{thm1-eqn1 k=-3} that 
 \begin{equation*}
	\limsup_{n\to+\infty}\big(U_{c_*}((y_n+\tau_1)- u_n(t, c_*t+y)\big)	\le 0\le \liminf_{n\to+\infty}\big(U_{c_*}(y_n+\tau_2)- u_n(t, c_*t+y)\big)
\end{equation*}
This implies that 
\begin{equation*}
	U_{c_*}(y+\tau_1)\le u_\infty(t, c_*t+y)\le 	U_{c_*}(y+\tau_2)~~~~~~~\text{for}~~t\in\R~~\text{and}~~y>1.
\end{equation*}
One then infers from the Liouville type result \cite[Theorem 3.5]{BH2007} that there exists $\tau\in[\tau_2,\tau_1]$ such that $u_\infty(t,x)=U_{c_*}(x-c_*t+\tau)$ for $(t,x)\in\R^2$. Since $u_n$ converges to $u_\infty$ as $n\to+\infty$ locally uniformly in $\R^2$, it follows in particular that $u_n(0,\cdot)- U_{c_*}(\cdot+\tau)\to 0$ uniformly in $[-L,L]$, namely,
\begin{equation*}
	\max_{|y|\le L}\Big| u\Big(t_n,y+c_*t_n-\frac{3}{2\lambda_*}\ln t_n+\frac{1}{\lambda_*}\ln\ln t_n\Big)-U_{c_*}(y+\tau)\Big|\to 0~~~\text{as}~~n\to+\infty.
\end{equation*}
By choosing $C\ge |\tau|$, one gets a contradiction with \eqref{thm2-eqn2-pf k=-3}. This proves \eqref{thm2-eqn1}.

It remains to prove \eqref{thm2-eqn2}. Let $m\in(0,1)$ be fixed, and let $(t_n)_{n\in\N}$ and $(x_n)_{n\in\N}$ be two sequences of positive real numbers such that $t_n\to+\infty$ as $n\to+\infty$ and $u(t_n,x_n)=m$ for all $n\in\N$. Set $\xi_n=x_n-c_*t_n+\frac{3}{2\lambda_*}\ln t_n-\frac{1}{\lambda_*}\ln\ln t_n$, then Theorem \ref{thm1-O(1) k>=-3} implies that the sequence $(\xi_n)_{n\in\N}$ is bounded, and then converges up to extraction of a subsequence to a real number $\xi_\infty$. Moreover, one infers from the preceding paragraph that the functions 
\begin{equation*}
	v_n(t,x)=u(t+t_n,x+x_n)=u\Big(t+t_n,x+\xi_n+c_*t_n-\frac{3}{2\lambda_*}\ln t_n+\frac{1}{\lambda_*}\ln\ln t_n\Big)
\end{equation*}
converge up to extraction of another subsequence, locally uniformly in $\R^2$ to $v_\infty(t,x)=U_{c_*}(x-c_*t+\xi_\infty+\tau)$ for some $\tau\in[-C,C]$ with $C>0$ chosen in \eqref{thm2-eqn1}. Since $v_n(0,0)=m$ for all $n\in\N$, one has $U_{c_*}(\xi_\infty+\tau)=m$, namely $\xi_\infty+\tau=U_{c_*}^{-1}(m)$. Finally, the limit function $v_\infty$ is uniquely determined and the whole sequence $(v_n)_{n\in\N}$ therefore converges to $U_{c_*}(x-c_*t+U_{c_*}^{-1}(m))$.
\end{proof}

\begin{proof}[Proof of Proposition \ref{prop2}] The proof is nearly the same as that of Proposition \ref{prop1} for the case of $k>-3$. 
One only needs to replace $c_*$ by $c$ and $\frac{k}{2\lambda_*}$ by $\frac{\boldsymbol{\nu}}{\lambda}$,  apply Theorem \ref{thm1-O(1) any nu} instead of Theorem \ref{thm1-O(1) k>=-3} and make use again of the Liouville type result \cite[Theorem 3.5]{BH2007}.  
 \end{proof}

\section{Sharp asymptotics up to $o(1)$ precision}\label{sec-6}
This section is devoted to the proofs for ``convergence to a traveling wave'' results, i.e. Theorems \ref{thm3: o(1) k<-3}-\ref{thm5: o(1) any nu}.

\subsection{Proof of Theorem \ref{thm3: o(1) k<-3}}\label{Sec5.1}

 Fix any $\mu\in(4/25,1/4)$ 
 and any $\varep>0$ small enough, then it follows from Proposition \ref{prop4.1}, with $k<-3$, that there exists $T_\varep>0$ sufficiently large such that 
 \begin{equation}
 	\label{A_k<-3}
 		(\varpi-\varep) (x-c_*t) e^{-\frac{(x-c_*t)^2}{4t}}t^{-\frac{3}{2}}	\le v(t,x)\le	(\varpi+\varep) (x-c_*t) e^{-\frac{(x-c_*t)^2}{4t}}t^{-\frac{3}{2}}
 \end{equation}
for $t\gg T_\varep$ and $x=c_*t+t^\mu+o(t^\mu)$, where $\varpi>0$ is given in Proposition \ref{prop4.1}.   
 
For any $\alpha\in[\varpi-\varep, \varpi+\varep]$   and   $\overline T_\varep\gg T_\varep$, we introduce     
\begin{equation}
	\label{1-TW shift k<-3}
	\psi_\alpha(t,x):=e^{\lambda_*(x-c_*t+\frac{3}{2\lambda_*}\ln t)}     U_{c_*}\Big(x-c_*t+\frac{3}{2\lambda_*}\ln t+\zeta_\alpha(t)\Big),~~~~~t\ge \overline T_\varep,~x\in\R.
\end{equation}
Here, the function $\zeta_\alpha(t)$  is chosen through the following constraint
\begin{equation}
	\label{constraint k<-3}
	\psi_\alpha\Big(t,c_*t-\frac{3}{2\lambda_*}\ln t+t^\mu\Big)=\alpha t^{\mu}e^{-\frac{1}{4} t^{2\mu-1}},~~~~~~t\ge \overline T_\varep.
\end{equation}
Recalling that $ U_{c_*}$ satisfies the normalization $ U_{c_*}(s)\approx s e^{-\lambda_*s}$ as $s\to+\infty$, we find that for $t\ge \overline T_\varep$, 
\begin{equation}\label{zeta_k<-3}
	\zeta_\alpha(t)=-\frac{1}{\lambda_*}\ln\alpha+\mathcal{O}( t^{2\mu-1}),
	~~~~~~~	|\dot\zeta_\alpha(t)|\le C t^{2\mu-2},
\end{equation}
with some $C>0$ independent of $\alpha$.

On the other hand, by defining $
	V(t,x)=t^{\frac{3}{2}}v(t,x)$  for $t\ge \overline T_\varep$ and $x\in\R$, then  \eqref{v-eqn} can be recast as 
\begin{equation}
	\label{eqn-V k<-3}
	V_t-V_{xx}+c_* V_x-\frac{3}{2t}V+\underbrace{f'(0)V-e^{\lambda_*(x-c_*t+\frac{3}{2\lambda_*}\ln t)}f\big(e^{-\lambda_*(x-c_*t+\frac{3}{2\lambda_*}\ln t)} V\big)}_{=:Q(t,x;V)}=0,~~t\ge \overline T_\varep,~x\in\R.
\end{equation}

We now introduce for convenience $$\mathcal{Y}^\pm(t):=c_*t-\frac{3}{2\lambda_*}\ln t\pm  t^\mu,~~~~~t\ge \overline T_\varep.$$
Substituting $\psi_\alpha$ into  \eqref{eqn-V k<-3}, together with \eqref{zeta_k<-3}, one has
\begin{align*}
	&\Big|\partial_t \psi_\alpha-\partial_{xx}\psi_\alpha+c_*\partial_x \psi_\alpha-\frac{3}{2t}
	\psi_\alpha+Q(t,x;\psi_\alpha)\Big|\\
	&~~~~~~~~~~~~~~=\Big|e^{\lambda_*(x-c_*t+\frac{3}{2\lambda_*}\ln t)} U_{c_*}'\Big(x-c_*t+\frac{3}{2\lambda_*}\ln t+\zeta_\alpha(t)\Big)\Big(\dot\zeta_\alpha(t)+\frac{3}{2\lambda_*t}\Big)\Big|\le C t^{\mu-1}
\end{align*}
for $t\ge \overline T_\varep$ and $\mathcal{Y}^-(t)\le x\le\mathcal{Y}^+(t)$, i.e. $|x-c_*t+\frac{3}{2\lambda_*}\ln t|\le  t^\mu$.

Let $v_\alpha(t,x)$ be the solution to the following initial boundary value problem:
\begin{equation*}
	\label{v_alpha k<-3}
	\begin{aligned}
		\begin{cases}
			\displaystyle	\partial_t v_\alpha-\partial_{xx}v_\alpha+c_*\partial_x v_\alpha-\frac{3}{2t}
			v_\alpha+Q(t,x;v_\alpha)=0,~~~~~~~~~~~&t\ge \overline T_\varep,~x\le\mathcal{Y}^+(t),\vspace{4pt}\\
			\displaystyle v_\alpha\big(t,\mathcal{Y}^+(t)\big)=\alpha t^{\mu}e^{-\frac{1}{4} t^{2\mu-1}},~~~~&t\ge \overline T_\varep,\vspace{4pt}\\
			\displaystyle v_\alpha(\overline T_\varep,x)  =V(\overline T_\varep,x), ~~~~~~~~~~~~~~~~~~~~~~~~~~~~~~~~~~~~~&x\le \mathcal{Y}^+(\overline T_\varep).
		\end{cases}
	\end{aligned}
\end{equation*}
 It then follows from the constraint \eqref{constraint k<-3} that $\psi_\alpha$ can approximately match $v_\alpha$ at $x=\mathcal{Y}^+(t)$, namely, 
\begin{equation*}
	v_\alpha\big(t,\mathcal{Y}^+(t)\big)=\psi_\alpha\big(t,\mathcal{Y}^+(t)\big)~~~~~\text{for}~t\ge \overline T_\varep.
\end{equation*}

Consider now particularly $\alpha=\varpi\pm\varep$, it then follows from comparison arguments together with  \eqref{A_k<-3}  that
\begin{equation}
	\label{1-claim k<-3}
	v_{\varpi-\varep}(t,x)\le V(t,x)\le v_{\varpi+\varep}(t,x)~~~~\text{for all}~t\ge \overline T_\varep,~~x\le \mathcal{Y}^+(t),
\end{equation}
and, as well as  \eqref{strong KPP}, that
\begin{equation}\label{estimate-1 k<-3}
	v_{\varpi+\varep}(t,x)\le \frac{\varpi+\varep}{\varpi-\varep}v_{\varpi-\varep}(t,x)~~~~\text{for all}~t\ge \overline T_\varep,~~x\le \mathcal{Y}^+(t).
\end{equation}
Therefore, \eqref{1-claim k<-3}-\eqref{estimate-1 k<-3} give in particular that
\begin{align}\label{1-v+varep-upper bound k<-3}
	v_{\varpi+\varep}\big(t,\mathcal{Y}^-(t)\big)&\le \frac{\varpi+\varep}{\varpi-\varep}v_{\varpi-\varep}\big(t,\mathcal{Y}^-(t)\big)<\frac{\varpi+\varep}{\varpi-\varep}V\big(t,\mathcal{Y}^-(t)\big)\le \frac{\varpi+\varep}{\varpi-\varep} e^{-\lambda_* t^\mu},~~~~t\ge \overline T_\varep.
\end{align}

\begin{prop}
	\label{prop-5.3}
	For $\varep>0$ small enough, there holds
	\begin{equation*}
		\lim_{t\to+\infty}\big(\psi_{\varpi\pm\varep}(t,x)-v_{\varpi\pm\varep}(t,x) \big)=0 ,~~\text{uniformly in}~~\mathcal{Y}^-(t)\le x \le \mathcal{Y}^+(t).
	\end{equation*}
\end{prop}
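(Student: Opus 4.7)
\begin{Proof}[ sketch]
The strategy is to run the comparison argument already used for Propositions~5.1 and 5.3 (or rather, its adaptation to the finite strip $\mathcal{Y}^-(t)\le x\le\mathcal{Y}^+(t)$). Introduce, for each choice of sign, the two nonnegative quantities
\begin{equation*}
\mathcal{Z}_1(t,x):=\big(v_{\varpi\pm\varep}-\psi_{\varpi\pm\varep}\big)^+\!(t,x),\qquad
\mathcal{Z}_2(t,x):=\big(\psi_{\varpi\pm\varep}-v_{\varpi\pm\varep}\big)^+\!(t,x),
\end{equation*}
on the parabolic strip $\{t\ge\overline T_\varep,\,\mathcal{Y}^-(t)\le x\le\mathcal{Y}^+(t)\}$. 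Since $v_{\varpi\pm\varep}$ solves \eqref{eqn-V k<-3} exactly while $\psi_{\varpi\pm\varep}$ solves it with residual bounded by $Ct^{\mu-1}$, a standard linearisation gives
\begin{equation*}
Q(t,x;a)-Q(t,x;b)=(a-b)\big(f'(0)-f'(\xi)\big),
\end{equation*}
where $\xi$ lies between $u$ and $U_{c_*}(y+\zeta_\alpha(t))$, both of which take values in $[0,1]$. Under the strong KPP assumption \eqref{strong KPP}, $s\mapsto f(s)/s$ is nonincreasing, hence $f'(s)\le f(s)/s\le f'(0)$ for all $s\in(0,1]$, so $f'(0)-f'(\xi)\ge 0$. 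Consequently, each of $\mathcal{Z}_1,\mathcal{Z}_2$ satisfies, in the viscosity sense, an inequality of the form
\begin{equation*}
\partial_t\mathcal{Z}-\partial_{xx}\mathcal{Z}+c_*\partial_x\mathcal{Z}-\tfrac{3}{2t}\mathcal{Z}+\mathcal{W}(t,x;\mathcal{Z})\le Ct^{\mu-1},
\end{equation*}
with $\mathcal{W}\ge 0$.

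The parabolic boundary data for these inequalities are all small. At $x=\mathcal{Y}^+(t)$ the matching constraint \eqref{constraint k<-3} forces $v_{\varpi\pm\varep}(t,\mathcal{Y}^+(t))=\psi_{\varpi\pm\varep}(t,\mathcal{Y}^+(t))$, so $\mathcal{Z}_{1,2}$ vanish there. At $x=\mathcal{Y}^-(t)$, estimate \eqref{1-v+varep-upper bound k<-3} together with monotonicity in $\alpha$ yields $v_{\varpi\pm\varep}(t,\mathcal{Y}^-(t))\le Ce^{-\lambda_*t^\mu}$, while the asymptotics $U_{c_*}(-\infty)=1$ and $\zeta_\alpha(t)=\mathcal{O}(1)$ give $\psi_{\varpi\pm\varep}(t,\mathcal{Y}^-(t))\approx e^{-\lambda_*t^\mu}$. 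Hence $\mathcal{Z}_{1,2}(t,\mathcal{Y}^-(t))\le Ce^{-\lambda_*t^\mu}$. Finally, at $t=\overline T_\varep$, the functions $\mathcal{Z}_{1,2}(\overline T_\varep,\cdot)$ are bounded by a constant depending only on $\overline T_\varep$ and $u_0$.

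To produce the supersolution, fix parameters $\nu,\sigma>0$ with
\begin{equation*}
\mu<\sigma<\tfrac{1-\mu-\nu}{2},
\end{equation*}
which is feasible because $\mu<1/4$, and set
\begin{equation*}
\overline{\mathcal{Z}}(t,x):=Bt^{-\nu}\cos\!\left(\frac{x-c_*t+\frac{3}{2\lambda_*}\ln t}{t^\sigma}\right).
\end{equation*}
For $t$ large, the argument of the cosine is bounded by $t^{\mu-\sigma}\to 0$ throughout the strip, so $\cos(\cdot)\ge 1/2$ there. A direct computation, mirroring that in the proof of Proposition~5.1, gives
\begin{equation*}
\overline{\mathcal{Z}}_t-\overline{\mathcal{Z}}_{xx}+c_*\overline{\mathcal{Z}}_x-\tfrac{3}{2t}\overline{\mathcal{Z}}
\ge \tfrac{B}{2}t^{-\nu-2\sigma}-CBt^{-\nu-\sigma-1+\mu}-CBt^{-\nu-1}\gg Ct^{\mu-1},
\end{equation*}
by the choice $\nu+2\sigma<1-\mu$, while $\mathcal{W}(t,x;\overline{\mathcal{Z}})\ge 0$. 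Choosing $B$ large enough ensures $\overline{\mathcal{Z}}$ dominates $\mathcal{Z}_{1,2}$ both on the lateral boundaries (since $Bt^{-\nu}\gg Ce^{-\lambda_*t^\mu}$) and on the initial slice $\{t=\overline T_\varep\}$. The parabolic comparison principle then yields $\mathcal{Z}_{1,2}(t,x)\le \overline{\mathcal{Z}}(t,x)\le Bt^{-\nu}$, which tends to zero uniformly in the strip.

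The delicate steps are (i) invoking the strong KPP monotonicity to ensure $\mathcal{W}\ge 0$ (thereby removing the nonlinearity from the supersolution problem), and (ii) tuning $\nu$ and $\sigma$ to satisfy simultaneously $\sigma>\mu$ (so the cosine stays close to $1$ across the slab) and $\nu+2\sigma<1-\mu$ (so $\overline{\mathcal{Z}}$ absorbs the linear residual $Ct^{\mu-1}$ coming from $\psi_\alpha$). The boundary estimate at $\mathcal{Y}^-(t)$ is genuinely exponential and thus far stronger than needed; the governing obstruction is rather the precise algebraic balance between the residual rate $\mu-1$ and the decay rate $-\nu-2\sigma$ of the cosine corrector.
\end{Proof}
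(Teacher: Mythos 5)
Your proof is correct and follows essentially the same route as the paper: the one-sided positive-part quantities are shown to satisfy a differential inequality with nonnegative lower-order term, a cosine-in-the-strip corrector $Bt^{-\nu}\cos\big((x-c_*t+\tfrac{3}{2\lambda_*}\ln t)/t^\sigma\big)$ is used as supersolution, and the parameter constraints $\sigma>\mu$, $\nu+2\sigma<1-\mu$ match the paper's $\rho>\mu$, $\upsilon+2\rho<1-\mu$. A small added value of your write-up: you invoke \eqref{strong KPP} explicitly to get $f'(\xi)\le f'(0)$ and hence $\mathcal{W}\ge 0$, which is the correct reason the lower-order term has a favorable sign; the paper's stated justification ($0<f(s)\le f'(0)s$ together with the linear extension) establishes only a bound on $\|d\|_{L^\infty}$, not the one-sided inequality $d\le f'(0)$ actually needed, so your argument is the more careful one. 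You also treat both signs $\mathcal{Z}_1,\mathcal{Z}_2$ symmetrically whereas the paper proves the $\limsup$ direction and asserts the other is similar; no substantive difference.
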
  
\begin{proof}
	We just prove $\limsup_{t\to+\infty}\big(\psi_{\varpi\pm\varep}(t,x)-v_{\varpi\pm\varep}(t,x) \big)\le 0$ with the label $\varpi+\varep$. One can follow similar lines to show $\liminf_{t\to+\infty}\big(\psi_{\varpi\pm\varep}(t,x)-v_{\varpi\pm\varep}(t,x) \big)\ge 0$,  where \eqref{1-v+varep-upper bound k<-3} will be a key ingredient.
	
	Define
	$\mathcal{S}(t,x):=(\psi_{\varpi+\varep}-v_{\varpi+\varep})^+(t,x)$ for $t\ge \overline T_\varep$ and $\mathcal{Y}^-(t)\le x \le \mathcal{Y}^+(t)$. We are  led to the problem
	\begin{equation}
		\label{1-s_varep k<-3}
		\begin{aligned}
			\begin{cases}
				\displaystyle	\Big| \mathcal{S}_t -\mathcal{S}_{xx}+c_* \mathcal{S}_x-\frac{3}{2t}
				\mathcal{S}+\mathcal{H}(t,x;\mathcal{S})\Big|<C t^{\mu-1},~~~&t\ge \overline T_\varep,~\mathcal{Y}^-(t)\le x \le \mathcal{Y}^+(t),\vspace{3pt}\\
				\displaystyle	\mathcal{S}(t,\mathcal{Y}^-(t)) \le   e^{-\lambda_* t^\mu}, ~~~~~~ ~~~~~~~~~~~~~~~~~ &t\ge \overline T_\varep,\vspace{3pt}\\
				\mathcal{S}(t,\mathcal{Y}^+(t))  =0, ~~~~~~~~~~~~~~~~~~~~~~~~~~~~~~~~~~~~~~~~~&t\ge \overline T_\varep,\vspace{3pt}\\
				\displaystyle	\mathcal{S}(\overline T_\varep,x)\le\psi_{\varpi+\varep}(\overline T_\varep,x), &\mathcal{Y}^-(\overline T_\varep)\le x \le \mathcal{Y}^+(\overline T_\varep),
			\end{cases}
		\end{aligned}
	\end{equation}
	where 
	\begin{align*}
		\mathcal{H}(t,x;\mathcal{S}):=Q(t,x;v_{\varpi+\varep})-Q(t,x;\psi_{\varpi+\varep})=f'(0)\mathcal{S}-d(t,x)\mathcal{S}\ge0, ~~~~~~\mathcal{S}\ge 0,
	\end{align*} 
	uniformly for $t\ge \overline T_\varep$ and $\mathcal{Y}^-(t)\le x \le \mathcal{Y}^+(t)$,
	in which $d(t,x)$ is a continuous and  bounded function satisfying  $\Vert d(t,x)\Vert_{L^\infty}\le f'(0)$
	since $0<f(s)\le f'(0)s$ for $s\in(0,1)$ and $f$ has linear extension outside $[0,1]$. It then suffices for us to show that $\mathcal{S}(t,x)\to 0$ as $t\to+\infty$, uniformly in $\mathcal{Y}^-(t)\le x \le \mathcal{Y}^+(t)$. 
	
	Remember that $\mu\in(4/25,1/4)$, one can then choose $\rho\in(\mu,1/2)$ such that $2\rho+\mu<1$, and finally fix $\upsilon\in(0,1-2\rho-\mu).$ 
	Up to increasing $\overline T_\varep$, let us assume that 
	$\cos\big( t^{\mu-\rho}\big)>\frac{1}{2}$ for $t\ge \overline T_\varep$. Then fix $\mathcal{B}>0$ so large that $\mathcal{B}\overline T_\varep^{-\upsilon}\ge\max_{x\in[\mathcal{Y}^-(\overline T_\varep),\mathcal{Y}^+(\overline T_\varep)]}\psi_{\varpi+\varep}(\overline T_\varep,x)$.
	Define
	\begin{equation*}
		\overline{\mathcal{S}}(t,x)=\frac{\mathcal{B}}{ t^\upsilon}\cos\left(\frac{x-c_*t+\frac{3}{2\lambda_*}\ln t}{ t^\rho}\right),~~~~~~~~~t\ge \overline T_\varep, ~~\mathcal{Y}^-(t)\le x \le \mathcal{Y}^+(t).
	\end{equation*}

	At time $t=\overline T_\varep$, we observe that $\overline{\mathcal{S}}(\overline T_\varep,x)>\frac{\mathcal{B}}{2}\overline T_\varep^{-\upsilon}\ge\max_{x\in[\mathcal{Y}^-(\overline T_\varep),\mathcal{Y}^+(\overline T_\varep)]}\psi_{\varpi+\varep}(\overline T_\varep,x)\ge \mathcal{S}(\overline T_\varep,x)$ for $\mathcal{Y}^-(\overline T_\varep)\le x \le \mathcal{Y}^+(\overline T_\varep)$. At the boundaries $x=\mathcal{Y}^\pm(t)$,  up to further increasing $\overline T_\varep$ if necessary, there holds $\overline{\mathcal{S}}(t,\mathcal{Y}^\pm(t))>\frac{\mathcal{B}}{2} t^{-\upsilon}>Ce^{-\lambda_* t^\mu}\ge \mathcal{S}(t,\mathcal{Y}^\pm(t))$ for $t\ge \overline T_\varep$. Eventually,  a direct computation gives that
	\begin{align*}
		\displaystyle	\overline{\mathcal{S}}_t -\overline{\mathcal{S}}_{xx}+c_* \overline{\mathcal{S}}_x-\frac{3}{2t}
		\overline{\mathcal{S}}&=\Big(\frac{-\upsilon}{ t}-\frac{3}{2t}+\frac{1}{ t^{2\rho}}\Big)\overline{\mathcal{S}}+\frac{\mathcal{B}}{ t^{\upsilon+\rho}}\left(\frac{\rho(x-c_*t+\frac{3}{2\lambda_*}\ln t)}{ t}-\frac{3}{2\lambda_*t}
		\right)\sin\left(\frac{x-c_*t+\frac{3}{2\lambda_*}\ln t}{ t^\rho}\right)\\
		&\ge \frac{C}{ t^{2\rho+\upsilon}}\gg \frac{C}{ t^{1-\mu}}, ~~~~~~~~~t\ge \overline T_\varep, ~~\mathcal{Y}^-(t)\le x \le \mathcal{Y}^+(t).
	\end{align*} 
	Together with $\mathcal{H}(t,x;\overline{\mathcal{S}})\ge 0$ uniformly for $t\ge \overline T_\varep$ and $\mathcal{Y}^-(t)\le x \le \mathcal{Y}^+(t)$, we then conclude that $\overline{\mathcal{S}}$ is a supersolution of \eqref{1-s_varep k<-3} for $t\ge \overline T_\varep$ and $\mathcal{Y}^-(t)\le x \le \mathcal{Y}^+(t)$. The comparison principle implies that $\mathcal{S}(t,x)\le \overline{\mathcal{S}}(t,x)$ for $t\ge \overline T_\varep$ and $\mathcal{Y}^-(t)\le x \le \mathcal{Y}^+(t)$. Thus,
	\begin{equation*}
		\psi_{\varpi+\varep}(t,x)-v_{\varpi+\varep}(t,x)\le 	\mathcal{S}(t,x)\le \overline{\mathcal{S}}(t,x)=o_{t\to+\infty}(1), ~~~~~\text{uniformly in}~ \mathcal{Y}^-(t)\le x \le \mathcal{Y}^+(t).
	\end{equation*}
	This finishes the proof. 
\end{proof}

 Proposition \ref{prop-5.3}, along with the definition  \eqref{1-TW shift k<-3} of $\psi_{\varpi\pm\varep}$, gives that
	\begin{equation}\label{5.10}
		\Big|v_{\varpi\pm\varep}(t,x)- e^{\lambda_*(x-c_*t+\frac{3}{2\lambda_*}\ln t)} U_{c_*}\Big(x-c_*t+\frac{3}{2\lambda_*}\ln t+\zeta_{\varpi\pm\varep}(t)\Big)\Big| \to 0~~~\text{as}~t\to+\infty,
	\end{equation}
	uniformly in $|x-c_*t+\frac{3}{2\lambda_*}\ln t|\le  t^\mu$, where
	\begin{equation*}
		\zeta_{\varpi\pm\varep}(t)=-\frac{1}{\lambda_*}\ln(\varpi\pm\varep)+\mathcal{O}( t^{2\mu-1}).
	\end{equation*}
	Since $\varep>0$ is chosen arbitrarily small, one can pass to the limit in \eqref{5.10} by taking $\varep\to 0$, which together with \eqref{1-claim k<-3} gives that
	\begin{equation*}
		\Big|V(t,x)- e^{\lambda_*(x-c_*t+\frac{3}{2\lambda_*}\ln t)}U_{c_*}\Big(x-c_*t+\frac{3}{2\lambda_*}\ln t-\sigma_\infty\Big)\Big| \to 0~~~\text{as}~t\to+\infty,
	\end{equation*}
	uniformly in $|x-c_*t+\frac{3}{2\lambda_*}\ln t|\le  t^\mu$,  with $\sigma_\infty:=\frac{1}{\lambda_*}\ln\varpi$ depending on $u_0$ (remember that $\varpi>0$ is given in Proposition \ref{prop4.1} and determined by $w_0$ given in \eqref{w_0}). 
	This implies that 
	\begin{equation}\label{-1 k<-3}
		\max_{-L\le x-c_*t+\frac{3}{2\lambda_*}\ln t\le t^\mu}\Big|u(t,x)-U_{c_*}\Big(x-c_*t+\frac{3}{2\lambda_*}\ln t-\sigma_\infty\Big)\Big|\to 0~~\text{as}~t\to+\infty,
	\end{equation}
	for any $L>0$.
	
	In addition, since $u$ can be bounded from below by the KPP equation with compactly supported initial data and bounded from above by the KPP equation with  initial data decaying as $x^{-3+1}e^{-\lambda_* x}$ as $x\to+\infty$, the comparison principle, together with \cite[Theorem 1]{HNRR13} and Theorem \ref{thm1-O(1) k>=-3}, implies that the level set of $u$ satisfies $$c_*t-\frac{3}{2\lambda_*}\ln t+\mathcal{O}_{t\to +\infty}(1)\le X_m(t)\le c_*t-\frac{3}{2\lambda_*}\ln t+\frac{1}{\lambda_*}\ln\ln t+\mathcal{O}_{t\to +\infty}(1).$$
	This together with  Lemma \ref{lemma-u(t,.) to 0} and the fact that $u(t,x)\to 1$ as $t\to+\infty$ locally uniformly in $x\in\R$, one has that 
	\begin{equation}
		\label{1-1 k<-3}
		\begin{aligned}
			\begin{cases}
				\displaystyle	\liminf_{t\to+\infty}\Big(\min_{0\le x\le c_*t-\frac{3}{2\lambda_*}\ln t-\Theta}u(t,x)\Big)\to 1~~~~~~~&\text{as}~~\Theta\to+\infty,\vspace{3pt}\\
					\displaystyle	\limsup_{t\to+\infty}\Big(\max_{x\ge c_*t-\frac{3}{2\lambda_*}\ln t+\frac{1}{\lambda_*}\ln\ln t+\Theta}u(t,x)\Big)\to 0~~~~~~~&\text{as}~~\Theta\to+\infty.
			\end{cases}
		\end{aligned}
	\end{equation}
	Since $ U_{c_*}(-\infty)=1$ and $ U_{c_*}(+\infty)=0$,  one can fix $L>0$ large such that
	\begin{equation}\label{-2 k<-3}
		\max_{\substack{x\in\R_+,~x-c_*t+\frac{3}{2\lambda_*}\ln t\le -L,\\ x-c_*t+\frac{3}{2\lambda_*}\ln t\ge t^\mu}}\Big|u(t,x)-U_{c_*}\Big(x-c_*t+\frac{3}{2\lambda_*}\ln t-\sigma_\infty\Big)\Big|\to 0~~\text{as}~t\to+\infty.
	\end{equation}

	Consequently,  the conclusion  of Theorem \ref{thm3: o(1) k<-3} follows immediately from \eqref{-1 k<-3} and \eqref{-2 k<-3}.

\subsection{Proof of Theorem \ref{thm4: o(1) k>=-3}}

The basic idea  is the same as that of Theorem \ref{thm3: o(1) k<-3}. 
The proof for the case of $k>-3$  can be easily done by repeating the arguments in Theorem \ref{thm3: o(1) k<-3} with $-3$ replaced by $k$ and $\varpi$ replaced by $a\varpi$, and is therefore omitted. Instead, we outline carefully the proof for the critical case $k=-3$.

\vskip 2mm

\noindent
{\bf The case of $k=-3$.} 
For any  $\mu\in(4/25,1/4)$ and  any $\varep>0$ small enough, it follows from Proposition  \ref{prop4.1}, with $k=-3$ and with $a_1=a_2=:a$, that there exists $\overline T_\varep>0$ sufficiently large such that
\begin{equation}
	\label{A_k=-3}
	 (a\varpi-\varep) (x-c_*t) e^{-\frac{(x-c_*t)^2}{4t}}t^{-\frac{3}{2}}\ln t\le v(t,x)\le	(a\varpi+\varep) (x-c_*t) e^{-\frac{(x-c_*t)^2}{4t}}t^{-\frac{3}{2}}\ln t
\end{equation}
for $t\ge \overline T_\varep$ and $x=c_*t+t^\mu+o(t^\mu)$, where $\varpi>0$ is given in Proposition \ref{prop4.1}.

For any $\alpha\in[a\varpi-\varep, a\varpi+\varep]$, we introduce
\begin{equation}
	\label{1-TW shift k=-3}
	\psi_\alpha(t,x):=e^{\lambda_*(x-c_*t+\frac{3}{2\lambda_*}\ln t-\frac{1}{\lambda_*}\ln\ln t)}     U_{c_*}\Big(x-c_*t+\frac{3}{2\lambda_*}\ln t-\frac{1}{\lambda_*}\ln\ln t+\zeta_\alpha(t)\Big),~~~~~t\ge \overline T_\varep,~x\in\R,
\end{equation}
where $\zeta_\alpha(t)$  is chosen through the following constraint
\begin{equation}
	\label{constraint k=-3}
	\psi_\alpha\Big(t,c_*t-\frac{3}{2\lambda_*}\ln t+\frac{1}{\lambda_*}\ln\ln t+ t^\mu\Big)=\alpha t^{\mu}e^{-\frac{1}{4} t^{2\mu-1}},~~~~~~t\ge \overline T_\varep.
\end{equation}
Due to $ U_{c_*}(s)\approx s e^{-\lambda_*s}$ as $s\to+\infty$, it comes that for $t\ge \overline T_\varep$, 
\begin{equation}\label{zeta k=-3}
	\begin{aligned}
		\zeta_\alpha(t)=-\frac{1}{\lambda_*}\ln\alpha+\mathcal{O}( t^{2\mu-1}),~~~~~~|\dot\zeta_\alpha(t)|\le C t^{2\mu-2},
	\end{aligned}
\end{equation}
for some $C>0$ independent of $\alpha$.

Define  $$\mathcal{Y}^\pm(t):=c_*t-\frac{3}{2\lambda_*}\ln t+\frac{1}{\lambda_*}\ln\ln t\pm  t^\mu,~~~~~t\ge \overline T_\varep.$$
Substituting $\psi_\alpha$ into the equation \eqref{eqn-V k=-3} satisfied by $V(t,x)=t^{\frac{3}{2}}(\ln t)^{-1}v(t,x)$ for $t\ge \overline T_\varep$ and $x\in\R$, along with \eqref{zeta k=-3}, one has
\begin{align*}
	&\Big|\partial_t \psi_\alpha-\partial_{xx}\psi_\alpha+c_*\partial_x \psi_\alpha+\Big(\frac{1}{t\ln t}-\frac{3}{2t}
	\Big)\psi_\alpha+Q(t,x;\psi_\alpha)\Big|\\
	&=\!\Big|e^{\lambda_*(x-c_*t+\frac{3}{2\lambda_*}\ln t-\frac{1}{\lambda_*}\ln\ln t)} U_{c_*}'\!\Big(x-c_*t+\!\frac{3}{2\lambda_*}\ln t\!-\!\frac{1}{\lambda_*}\ln\ln t+\zeta_\alpha(t)\Big)\Big(\dot\zeta_\alpha(t)+\frac{3}{2\lambda_*t}\!-\!\frac{1}{\lambda_*t\ln t}\Big)\Big|\!\le \!C t^{\mu-1}
\end{align*}
for $t\ge \overline T_\varep$ and $\mathcal{Y}^-(t)\le x\le\mathcal{Y}^+(t)$, i.e. $|x-c_*t+\frac{3}{2\lambda_*}\ln t-\frac{1}{\lambda_*}\ln\ln t|\le  t^\mu$.

Consider the solution  $v_\alpha(t,x)$ to the following initial boundary value problem:
\begin{equation*}
	\label{v_alpha k=-3}
	\begin{aligned}
		\begin{cases}
			\displaystyle	\partial_t v_\alpha-\partial_{xx}v_\alpha+c_*\partial_x v_\alpha+\Big(\frac{1}{t\ln t}-\frac{3}{2t}
			\Big)v_\alpha+Q(t,x;v_\alpha)=0,~~~~~~~~~~~&t\ge \overline T_\varep,~x\le\mathcal{Y}^+(t),\vspace{4pt}\\
			\displaystyle v_\alpha\big(t,\mathcal{Y}^+(t)\big)=\alpha t^{\mu}e^{-\frac{1}{4} t^{2\mu-1}},~~~~&t\ge \overline T_\varep,\vspace{4pt}\\
			\displaystyle v_\alpha(\overline T_\varep,x)  =V(\overline T_\varep,x), ~~~~~~~~~~~~~~~~~~~~~~~~~~~~~~~~~~~~~&x\le \mathcal{Y}^+(\overline T_\varep).
		\end{cases}
	\end{aligned}
\end{equation*}
It then follows from the constraint \eqref{constraint k=-3} that
\begin{equation*}
	v_\alpha\big(t,\mathcal{Y}^+(t)\big)=\psi_\alpha\big(t,\mathcal{Y}^+(t)\big)~~~~~\text{for}~t\ge \overline T_\varep.
\end{equation*}

Let us now focus particularly on the cases when $\alpha=a\varpi\pm\varep$.  We deduce from \eqref{A_k=-3} and the comparison principle  that
\begin{equation}
	\label{1-claim k=-3}
	v_{a\varpi-\varep}(t,x)<V(t,x)<v_{a\varpi+\varep}(t,x)~~~~\text{for all}~t\ge \overline T_\varep,~~x\le \mathcal{Y}^+(t),
\end{equation}
and from  \eqref{strong KPP} that
\begin{equation}\label{estimate-1 k=-3}
	v_{a\varpi+\varep}(t,x)\le \frac{a\varpi+\varep}{a\varpi-\varep}v_{a\varpi-\varep}(t,x)~~~~~~\text{for all}~t\ge \overline T_\varep,~~x\le \mathcal{Y}^+(t).
\end{equation}
Moreover, \eqref{1-claim k=-3} together with \eqref{estimate-1 k=-3} yields  that
\begin{align*}\label{1-v+varep-upper bound k=-3}
	v_{a\varpi+\varep}\big(t,\mathcal{Y}^-(t)\big)&\le \frac{a\varpi+\varep}{a\varpi-\varep}v_{a\varpi-\varep}\big(t,\mathcal{Y}^-(t)\big)<\frac{a\varpi+\varep}{a\varpi-\varep}V\big(t,\mathcal{Y}^-(t)\big)\le \frac{a\varpi+\varep}{a\varpi-\varep} e^{-\lambda_* t^\mu},~~~~t\ge \overline T_\varep.
\end{align*}

Arguing as in Proposition \ref{prop-5.3}, we get
\begin{prop}
	\label{prop-5.2}
	For $\varep>0$ small enough, there holds
	\begin{equation*}
	 \lim_{t\to+\infty}\big(\psi_{a\varpi\pm\varep}(t,x)- v_{a\varpi\pm\varep}(t,x)
	 \big)=0,~~~\text{uniformly in}~~\mathcal{Y}^-(t)\le x \le \mathcal{Y}^+(t).
	\end{equation*}
\end{prop}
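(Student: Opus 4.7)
The plan is to mimic the proof of Proposition \ref{prop-5.3} (the case $k<-3$), adapting only those ingredients that depend on the precise logarithmic shift and the linear coefficient. Specifically, I will treat only the upper inequality $\limsup_{t\to+\infty}(\psi_{a\varpi+\varep}-v_{a\varpi+\varep})\le 0$; the reversed inequality with label $a\varpi-\varep$ is obtained along identical lines, invoking the analogue of the boundary bound $v_{a\varpi+\varep}(t,\mathcal{Y}^-(t))\le \frac{a\varpi+\varep}{a\varpi-\varep}e^{-\lambda_*t^\mu}$ just derived.

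First I set $\mathcal{S}(t,x)=(\psi_{a\varpi+\varep}-v_{a\varpi+\varep})^+(t,x)$ on the strip $t\ge \overline T_\varep$, $\mathcal{Y}^-(t)\le x\le \mathcal{Y}^+(t)$. Subtracting the equation for $v_{a\varpi+\varep}$ from \eqref{eqn-V k=-3} evaluated at $\psi_{a\varpi+\varep}$, and using the approximate-equation bound of order $t^{\mu-1}$ just computed from \eqref{zeta k=-3}, the function $\mathcal{S}$ obeys
\begin{equation*}
\Bigl|\mathcal{S}_t-\mathcal{S}_{xx}+c_*\mathcal{S}_x+\Bigl(\tfrac{1}{t\ln t}-\tfrac{3}{2t}\Bigr)\mathcal{S}+\mathcal{H}(t,x;\mathcal{S})\Bigr|<Ct^{\mu-1},
\end{equation*}
where $\mathcal{H}(t,x;\mathcal{S})=f'(0)\mathcal{S}-d(t,x)\mathcal{S}\ge 0$ with $\|d\|_{L^\infty}\le f'(0)$, exactly as in the proof of Proposition \ref{prop-5.3}. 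The boundary/initial data satisfy $\mathcal{S}(t,\mathcal{Y}^-(t))\le C e^{-\lambda_* t^\mu}$, $\mathcal{S}(t,\mathcal{Y}^+(t))=0$ (by construction of $\psi_{a\varpi+\varep}$), and $\mathcal{S}(\overline T_\varep,x)\le \psi_{a\varpi+\varep}(\overline T_\varep,x)$ on $[\mathcal{Y}^-(\overline T_\varep),\mathcal{Y}^+(\overline T_\varep)]$.

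Next, mimicking the barrier, I choose $\rho\in(\mu,1/2)$ with $2\rho+\mu<1$ and $\upsilon\in(0,1-2\rho-\mu)$, and, up to increasing $\overline T_\varep$, pick $\mathcal{B}>0$ so that $\mathcal{B}\overline T_\varep^{-\upsilon}\ge \max_{x}\psi_{a\varpi+\varep}(\overline T_\varep,x)$. Then define
\begin{equation*}
\overline{\mathcal{S}}(t,x)=\frac{\mathcal{B}}{t^\upsilon}\cos\!\left(\frac{x-c_*t+\frac{3}{2\lambda_*}\ln t-\frac{1}{\lambda_*}\ln\ln t}{t^\rho}\right),
\end{equation*}
so that $\overline{\mathcal{S}}>\mathcal{B}/(2t^\upsilon)$ uniformly in the strip, guaranteeing the boundary and initial comparisons. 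A direct computation gives, for $t\ge\overline T_\varep$,
\begin{equation*}
\overline{\mathcal{S}}_t-\overline{\mathcal{S}}_{xx}+c_*\overline{\mathcal{S}}_x+\Bigl(\tfrac{1}{t\ln t}-\tfrac{3}{2t}\Bigr)\overline{\mathcal{S}}=\Bigl(\tfrac{-\upsilon}{t}+\tfrac{1}{t^{2\rho}}+\tfrac{1}{t\ln t}-\tfrac{3}{2t}\Bigr)\overline{\mathcal{S}}+O\bigl(t^{\mu-\rho-\upsilon-1}\bigr),
\end{equation*}
which is bounded below by $Ct^{-2\rho-\upsilon}\gg Ct^{\mu-1}$ since $2\rho+\upsilon<1-\mu$. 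Adding $\mathcal{H}(t,x;\overline{\mathcal{S}})\ge 0$ confirms that $\overline{\mathcal{S}}$ is a supersolution of the differential inequality satisfied by $\mathcal{S}$, so the comparison principle gives $\mathcal{S}\le\overline{\mathcal{S}}=o_{t\to+\infty}(1)$ uniformly on the strip.

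I do not anticipate a serious obstacle here: the only substantive change from the proof of Proposition \ref{prop-5.3} is the additional zero-order coefficient $(t\ln t)^{-1}$, which is of smaller order than $1/t$ and is therefore harmlessly absorbed by the dominant $1/t^{2\rho}$ produced by the cosine Laplacian. The mild nuisance is bookkeeping the shift $-\frac{1}{\lambda_*}\ln\ln t$ everywhere (in the definition of $\psi_{a\varpi\pm\varep}$, in the approximate-equation remainder, and in the argument of the cosine barrier); the choice of parameters $\mu<1/4$, $\rho<1/2$ with $2\rho+\mu<1$ and $\upsilon\in(0,1-2\rho-\mu)$ is exactly that of Proposition \ref{prop-5.3} and suffices without further tuning.
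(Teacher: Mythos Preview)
Your proposal is correct and follows essentially the same route as the paper, which simply writes ``Arguing as in Proposition \ref{prop-5.3}, we get'' and leaves the details implicit. You have identified precisely the two modifications required---the extra zero-order term $(t\ln t)^{-1}$ in the linear coefficient and the additional shift $-\frac{1}{\lambda_*}\ln\ln t$ in the moving frame---and correctly observed that both are of lower order than the terms already present in the barrier computation, so the choice of parameters $\mu\in(4/25,1/4)$, $\rho\in(\mu,1/2)$ with $2\rho+\mu<1$, and $\upsilon\in(0,1-2\rho-\mu)$ carries over unchanged.
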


It follows from
Proposition \ref{prop-5.2} and the definition  \eqref{1-TW shift k=-3} of $\psi_{a\varpi\pm\varep}$ that
	\begin{equation*}
		\Big|v_{a\varpi\pm\varep}(t,x)- e^{\lambda_*(x-c_*t+\frac{3}{2\lambda_*}\ln t-\frac{1}{\lambda_*}\ln\ln t)} U_{c_*}\Big(x-c_*t+\frac{3}{2\lambda_*}\ln t-\frac{1}{\lambda_*}\ln\ln t+\zeta_{a\varpi\pm\varep}(t)\Big)\Big| \to 0~~~\text{as}~t\to+\infty,
	\end{equation*}
	uniformly in $|x-c_*t+\frac{3}{2\lambda_*}\ln t-\frac{1}{\lambda_*}\ln\ln t|\le  t^\mu$, with 
$\zeta_{a\varpi\pm\varep}(t)=-\frac{1}{\lambda_*}\ln(a\varpi\pm\varep)+\mathcal{O}( t^{2\mu-1})$.

	Passing to the limit in the above formula as $\varep\to 0$, altogether with \eqref{1-claim k=-3}, will imply
	\begin{equation*}
		\Big|V(t,x)- e^{\lambda_*(x-c_*t+\frac{3}{2\lambda_*}\ln t-\frac{1}{\lambda_*}\ln\ln t)}U_{c_*}\Big(x-c_*t+\frac{3}{2\lambda_*}\ln t-\frac{1}{\lambda_*}\ln\ln t-\sigma_\infty\Big)\Big| \to 0~~~\text{as}~t\to+\infty,
	\end{equation*}
	uniformly in $|x-c_*t+\frac{3}{2\lambda_*}\ln t-\frac{1}{\lambda_*}\ln\ln t|\le  t^\mu$,  with $\sigma_\infty:=\frac{1}{\lambda_*}\ln(a\varpi)$ depending on $u_0$.
	One then has
	\begin{equation}\label{-1 k=-3}
		\max_{|x-c_*t+\frac{3}{2\lambda_*}\ln t-\frac{1}{\lambda_*}\ln\ln t|\le L}\Big|u(t,x)-U_{c_*}\Big(x-c_*t+\frac{3}{2\lambda_*}\ln t-\frac{1}{\lambda_*}\ln\ln t-\sigma_\infty\Big)\Big|\to 0~~\text{as}~t\to+\infty,
	\end{equation}
for any $L>0$.

	On the other hand, one deduces from \eqref{1.2-1 k=-3}-\eqref{1.2-2 k=-3}  as well as $ U_{c_*}(-\infty)=1$ and $ U_{c_*}(+\infty)=0$ that up to increasing  $L$,
	\begin{equation}\label{-2 k=-3}
		\max_{x\in\R_+,~|x-c_*t+\frac{3}{2\lambda_*}\ln t-\frac{1}{\lambda_*}\ln\ln t|\ge L}\Big|u(t,x)-U_{c_*}\Big(x-c_*t+\frac{3}{2\lambda_*}\ln t-\frac{1}{\lambda_*}\ln\ln t-\sigma_\infty\Big)\Big|\to 0~~\text{as}~t\to+\infty.
	\end{equation}

Thanks to \eqref{-1 k=-3} and \eqref{-2 k=-3}, the proof of Theorem \ref{thm4: o(1) k>=-3} is complete.

	\subsection{Proof of Theorem \ref{thm5: o(1) any nu}}

	Fix any $\varsigma\in(4/25,1/3)$ small enough, then it follows from Proposition \ref{prop4.2_match sol and TW} that there exists $T_\varep>0$ sufficiently large such that 
	\begin{equation}\label{thm1.5_eqn1}
	\big(a \Lambda_\mu-\varep\big)  t^{\boldsymbol{\nu}} e^{-\frac{(x-ct)^2}{4t}}\le v(t,x)\le	 \big(
	a \Lambda_\mu +\varep\big) t^{\boldsymbol{\nu}} e^{-\frac{(x-ct)^2}{4t}}
\end{equation}
	for $t\gg T_\varep$ and $x=ct+t^\varsigma+o(t^\varsigma)$, where $\Lambda_\mu>0$ is given in Proposition \ref{prop4.2_match sol and TW}.

For any  $\alpha\in[a \Lambda_\mu-\varep, a \Lambda_\mu+\varep]$, set
	\begin{equation}
		\label{5.21}
		\psi_\alpha(t,x):=e^{\lambda (x-c t-\frac{\boldsymbol{\nu}}{\lambda}\ln t)}     U_{c }\Big(x-c t-\frac{\boldsymbol{\nu}}{\lambda}\ln t+\zeta_\alpha(t)\Big),~~~~~t\ge T_\varep,~x\in\R.
	\end{equation}
	Here, the function $\zeta_\alpha(t)$  is chosen through the following constraint
	\begin{equation}
		\label{constraint nu}
		\psi_\alpha\Big(t,c t+\frac{\boldsymbol{\nu}}{\lambda}\ln t+ t^\varsigma\Big)=\alpha e^{-\frac{1}{4} t^{2\varsigma-1}},~~~~~~t\ge T_\varep.
	\end{equation}
	Since  $ U_{c }(s)\approx  e^{-\lambda s}$ as $s\to+\infty$, we have for $t\ge T_\varep$,
	\begin{equation}\label{zeta_nu}
		\begin{aligned}
			\zeta_\alpha(t)=-\frac{1}{\lambda }\ln\alpha+\mathcal{O}( t^{2\varsigma-1}),~~~~~~~~~	|\dot\zeta_\alpha(t)|\le C t^{2\varsigma-2},
		\end{aligned}
	\end{equation}
	for some $C>0$ independent of $\alpha$.
	
	Recall that $
	V(t,x)=t^{-\boldsymbol{\nu}}v(t,x)$  satisfies \eqref{eqn-V_flat}:
\begin{equation}\label{thm1.5_eqn2}
	V_t-V_{xx}+2\lambda  V_x+\frac{\boldsymbol{\nu}}{t}V+\underbrace{f'(0)V-e^{\lambda(x-ct-\frac{\boldsymbol{\nu}}{\lambda}\ln t)}f\big(e^{-\lambda(x-ct-\frac{\boldsymbol{\nu}}{\lambda}\ln t)} V\big)}_{=:Q(t,x;V)}=0,~~t\ge T_\varep,~x\in\R,
\end{equation} 
	
	We now introduce    $$\mathcal{Y}^\pm(t):=c t+\frac{\boldsymbol{\nu}}{\lambda}\ln t\pm  t^\varsigma,~~~~~t\ge T_\varep.$$
	Substituting $\psi_\alpha$ into  \eqref{thm1.5_eqn2}, together with \eqref{zeta_nu}, one has
	\begin{align*}
		&\Big|\partial_t \psi_\alpha-\partial_{xx}\psi_\alpha+2\lambda \partial_x \psi_\alpha+\frac{\boldsymbol{\nu}}{t}
		\psi_\alpha+Q(t,x;\psi_\alpha)\Big|\\
		&~~~~~~~~~~~~~~=\Big|e^{\lambda (x-c t-\frac{\boldsymbol{\nu}}{\lambda}\ln t)} U_{c }'\Big(x-c t-\frac{\boldsymbol{\nu}}{\lambda}\ln t+\zeta_\alpha(t)\Big)\Big(\dot\zeta_\alpha(t)-\frac{\boldsymbol{\nu}}{\lambda t}\Big)\Big|\le C_2 t^{-1}
	\end{align*}
	for $t\ge T_\varep$ and $\mathcal{Y}^-(t)\le x\le\mathcal{Y}^+(t)$, i.e. $|x-c t-\frac{\boldsymbol{\nu}}{\lambda}\ln t|\le  t^\varsigma$, with some $C_2>0$.

	Let $v_\alpha(t,x)$ be the solution to the following initial boundary value problem:
	\begin{equation*}
		\label{v_alpha nu}
		\begin{aligned}
			\begin{cases}
				\displaystyle	\partial_t v_\alpha-\partial_{xx}v_\alpha+2\lambda \partial_x v_\alpha+\frac{\boldsymbol{\nu}}{t}
				v_\alpha+Q(t,x;v_\alpha)=0,~~~~~~~~~~~&t\ge T_\varep,~x\le\mathcal{Y}^+(t),\vspace{4pt}\\
				\displaystyle v_\alpha\big(t,\mathcal{Y}^+(t)\big)=\alpha e^{-\frac{1}{4} t^{2\varsigma-1}},~~~~&t\ge T_\varep,\vspace{4pt}\\
				\displaystyle v_\alpha(T_\varep,x)  =V(T_\varep,x), ~~~~~~~~~~~~~~~~~~~~~~~~~~~~~~~~~~~~~&x\le \mathcal{Y}^+(T_\varep).
			\end{cases}
		\end{aligned}
	\end{equation*}
We observe from the constraint \eqref{constraint nu} that, up to increasing $T_\varep$,
	\begin{equation}\label{thm1.5_eqn4}
		v_\alpha\big(t,\mathcal{Y}^+(t)\big)=\psi_\alpha\big(t,\mathcal{Y}^+(t)\big)~~~~~\text{for}~t\ge T_\varep.
	\end{equation}
	
Take $\alpha=a\Lambda_\mu\pm\varep$ with $\varep>0$  small enough. It follows from Proposition \ref{prop4.2_match sol and TW} and comparison arguments   that
	\begin{equation}
		\label{1-claim nu}
		v_{a\Lambda_\mu-\varep}(t,x)<V(t,x)<v_{a\Lambda_\mu+\varep}(t,x)~~~~\text{for all}~t\ge T_\varep~~\text{and}~~x\le \mathcal{Y}^+(t),
	\end{equation}
	and, together with \eqref{strong KPP}, that
	\begin{equation}\label{estimate-1 nu}
		v_{a\Lambda_\mu+\varep}(t,x)\le \frac{a\Lambda_\mu+\varep}{a\Lambda_\mu-\varep}v_{a\Lambda_\mu-\varep}(t,x)~~~~\text{for all}~t\ge T_\varep~~\text{and}~~x\le \mathcal{Y}^+(t).
	\end{equation}
	It follows from \eqref{1-claim nu}-\eqref{estimate-1 nu} that  
	\begin{align}\label{1-v+varep-upper bound nu}
		v_{a\Lambda_\mu+\varep}\big(t,\mathcal{Y}^-(t)\big)&\le \frac{a\Lambda_\mu+\varep}{a\Lambda_\mu-\varep}v_{a\Lambda_\mu-\varep}\big(t,\mathcal{Y}^-(t)\big)<\frac{a\Lambda_\mu+\varep}{a\Lambda_\mu-\varep}V\big(t,\mathcal{Y}^-(t)\big)\le \frac{a\Lambda_\mu+\varep}{a\Lambda_\mu-\varep} e^{-\lambda  t^\varsigma},~~~~t\ge T_\varep.
	\end{align}

	\begin{prop}
		\label{prop-5.4}
		For $\varep>0$ small enough, there holds
		\begin{equation*}
			\lim_{t\to+\infty}\big(\psi_{a\Lambda_\mu\pm\varep}(t,x)- v_{a\Lambda_\mu\pm\varep}(t,x)\big)=0,~~~\text{uniformly in}~~\mathcal{Y}^-(t)\le x \le \mathcal{Y}^+(t).
		\end{equation*}
	\end{prop}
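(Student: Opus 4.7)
The plan is to adapt the comparison scheme used in Propositions \ref{prop-5.3} and \ref{prop-5.2}, with one essential modification: the cosine barrier employed there breaks down in the present setting, and I would replace it by a linear barrier in the spirit of Proposition \ref{prop-upper lower bd for V_flat}. As in those two propositions, I would only detail the estimate $\limsup_{t\to+\infty}(\psi_{a\Lambda_\mu+\varep}-v_{a\Lambda_\mu+\varep})(t,x)\le 0$ uniformly in the window; the matching inequality with label $a\Lambda_\mu-\varep$ follows symmetrically, using \eqref{1-v+varep-upper bound nu} together with the elementary bound $V(t,\mathcal{Y}^-(t))\le e^{-\lambda t^\varsigma}$ (coming from $u\le 1$) to control the left boundary.

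Setting $\mathcal{S}(t,x):=(\psi_{a\Lambda_\mu+\varep}-v_{a\Lambda_\mu+\varep})^+(t,x)$, and combining the residual estimate $|\partial_t\psi_\alpha-\partial_{xx}\psi_\alpha+2\lambda\partial_x\psi_\alpha+\frac{\boldsymbol{\nu}}{t}\psi_\alpha+Q(t,x;\psi_\alpha)|\le C_2 t^{-1}$ already established, the matching \eqref{thm1.5_eqn4} at the right boundary, and the standard decomposition $Q(t,x;\psi)-Q(t,x;v)=(f'(0)-d(t,x))\mathcal{S}=:\mathcal{H}(t,x;\mathcal{S})\ge 0$ with $\|d\|_\infty\le f'(0)$, one obtains
\begin{equation*}
\begin{cases}
\mathcal{S}_t-\mathcal{S}_{xx}+2\lambda\mathcal{S}_x+\dfrac{\boldsymbol{\nu}}{t}\mathcal{S}+\mathcal{H}(t,x;\mathcal{S})\le C_2\,t^{-1}, & t\ge T_\varep,\ \mathcal{Y}^-(t)\le x\le\mathcal{Y}^+(t),\\[2pt]
\mathcal{S}(t,\mathcal{Y}^-(t))\le e^{-\lambda t^\varsigma},\qquad \mathcal{S}(t,\mathcal{Y}^+(t))=0, & t\ge T_\varep,\\[2pt]
\mathcal{S}(T_\varep,x)\le M_0, & \mathcal{Y}^-(T_\varep)\le x\le\mathcal{Y}^+(T_\varep),
\end{cases}
\end{equation*}
where $M_0:=\max_{[\mathcal{Y}^-(T_\varep),\mathcal{Y}^+(T_\varep)]}\psi_{a\Lambda_\mu+\varep}(T_\varep,\cdot)<\infty$. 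I would then use the linear barrier
\begin{equation*}
\overline{\mathcal{S}}(t,x):=\frac{\mathcal{D}}{t}\Big(t^\varsigma+K-\big(x-ct-\tfrac{\boldsymbol{\nu}}{\lambda}\ln t\big)\Big),\qquad t\ge T_\varep,\ \mathcal{Y}^-(t)\le x\le \mathcal{Y}^+(t),
\end{equation*}
for suitable constants $\mathcal{D},K>0$. A direct computation gives
\begin{equation*}
\overline{\mathcal{S}}_t-\overline{\mathcal{S}}_{xx}+2\lambda\overline{\mathcal{S}}_x+\frac{\boldsymbol{\nu}}{t}\overline{\mathcal{S}}=\frac{\mathcal{D}\mu}{t}+\frac{\boldsymbol{\nu}-1}{t}\overline{\mathcal{S}}+\mathcal{O}(t^{\varsigma-2}),
\end{equation*}
where the leading term $\mathcal{D}\mu/t$ arises from the mismatch $c-2\lambda=\mu>0$, and the remainder is negligible since $\overline{\mathcal{S}}=\mathcal{O}(t^{\varsigma-1})$. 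Taking $\mathcal{D}>2C_2/\mu$, then $K$ so that $\mathcal{D}K/T_\varep\ge M_0$, and finally increasing $T_\varep$ if necessary, one verifies that $\overline{\mathcal{S}}$ is a supersolution of the $\mathcal{S}$-problem: the PDE inequality follows from the display above together with $\mathcal{H}(t,x;\overline{\mathcal{S}})\ge 0$; the left boundary inequality holds since $\overline{\mathcal{S}}(t,\mathcal{Y}^-(t))\ge 2\mathcal{D}t^{\varsigma-1}\gg e^{-\lambda t^\varsigma}$; and the right boundary and initial comparisons are ensured by the choice of $K$. Comparison then yields $\mathcal{S}\le\overline{\mathcal{S}}=\mathcal{O}(t^{\varsigma-1})=o(1)$ uniformly, which is the desired conclusion.

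The main obstacle is the breakdown of the cosine barrier from Propositions \ref{prop-5.3}--\ref{prop-5.2} in the present regime. If one attempts $\overline{\mathcal{S}}=\mathcal{B}t^{-\upsilon}\cos((x-ct-\frac{\boldsymbol{\nu}}{\lambda}\ln t)/t^\rho)$, the cancellation between $\partial_t$ (through the centering $ct$) and the drift $c_*$ that was available in Section \ref{Sec5.1} is incomplete here: because the drift in \eqref{thm1.5_eqn2} is $2\lambda$ while the centering uses $c=2\lambda+\mu$, there remains a sine contribution $\mathcal{B}\mu t^{-(\upsilon+\rho)}\sin(\cdot)$ of size $t^{\varsigma-\upsilon-2\rho}$ inside the window $|w|\le t^\varsigma$, strictly larger than the useful cosine gain $t^{-\upsilon-2\rho}$ from $-\overline{\mathcal{S}}_{xx}$ and of unfavorable sign on the half $w<0$. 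The linear barrier proposed above turns precisely this same factor $\mu>0$ into the dominant positive term, which is exactly the mechanism already exploited in Proposition \ref{prop-upper lower bd for V_flat}.
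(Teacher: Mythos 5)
Your proposal is correct and follows essentially the same approach as the paper: define $\mathcal{S}=(\psi_{a\Lambda_\mu+\varep}-v_{a\Lambda_\mu+\varep})^+$, and dominate it by the linear-in-$x$ barrier $\frac{\mathcal{D}}{t}\big(t^\varsigma+K-(x-ct-\frac{\boldsymbol{\nu}}{\lambda}\ln t)\big)$ whose positivity in the PDE comes precisely from the drift mismatch $c-2\lambda=\mu>0$, exactly as in the paper's choice $\mathcal{B}t^{-1}\big(t^\varsigma-(x-ct-\frac{\boldsymbol{\nu}}{\lambda}\ln t)+1\big)$ (your $K$ plays the role of the paper's additive constant, absorbed there by taking $\mathcal{B}$ large). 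Your closing discussion of why the cosine barrier of Propositions \ref{prop-5.3}--\ref{prop-5.2} breaks down is a correct and useful observation not made explicit in the paper, but the argument itself is the same.
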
  
	\begin{proof}[Proof of Proposition \ref{prop-5.4}]
		We  sketch below the proof of $	\limsup_{t\to+\infty}\big(\psi_{a\Lambda_\mu\pm\varep}(t,x)- v_{a\Lambda_\mu\pm\varep}(t,x)\big)\le 0$ with the label $a\Lambda_\mu+\varep$. The proof  of $	\liminf_{t\to+\infty}\big(\psi_{a\Lambda_\mu\pm\varep}(t,x)- v_{a\Lambda_\mu\pm\varep}(t,x)\big)\ge 0$ follows similar lines in which  \eqref{1-v+varep-upper bound nu} will be a key ingredient.
		
		Define
		$\mathcal{S}(t,x):=(\psi_{a\Lambda_\mu+\varep}-v_{a\Lambda_\mu+\varep})^+(t,x)$ for $t\ge T_\varep$ and $\mathcal{Y}^-(t)\le x \le \mathcal{Y}^+(t)$, then $\mathcal{S}$ satisfies
		\begin{equation}
			\label{thm1.5_eqn3}
			\begin{aligned}
				\begin{cases}
					\displaystyle	\big| \mathcal{S}_t -\mathcal{S}_{xx}+2\lambda   \mathcal{S}_x+\frac{\boldsymbol{\nu}}{t}
					\mathcal{S}+\mathcal{H}(t,x;\mathcal{S})\big|\le C_2 t^{-1},~~~&t\ge T_\varep,~\mathcal{Y}^-(t)\le x \le \mathcal{Y}^+(t),\vspace{3pt}\\
					\mathcal{S}(t,\mathcal{Y}^+(t))  =0, ~~~~~~~~~~~~~~~~~~~~~~~~~~~~~~~~~~~~~~~~~&t\ge T_\varep,\vspace{3pt}\\
					\displaystyle	\mathcal{S}(t,\mathcal{Y}^-(t)) \le   e^{-\lambda  t^\varsigma}, ~~~~~~ ~~~~~~~~~~~~~~~~~ &t\ge T_\varep,\vspace{3pt}\\
					\displaystyle	\mathcal{S}(T_\varep,x)=(\psi_{a\Lambda_\mu+\varep}-v_{a\Lambda_\mu+\varep})^+(T_\varep,x), &\mathcal{Y}^-(T_\varep)\le x \le \mathcal{Y}^+(T_\varep),
				\end{cases}
			\end{aligned}
		\end{equation}
		where 
		\begin{align*}
			\mathcal{H}(t,x;\mathcal{S}):=Q(t,x;v_{a\Lambda_\mu+\varep})-Q(t,x;\psi_{a\Lambda_\mu+\varep})=f'(0)\mathcal{S}-d(t,x)\mathcal{S}\ge0, ~~~~~~\mathcal{S}\ge 0,
		\end{align*} 
		uniformly for $t\ge T_\varep$ and $\mathcal{Y}^-(t)\le x \le \mathcal{Y}^+(t)$,
		in which $d(t,x)$ is a continuous and  bounded function satisfying  $\Vert d(t,x)\Vert_{L^\infty}\le f'(0)$. We claim that $\mathcal{S}(t,x)\to 0$ as $t\to+\infty$, uniformly in $\mathcal{Y}^-(t)\le x\le \mathcal{Y}^+(t)$.

Define
\begin{equation*}
	~~~~~~~~~~~~~~	\overline{\mathcal{S}}(t,x):=\mathcal{B}t^{-1}\left(t^\varsigma-\big(x-ct-\frac{\boldsymbol{\nu}}{\lambda}\ln t\big)+1 \right),~~~~~~~t\ge T_\varep,~~\mathcal{Y}^-(t)\le x \le \mathcal{Y}^+(t),
\end{equation*}
with $$\mathcal{B}>\max\left(4C_2/\mu,  \max_{x\in[\mathcal{Y}^-( T_\varep),\mathcal{Y}^+( T_\varep)]}\mathcal{S}(T_\varep,x) T_\varep\right)>0.$$
Let us now check that $\overline{\mathcal{S}}$ 
is a supersolution of \eqref{thm1.5_eqn3} for  $t\ge T_\varep$ and $\mathcal{Y}^-(t)\le x \le \mathcal{Y}^+(t)$. 

In fact, we first notice that
$\overline{\mathcal{S}}(T_\varep,x)\ge \mathcal{B}T_\varep^{-1}> \max_{x\in[\mathcal{Y}^-( T_\varep),\mathcal{Y}^+( T_\varep)]}\mathcal{S}(T_\varep,x)$ for $\mathcal{Y}^-(T_\varep)\le x \le \mathcal{Y}^+(T_\varep)$.  Moreover,  $\overline{\mathcal{S}}(t,\mathcal{Y}^+(t))=\mathcal{B}t^{-1}>0= \mathcal{S}(t,\mathcal{Y}^+(t))$ for $t\ge T_\varep$, and $\overline{\mathcal{S}}(t,\mathcal{Y}^-(t))>2\mathcal{B}t^{\varsigma-1} \gg e^{-\lambda t^\varsigma}\ge  \mathcal{S}(t,\mathcal{Y}^-(t))$ 
for $t\ge T_\varep$, up to increasing $T_\varep$. In addition, up to increasing $T_\varep$, 
\begin{align*}
	\overline{\mathcal{S}}_t-\overline{\mathcal{S}}_{xx}+2\lambda \overline{\mathcal{S}}_x+\frac{\boldsymbol{\nu}}{t}\overline{\mathcal{S}}=\frac{\mathcal{B}}{t}\left(\mu+\varsigma t^{\varsigma-1}+\frac{\boldsymbol{\nu}}{\lambda t}+\frac{\boldsymbol{\nu}-1}{t}\Big(t^{\varsigma}-(x-ct-\frac{\boldsymbol{\nu}}{\lambda}\ln t)+1\Big)\right)\ge \frac{\mathcal{B}}{t}\big(\mu-C t^{\varsigma-1}\big)\ge \frac{\mathcal{D\mu}}{2t}
\end{align*}
for $t\ge T_\varep$ and $\mathcal{Y}^-(t)\le x \le \mathcal{Y}^+(t)$. This implies that  $\overline{\mathcal{S}}$ is indeed a supersolution of problem \eqref{thm1.5_eqn3} for  $t\ge T_\varep$ and $\mathcal{Y}^-(t)\le x \le \mathcal{Y}^+(t)$. Therefore,  the comparison principle gives that
\begin{equation*}
	\psi_{a\Lambda_\mu+\varep}-v_{a\Lambda_\mu+\varep}\le \mathcal{S}(t,x)\le \overline{\mathcal{S}}(t,x)= o_{t\to+\infty}(1),~~~~~~\text{uniformly in}~~\mathcal{Y}^-(t)\le x \le \mathcal{Y}^+(t).
\end{equation*}
	 This gives the conclusion, as desired.
	\end{proof}
	
According to Proposition \ref{prop-5.4} 
and the definition  \eqref{5.21} of $\psi_{a\Lambda_\mu\pm\varep}$, one has
	\begin{equation}\label{5.28}
		\Big|v_{a\Lambda_\mu\pm\varep}(t,x)- e^{\lambda(x-c t-\frac{\boldsymbol{\nu}}{\lambda}\ln t)} U_{c }\Big(x-c t-\frac{\boldsymbol{\nu}}{\lambda}\ln t+\zeta_{a\Lambda_\mu\pm\varep}(t)\Big)\Big| \to 0~~~\text{as}~t\to+\infty,
	\end{equation}
	uniformly in $|x-c t-\frac{\boldsymbol{\nu}}{\lambda}\ln t|\le  t^\varsigma$, where
	\begin{equation*}
		\zeta_{a\Lambda_\mu\pm\varep}(t)=-\frac{1}{\lambda}\ln(a\Lambda_\mu\pm\varep)+\mathcal{O}( t^{2\varsigma-1}).
	\end{equation*}
By letting $\varep\to 0$ in \eqref{5.28}, together with \eqref{1-claim nu}, one deduces
	\begin{equation*}
		\Big|V(t,x)- e^{\lambda(x-c t-\frac{\boldsymbol{\nu}}{\lambda}\ln t)}U_{c }\Big(x-c t-\frac{\boldsymbol{\nu}}{\lambda}\ln t-\sigma_\infty\Big)\Big| \to 0~~~\text{as}~t\to+\infty,
	\end{equation*}
	uniformly in $|x-c t-\frac{\boldsymbol{\nu}}{\lambda}\ln t|\le  t^\varsigma$,  with $\sigma_\infty:=\frac{1}{\lambda}\ln(a\Lambda_\mu)$ depending on $u_0$ (remember that $a\Lambda_\mu>0$ is given in Proposition \ref{prop4.2_match sol and TW} and determined by $u_0$). Thus, for any $L>0$,   
	\begin{equation}\label{-1 nu}
		\max_{|x-c t-\frac{\boldsymbol{\nu}}{\lambda}\ln t|\le L}\Big|u(t,x)-U_{c }\Big(x-c t-\frac{\boldsymbol{\nu}}{\lambda}\ln t-\sigma_\infty\Big)\Big|\to 0~~\text{as}~t\to+\infty.
	\end{equation}

	Moreover,  based upon Theorem \ref{thm1-O(1) any nu}, we  have a priori $X_m(t)=c t+\frac{\boldsymbol{\nu}}{\lambda}\ln t+\mathcal{O}_{t\to +\infty}(1)$. This, together with  Lemma \ref{lemma-u(t,.) to 0} and $u(t,x)\to 1$ as $t\to+\infty$ locally uniformly in $x\in\R$, implies that 
	\begin{equation*}
		\label{1-1 nu}
		\begin{aligned}
			\begin{cases}
				\displaystyle	\liminf_{t\to+\infty}\Big(\min_{0\le x\le c t+\frac{\boldsymbol{\nu}}{\lambda}\ln t-\Theta}u(t,x)\Big)\to 1~~~~~~~&\text{as}~~\Theta\to+\infty,\vspace{3pt}\\
				\displaystyle	\limsup_{t\to+\infty}\Big(\max_{x\ge c t+\frac{\boldsymbol{\nu}}{\lambda}\ln t+\Theta}u(t,x)\Big)\to 0~~~~~~~&\text{as}~~\Theta\to+\infty.
			\end{cases}
		\end{aligned}
	\end{equation*}
	Combining this with $ U_{c }(-\infty)=1$ and $ U_{c }(+\infty)=0$, one has that up to increasing  $L$,
	\begin{equation}\label{-2 nu}
		\max_{x\in\R_+,~|x-c t-\frac{\boldsymbol{\nu}}{\lambda}\ln t|\ge L}\Big|u(t,x)-U_{c }\Big(x-c t-\frac{\boldsymbol{\nu}}{\lambda}\ln t-\sigma_\infty\Big)\Big|\to 0~~\text{as}~t\to+\infty.
	\end{equation}
	Consequently, the conclusion of Theorem \ref{thm5: o(1) any nu} follows immediately from \eqref{-1 nu} and \eqref{-2 nu}.

\end{document}